\newcommand{\field}[1]{\mathbb{#1}}
\newcommand{\N}{\field{N}}
\newcommand{\Z}{\field{Z}}%
\newcommand{\R}{\field{R}}
\newcommand{\C}{\field{C}}
\newcommand{\Q}{\field{Q}}
\newcommand{\GL}{\operatorname{GL}}
\newcommand{\HH}{\mathbb{H}}
\newcommand{\sgn}{\mathrm{sgn}}
\newcommand{\SL}{\operatorname{SL}}
\newcommand{\sign}{sign}
\newcommand{\diag}{diag}
\newcommand{\Res}{\text{Res}}
\newcommand{\im}{\text{Im}}
\newcommand{\re}{\text{Re}}
\newcommand{\triplet}{\mathcal{W}(p)}
\newcommand{\singlet}{\mathcal{W}(2,2p-1)}
\def \a{\alpha}
\newcommand{\vol}{\operatorname{vol}}
\newcommand{\wtb}{{\rm {wt} }  b }
\newcommand{\bea}{\begin{eqnarray}}
\newcommand{\eea}{\end{eqnarray}}
\newcommand{\be}{\begin {equation}}
\newcommand{\ee}{\end{equation}}
\newcommand{\g}{\frak g}
\newcommand{\nn}{\nonumber \\}
\newcommand{\ver}{L(-\tfrac{4}{3}\Lambda_0)}
\def \ga{\gamma }
\newcommand{\hg}{\widehat {\frak g} }
\newcommand{\hn}{\widehat {\frak n} }
\newcommand{\h}{\frak h}
\newcommand{\wt}{{\rm {wt} }   }
\newcommand{\V}{\cal V}
\newcommand{\hh}{\widehat {\frak h} }
\newcommand{\logmin}{\mathcal{W}(p,p')}
\newcommand{\slogmin}{\mathcal{SW}(p,p')}
\newcommand{\n}{\frak n}
\newcommand{\W}{\mathcal W}
\newcommand{\vak}{\bf 1}
\newcommand{\la}{\langle}
\newcommand{\ra}{\rangle}
\newcommand{\ap}{{\bf a}^+}
\newcommand{\am}{{\bf a}^-}
\newcommand{\NS}{\frak{ns} }
\newcommand{\striplet}{\mathcal{SW}(m)}
\newcommand{\hf}{\mbox{$\frac{1}{2}$}}
\newcommand{\thf}{\mbox{$\frac{3}{2}$}}
\newcommand{\doublet}{\mathcal{A}(p)}
\renewcommand{\sl}{{\operatorname{s\ell}}}
\newcommand{\ord}{\operatorname{ord}}
\newcommand{\lcm}{\operatorname{lcm}}
\renewcommand{\H}{\mathbb{H}}
\numberwithin{equation}{section}
\numberwithin{table}{section}
\newtheorem{theorem}{\textbf{Theorem}}
\numberwithin{theorem}{section}
\newtheorem{lemma}[theorem]{\textbf{Lemma}}
\theoremstyle{remark}
\newtheorem{remarks}[theorem]{{\bf Remarks}}
\renewcommand{\pmod}[1]{\  \,  \left(  \mathrm{mod} \,  #1 \right)}
\begin{document}
\title{Class numbers and representations by ternary quadratic forms with congruence conditions}
\author{Kathrin Bringmann}
\address{Mathematical Institute, University of Cologne, Weyertal 86-90, 50931 Cologne, Germany}
\email{kbringma@math.uni-koeln.de}
\author{Ben Kane}
\address{Department of Mathematics, University of Hong Kong, Pokfulam, Hong Kong}
\email{bkane@hku.hk}
\date{\today}
\maketitle

\begin{abstract}
In this paper, we are interested in the interplay between integral ternary quadratic forms and class numbers. This is partially motivated by a question of Petersson. 	
\end{abstract}

\section{Introduction and statement of results}
For $\bm{a}\in\N^3$ and $\bm{x}\in\Z^3$, we define
\[
Q_{\bm{a}}(\bm{x}):=\sum_{j=1}^3 a_j x_j^2.
\]
Throughout we write vectors in bold letters and their components in non-bold and with subscripts. For $\bm{h}\in\Z^3$, $N\in\N$, and $n\in\N_0$, we then let 
\[
r_{\bm{a},\bm{h},N}(n):=\#\left\{\bm{x}\in\Z^3:Q_{\bm{a}}(\bm{x})=n\text{ and }\bm{x}\equiv \bm{h}\pmod{N}\right\}.
\]
We omit $\bm{h}$ and $N$ if $N=1$. 

It is well-known that $r_{\bm{1}}(n)$ (with $\bm{1}:=(1\ 1\ 1)^T$) may be written in terms of {\it Hurwitz class numbers} $H(|D|)$ ($D<0$ a discriminant) which count the number of $\SL_2(\Z)$-equivalence classes of integral binary quadratic forms of discriminant $D$, weighted
 by $\tfrac{1}{2}$ if the form is equivalent to $f(x_1^2+x_2^2)$ and by $\tfrac{1}{3}$ if the form is equivalent to $f(x_1^2-x_1x_2+x_2^2)$,  for some $f\in\Z$. By Gauss (see e.g. \cite[Theorem 8.5]{OnoBook}) $r_{\bm{1}}(n)$ is proportional to class numbers 
\begin{equation}\label{eqn:r1Gauss}
r_{\bm{1}}(n)=\begin{cases}
12 H(4n)&\text{if }n\equiv 1,2\pmod{4},\\
24 H(n)&\text{if }n\equiv 3\pmod{8},\\
r_{\bm{1}}\left(\tfrac{n}{4}\right)&\text{if }4\mid n,\\
0&\text{otherwise}.
\end{cases}
\end{equation}
In this paper, we extend \eqref{eqn:r1Gauss} to include congruence conditions on $\bm{x}$ and to write such identities more uniformly. This is partially motivated by a question of Petersson \cite{PeBer}, who observed that for many choices of $\bm{h}$, the numbers $r_{\bm{1},\bm{h},4}(n)$ satisfy formulas reminiscent of \eqref{eqn:r1Gauss}. His claims for $\bm{h}=(0\ 0\ 1)^T$ and $\bm{h}=(1\ 2\ 2)^T$ were later proved by Ebel \cite{Ebel}. To give a flavor of the types of identities that one obtains, for $\bm{h}=(0\ 1\ 1)^T$ we have 
\begin{equation}\label{eqn:Ratio1}
r_{\bm{1},\bm{h},4}(n)=\tfrac{1}{12}\delta_{n\equiv 2\pmod8} r_{\bm{1}}(n)=\delta_{n\equiv 2\pmod8}H(4n),
\end{equation}
where here and throughout $\delta_S:=1$ if a statement $S$ holds and $\delta_S:=0$ otherwise.
Using \eqref{eqn:r1Gauss} and the first identity in \eqref{eqn:Ratio1} as a guide, we undertake a two-part search; the first search yields choices of $\bm{a}$ for which identities like \eqref{eqn:r1Gauss} hold, while the second search yields choices of $\bm{h}$ and $N$ for which an identity resembling the first identity in \eqref{eqn:Ratio1} holds. 

Prior work of Jones (see \cite[Theorem 86]{Jones}) immediately yields the choices of $\bm{a}$, namely  those for which $Q_{\bm{a}}$ has \begin{it}(genus) class number one\end{it}, which means that every integral quadratic form which is $p$-adically equivalent to $Q_{\bm{a}}$ is actually equivalent over $\Z$. Jones showed that, while the numbers $r_{\bm{a},\bm{h},N}(n)$ may not themselves be directly related to class numbers, a certain weighted average (known as the Siegel-Weil average) $r_{\operatorname{g}(Q_{\bm{a}})}(n)$ may indeed be written in terms of class numbers. If $Q_{\bm{a}}$ has class number one, then the average collapses to 
\[
r_{\operatorname{g}(Q_{\bm{a}})}(n)=r_{\bm{a}}(n).
\]
The (finite set of) binary integral quadratic forms with class number one is known and we let $\mathcal{C}$ denote the set of $\bm{a}$ for which $Q_{\bm{a}}$ has class number one. 
For each $\bm{a}\in\mathcal{C}$, we obtain a formula relating $r_{\bm{a}}(n)$ to Hurwitz class numbers. For example, Lemma \ref{lem:111} states that
\begin{equation}\label{eqn:r1notGauss}
r_{\bm{1}}(n)=12H(4n)-24H(n).
\end{equation}
For each $\bm{a}\in\mathcal{C}$, we find a set $\mathcal{S}_{\bm{a}}$ of  $(\bm{h},N)$ for which an identity resembling \eqref{eqn:Ratio1} holds. For each $(\bm{h},N)\in\mathcal{S}_{\bm{a}}$, we show that $r_{\bm{a},\bm{h},N}(n)$ is proportional to $r_{\bm{a}}(n)$; the constant of proportionality $d_{\bm{a},\bm{h},N}(n)$ only depends on the residue class of $n\pmod{M}$ for some fixed $M$, mimicking \eqref{eqn:Ratio1}. We list $d_{\bm{a},\bm{h},N}(n)$ in Appendix \ref{sec:appendixds}; note that throughout Appendices \ref{sec:appendixds} and \ref{sec:appendixcs}, the constant is taken to be zero if a given congruence class does not occur in the tables. 
\begin{theorem}\label{thm:congcondition}
For each $\bm{a}\in \mathcal{C}$, $(\bm{h},N)\in\mathcal{S}_{\bm{a}}$, and $n\in\N$, we have 
\[
r_{\bm{a},\bm{h},N}(n)=d_{\bm{a},\bm{h},N}(n) r_{\bm{a}}(n).
\]
\end{theorem}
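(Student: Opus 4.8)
The plan is to realize each $r_{\bm{a},\bm{h},N}(n)$ as the coefficient of a modular form and then to identify that modular form with an explicit multiple of the theta series attached to $Q_{\bm{a}}$. Concretely, I would first note that the generating function
\[
\theta_{\bm{a},\bm{h},N}(\tau):=\sum_{\bm{x}\equiv\bm{h}\pmod N}q^{Q_{\bm{a}}(\bm{x})}\qquad\left(q:=e^{2\pi i\tau}\right)
\]
is a holomorphic modular form of weight $\tfrac{3}{2}$ on $\Gamma_0\left(4N^2\operatorname{lcm}(a_1,a_2,a_3)\right)$ with a character (a standard fact about theta series of positive-definite ternary forms restricted to a coset of a sublattice; see e.g. Shimura). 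Since $\bm{a}\in\mathcal{C}$, the unrestricted theta series $\theta_{\bm{a}}(\tau)=\sum_{\bm{x}\in\Z^3}q^{Q_{\bm{a}}(\bm{x})}$ coincides, by Jones' genus class number one result together with the Siegel--Weil formula, with the Eisenstein series $r_{\operatorname{g}(Q_{\bm{a}})}(n)$ that we have already expressed in terms of Hurwitz class numbers (as in \eqref{eqn:r1notGauss} and its analogues derived for every $\bm{a}\in\mathcal{C}$). In particular $\theta_{\bm{a}}$ lies in the Eisenstein subspace.

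The next step is to show that $\theta_{\bm{a},\bm{h},N}$ also lies in the Eisenstein subspace of the relevant space $M_{3/2}\left(\Gamma_0(4N^2\operatorname{lcm}(a_1,a_2,a_3)),\chi\right)$. The cleanest route is to write $\theta_{\bm{a},\bm{h},N}$ as a finite linear combination of the full theta series $\theta_{\bm{a}}$ twisted by additive characters: averaging $q^{Q_{\bm{a}}(\bm{x})}$ over the $N^3$ residue classes modulo $N$ with suitable roots of unity picks out the coset $\bm{x}\equiv\bm{h}$. Each twist $\theta_{\bm{a}}\otimes\psi$ of a form in the one-dimensional (per genus) span is again a linear combination of theta series of forms in the genus of $Q_{\bm{a}}$ on a larger level, hence — since every such form has class number one and equals its genus average — again an Eisenstein series. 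Thus $\theta_{\bm{a},\bm{h},N}$ is Eisenstein, and its coefficients are a finite $\Z$-linear combination of Hurwitz class numbers $H(4t n)$ evaluated at various arithmetic-progression-restricted shifts of $n$.

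Having located everything in the (well-understood, basis given by Cohen--Eisenstein series and their level raisings) Eisenstein space, the final step is purely a coefficient comparison: both $r_{\bm{a},\bm{h},N}(n)$ and $d_{\bm{a},\bm{h},N}(n)\,r_{\bm{a}}(n)$ are, on each fixed residue class of $n\pmod M$, eigenform-type expressions in $n$ that are determined by finitely many coefficients. One checks the claimed identity on enough initial values of $n$ in each residue class modulo $M$ — the bound on how many values suffice coming from the dimension of the ambient Eisenstein space — and invokes the fact that two modular forms agreeing up to the Sturm bound are equal. In practice this reduces, for each of the finitely many pairs $(\bm{a},\bm{h},N)$ with $\bm{a}\in\mathcal{C}$ and $(\bm{h},N)\in\mathcal{S}_{\bm{a}}$, to a finite computation, which is exactly how the tables in Appendix \ref{sec:appendixds} are produced and verified.

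The main obstacle I anticipate is the bookkeeping in the second step: controlling the level, character, and in particular the precise Eisenstein subspace in which the \emph{twisted} and \emph{coset-restricted} theta series lives, and making sure the decomposition of $\theta_{\bm{a},\bm{h},N}$ into class-number-one genus theta series is valid on that level (one must verify that the larger-level forms appearing still have genus class number one, or else handle their Siegel--Weil averages directly). Once that structural input is in place, the proof is a finite check against the Sturm bound, and the only remaining effort is organizing the case distinctions on $n\pmod M$ so that the proportionality constants $d_{\bm{a},\bm{h},N}(n)$ emerge in the tabulated form.
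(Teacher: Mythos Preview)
Your endgame—recognize both sides as holomorphic weight-$\tfrac32$ modular forms on a common congruence subgroup and finish with a Sturm-type bound plus a finite computer check—is exactly what the paper does. But the middle portion of your plan, the detour through the Eisenstein subspace, is both unnecessary and the source of the obstacle you yourself flag. The paper bypasses it entirely: since $d_{\bm{a},\bm{h},N}(n)$ depends only on $n\bmod M$, the right-hand side $d_{\bm{a},\bm{h},N}(n)\,r_{\bm{a}}(n)$ is literally the $n$-th coefficient of
\[
\sum_{m\pmod{M}} d_{\bm{a},\bm{h},N}(m)\,\Theta_{\bm{a}}\big|S_{M,m},
\]
and Lemma~\ref{lem:operators}(2) shows directly that the sieving operator $S_{M,m}$ sends modular forms to modular forms on an explicit larger level. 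So the right-hand side is automatically a holomorphic modular form; there is no need for Siegel--Weil, for decomposing into twists, or for verifying that any larger-level genera have class number one. (Your claim that a twist of $\theta_{\bm{a}}$ is ``again a linear combination of theta series of forms in the genus of $Q_{\bm{a}}$'' is not correct as stated—additive twists produce theta series of \emph{cosets}, not of forms in the same genus—so that step would not go through as written.) Once both sides are modular, one intersects the two subgroups, computes the index via Lemma~\ref{lem:index}, applies the valence formula, and checks the resulting number of coefficients by machine; these bounds are tabulated in Appendix~\ref{sec:appendixvalence}.

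One further wrinkle your outline does not anticipate: for a handful of pairs $(\bm{a},N)$ the Sturm bound from this direct comparison is computationally prohibitive. In those cases the paper instead proves an elementary reduction of the form $r_{\bm{a},\bm{h},N}(n)=\tfrac12\,r_{\bm{a},\bm{h},N'}(n)$ (on the relevant residue classes) to a smaller $N'\mid N$ already handled, sometimes via an intermediate modular-form identity with a more manageable bound.
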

\begin{remarks}
\noindent

\noindent
\begin{enumerate}[leftmargin=*]
\item
There is also a definition of genus for a quadratic form $Q$ for which $\bm{x}$ is in a fixed congruence class $\bm{h} \pmod{N}$ and van der Blij \cite{vanderBlij} constructed a Siegel-Weil average $r_{\operatorname{g}(Q,\bm{h},N)}(n)$ for such quadratic forms with congruence conditions. 
For each prime $p$ (including the prime $\infty$), there are so-called local densities $\beta_{(Q,\bm{h},N),p}(n)$ for which 
\begin{equation}\label{eqn:rlocaldensity}
r_{\operatorname{g}(Q,\bm{h},N)}(n)=\beta_{(Q,\bm{h},N),\infty}(n)\prod_{p}\beta_{(Q,\bm{h},N),p}(n),
\end{equation}
 the product running over all finite primes $p$. For all but finitely many $p$, one has
\begin{equation}\label{eqn:localdensity}
\beta_{(Q,\bm{h},N),p}(n)=\beta_{Q,p}(n). 
\end{equation}
From this, one can conclude that $r_{\operatorname{g}(Q,\bm{h},N)}(n)$ and $r_{\operatorname{g}(Q)}(n)$ are proportional; the constant of proportionality depends on $p$-adic properties of $n$; $p$ not satisfying \eqref{eqn:localdensity}. Hence, based on \cite[Theorem 86]{Jones}, identities relating $r_{\bm{a},\bm{h},N}(n)$ to class numbers should only come from identities like the ones given in Theorem \ref{thm:congcondition} in the special case that $(Q_{\bm{a}},\bm{h},N)$ has class number one; by \cite[Corollary 4.4]{ChanOh}, this implies that $Q_{\bm{a}}$ has class number one. 
\rm

\item
In many cases, the identities given in Theorem \ref{thm:congcondition} may be deduced by noting that if $n$ is restricted to certain congruence classes, $Q_{\bm{a}}(\bm{x})=n$ implies that $\bm{x}\equiv \bm{h}\pmod{N}$ up to reordering of the variables. However, this is not always the case for the identities proven in Theorem \ref{thm:congcondition}. For example, combining Lemma \ref{lem:111} with Theorem \ref{thm:congcondition} (see the table in Appendix \ref{sec:appendixds}), we obtain that for $\bm{a}=(1\ 2\ 2)^T$ and  $\bm{h}\in\{(1\ 0\ 3)^T,(3\ 1\ 2)^T\}$ we have
\[
r_{{\bm{a}},\bm{h},6}(n)=\delta_{n\equiv 19\pmod{24}}\left(H(4n)-2H(n)\right).
\]
Hence there are at least two distinct possibilities for the solutions $\bm{x}\pmod 6$. It would be interesting to try to directly prove that these are the same via a combinatorial argument.

\end{enumerate}
\end{remarks}

Another motivation for finding relations between class numbers and representations by ternary quadratic forms comes from computational number theory. Specifically, the algorithm of Jacobson and Mosunov \cite{JacobsonMosunov} used to 
tabulate class groups of imaginary quadratic fields of all discriminants $|D|<2^{40}$ (setting a record for unconditional computation) exploits \eqref{eqn:r1Gauss} to evaluate  $H(n)$ or $H(4n)$ for squarefree $n\not\equiv 7\pmod{8}$. 
However, since $r_{\bm{1}}(n)=0$ for $n\equiv 7\pmod{8}$, \cite{JacobsonMosunov} required another method to compute $H(n)$ for squarefree $n\equiv 7\pmod{8}$. One may use the identities resembling \eqref{eqn:r1notGauss} for other choices of $\bm{a}$ to push the calculation further. Indeed, for $\bm{a}=(1\ 3\ 3)^T$, letting $\left(\tfrac{\cdot}{\cdot}\right)$ denote the extended Legendre symbol, we may show, using the methods of this paper that for $n\equiv 7\pmod{8}$ with $9\nmid n$, we have
\[
r_{\bm{a}}(n)=8\left(1+\left(\tfrac{n}{3}\right)\right)H(n).
\]
 Similarly, for $\bm{a}=(2\ 5\ 10)^T$, for $n\equiv 23\pmod{24}$ with $25\nmid n$, we have
\[
r_{\bm{a}}(n)=2\left(1-\left(\tfrac{n}{5}\right)\right) H(n).
\]

One may also use Theorem \ref{thm:congcondition} and the lemmas in Section \ref{sec:lemmas} to compute $H(n)$ quicker for special choices of $n$: since the shifted lattice defined by $\bm{x}\in\Z^3$ with $\bm{x}\equiv \bm{h}\pmod{N}$ is more ``spread out'' than the lattice $\Z^3$, the computation of $r_{\bm{a},\bm{h},N}(n)$ is faster than that of $r_{\bm{a}}(n)$. 

Although we use \cite[Theorem 86]{Jones} as a guide for finding identities like \eqref{eqn:r1notGauss} in Section \ref{sec:lemmas}, we do not employ it directly. Jones used a constructive proof to establish a bijection between (primitive) representations of integers and class numbers, but his theorem only applies to those $n$ which are coprime to $2a_1a_2a_3$. In principle, one could take \eqref{eqn:rlocaldensity} and compute the local densities on the right-hand side in order to determine $r_{\bm{a},\bm{h},N}(n)$ if the class number is one. One could then realize a relationship between the right-hand side as a special value of an $L$-function for a Dirichlet $L$-function and use Dirichlet's class number formula to obtain an identity involving class numbers. This is essentially what is done in \cite[Theorem 1.5]{ShimuraCongruence}, although many of the constants would still need to be worked out to obtain an explicit identity and the calculation splits depending on the $p$-adic properties of $n$. 

Instead, we  investigate the representation numbers $r_{\bm{a},\bm{h},N}(n)$ by packaging them into generating functions generally referred to as \begin{it}theta functions\end{it} ($q:=e^{2\pi i \tau}$)
\begin{equation}\label{3The}
\Theta_{\bm{a},\bm{h},N}(\tau)
:=\prod_{j=1}^3 \vartheta_{h_j,N}\left(2N a_j\tau\right)=\sum_{n\geq 0} r_{\bm{a},\bm{h},N}(n) q^n,\quad\text{where}
\end{equation}
\[
\vartheta_{h,N}(\tau):=\sum_{m\equiv h\pmod{N}} q^{\frac{m^2}{2N}}.
\]
The functions $\vartheta_{h,N}$ are modular forms of weight $\tfrac{3}{2}$ (see Lemma \ref{lem:thetamodular}), and we use the theory of non-holomorphic modular forms to recognize the generating functions of the right-hand sides of 
 \eqref{eqn:r1notGauss} as (holomorphic) modular forms of weight $\tfrac{3}{2}$. We then use the theory of modular forms to prove that these two modular forms are equal, and hence conclude that the coefficients used to build them are indeed equal. 

The paper is organized as follows. In Section \ref{sec:prelim}, we introduce and recall properties of non-holomorphic modular forms, theta functions, and class numbers. We then prove Theorem \ref{thm:congcondition} in Section \ref{sec:congcondition}. In Section \ref{sec:lemmas}, we confirm identities resembling \eqref{eqn:r1notGauss} for each $\bm{a}\in\mathcal{C}$, which, when combined with Theorem \ref{thm:congcondition}, yield the desired relations between class numbers representations by the form $Q_{\bm{a}}$ with $\bm{x}\equiv \bm{h}\pmod{N}$.  We list all of the constants needed for stating Theorem \ref{thm:congcondition} in Appendix \ref{sec:appendixds} and the constants required for the proof of Theorem \ref{thm:congcondition} are given in Appendix \ref{sec:appendixvalence}. Finally, Appendix \ref{sec:appendixcs} (resp. \ref{sec:appendixvalencelemmas}) gives the definitions of the constants needed to state (resp. prove) the lemmas in Section \ref{sec:lemmas}.

\section*{Acknowledgements}
The authors would like to thank Wai Kiu ``Billy'' Chan for many helpful comments. 
The research of the  first author is supported by the Alfried Krupp Prize for Young University Teachers of the Krupp foundation. The research of the second author was supported by grants from the Research Grants Council of the Hong Kong SAR, China (project numbers HKU 17316416, 17301317 and 17303618).
\section{Preliminaries}\label{sec:prelim}

\subsection{Modular forms}
We briefly introduce modular forms below, but refer the reader to \cite{OnoBook} for a more details. 
As usual, for $d$ odd, we set 
\[
\varepsilon_{d}:=\begin{cases} 1 &\text{if }d\equiv 1\pmod{4}\hspace{-1pt},\\ i&\text{if }d\equiv -1\pmod{4}\hspace{-1pt}.\end{cases}
\]
A function $f:\H\to\C$ satisfies \begin{it}modularity of weight $\kappa\in\frac12+\Z$ on $\Gamma\subseteq \Gamma_0(4)$\end{it} ($\Gamma$ a congruence subgroup containing $T:=\left(\begin{smallmatrix}1&1\\ 0 &1\end{smallmatrix}\right)$)
\textit{ with character $\chi$} if for every $\gamma=\left(\begin{smallmatrix}a&b\\ c&d\end{smallmatrix}\right)\in\Gamma$ we have 
\[
f|_{\kappa}\gamma  = \chi(d) f.
\]
Here the weight $\kappa$ \begin{it}slash operator\end{it} is defined by 
\[
f\big|_{\kappa}\gamma(\tau):= \left( \tfrac cd \right) \varepsilon_d^{2k}(c\tau+d)^{-\kappa} f(\gamma\tau).
\]
We moreover call $f$ a \begin{it}(holomorphic) modular form\end{it} if $f$ is holomorphic on $\H$ and $f(\tau)$ grows as most polynomially in $v$ as $\tau=u+iv\to \Q\cup\{i\infty\}$.

We call the equivalence classes of $\Gamma\backslash (\Q\cup\{i\infty\})$ the \begin{it}cusps of $\Gamma\backslash\H$\end{it}. For a cusp $\varrho$ we choose $M_{\varrho}$ such that $M_{\varrho}(i\infty)=\varrho$. If $f$ satisfies modularity of weight $\kappa$ on $\Gamma$ with some character $\chi$, then  $f_{\varrho}:=f\big|_{\kappa} M_{\varrho}$ is invariant under $T^{\sigma_{\varrho}}$ for 
some $\sigma_{\varrho}\in\N$ and
 hence has a Fourier expansion 
\[
f_{\varrho}(\tau)= \sum_{n\in\Z} c_{f,\varrho,v}(n) q^{\frac{n}{\sigma_{\varrho}}}.
\]
The number $\sigma_{\varrho}$ (chosen minimally) is called the \begin{it}cusp width\end{it} of $\varrho$, and we drop the dependence on $v$ in the notation if $f$ is holomorphic and we drop $\varrho$ from the notation if $\varrho=i\infty$. The growth conditions at the cusps for a holomorphic modular form are equivalent to the assumption that $c_{f,\varrho}(n)=0$ for every $n<0$. If $f$ is a holomorphic modular form and $m\in\Z$ is minimal such that $c_{f,\varrho}(m)\neq 0$, then we set 
\[
\ord_{\varrho}(f):=\tfrac{m}{\sigma_\varrho}.
\]

For $\tau\in \H$ and a congruence subgroup $\Gamma\subseteq\SL_2(\Z)$, we let 
\[
\omega_{\tau}=\omega_{\Gamma,\tau}:=\frac{\# \{\gamma\in\Gamma: \gamma \tau=\tau\}}{\#\{\gamma\in\Gamma : \gamma=\pm I\}}.
\]
The following identity, known as the valence formula, is used throughout this paper to prove identities between coefficients of modular forms.
\begin{lemma}\label{lem:valenceformula}
Suppose that $k\in\frac{1}{2}\Z$, $\Gamma\subseteq \Gamma_0(4)$ is a congruence subgroup containing $\left(\begin{smallmatrix}
1 & 1 \\ 0 & 1
\end{smallmatrix}\right)$, and $\chi$ is a character. If $f\not\equiv 0$ is a (holomorphic) modular form of weight $k$ on $\Gamma$ with character $\chi$, then 
\[
\tfrac{k}{12}\left[\SL_2(\Z):\Gamma\right]=\sum_{\tau\in \Gamma\backslash\H} \frac{\ord_{\tau}(f)}{\omega_{\tau}} +\sum_{\varrho\in \Gamma\backslash (\Q\cup\{i\infty\})} \ord_{\varrho}(f). 
\]
In particular, if $f(\tau)=\sum_{n\gg -\infty} c_{f}(n) q^n$ and $c_f(n)=0$ for all $n\leq \frac{k}{12}\left[\SL_2(\Z):\Gamma\right]$, then $f\equiv0$.
\end{lemma}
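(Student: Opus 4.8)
The plan is to derive this from the classical valence formula on $\SL_2(\Z)$: for a nonzero holomorphic modular form $g$ of integer weight $k'$ on $\SL_2(\Z)$ with trivial character,
\[
\tfrac{k'}{12}=\sum_{\tau\in\SL_2(\Z)\backslash\H}\frac{\ord_\tau(g)}{\omega_{\SL_2(\Z),\tau}}+\ord_{i\infty}(g),
\]
where $\omega_{\SL_2(\Z),\tau}$ is $2$ on the $\SL_2(\Z)$-orbit of $i$, $3$ on that of $\rho:=e^{\pi i/3}$, and $1$ elsewhere; this is the usual computation of $\tfrac1{2\pi i}\oint g'/g\,d\tau$ around the modular fundamental domain. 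One could instead run that contour argument directly on a fundamental domain for $\Gamma\backslash\H$ — the multiplier $\left(\tfrac cd\right)\varepsilon_d^{2k}\chi(d)$ in the automorphy factor is locally constant, so it drops out of $d\log f$ and $k$ simply replaces $k'$ — but I prefer to quote the level-one case and lift $f$ to level one.

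First I would reduce to integer weight: if $k\in\tfrac12+\Z$, then $f^2$ is a nonzero holomorphic modular form of integer weight $2k$ on $\Gamma$ with an ordinary character, and since $\ord_\tau(f^2)=2\ord_\tau(f)$ at every point of $\H$ and at every cusp while $[\SL_2(\Z):\Gamma]$ is unchanged, the identity for $f$ follows from that for $f^2$ upon dividing both sides by $2$. So assume $k\in\Z$, let $\mu:=[\SL_2(\Z):\Gamma]$, choose right coset representatives $\gamma_1,\dots,\gamma_\mu$ of $\Gamma$ in $\SL_2(\Z)$, and put $G:=\prod_{j=1}^\mu f\big|_k\gamma_j$. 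Since $f|_k\gamma=\chi(d)f$ for $\gamma\in\Gamma$ and the slash is a right action, $G$ transforms under every $\delta\in\SL_2(\Z)$ by $G|_{k\mu}\delta=\psi(\delta)G$ for a character $\psi$ of $\SL_2(\Z)$ (of order dividing $12$), $G$ is holomorphic on $\H$ and at $i\infty$, and $G\not\equiv0$. Then $G^{\ord(\psi)}$ is a nonzero holomorphic modular form of integer weight $k\mu\,\ord(\psi)$ on $\SL_2(\Z)$ with trivial character; applying the classical formula to it and dividing by $\ord(\psi)$ gives
\[
\tfrac{k\mu}{12}=\sum_{\tau\in\SL_2(\Z)\backslash\H}\frac{\ord_\tau(G)}{\omega_{\SL_2(\Z),\tau}}+\ord_{i\infty}(G).
\]

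It then remains to rewrite the right-hand side in terms of $f$. For $\tau\in\H$ the factor $(c\tau+d)^{-k}$ is holomorphic and nonvanishing near $\tau$, so $\ord_\tau(f|_k\gamma)=\ord_{\gamma\tau}(f)$ and $\ord_\tau(G)=\sum_j\ord_{\gamma_j\tau}(f)$. As $j$ runs over $\Gamma\backslash\SL_2(\Z)$, the $\gamma_j\tau$ run over the $\Gamma$-orbits above the $\SL_2(\Z)$-orbit of $\tau$, each orbit $[\tau']$ appearing with multiplicity $\bigl[\overline{\mathrm{Stab}}_{\SL_2(\Z)}(\tau):\overline{\mathrm{Stab}}_\Gamma(\tau')\bigr]=\omega_{\SL_2(\Z),\tau}/\omega_{\Gamma,\tau'}$, the local degree of the covering $\Gamma\backslash\H\to\SL_2(\Z)\backslash\H$; since $\ord$ is constant along $\Gamma$-orbits, $\ord_\tau(G)/\omega_{\SL_2(\Z),\tau}=\sum_{[\tau']\mapsto[\tau]}\ord_{\tau'}(f)/\omega_{\Gamma,\tau'}$, and summing over the $\SL_2(\Z)$-orbits in $\H$ produces exactly $\sum_{\tau'\in\Gamma\backslash\H}\ord_{\tau'}(f)/\omega_{\Gamma,\tau'}$. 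A parallel count at $i\infty$, now carrying along the cusp widths $\sigma_\varrho$ — which control how the $\mu$ cosets are distributed among the $\Gamma$-orbits of cusps, the widths summing to $\mu$ — turns $\ord_{i\infty}(G)$ into $\sum_{\varrho\in\Gamma\backslash(\Q\cup\{i\infty\})}\ord_\varrho(f)$. Substituting and undoing the two reductions gives the asserted identity.

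The concluding assertion is then immediate: for $f\not\equiv0$ every summand on the right is $\geq0$, and since $T\in\Gamma$ the cusp $i\infty$ has width $1$, so $\ord_{i\infty}(f)=\min\{n:c_f(n)\neq0\}$; hence $\min\{n:c_f(n)\neq0\}\leq\tfrac{k}{12}[\SL_2(\Z):\Gamma]$, and if $c_f(n)=0$ for all $n\leq\tfrac{k}{12}[\SL_2(\Z):\Gamma]$ then $f\equiv0$. Beyond quoting the $\SL_2(\Z)$ formula, the real work is the bookkeeping in the unwinding step — matching the stabilizer indices $\omega_{\SL_2(\Z),\tau}/\omega_{\Gamma,\tau'}$ and the cusp widths against the coset multiplicities — together with the routine checks that the half-integral multiplier and the character genuinely disappear upon squaring and taking $\ord(\psi)$-th powers; I expect the cusp-width normalization to be the fussiest point.
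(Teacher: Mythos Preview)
The paper does not prove this lemma; it is stated as a known result (``The following identity, known as the valence formula, is used throughout this paper\ldots'') and then immediately moves on to Lemma~\ref{lem:index}. Your proof is a correct standard derivation --- squaring to reduce to integer weight, norming down to $\SL_2(\Z)$ via $G=\prod_j f|_k\gamma_j$, killing the residual character by a further power, applying the classical level-one valence formula, and then unwinding using the covering-degree identities $\omega_{\SL_2(\Z),\tau}/\omega_{\Gamma,\tau'}$ and the cusp-width bookkeeping (which matches the paper's normalization $\ord_\varrho(f)=m/\sigma_\varrho$). There is nothing to compare against, but nothing is wrong either.
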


We require indices of certain subgroups of $\SL_2(\Z)$. 
As usual $\varphi$ denotes Euler totient function and we set $$\Gamma_{N,M}:=\Gamma_0(N)\cap\Gamma_1(M).$$

\begin{lemma}\label{lem:index}
If $N,M\in\N$ with $M\mid N$, then we have 
\[
\left[\SL_2(\Z):\Gamma_{N,M}\right]= N\prod_{p\mid N} \left(1+\tfrac{1}{p}\right) \varphi(M),
\]
where the product runs over all primes divisors of $N$.
In particular
\begin{equation*}
\left[\SL_2(\Z):\Gamma_0(N)\right]=N\prod_{p\mid N} \left(1+\tfrac{1}{p}\right).
\end{equation*}
\end{lemma}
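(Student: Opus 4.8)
The plan is to compute the index inside the finite group $\SL_2(\Z/N\Z)$ by realizing $\Gamma_{N,M}$ as a preimage under reduction modulo $N$. First I would recall that reduction modulo $N$ gives a surjection $\pi\colon\SL_2(\Z)\twoheadrightarrow\SL_2(\Z/N\Z)$, so that $[\SL_2(\Z):\pi^{-1}(H)]=[\SL_2(\Z/N\Z):H]$ for any subgroup $H\le\SL_2(\Z/N\Z)$. The key observation is that $\Gamma_{N,M}$ is such a preimage: for $\gamma=\left(\begin{smallmatrix}a&b\\ c&d\end{smallmatrix}\right)$ with $N\mid c$, the relation $ad-bc=1$ together with $M\mid N$ forces $ad\equiv 1\pmod M$, so the two conditions $a\equiv d\equiv 1\pmod M$ defining $\Gamma_1(M)$ collapse to the single congruence $a\equiv 1\pmod M$, which — since $M\mid N$ — depends only on $\gamma \bmod N$. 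Hence $\Gamma_{N,M}=\pi^{-1}(H)$ with $H=\left\{\left(\begin{smallmatrix}a&b\\ 0&d\end{smallmatrix}\right)\in\SL_2(\Z/N\Z):a\equiv 1\pmod M\right\}$.

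Next I would count $H$. Such a matrix is determined by a unit $a\in(\Z/N\Z)^\times$ with $a\equiv 1\pmod M$ (which forces $d=a^{-1}$) together with an arbitrary $b\in\Z/N\Z$; since $M\mid N$, the reduction $(\Z/N\Z)^\times\to(\Z/M\Z)^\times$ is surjective, so the number of admissible $a$ is $\varphi(N)/\varphi(M)$ and thus $\#H=N\varphi(N)/\varphi(M)$. Plugging in the standard identities $\#\SL_2(\Z/N\Z)=N^3\prod_{p\mid N}(1-p^{-2})$ and $\varphi(N)=N\prod_{p\mid N}(1-p^{-1})$ and simplifying via $(1-p^{-2})/(1-p^{-1})=1+p^{-1}$, I obtain
\[
[\SL_2(\Z):\Gamma_{N,M}]=\frac{\#\SL_2(\Z/N\Z)}{\#H}=N\,\varphi(M)\prod_{p\mid N}\left(1+\tfrac1p\right);
\]
specializing $M=1$ (so $\varphi(1)=1$ and $\Gamma_{N,1}=\Gamma_0(N)$) recovers the ``in particular'' formula.

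I do not expect a genuine obstacle; the only point requiring care is the reduction of the $\Gamma_1(M)$-condition to a single congruence modulo $N$ on the entry $a$, which is exactly what exhibits $\Gamma_{N,M}$ as a level-$N$ congruence subgroup and lets the whole count take place in $\SL_2(\Z/N\Z)$. If one prefers to sidestep quoting $\#\SL_2(\Z/N\Z)$, an equivalent two-step route works: $[\SL_2(\Z):\Gamma_0(N)]=\#\mathbb{P}^1(\Z/N\Z)=N\prod_{p\mid N}\left(1+\tfrac1p\right)$ from the transitive action of $\SL_2(\Z)$ on $\mathbb{P}^1(\Z/N\Z)$ with stabilizer $\Gamma_0(N)$, followed by $[\Gamma_0(N):\Gamma_{N,M}]=\varphi(M)$ via the surjective homomorphism $\Gamma_0(N)\to(\Z/M\Z)^\times$, $\left(\begin{smallmatrix}a&b\\ c&d\end{smallmatrix}\right)\mapsto d\bmod M$, whose kernel is precisely $\Gamma_{N,M}$.
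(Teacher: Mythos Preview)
Your proposal is correct. Your primary route---realizing $\Gamma_{N,M}$ as the preimage of an explicit subgroup $H\le\SL_2(\Z/N\Z)$ and computing the index as $\#\SL_2(\Z/N\Z)/\#H$---is a genuinely different argument from the paper's. The paper instead takes exactly the two-step route you sketch at the end: it factors $[\SL_2(\Z):\Gamma_{N,M}]=[\SL_2(\Z):\Gamma_0(N)]\cdot[\Gamma_0(N):\Gamma_{N,M}]$, quotes the first index from the literature, and obtains the second from the surjective homomorphism $\Gamma_0(N)\to(\Z/M\Z)^\times$ with kernel $\Gamma_{N,M}$. Your direct count is more self-contained (it does not cite the $\Gamma_0(N)$ index as input) but requires the order of $\SL_2(\Z/N\Z)$ and the surjectivity of $(\Z/N\Z)^\times\to(\Z/M\Z)^\times$; the paper's approach is shorter because it outsources the $\Gamma_0(N)$ computation. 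Either way the substance is the same, and your reduction of the $\Gamma_1(M)$ conditions to the single congruence $a\equiv 1\pmod M$ (using $ad\equiv 1\pmod M$ when $M\mid N\mid c$) is precisely the point underlying the paper's claim that the kernel of $\Gamma_0(N)\to(\Z/M\Z)^\times$ is $\Gamma_{N,M}$.
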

\begin{proof}
We write 
\begin{equation*}
\left[\SL_2(\Z):\Gamma_{N,M}\right]=\left[\SL_2(\Z):\Gamma_0(N)\right]\left[\Gamma_0(N):\Gamma_{N,M}\right].
\end{equation*}
We have, by \cite[Proposition 1.7]{OnoBook},
\begin{equation*}
\left[\SL_2(\Z):\Gamma_0(N)\right]=N\prod_{p\mid N} \left(1+\tfrac{1}{p}\right).
\end{equation*}
Since $M\mid N$, there is a natural surjective homomorphism from $\Gamma_0(N)$ to $(\Z/M\Z)^{\times}$ with kernel $\Gamma_{N,M}$, from which we conclude that
\[
\left[\Gamma_0(N):\Gamma_{N,M}\right]=\varphi(M).\qedhere
\]
\end{proof}

\subsection{Operators on non-holomorphic modular forms}
For some of the theta functions, we directly prove that they match generating functions for certain class numbers by showing that the generating functions are modular forms and then use Lemma \ref{lem:valenceformula}. For this, we require some basic facts about operators on non-holomorphic modular forms. 
For $d , M \in \N$, $m \in \Z$, and $\chi$ a (real) character the operators on  
$f(\tau) = \sum_{n\in\Z}  c_{f,v}(n) q^n$ are defined as
\begin{align*}
f\big|U_d(\tau)&:=\sum_{n\in\Z} c_{f,\tfrac{v}{d}} \left(dn\right)q^n,
&f\big|V_d(\tau)&:=f(d\tau)=\sum_{n\in\Z} c_{f,dv}(n)q^{dn},\\
f\big|S_{M,m}(\tau)&:=\sum_{ n\equiv m\pmod{M}}c_{f,v}(n)q^n.& 
\end{align*}
The \begin{it}radical\end{it} for $n\in\N$ is
$
\operatorname{rad}(n):=\prod_{p\mid n}p.
$
For a discriminant $D$, let $\chi_D(\cdot):=\left(\tfrac{D}{\cdot}\right)$.
The proof of the following lemma is standard, however, for the reader's convenience, we give a proof.

\begin{lemma}\label{lem:operators}
Suppose that $f:\H\to\C$ satisfies modularity of weight $k\in\frac{1}{2}+\Z$ on $\Gamma_0(N)$ $(4 \mid N)$ with character $\chi$ of conductor $N_{\chi}$. 
\noindent

\begin{enumerate}[leftmargin=*, label=\rm(\arabic*)]
\item The function $f|U_d$ satisfies modularity of weight $k$ on $\Gamma_0(4\lcm(\frac{N}{4},\operatorname{rad}(d)))$ with character $\chi\chi_{4d}$. 
Moreover, if $f=g|V_d$, with $g$ satisfying modularity of weight $k$ on $\Gamma_0(\tfrac{N}{d})$ with character $\chi\chi_{4d}$, then $f|U_d$ satisfies modularity of weight $k$ on $\Gamma_0(\tfrac{N}{d})$ with character $\chi\chi_{4d}$. 

\item 
Suppose that $h$ satisfies modularity of weight $k$ on $\Gamma_{N,L}$ ($L\in\N$) with character $\chi$. 
For $M\not\equiv 2\pmod{4}$ (resp. $M\equiv 2\pmod{4}$), $h|S_{M,m}$ satisfies modularity of weight $k$ on $\Gamma_{\lcm(N,M^2,N_{\chi}M, ML),\lcm(M,L)}$ (resp.  $\Gamma_{\lcm(N,4M^2,N_{\chi}M,ML),\lcm(M,L)}$) with character $\chi$.  Moreover, if $M\mid 24$ and $M\not\equiv 2\pmod{4}$ (resp. $M\equiv 2\pmod{4}$), then $h|S_{M,m}$ satisfies modularity of weight $k$ on $\Gamma_{\lcm(N,M^2,MN_{\chi},ML),L}$  (resp. $\Gamma_{\lcm(N,4M^2,MN_{\chi},ML),L}$) with character $\chi$.

\item
The function $f|V_d$ satisfies modularity of weight $k$ on $\Gamma_0(Nd)$ with character $\chi\chi_{4d}$.
\item 
If $d,M\in\N$, $m\in\Z$, and $M=M_1M_2$ with $\gcd(M_1,d)=1$ and $M_2|d$, then 
\[
f\big|V_d\big|S_{M,m}= \begin{cases} f\big|S_{M_1,m\overline{d}}\big|V_d&\text{if }M_2\mid m,\\ 
0 &\text{otherwise},
\end{cases}
\]
where 
 $\overline{d}$ denotes the inverse of $d\pmod{M_1}$. If $d\mid M$, then 
\[
f\big|V_d\big|S_{M,m}= \begin{cases} f\big|S_{\frac{M}{d},\frac{m}{d}}\big|V_d&\text{if }d\mid m,\\ 
0 &\text{otherwise}.
\end{cases}
\]
For $a,d,\delta\in\N$ with $\gcd(d,\delta)=1$, we have
\[
f\big|V_{ad}\big|U_{a\delta}=f\big|U_{\delta}\big|V_{d}.
\]
For $M_1$ and $M_2$ coprime and $m_1,m_2\in\Z$, we have
\begin{equation*}
f\big|S_{M_1,m_1}\big|S_{M_2,m_2}= f\big|S_{M_2,m_2}\big|S_{M_1,m_1}, \quad 
f\big|U_{M_1}\big|S_{M_2,m_2}= f\big|S_{M_2,M_1m_2}\big|U_{M_1}.
\end{equation*}
\end{enumerate}
\end{lemma}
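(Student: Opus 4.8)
The plan is to establish each modularity claim by tracking how the relevant operator interacts with the weight-$k$ slash action of congruence subgroups. The operators $U_d$, $V_d$, and $S_{M,m}$ can all be written as finite linear combinations of slash operators composed with scaling maps $\tau \mapsto \tfrac{\tau+j}{d}$ or $\tau \mapsto d\tau$, so the strategy throughout is: (i) express the operator via such a decomposition, (ii) conjugate a matrix $\gamma$ in the target group through the scaling to land back in a group on which $f$ (or $g$, $h$) is known to be modular, and (iii) bookkeep the resulting automorphy factors, which in half-integral weight contribute the Kronecker-symbol twist $\chi_{4d}$ through the standard transformation of the theta multiplier. For part~(1), the first statement is the classical computation that $f|U_d$ is modular on $\Gamma_0(4\lcm(\tfrac N4,\operatorname{rad}(d)))$ with character $\chi\chi_{4d}$ — I would cite or reproduce the argument from the standard references (e.g.\ \cite{OnoBook}) adapted to half-integral weight, noting that only $\operatorname{rad}(d)$ (not $d$) enters the level because $U_d U_p = U_{dp}$-type relations collapse repeated prime factors. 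The refinement when $f = g|V_d$ is the observation that $U_d V_d = \mathrm{id}$, so $f|U_d = g$, and the level bound is then just that of $g$.

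\textbf{The $S$-operator (part (2)).} This is where the main work lies and where I expect the chief obstacle. The projection $h|S_{M,m}$ onto a fixed residue class mod $M$ is a linear combination $h|S_{M,m} = \tfrac1M \sum_{j \bmod M} \zeta_M^{-jm} h(\tau + \tfrac jM)\cdot(\text{scaling})$ — more precisely it is built from $h|V_M|U_M$ composed with additive shifts, or equivalently via $h|S_{M,m} = \tfrac{1}{M}\sum_{j=0}^{M-1} e^{-2\pi i jm/M}\, h\big|\left(\begin{smallmatrix}1 & j/M \\ 0 & 1\end{smallmatrix}\right)$. To control the group on which the average is modular, one conjugates $\gamma \in \Gamma_{N',L'}$ (the claimed target) by $\left(\begin{smallmatrix}1 & j/M \\ 0 & 1\end{smallmatrix}\right)$, equivalently by $\left(\begin{smallmatrix}M & j \\ 0 & M\end{smallmatrix}\right)$ up to scalars, and demands the conjugate lie in $\Gamma_{N,L}$. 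The conditions that emerge — the level must be divisible by $M^2$ (to clear the $j/M$ in the lower-left after conjugation), by $ML$ and $MN_\chi$ (to keep the $\Gamma_1$-part and the character well-defined), and by $N$ — give exactly $\lcm(N, M^2, N_\chi M, ML)$; the $\Gamma_1$-level becomes $\lcm(M,L)$ because the shifts mix residues mod $M$ with residues mod $L$. The $M \equiv 2 \pmod 4$ case needs the extra factor $4$ because in half-integral weight the relevant congruence subgroup must have level divisible by $4$ and $M^2 \equiv 4 \pmod{16}$ is not itself divisible by $4$ in the needed way once one also tracks the theta multiplier's behaviour; carefully, $M^2$ with $M\equiv 2 \pmod 4$ gives $4M^2$ as the safe level. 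The improvement when $M \mid 24$ (replacing $M^2$-divisibility of the $\Gamma_1$-part by merely $L$) exploits that for such small $M$ the additive shift by $j/M$ can be absorbed using that $\left(\begin{smallmatrix}1 & 1 \\ 0 & 1\end{smallmatrix}\right) \in \Gamma$ together with an explicit matrix identity; this is the standard trick behind eta-quotient level optimizations and I would invoke it in that form.

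\textbf{Parts (3) and (4).} Part~(3) is the easiest: $f|V_d(\tau) = f(d\tau) = f\big|\left(\begin{smallmatrix}d & 0 \\ 0 & 1\end{smallmatrix}\right)(\tau)$ up to a constant, and conjugating $\left(\begin{smallmatrix}a & b \\ c & d'\end{smallmatrix}\right) \in \Gamma_0(Nd)$ by $\left(\begin{smallmatrix}d & 0 \\ 0 & 1\end{smallmatrix}\right)$ yields $\left(\begin{smallmatrix}a & bd \\ c/d & d'\end{smallmatrix}\right) \in \Gamma_0(N)$, with the half-integral automorphy factor producing the $\chi_{4d}$ twist exactly as in part~(1). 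Part~(4) consists of several operator identities which I would verify directly at the level of $q$-expansions: each is an elementary manipulation of which Fourier coefficients survive and with what $q$-powers. For instance $f|V_d|S_{M,m}$ keeps coefficients $c_{f,dv}(n)$ with $dn \equiv m \pmod M$; writing $M = M_1M_2$ with $\gcd(M_1,d)=1$, $M_2 \mid d$, this forces $M_2 \mid m$ and $n \equiv m\overline d \pmod{M_1}$, giving $f|S_{M_1, m\overline d}|V_d$; the $d \mid M$ case and the $V_{ad}U_{a\delta} = U_\delta V_d$ and commutation relations are analogous coefficient bookkeeping. None of part~(4) requires modularity — these are formal identities on $q$-series — so they can be dispatched quickly. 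Overall the only genuinely delicate step is pinning down the precise levels in part~(2), and in particular justifying the $M\equiv 2 \pmod 4$ correction and the $M \mid 24$ optimization; everything else is a routine (if notation-heavy) chase through conjugation of matrices and half-integral-weight automorphy factors.
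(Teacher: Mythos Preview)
Your approach matches the paper's in every part: cite the standard $U_d$ and $V_d$ results for (1) and (3), write $S_{M,m}$ as an exponential sum of additive shifts and conjugate through them for (2), and verify (4) by direct coefficient comparison. The one place your write-up needs sharpening is the two special cases in (2).

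For the $M\equiv 2\pmod 4$ correction, your stated reason is off: $M^2$ \emph{is} divisible by $4$ in that case. The actual constraint is that after conjugation one must have $d+\lambda c/M\equiv d\pmod 4$ so that $\varepsilon_{d+\lambda c/M}=\varepsilon_d$ (and similarly for the Kronecker symbol), which requires $4M\mid c$. When $M$ is odd or $4\mid M$, the condition $M^2\mid c$ together with $4\mid N$ already forces $4M\mid c$; when $M\equiv 2\pmod 4$ it only gives $2M\mid c$, whence the bump to $4M^2$.

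For the $M\mid 24$ improvement, the mechanism is not an absorption via $\left(\begin{smallmatrix}1&1\\0&1\end{smallmatrix}\right)$ but the arithmetic fact that every unit modulo a divisor of $24$ squares to $1$. For $\gamma=\left(\begin{smallmatrix}a&b\\c&d\end{smallmatrix}\right)\in\Gamma_0(M)$ one has $ad\equiv 1\pmod M$, hence $d\equiv a^{-1}\equiv a\pmod M$; the defining congruence $a\lambda\equiv d\ell\pmod M$ then forces $\lambda\equiv\ell$, so the sum over shifts is merely permuted and no $\Gamma_1(M)$ condition is needed. This is indeed the same phenomenon underlying eta-quotient level bounds, but you should name it explicitly rather than invoke it by analogy.
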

\begin{proof}
(1) For the first 
claim
 see for example \cite[Proposition 3.7 (2)]{OnoBook}, where for $d=\prod_{p\mid n} p^{a_p}$ we write $U_d=\prod_{p\mid n} U_p^{a_p}$ and then apply the statement iteratively for $U_p$. The second statement follows directly from the fact that $V_{d}U_d$ is the identity.

\noindent
(2) We start by rewriting
\begin{equation}\label{eqn:fsieve}
h\big|S_{M,m}(\tau) = \tfrac{1}{M}\sum_{\ell\pmod{M}}e^{\frac{2\pi i m \ell }{M}}h\left(\tau - \tfrac{\ell}{M}\right).
\end{equation}
For $\ell\in\Z$ and $\gamma=\left(\begin{smallmatrix}a&b\\ c&d\end{smallmatrix}\right)\in \Gamma_{\lcm(N,M^2,ML),L}$, we choose $\lambda\in\Z$ such that $a\lambda\equiv d\ell \pmod{M}$ and compute, using the modularity of $h$,
\begin{equation*}
h\left( \gamma\tau -\tfrac{\ell}{M}\right) =\left(\tfrac{c}{d+\lambda\tfrac{c}{M}}\right) \varepsilon_{d+\lambda\tfrac{c}{M}}^{-2k} \chi\left(d+\lambda\tfrac{c}{M}\right) 
(c\tau+d)^{k}h\left(\tau-\tfrac{\lambda}{M}\right).
\end{equation*}
Additionally assuming that $\gamma\in\Gamma_0(N_{\chi}M)$ (resp. $\gamma\in \Gamma_{0}(\lcm(4M^2,N_{\chi}M))$ if $M\not\equiv 2\pmod{4}$ (resp. $M\equiv 2\pmod{4}$), a short calculation yields
\begin{equation*}
h\left( \gamma\tau -\tfrac{\ell}{M}\right) =\left(\tfrac{c}{d}\right)\varepsilon_{d}^{-2k} \chi(d)(c\tau+d)^{k}h\left(\tau-\tfrac{\lambda}{M}\right).
\end{equation*}
Note that if $\gamma \in \Gamma_1(M)$, then we have $\lambda=\ell$. Plugging back into \eqref{eqn:fsieve} yields the first claim.

Finally, assume that $M\mid 24$. Since $\gamma\in\Gamma_0(M)$, $a$ is invertible $\pmod M$ and $a\equiv d\pmod{M}$.  Hence $\lambda=\ell$, yielding the claimed modularity in this case as well.

\noindent
(3) This is well-known; see for example \cite[Proposition 3.7 (1)]{OnoBook}.

\noindent
(4) For the first statement, we evaluate using the Chinese Remainder Theorem 
\begin{equation*}
f\big|V_d\big|S_{M,m} = f\big|V_d\big|S_{M_2,m}\big|S_{M_1,m}.
\end{equation*}
Since the $n$-th coefficient of $f\big|V_d$ vanishes unless $d\mid n$ and $M_2\mid d$, we obtain
\begin{equation*}
f\big|V_d\big|S_{M_2,m} = \begin{cases}
0\quad&\textnormal{if } M_2\nmid m,\\
f\big|V_d\quad&\textnormal{if }M_2\mid m.
\end{cases}
\end{equation*}
The first claim 
then follows by
\begin{align*}
f \big|V_d|S_{M_1,m}  = f\big|S_{M_1,m\overline d} \big|V_d,
\end{align*}
which one obtains by comparing the coefficients of the Fourier expansions on each side.

Next suppose that $d\mid M$ and write 
\begin{equation*}
f\big|V_d\big|S_{M,m}(\tau)
 = \sum_{dn\equiv m\pmod{d \frac{M}{d}}} c_{f,dv}(n)q^{dn}.
\end{equation*}
If $d\nmid m$, then the congruence is not solvable and hence $f\big|V_d\big|S_{M,m}=0$. If $d\mid m$, then the second claim follows from
\begin{equation*}
\sum_{dn\equiv d \frac{m}{d}\pmod{d \frac{M}{d}}} c_{f,dv}(n)q^{dn}= \sum_{n\equiv \frac{m}{d}\pmod{\frac{M}{d}}} c_{f,dv}(n)q^{dn} = f\big|S_{\frac{M}{d},\frac{m}{d}}\big|V_d.
\end{equation*}

We next compute the commutator relations between the $U$ and $V$ operators. Noting that $V_aU_a$ is the identity, for $\gcd(d,\delta)=1$ we have 
\begin{equation*}
f\big|V_{ad}\big|U_{a\delta}(\tau)=f\big|V_d \big|U_\delta(\tau)=\sum_{d|n} c_{f, \frac{dv}{\delta}}\left(\delta\tfrac{n}{d}\right)q^n=f\big|U_\delta V_d(\tau).
\end{equation*}
The final statement follows by the evaluation
\[
f\big|U_{M_1}|S_{M_2,m_2}(\tau)= \sum_{n\equiv m_2\pmod{M_2}} c_{f,\frac{v}{M_1}}(M_1n)q^n=f\big|S_{M_2,M_1m_2}|U_{M_1}(\tau).\qedhere
\]
\end{proof}

\subsection{Theta functions}\label{sec:genus}
Let for $\bm{a} \in \N^3$ 
\[
\ell=\ell_{\bm{a}}:=\lcm(\bm{a}) \qquad\text{ and } \qquad
\mathcal{D}_{\bm a}:=a_1a_2a_3.
\]
Using \cite[Proposition 2.1]{Shimura}, we obtain the following modularity of $\Theta_{\bm{a},\bm{h},N}$ from \eqref{3The}.
\begin{lemma}\label{lem:thetamodular}
The theta function $\Theta_{\bm{a},\bm{h},N}$ is a modular form of weight $\frac{3}{2}$ on $\Gamma_{4N^2\ell,N}$ with character $\chi_{4\mathcal{D}_{\bm a}}$. 
\end{lemma}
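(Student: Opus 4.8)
The plan is to reduce the modularity of $\Theta_{\bm{a},\bm{h},N}$ to the modularity of its building blocks $\vartheta_{h_j,N}(2Na_j\tau)$ and then to the modularity of the basic one-variable theta series $\vartheta_{h,N}(\tau)$. First I would record the transformation behavior of $\vartheta_{h,N}$. By \cite[Proposition 2.1]{Shimura} (applied with the obvious rank-one quadratic form $m\mapsto m^2$ and the congruence condition $m\equiv h\pmod N$), the function $\vartheta_{h,N}(\tau)=\sum_{m\equiv h\ (N)}q^{m^2/(2N)}$ is a modular form of weight $\frac12$ on some $\Gamma_0(4N^2)\cap\Gamma_1(N)=\Gamma_{4N^2,N}$, with a quadratic character; one needs to check that the level is $4N^2$ and not merely a divisor, which comes from the denominator $2N$ in the exponent together with the shift by $h$, and that invariance under $\Gamma_1(N)$ (rather than only $\Gamma_0(N)$) holds because the shift $h$ is preserved exactly by matrices with $a\equiv d\equiv 1\pmod N$.

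Next I would track what the scaling $\tau\mapsto 2Na_j\tau$ does. Writing $\vartheta_{h_j,N}(2Na_j\tau)=\vartheta_{h_j,N}\big|V_{2Na_j}(\tau)$ and applying Lemma \ref{lem:operators}(3) (the $V_d$ statement) with $d=2Na_j$, this function satisfies modularity of weight $\frac12$ on $\Gamma_0(4N^2\cdot 2Na_j)$ with character multiplied by $\chi_{4\cdot 2Na_j}=\chi_{8Na_j}$; combined with the $\Gamma_1(N)$-invariance one gets modularity on $\Gamma_{8N^3a_j,N}$ or a slightly larger-level group. Since $\Theta_{\bm{a},\bm{h},N}=\prod_{j=1}^3 \vartheta_{h_j,N}(2Na_j\tau)$, I would then multiply the three factors: the weight adds to $\frac32$, the level becomes the common congruence subgroup on which all three factors transform, and the character becomes the product of the three individual characters.

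The main bookkeeping obstacle is getting the level down to exactly $\Gamma_{4N^2\ell,N}$ (with $\ell=\lcm(\bm a)$) rather than the naive $\Gamma_0(8N^3\mathcal{D}_{\bm a})$, and pinning the character down to exactly $\chi_{4\mathcal{D}_{\bm a}}$. For the character, note $\chi_{8Na_1}\chi_{8Na_2}\chi_{8Na_3}=\chi_{(8N)^3 a_1a_2a_3}$, and since a Kronecker symbol $\chi_m$ depends only on $m$ up to squares, $(8N)^3\mathcal{D}_{\bm a}$ is, modulo squares, $2\mathcal{D}_{\bm a}$ or $\mathcal{D}_{\bm a}$ depending on parities — so the clean way is to invoke the sharper form of \cite[Proposition 2.1]{Shimura} directly on the rank-three form $Q_{\bm a}$ with the congruence condition $\bm x\equiv\bm h\pmod N$, which outputs weight $\frac32$, level dividing $4N^2\cdot(\text{level of }Q_{\bm a})$, and character $\chi_{4\det}$ with $\det$ proportional to $\mathcal{D}_{\bm a}$; the level of $Q_{\bm a}$ (i.e.\ the level of the lattice) is $\ell_{\bm a}$ up to a power of $2$ absorbed into the $4N^2$. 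I would therefore not build $\Theta_{\bm{a},\bm{h},N}$ factor-by-factor for the final bound but instead apply Shimura's proposition once to the whole ternary form, using the factorization \eqref{3The} only to identify the quadratic form and the congruence lattice; the per-factor computation above then serves merely as a sanity check that the weight is $\frac32$ and the character is a quadratic character in $\chi_{4\mathcal{D}_{\bm a}}$'s genus. The remaining work is the routine verification that Shimura's hypotheses apply and that his level and character specialize to the stated $\Gamma_{4N^2\ell,N}$ and $\chi_{4\mathcal D_{\bm a}}$.
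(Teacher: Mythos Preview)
Your proposal is correct and ultimately lands on exactly the paper's approach: the paper's entire proof is the single sentence ``Using \cite[Proposition 2.1]{Shimura}, we obtain the following modularity of $\Theta_{\bm{a},\bm{h},N}$ from \eqref{3The},'' i.e.\ a direct application of Shimura's proposition to the ternary form with congruence condition. Your initial factor-by-factor route via $V_{2Na_j}$ is a reasonable sanity check but, as you yourself note, makes the level and character bookkeeping needlessly awkward; the paper skips this detour entirely and goes straight to the single application of Shimura's result, which is also where you end up.
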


\subsection{Modularity of class number generating functions}
Recall that Zagier \cite{ZagierClassNum} (see also \cite[Theorem 2]{HirzebruchZagier}) has proven that the class number generating function 
\[
\mathcal{H}(\tau):=\sum_{\substack{D\geq 0\\ D\equiv 0,3\pmod{4}}} H(D) q^D
\]
satisfies modular properties. To state his result, define for $x>0$ and $\alpha \in \R$ the {\it incomplete gamma function } $\Gamma(\alpha,x):=\int_x^\infty e^{-t} t^{\alpha-1} dt$.
\begin{lemma}\label{lem:Hcomplete}
The function  
\begin{equation*}
\widehat{\mathcal{H}}(\tau):=\mathcal{H}(\tau) +\frac{1}{8\pi \sqrt{v}}+ \frac{1}{4\sqrt{\pi}}\sum_{n\geq 1}n\Gamma\left(-\tfrac12, 4\pi n^2 v\right)q^{-n^2}
\end{equation*}
transforms like a modular form of weight $\tfrac 32$ on $\Gamma_0(4)$.
\end{lemma}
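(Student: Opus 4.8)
As noted above this is due to Zagier; the standard proof realizes $\widehat{\mathcal H}$ as a Hecke-regularized non-holomorphic Eisenstein series of weight $\tfrac32$ on $\Gamma_0(4)$, and I sketch it now. Write $\theta(\tau):=\sum_{n\in\Z}q^{n^2}$ for the standard theta function, a modular form of weight $\tfrac12$ on $\Gamma_0(4)$, and let $j(\gamma,\tau):=\theta(\gamma\tau)/\theta(\tau)$ be its automorphy cocycle, so that $|j(\gamma,\tau)|=|c\tau+d|^{1/2}$ for $\gamma=\left(\begin{smallmatrix}a&b\\ c&d\end{smallmatrix}\right)$. For $\tau=u+iv\in\H$ and $s\in\C$ with $\operatorname{Re}(s)>\tfrac14$, set
\[
E(\tau,s):=\sum_{\gamma\in\Gamma_\infty\backslash\Gamma_0(4)}j(\gamma,\tau)^{-3}\left(\frac{v}{|c\tau+d|^2}\right)^{s},\qquad\Gamma_\infty:=\left\{\pm\left(\begin{smallmatrix}1&n\\0&1\end{smallmatrix}\right):n\in\Z\right\}.
\]
The series converges absolutely and locally uniformly for $\operatorname{Re}(s)>\tfrac14$, by comparison with $\sum_{c,d}|c\tau+d|^{-\frac32-2\operatorname{Re}(s)}$, and the cocycle relation for $j$ shows that for each such $s$ the function $E(\cdot,s)$ transforms under $\Gamma_0(4)$ with the same automorphy factor $j(\cdot,\cdot)^3$ as $\theta^3$, i.e.\ satisfies modularity of weight $\tfrac32$ on $\Gamma_0(4)$.

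The next step is the Fourier expansion. Separating the identity coset (which contributes $v^s$), grouping the remaining cosets by $c$ with $4\mid c$ and by $d$ modulo $c$, and applying Poisson summation in $d$, one obtains for each frequency $D\in\Z$ a coefficient that factors as an explicit quotient of $\Gamma$-functions and Whittaker functions in $(s,v)$ times a Dirichlet series in $D$ assembled from quadratic Gauss sums. That Dirichlet series has an Euler product whose analytic continuation at the relevant point equals $L(1,\chi_{D_0})$, where $D_0$ is the fundamental discriminant dividing $D$, times elementary local factors at the primes dividing $D/D_0$; by the Dirichlet class number formula this product collapses to the Hurwitz class number $H(|D|)$, the $\tfrac12$ and $\tfrac13$ weights included. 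The Whittaker factor contributes two pieces: the exponentially decaying one survives for $D>0$ and yields $H(D)q^D$, while the $D=0$ term together with the companion Whittaker pieces for $D<0$ yield, after using $\Gamma(-\tfrac12,x)=\tfrac{2}{\sqrt x}e^{-x}-2\sqrt\pi\operatorname{erfc}(\sqrt x)$ and fixing the overall normalization so that the holomorphic part of the $q^0$-coefficient is $H(0)=-\tfrac1{12}$, precisely the non-holomorphic terms $\tfrac1{8\pi\sqrt v}$ and $\tfrac1{4\sqrt\pi}\sum_{n\ge1}n\,\Gamma(-\tfrac12,4\pi n^2v)q^{-n^2}$.

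With the expansion in hand I would carry out the analytic continuation to $s=0$: each Fourier coefficient is a ratio of $\Gamma$-functions times a Dirichlet series with Euler product, so it continues meromorphically in $s$; one checks there is no pole at $s=0$, and a uniform-in-$D$ bound justifies passing to the limit $s\to0$ term by term. Since modularity of weight $\tfrac32$ on $\Gamma_0(4)$ is preserved under locally uniform limits in $\tau$, the limit function — which by the Fourier computation is the stated normalization of $\widehat{\mathcal H}$ — transforms as claimed.

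The main obstacle is the evaluation at $s=0$ itself. Because $\tfrac32<2$, no convergent holomorphic Eisenstein series of this weight exists, and correspondingly $\mathcal H$ on its own is genuinely non-modular; the entire content of the lemma is that the Hecke limit at $s=0$ exists and that its deviation from holomorphy is exactly the two correction terms. Making this precise means controlling the continuation at $s=0$ (no pole; the companion Whittaker pieces genuinely nonvanishing) rather than naively setting $s=0$ in a divergent sum. A secondary, bookkeeping, difficulty is matching the local Euler factors at the prime $2$ and at primes dividing $D$ to even order with the exact definition of $H(|D|)$, namely the weights $\tfrac12$, $\tfrac13$ from extra automorphisms and the contribution of imprimitive forms. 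A cleaner alternative, if one is willing to quote harmonic Maass form theory, is to check directly that $\widehat{\mathcal H}$ is a harmonic Maass form of weight $\tfrac32$ whose image under $\xi_{3/2}$ is a constant multiple of $\theta$; its transformation law then follows from that of $\theta$, although identifying the holomorphic part with $\mathcal H$ still amounts to essentially the same computation (equivalently, to the Hurwitz--Kronecker class number relations).
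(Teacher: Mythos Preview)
The paper does not give its own proof of this lemma; it simply records the result as due to Zagier \cite{ZagierClassNum} (see also \cite[Theorem~2]{HirzebruchZagier}) and uses it as a black box. Your sketch accurately outlines Zagier's original argument via the Hecke-regularized Eisenstein series of weight $\tfrac32$, so in that sense you have supplied more than the paper does. Since the authors treat this as a citation rather than something to be proved, there is nothing further to compare.
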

The following lemma turns out to be useful for our purpose. 

\begin{lemma}\label{lem:Hslash}
	For $\ell_1,\ell_2\in\N$, with $\gcd(\ell_1,\ell_2)=1$ and $\ell_2$ squarefree,
 we have that
	\begin{equation*}
	\mathcal{H}_{\ell_1,\ell_2}:= \mathcal H\big|(U_{\ell_1\ell_2}-\ell_2U_{\ell_1}V_{\ell_2})
	\end{equation*}
	is a modular form of weight $\frac32$ on $\Gamma_0(4\operatorname{rad}(\ell_1)\ell_2)$ with character $\chi_{4\ell_1\ell_2}$.
\end{lemma}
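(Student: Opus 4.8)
The plan is to deduce the modularity of $\mathcal{H}_{\ell_1,\ell_2}$ from Lemma~\ref{lem:Hcomplete} by first tracking the non-holomorphic completion through the operators $U_{\ell_1\ell_2}$, $U_{\ell_1}$, and $V_{\ell_2}$, and then showing that the non-holomorphic contributions cancel in the specific combination $U_{\ell_1\ell_2}-\ell_2 U_{\ell_1}V_{\ell_2}$, leaving a function that transforms like a holomorphic modular form and is holomorphic at the cusps. Concretely, I would write $\widehat{\mathcal{H}}=\mathcal{H}+\mathcal{H}^-$, where $\mathcal{H}^-(\tau)=\frac{1}{8\pi\sqrt v}+\frac{1}{4\sqrt\pi}\sum_{n\geq1}n\Gamma(-\tfrac12,4\pi n^2 v)q^{-n^2}$ collects the error terms, and apply the operators to $\widehat{\mathcal{H}}$ using Lemma~\ref{lem:operators}. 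Parts (1) and (3) of Lemma~\ref{lem:operators} give that $\widehat{\mathcal{H}}|U_{\ell_1\ell_2}$ and $\widehat{\mathcal{H}}|U_{\ell_1}V_{\ell_2}$ each transform like modular forms of weight $\tfrac32$ on the group $\Gamma_0(4\operatorname{rad}(\ell_1)\ell_2)$ with character $\chi_{4\ell_1\ell_2}$: indeed $U_{\ell_1\ell_2}$ lands on $\Gamma_0(4\lcm(1,\operatorname{rad}(\ell_1\ell_2)))=\Gamma_0(4\operatorname{rad}(\ell_1)\ell_2)$ (using $\ell_2$ squarefree and $\gcd(\ell_1,\ell_2)=1$) with character $\chi_4\chi_{4\ell_1\ell_2}$ suitably simplified, and $U_{\ell_1}$ followed by $V_{\ell_2}$ gives the level $4\operatorname{rad}(\ell_1)\cdot\ell_2$ with the matching character. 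Since both pieces transform on the same group with the same character, so does $\mathcal{H}_{\ell_1,\ell_2}=\widehat{\mathcal{H}}|U_{\ell_1\ell_2}-\ell_2\,\widehat{\mathcal{H}}|U_{\ell_1}V_{\ell_2}$ minus the non-holomorphic correction.

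The heart of the argument is the cancellation of the non-holomorphic part. Applying $U_{\ell_1\ell_2}$ to $\mathcal{H}^-$ picks out the coefficients of $q^{-n^2}$ at indices divisible by $\ell_1\ell_2$; since $\ell_2$ is squarefree, $\ell_1\ell_2\mid n^2$ forces $\ell_2\mid n$ and $\operatorname{rad}(\ell_1)\mid n$, and one computes that the leading term $\frac{1}{8\pi\sqrt v}$ interacts with the $v\mapsto v/(\ell_1\ell_2)$ rescaling in the definition of $U_d$. Applying $\ell_2 U_{\ell_1}V_{\ell_2}$ to $\mathcal{H}^-$: the $V_{\ell_2}$ sends $q^{-n^2}\mapsto q^{-\ell_2 n^2}$ and rescales $v\mapsto \ell_2 v$, then $U_{\ell_1}$ extracts indices divisible by $\ell_1$. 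The claim is that in the difference, term by term in $n$, the incomplete-gamma contributions and the $v^{-1/2}$ contributions match exactly — this is a direct but slightly delicate bookkeeping computation with the substitutions $v\to v/(\ell_1\ell_2)$ versus $v\to \ell_2 v/\ell_1$ inside $\Gamma(-\tfrac12,4\pi(\cdot)v)$, together with the factor $\ell_2$ out front. I expect this cancellation to be the main obstacle: one must carefully match which $n$ survive each operator and verify that the arguments of $\Gamma(-\tfrac12,\cdot)$ and the powers of $q$ line up, using squarefreeness of $\ell_2$ and coprimality in an essential way. Once the cancellation is established, $\mathcal{H}_{\ell_1,\ell_2}$ equals a function transforming like a weight-$\tfrac32$ modular form on $\Gamma_0(4\operatorname{rad}(\ell_1)\ell_2)$ with character $\chi_{4\ell_1\ell_2}$ that is moreover holomorphic on $\H$.

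It remains to check holomorphy at the cusps, i.e.\ that $\mathcal{H}_{\ell_1,\ell_2}$ has no negative Fourier exponents in any cusp expansion. On $\H$ it is holomorphic because $\mathcal{H}$ is and the operators preserve holomorphy; at the cusps, since $\widehat{\mathcal{H}}$ transforms like a modular form of moderate growth on $\Gamma_0(4)$ and the operators $U_d,V_d$ send such functions to functions of moderate growth, $\mathcal{H}_{\ell_1,\ell_2}$ has at worst polynomial growth in $v$ at every cusp; combined with holomorphy and the fact that the non-holomorphic terms have been removed, the cusp expansions have only non-negative integer (or, after scaling by the cusp width, non-negative) exponents. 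Hence $\mathcal{H}_{\ell_1,\ell_2}$ is a genuine holomorphic modular form of weight $\tfrac32$ on $\Gamma_0(4\operatorname{rad}(\ell_1)\ell_2)$ with character $\chi_{4\ell_1\ell_2}$, as claimed. $\qed$
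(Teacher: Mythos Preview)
Your proposal is correct and follows essentially the same approach as the paper: complete $\mathcal{H}$ to $\widehat{\mathcal{H}}$, invoke Lemma~\ref{lem:operators} (1) and (3) to get modularity of $\widehat{\mathcal{H}}|(U_{\ell_1\ell_2}-\ell_2 U_{\ell_1}V_{\ell_2})$ on $\Gamma_0(4\operatorname{rad}(\ell_1)\ell_2)$ with character $\chi_{4\ell_1\ell_2}$, and then verify by direct computation that the non-holomorphic part $\widehat{\mathcal{H}}^-$ is annihilated by this operator combination. The paper organizes the cancellation step slightly more explicitly by writing $\ell_1=\lambda_1\mu_1^2$ with $\lambda_1$ squarefree, so that the surviving exponents on both sides are $-\lambda_1\ell_2 m^2$, and then checks the identity $c_{v/(\ell_1\ell_2)}(\lambda_1\ell_2\mu_1 m)=\ell_2\, c_{\ell_2 v/\ell_1}(\lambda_1\mu_1 m)$ from the explicit form of $c_v(n)$; this is exactly the ``delicate bookkeeping'' you anticipate, and it goes through without difficulty.
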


\begin{proof}
	We first prove modularity of $\widehat{\mathcal H}_{\ell_1,\ell_2}:=\widehat{\mathcal H}\big|(U_{\ell_1\ell_2}-\ell_2U_{\ell_1}V_2)$.
	 By Lemma \ref{lem:Hcomplete}, $\widehat{\mathcal H}$ satisfies modularity of weight $\frac32$ on $\Gamma_0(4)$. Hence, by Lemma \ref{lem:operators} (1) and (3), $\widehat{\mathcal H}_{\ell_1,\ell_2}$ is modular of weight $\frac32$ on $\Gamma_0(4\operatorname{rad}(\ell_1)\ell_2)$ with character $\chi_{4\ell_1\ell_2}$.
	
	We next show that this function does not have a non-holomorphic part. 
 For this write
	\begin{equation*}
	\widehat{\mathcal H}^-(\tau)
:=\widehat{\mathcal{H}}(\tau)-\mathcal{H}(\tau) 
=: \sum_{n\geq 0} c_{v}(n)q^{-n^2} .
	\end{equation*}
	 We compute, for $\ell\in\N$,
	\begin{align*}
\widehat{\mathcal H}^-
\big|U_\ell(\tau) = \sum_{\substack{m\geq 0 \\ \ell |m^2 }} c_{\frac{v}{\ell}}(m) q^{-\frac{m^2}{\ell}}.
	\end{align*}
	Thus writing $\ell_1=\lambda_1\mu_1^2$ with $\lambda_1$ squarefree, we obtain
	\begin{equation*}
\widehat{\mathcal H}^-
\big|(U_{\ell_1\ell_2}-\ell_2U_{\ell_1}V_{\ell_2})(\tau) = \sum_{m\geq 0} c_{\frac{v}{\ell_1\ell_2}}(\lambda_1\ell_2\mu_1 m) q^{-\lambda_1\ell_2m^2} - \ell_2 \sum_{m\geq 0} c_{\frac{\ell_2v}{\ell_1}}(\lambda_1\mu_1 m) q^{-\lambda_1\ell_2m^2}.
	\end{equation*}
Plugging in the explicit formula for $c_v(n)$ from Lemma \ref{lem:Hslash}, the claim follows from the relation
	\begin{equation*}
	c_{\frac{v}{\ell_1\ell_2}}(\lambda_1\ell_2\mu_1m)-\ell_2 c_{\frac{\ell_2v}{\ell_1}}(\lambda_1\mu_1 m) 
 = 0.\qedhere
	\end{equation*}
\end{proof}

\section{Proof of Theorem \ref{thm:congcondition}}\label{sec:congcondition}

In this section we prove Theorem \ref{thm:congcondition}, reducing the task of relating $r_{\bm{a},\bm{h},N}(n)$ to class numbers to finding such formulas for $r_{\bm{a}}(n)$. 

\begin{proof}[Proof of Theorem \ref{thm:congcondition}]
We let $\bm{a}\in\mathcal{C}$ and $(\bm{h},N)\in\mathcal{S}_{\bm{a}}$. For each $m$ and $M$ appearing in the table in Appendix \ref{sec:appendixds}, for $n\equiv m\pmod{M}$ we set $d=d_{\bm{a},\bm{h},N}(n)$ to be the corresponding value of $d$ appearing in that entry from the table.

Recall that for $D\mid M$ and $j\in\Z$ and a translation-invariant function $f$, we have
\[
\sum_{\substack{m\pmod{M}\\ m\equiv j\pmod{D}}} f\big| S_{M,m} = f\big| S_{D,j}.
\]
Thus if $
d_{\bm{a},\bm{h},N}
(n)$ only depends on $n\pmod{D}$, then 
\[
\sum_{\substack{m\pmod{M}\\ m\equiv j\pmod{D}}}
d_{\bm{a},\bm{h},N}
(m)f\big| S_{M,m} =
d_{\bm{a},\bm{h},N}
(j)\sum_{\substack{m\pmod{M}\\ m\equiv j\pmod{D}}} f\big|S_{M,m} =
d_{\bm{a},\bm{h},N}
(j) f\big| S_{D,j}.
\]
This allows us to combine multiple cases by taking the lcm of the moduli of a number of cases that show up in the congruence conditions defining $
d_{\bm{a},\bm{h},N}$. 

Specifically, the claim is equivalent to showing that, for some $M_N$ with $M\mid M_N$,
\begin{equation}\label{eqn:congcondition}
\Theta_{\bm{a},\bm{h},N}=\sum_{m\pmod{M_N}} d_{\bm{a},\bm{h},N}(m) \Theta_{\bm{a}}\Big|S_{M_N,m}.
\end{equation}
We group many choices of $N$ together in each entry in Appendix \ref{sec:appendixvalence} by choosing $M_N$ as the lcm of those $M$ coming from each $(\bm{h},N)\in\mathcal{S}_{\bm{a}}$. For most cases, we prove \eqref{eqn:congcondition} directly via Lemma \ref{lem:valenceformula}. In particular if $
(\bm{a},N)\not\in
\{
((1\ 5\ 8)^T,20)
,\  
((1\ 6\ 16)^T,24)
,\ 
((1\ 9\ 21)^T,21)
,$ $ 
((1\ 9\ 24)^T,12)
,\ 
((1\ 9\ 24)^T,24)
,\ 
((1\ 16\ 24)^T,12)
,\ 
((2\ 3\ 8)^T,24)\}$, then Lemma \ref{lem:thetamodular} implies that the left-hand side of \eqref{eqn:congcondition} is a modular form of weight $\frac{3}{2}$ on  $\Gamma_{4\ell N^2,N}$ with character $\chi_{4d_{\bm{a}}}$. 
 Combining Lemmas \ref{lem:thetamodular} and \ref{lem:operators} (2), the right-hand side of \eqref{eqn:congcondition} is a modular form of weight $\frac{3}{2}$ on $\Gamma_{\lcm(4\ell, M_N^2,M_NN_{\chi_{4d_{\bm{a}}}}),M_N}$ 
with character $\chi_{4d_{\bm{a}}}$. The intersection $\Gamma$ of the two groups appearing from both sides of \eqref{eqn:congcondition} is computed for each case and listed in the row of the table in Appendix \ref{sec:appendixvalence}. Lemmas \ref{lem:valenceformula} and \ref{lem:index} yield a bound $b$ given in the last entry of the row of the table in Appendix \ref{sec:appendixvalence}, such that the identity holds if and only if holds for the first $b$ coefficients. We then check with a computer the identity for the first $b$ coefficients, yielding the claim.
For the remaining cases, $b$ could theoretically be obtained by the same method, but the computer check to verify the claim would require too much time. In these cases, we show the claim by reducing it to one of the cases which is already done.

We start with $\bm{a}=(1\ 5\ 8)^T$ and $N=20$ and reduce the claim to the claim for $N=4$ that is already proven. 
Note that 
for $\bm{h}=(0\ 4\ 5)^T$ (resp. $\bm{h}=(0\ 8\ 5)^T$) we have $n\equiv 5\pmod{25}$ (resp. $n\equiv -5\pmod{25}$)
and a direct calculation shows that 
\[
r_{{\bm{a}},\bm{h},20}(n)=\tfrac{1}{2}r_{{\bm{a}},\bm{h},4}(n).
\]
Reducing $\bm{h}\pmod{4}$ and plugging in the results for $N=4$,
 we obtain the claims.

We next consider the case $\bm{a}=(1\ 6\ 16)^T$ and $N=24$. For each of the choices in the table 
 we may show that we have
\begin{equation*}\label{eqn:1616N=24-2}
r_{{\bm{a}},\bm{h},24}(n)=\tfrac{1}{2}r_{{\bm{a}},\bm{h},8}(n).
\end{equation*}
Reducing $\bm{h}\pmod{8}$ then gives the claim.

We next assume that $\bm{a}=(1\ 9\ 21)^T$ and $N=21$. With $n$ restricted to the cases assumed in 
the table, 
 elementary congruence considerations yield
\begin{equation*}\label{eqn:toshow1921main}
r_{{\bm{a}},\bm{h},21}(n)=\tfrac{1}{2}r_{{\bm{a}},\bm{h},3}(n).
\end{equation*} 
Reducing $\bm h \pmod 3$ then gives the claim using 
the table in Appendix \ref{sec:appendixds}.

We next consider the case $\bm a=(1\ 9\ 24)^T$ and $
12\mid N
$. We
 let $\bm{h}\in\{(12\ 4\ 3)^T,(12\ 4\ 9)^T\}$ be given and 
for $N=12$ we
claim that, noting that $\bm{h}\equiv (0\ 4\ \pm 3)^T\pmod{12}$,
\begin{equation*}
r_{\bm{a},\bm{h},12}(n)=\begin{cases} \tfrac{1}{2} r_{\bm{a},\bm{h},3}(n)&\text{if }n\equiv 24\pmod{32},\\
\tfrac{1}{4}r_{\bm{a},\bm{h},3}(n)&\text{if }n\equiv 40\pmod{64},\\
0&\text{otherwise}.
\end{cases}
\end{equation*}
In terms of theta functions, this claim may be written as
\begin{equation}\label{eqn:19243to12}
\Theta_{\bm{a},\bm{h},12}= \tfrac{1}{2}\Theta_{\bm{a},\bm{h},3}\big|S_{32,24}+\tfrac{1}{4}\Theta_{\bm{a},\bm{h},3}\big|S_{64,40}.
\end{equation}
To prove \eqref{eqn:19243to12}, note that by Lemma \ref{lem:thetamodular}, the left-hand side of \eqref{eqn:19243to12} is a modular form of  weight $\frac{3}{2}$ on $\Gamma_{41472,12}$ with character $\chi_{24}$
 For the right-hand side, we use that $\Theta_{\bm{a},\bm{h},3}$ is a modular form of weight $\frac{3}{2}$ on $\Gamma_{2592,3}$ with character $\chi_{24}$. Applying Lemma \ref{lem:operators} (2) yields a modular form of weight $\frac{3}{2}$ on $\Gamma_{331776,192}$ 
with character $\chi_{24}$.
 By Lemmas \ref{lem:valenceformula} and \ref{lem:index} we need to check the first $5\,308\,416$ coefficients.
To complete the $N=12$ case, 
we note that $\bm{h}\equiv (0\ 1\ 0)^T\pmod{3}$, so we may plug in the identity from the $N=3$ cases from the table.

 We next claim that for $N=24$ we have 
\begin{equation*}
r_{\bm{a},\bm{h},24}(n)=\begin{cases} r_{\bm{a},\bm{h},12}(n)&\text{if }n\equiv 120\pmod{128}\text{ and }\bm{h}=(12\ 4\ 3)^T\\
&\text{ or }n\equiv 56\pmod{128}\text{ and }\bm{h}=(12\ 4\ 9)^T,\\
0&\text{otherwise}.
\end{cases}
\end{equation*}
Note that the claim is equivalent to
\begin{equation}
\label{eqn:192412to24} \Theta_{\bm{a},\bm{h},24}= 
\begin{cases}
\Theta_{\bm{a},\bm{h},12}\big|S_{128,120}&\text{if }\bm{h}=(12\ 4\ 3)^T,\\
\Theta_{\bm{a},\bm{h},12}\big|S_{128,56}&\text{if }\bm{h}=(12\ 4\ 9)^T.\\
\end{cases}
\end{equation}
To prove \eqref{eqn:192412to24}, use that the left-hand side is by 
Lemma \ref{lem:thetamodular} a
 modular form of weight $\frac32$ on $\Gamma_{165888,24}$ with character $\chi_{24}$ which is trivial on that group. For the right-hand side $\Theta_{\bold{a}, \bold{h}, 12}$ is a modular form of weight $\frac32$ on $\Gamma_{41472,12}$ 
with character $\chi_{24}$.
By Lemma \ref{lem:operators} (2), $\Theta_{\bm{a},\bm{h},12}\big|S_{128,m}$ is a modular form of weight $\frac{3}{2}$ on $ \Gamma_{1327104,384}$. By Lemmas \ref{lem:valenceformula} and \ref{lem:index}, we need to check the first $42\,467\,328$ coefficients. 
Plugging in the identities for $N=12$ yields the claim.

For $\bm{a}=(1\ 16\ 24)^T$ and $N=12$,
we can show that
\[
r_{{\bm{a}},\bm{h},12}(n)=\tfrac{1}{2}r_{{\bm{a}},\bm{h},4}(n).
\]
Reducing $\bm{h}\pmod{4}$ and plugging in the formula for $N=4$ yields the claim.

For $\bm{a}=(2\ 3\ 8)^T$, $N=24$, and $n\equiv 3\pmod{8}$ we have 
\[
r_{\bm{a},\bm{h},24}(n)=\tfrac{1}{2}r_{\bm{a},\bm{h},8}(n).
\]
The table in Appendix \ref{sec:appendixds} for $\bm{a}=(2\ 3\ 8)^T$
 gives the claim reducing $\bm h \pmod 8$.
\end{proof}

\section{Class number identities}\label{sec:lemmas}
In this section, we derive relations like \eqref{eqn:r1notGauss} between $r_{\bm{a}}(n)$ and Hurwitz class numbers for each form $\bm{a}\in\mathcal{C}$.  The general idea is as follows: We use Lemmas \ref{lem:thetamodular}, \ref{lem:Hslash}, and \ref{lem:operators} to show that both sides of the identity are modular forms for some congruence subgroup $\Gamma$ of $\Gamma_0(4)$ and then use Lemmas \ref{lem:valenceformula} and \ref{lem:index} to prove that the claim follows as long as it holds for the first $b$ (given in Appendix \ref{sec:appendixvalence}) Fourier coefficients. Finally, we check $b$ coefficients with a computer.  We give a detailed proof of one case in Lemma \ref{lem:111} in order to show how to carry out the details to obtain $\Gamma$ and $b$ and leave the details for the other cases to the reader.

Setting $c_{(1\ 1\ 1)^T}(n) :=12$ and taking $c_{\bm{a}}(n)$ from the tables in Appendix \ref{sec:appendixcs} otherwise, we show  the following.
\begin{lemma}\label{lem:111}
For $\bm{a}\in \{(1\ 1\ 1)^T,(1\ 1\ 4)^T, (1\ 2\ 2)^T, (1\ 2\ 8)^T, (1\ 4\ 4)^T, (1\ 8\ 8)^T\}$, we have 
\[
r_{{\bm{a}}}(n)= c_{\bm a}(n)\left(H(4n)-2H(n)\right).
\]
\end{lemma}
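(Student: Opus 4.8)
The plan is to prove all six identities simultaneously by the valence-formula strategy outlined in the introduction to Section \ref{sec:lemmas}. For each $\bm a$ in the list, set
\[
F_{\bm a}(\tau):=\Theta_{\bm a}(\tau)-c_{\bm a}(n)\bigl(\mathcal H(4\tau)-2\mathcal H(\tau)\bigr),
\]
interpreting the right-hand side appropriately: since $c_{\bm a}(n)$ depends only on the residue of $n$ modulo some fixed $M_{\bm a}$ (read off from Appendix \ref{sec:appendixcs}), write $\mathcal H\mapsto \sum_{m\pmod{M_{\bm a}}}c_{\bm a}(m)\,\mathcal H\big|S_{M_{\bm a},m}$ and use the sieving operators $S_{M,m}$ together with $U_4,V_4$ to express the candidate identity entirely in terms of the operators from Lemma \ref{lem:operators}. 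The first step is to show $F_{\bm a}$ (more precisely its holomorphic completion) is a holomorphic modular form of weight $\tfrac32$ on an explicit congruence subgroup $\Gamma\subseteq\Gamma_0(4)$ with character $\chi_{4\mathcal D_{\bm a}}$: Lemma \ref{lem:thetamodular} handles $\Theta_{\bm a}$ (with $N=1$, giving $\Gamma_{4\ell_{\bm a},1}=\Gamma_0(4\ell_{\bm a})$), while Lemmas \ref{lem:Hcomplete}, \ref{lem:Hslash} and \ref{lem:operators} handle the class-number side; intersecting the two groups and taking the compositum of the characters gives $\Gamma$.

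The second step is to kill the non-holomorphic part. The completion $\widehat{\mathcal H}$ contributes a non-holomorphic piece supported on exponents $-n^2$; after applying $U_4$ and $V_4$ and the sieving operators, one must check that the combination $\widehat{\mathcal H}\big|U_4$-type terms minus $2\widehat{\mathcal H}$-type terms has vanishing non-holomorphic part on the relevant congruence classes — this is exactly the kind of cancellation proved in Lemma \ref{lem:Hslash}, and I would either invoke that lemma directly (choosing $\ell_1,\ell_2$ appropriately, e.g. $\ell_1=1$, $\ell_2=$ something squarefree, or noting $4=2^2$ so $U_4=U_2^2$ needs the iterated form) or reproduce its short computation with the explicit coefficients $c_v(n)=\tfrac{1}{8\pi\sqrt v}\delta_{n=0}+\tfrac{n}{4\sqrt\pi}\Gamma(-\tfrac12,4\pi n^2 v)$ for $n\ge 1$. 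Once $F_{\bm a}$ is a genuine holomorphic modular form, the third step is the valence formula: Lemma \ref{lem:index} computes $[\SL_2(\Z):\Gamma]$, Lemma \ref{lem:valenceformula} then yields an explicit bound $b=b_{\bm a}=\tfrac{k}{12}[\SL_2(\Z):\Gamma]$ with $k=\tfrac32$, and it suffices to verify $c_{F_{\bm a}}(n)=0$ for $n\le b_{\bm a}$. These bounds are the entries tabulated in Appendix \ref{sec:appendixvalence}. The fourth step is the finite computer check of the first $b_{\bm a}$ coefficients of $\Theta_{\bm a}$ against $c_{\bm a}(n)(H(4n)-2H(n))$, which closes each case.

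For the detailed write-up promised in the statement, I would carry out $\bm a=(1\ 1\ 1)^T$ fully: here $c_{\bm a}(n)\equiv 12$, $\ell_{\bm a}=1$, so $\Theta_{\bm 1}$ is modular of weight $\tfrac32$ on $\Gamma_0(4)$ with trivial character; $12(\mathcal H(4\tau)-2\mathcal H(\tau))$ — using Lemma \ref{lem:operators} (3) for $V_4$ acting on $\widehat{\mathcal H}$ — is modular on $\Gamma_0(16)$ with character $\chi_{16}=\chi_4$; intersecting gives $\Gamma=\Gamma_0(16)$, and one checks the character of $\Theta_{\bm 1}$ on $\Gamma_0(16)$ is also trivial (consistent with $\chi_{4\cdot 1}=\chi_4$ being trivial on $\Gamma_0(16)$, or rather one must be careful: compute $[\SL_2(\Z):\Gamma_0(16)]=24$ from Lemma \ref{lem:index}, so $b=\tfrac{3}{2}\cdot\tfrac{24}{12}=3$), then verify $r_{\bm 1}(0)=1=12(H(0)-2H(0))$ with the convention $H(0)=-\tfrac1{12}$, and $r_{\bm 1}(1),r_{\bm 1}(2),r_{\bm 1}(3)$ directly. (The precise value of $b$ and of $\Gamma$ I would recompute carefully rather than trust this sketch.) I expect the main obstacle to be the bookkeeping in step one and two: getting the congruence subgroup and character exactly right after composing $U_4$, $V_4$ and the sieving operators — in particular tracking how $\chi_{4\mathcal D_{\bm a}}$ interacts with the conductor-dependent level increases in Lemma \ref{lem:operators} (1),(2) — and confirming the non-holomorphic cancellation in the cases where $c_{\bm a}(n)$ genuinely varies with $n$ (so that the sieving operators are actually needed), since then one cannot simply cite the clean statement of Lemma \ref{lem:Hslash} but must run its coefficient computation through the extra $S_{M,m}$ layer.
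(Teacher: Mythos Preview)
Your overall strategy is the paper's strategy, but there is one concrete error and one missed simplification.

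The error: you write $\mathcal H(4\tau)-2\mathcal H(\tau)$ and explicitly invoke $V_4$ via Lemma \ref{lem:operators} (3). But $\mathcal H(4\tau)=\mathcal H\big|V_4$ has $n$-th coefficient $H(n/4)\delta_{4\mid n}$, not $H(4n)$. The generating function $\sum_{n\ge 0}H(4n)q^n$ is $\mathcal H\big|U_4$, so the correct object is $\mathcal H\big|U_4-2\mathcal H$. Your level computation (and the operator you cite) is therefore for the wrong function; with $U_4$ in place of $V_4$, Lemma \ref{lem:operators} (1) gives level $\Gamma_0(8)$ rather than $\Gamma_0(16)$ in the $\bm a=(1\ 1\ 1)^T$ case, matching the entry in Appendix \ref{sec:appendixvalencelemmas}.

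The missed simplification, which dissolves your anticipated ``main obstacle'': the paper does not redo the non-holomorphic cancellation through a sieving layer. Instead it observes the algebraic identity
\[
\mathcal H\big|U_4-2\mathcal H=\bigl(\mathcal H\big|U_2-2\mathcal H\big|V_2\bigr)\big|U_2=\mathcal H_{1,2}\big|U_2,
\]
and Lemma \ref{lem:Hslash} already certifies $\mathcal H_{1,2}$ as a \emph{holomorphic} modular form of weight $\tfrac32$ on $\Gamma_0(8)$ with character $\chi_8$. Applying $U_2$ and then $S_{M,m}$ preserves holomorphicity, so no additional cancellation argument is needed when $c_{\bm a}(n)$ varies with $n$. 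The claimed identity becomes
\[
\Theta_{\bm a}=\sum_{m\pmod M}c_{\bm a}(m)\,\mathcal H_{1,2}\big|U_2\big|S_{M,m},
\]
both sides manifestly holomorphic; the paper then carries out the detailed level/character/valence bookkeeping for $\bm a=(1\ 2\ 8)^T$ (where $M=16$), landing on $\Gamma_{256,16}$ and a bound of $384$ coefficients.
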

\begin{proof}[Proof of Lemma \ref{lem:111}]
For each choice of $\bm{a}$, we let $M$ be the lcm of the corresponding entries in the tables in Appendix \ref{sec:appendixcs}. It is then not hard to see that the claim is equivalent to
\[
\Theta_{{\bm{a}}} = \sum_{m \pmod{M}} c_{\bm a} (m) \mathcal{H}_{1,2} \big| U_2 \big|S_{M,m}.
\]
To prove this, note that Lemma \ref{lem:thetamodular} implies that the left-hand side is a modular form of weight $\frac{3}{2}$ on $\Gamma_0({4 \ell})$ ($\ell\mid 8$). By Lemma \ref{lem:thetamodular}, $\mathcal H_{1,2}$ is a modular form of weight $\frac 32$ on $\Gamma_0(8)$ with character $\chi_8$. Hitting with $U_2$ gives by Lemma \ref{lem:operators} (1) a modular form of weight $\frac 32$ on $\Gamma_0(8)$. For simplicity we only consider $\bm a =(1\ 2\ 8)^T$. Hitting with $S_{16,m}$ gives a modular form of weight $\frac3 2$ on $\Gamma_{256,16}$.
 By Lemmas \ref{lem:valenceformula} and \ref{lem:index}, it suffices to check the identity for the first 
\[
\tfrac{1}{8}\left[\SL_2(\Z):\Gamma_{256,16}\right]=384
\]
coefficients, which is easily verified.
\end{proof}

\begin{lemma}\label{lem:112}
For $\bm{a}\in \{(1\ 1\ 2)^T, (1\ 1\ 8)^T, (1\ 2\ 4)^T, (1\ 2\ 16)^T, (1\ 4\ 8)^T,  (1\ 8\ 16)^T\}$, we have 
\[
r_{{\bm{a}}}(n)=c_{\bm a}(n)\left( H(8n) -2 H(2n) \right).
\]
\end{lemma}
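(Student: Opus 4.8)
\textbf{Proof proposal for Lemma \ref{lem:112}.}
The plan is to mirror exactly the strategy used for Lemma \ref{lem:111}, replacing the weight $\tfrac32$ form $\mathcal H_{1,2}\big|U_2$ appearing there with an analogue built from $\mathcal H_{2,1}$ (the case $\ell_1=1$, $\ell_2=2$ of Lemma \ref{lem:Hslash} gives $\mathcal H_{1,2}$; here I instead want the dilation that produces $H(8n)$ and $H(2n)$ in the coefficients). Concretely, since $\mathcal H|V_2(\tau)=\sum_{D} H(D)q^{2D}$, and $U_2$ extracts the coefficients in arithmetic progressions divisible by $2$, one checks that the generating function $\sum_{n\ge 0}\big(H(8n)-2H(2n)\big)q^n$ equals $\mathcal H\big|(U_{8}V_2 - 2U_2V_2)\big|$ suitably — more precisely one writes it in terms of $\mathcal H_{1,2}$ composed with $V_2$ and $U_4$, using the commutation relations in Lemma \ref{lem:operators} (4). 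The first step is therefore to identify the precise combination of $U$, $V$, $S$ operators applied to $\mathcal H$ (equivalently to $\mathcal H_{1,2}$) whose $q$-expansion is $\sum_{n\ge0}(H(8n)-2H(2n))q^n$; this is a bookkeeping computation on Fourier coefficients of the type already carried out in the proof of Lemma \ref{lem:Hslash}.

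Once that combination is fixed — call it $G(\tau):=\sum_{n\ge0}(H(8n)-2H(2n))q^n$ — the claim becomes the identity
\[
\Theta_{\bm a} = \sum_{m\pmod{M}} c_{\bm a}(m)\, G\big|S_{M,m},
\]
where $M$ is the lcm of the moduli appearing in the relevant tables of Appendix \ref{sec:appendixcs}. The second step is to establish that both sides are holomorphic modular forms of weight $\tfrac32$ on a common congruence subgroup $\Gamma\subseteq\Gamma_0(4)$: Lemma \ref{lem:thetamodular} handles the left-hand side (it lies on $\Gamma_0(4\ell)$ with $\ell=\ell_{\bm a}\mid 16$ and character $\chi_{4\mathcal D_{\bm a}}$), while for the right-hand side one feeds the modularity of $\mathcal H$ from Lemma \ref{lem:Hcomplete} through Lemma \ref{lem:operators} parts (1), (3) to get modularity of $G$, then through part (2) to account for the sieving operators $S_{M,m}$; one must also invoke Lemma \ref{lem:Hslash} (or rather the non-holomorphic-part cancellation argument in its proof) to confirm $G$ is genuinely holomorphic and not merely a harmonic object. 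The third step intersects the two groups, applies Lemma \ref{lem:index} to compute $[\SL_2(\Z):\Gamma]$, and applies the valence formula Lemma \ref{lem:valenceformula} to produce an explicit bound $b$ (recorded in Appendix \ref{sec:appendixvalence}); then one verifies the identity for the first $b$ Fourier coefficients by computer. Since each of the six forms $\bm a$ here has $\ell_{\bm a}$ a power of $2$ times a factor dividing $16$, no prime other than $2$ enters and the character is one of $\chi_{8},\chi_{4\mathcal D_{\bm a}}$, so the argument is uniform across the list exactly as in Lemma \ref{lem:111}.

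The main obstacle I anticipate is the very first step: getting the operator identity for $G$ exactly right, including the constant $-2$ and the correct dilation index so that $H(8n)$ rather than $H(4n)$ or $H(16n)$ appears. The dangerous points are (i) keeping track of how $U_d$ acts on the non-holomorphic tail $\widehat{\mathcal H}^-(\tau)=\sum_{n\ge0}c_v(n)q^{-n^2}$ so that the completed form $\widehat G$ has vanishing non-holomorphic part — this is precisely the computation $c_{v/(\ell_1\ell_2)}(\lambda_1\ell_2\mu_1 m)-\ell_2 c_{\ell_2 v/\ell_1}(\lambda_1\mu_1 m)=0$ from the proof of Lemma \ref{lem:Hslash}, but now one additionally composes with $V_2$, so one should double-check the argument of $c$; and (ii) ensuring the level and character bookkeeping through Lemma \ref{lem:operators} (2) is not off by a factor, since $M$ here may be divisible by $2$ and the cases $M\equiv2\pmod4$ versus $M\not\equiv2\pmod4$ give different levels. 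Everything after the operator identity is then routine: the modularity chase, the index computation, the valence bound, and the finite coefficient check are all mechanical applications of the already-proven lemmas, entirely parallel to Lemma \ref{lem:111}.
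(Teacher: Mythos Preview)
Your approach is essentially the same as the paper's, and the overall framework is correct. The one place where you are slightly off is the operator expression for $G(\tau)=\sum_{n\ge 0}(H(8n)-2H(2n))q^n$: the paper takes simply $G=\mathcal{H}_{1,2}\big|U_4$, with no additional $V_2$. Indeed, the $n$-th coefficient of $\mathcal{H}_{1,2}=\mathcal{H}\big|(U_2-2V_2)$ is $H(2n)-2H(n/2)$, so applying $U_4$ gives $H(8n)-2H(2n)$ on the nose; equivalently, $\mathcal{H}_{1,2}\big|U_4=\mathcal{H}\big|(U_8-2U_2)$ via Lemma \ref{lem:operators} (4). Once you have this, your worry about the non-holomorphic tail evaporates: $\mathcal{H}_{1,2}$ is already holomorphic by Lemma \ref{lem:Hslash}, and $U_4$ and $S_{M,m}$ preserve holomorphicity. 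The remaining steps---modularity bookkeeping via Lemmas \ref{lem:thetamodular} and \ref{lem:operators}, the valence bound via Lemmas \ref{lem:valenceformula} and \ref{lem:index}, and the finite computer check---proceed exactly as you describe and exactly as in Lemma \ref{lem:111}.
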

\begin{proof} It is not hard to see that the formula is equivalent to 
	\[
	\Theta_{{\bm{a}}}=\sum_{m\pmod{M}} 
	c_{\bm a}(m)
	\mathcal{H}_{1,2}\big|U_4\big|S_{M,m}.
	\]	
	The table in Appendix \ref{sec:appendixvalencelemmas}, together with a short computer check, finishes, in this case, as well as in the following ones, the claim.
\end{proof}

Letting $c_{(1\ 1\ 3)^T}(n) :=2$, we have 
the following.
\begin{lemma}\label{lem:113}
	For $\bm a \in\{ (1\ 1\ 3)^T, (1\ 3\ 9)^T\}$, we have
	\[
	r_{{\bm a}}(n)= c_{\bm a}(n) \left( H(12n)+2 H(3n)-3H \left(\tfrac{4n}{3}\right)- 6 H\left(\tfrac{n}{3}\right)\right).
	\]
\end{lemma}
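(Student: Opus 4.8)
The strategy mirrors the one already used in Lemmas \ref{lem:111} and \ref{lem:112}: recast the asserted coefficient identity as an equality of holomorphic modular forms of weight $\tfrac32$, bound the order of the difference at all cusps via the valence formula, and then verify finitely many Fourier coefficients by machine. Concretely, the plan is to show that the claim is equivalent to a vector identity of the form
\[
\Theta_{\bm{a}} = \sum_{m\pmod{M}} c_{\bm{a}}(m)\, \mathcal{G}\big|S_{M,m},
\]
where $\mathcal{G}$ is built out of the slashed Hurwitz class number function $\mathcal{H}_{\ell_1,\ell_2}$ from Lemma \ref{lem:Hslash} by applying the $U$- and $V$-operators so that its Fourier expansion has $n$th coefficient a linear combination of $H(12n), H(3n), H(4n/3), H(n/3)$. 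Here the presence of the $\tfrac{n}{3}$ and $\tfrac{4n}{3}$ terms forces a $V_3$ (or equivalently a combination $U_3,V_3$) into the construction, so one naturally takes $\ell_2=3$ in Lemma \ref{lem:Hslash}; the resulting $\mathcal{H}_{\ell_1,3}$ is modular on $\Gamma_0(4\operatorname{rad}(\ell_1)\cdot 3)$ with character $\chi_{12\ell_1}$, and after the relevant $U_d$ the character becomes $\chi_{12\ell_1}\chi_{4d}$, which one arranges to equal $\chi_{4\mathcal{D}_{\bm{a}}}=\chi_{12}$ for $\bm{a}=(1\ 1\ 3)^T$ and $\chi_{4\cdot 243}=\chi_{12}$ for $\bm{a}=(1\ 3\ 9)^T$.

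First I would identify, for each of the two forms $\bm a$, the exact operator word on $\mathcal{H}$ whose $q$-expansion matches $H(12n)+2H(3n)-3H(4n/3)-6H(n/3)$; this is a short bookkeeping exercise using the definitions of $U_d$, $V_d$, and the fact that $\mathcal{H}(\tau)=\sum H(D)q^D$. Second, using Lemma \ref{lem:thetamodular} I record that $\Theta_{(1\ 1\ 3)^T}$ is modular of weight $\tfrac32$ on $\Gamma_0(12)$ with character $\chi_{12}$ and $\Theta_{(1\ 3\ 9)^T}$ on $\Gamma_0(4\cdot 9)=\Gamma_0(36)$ (since $\ell_{\bm a}=9$) with character $\chi_{12}$. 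Third, using Lemma \ref{lem:Hslash} together with Lemma \ref{lem:operators} parts (1), (3), and the sieving operator bound in part (2), I compute a congruence subgroup $\Gamma$ (of the shape $\Gamma_{N_0,M}$) on which both sides are simultaneously modular of weight $\tfrac32$ with character $\chi_{12}$. Fourth, Lemmas \ref{lem:valenceformula} and \ref{lem:index} give an explicit bound $b=\tfrac{1}{8}[\SL_2(\Z):\Gamma]$ such that the identity holds as soon as the first $b$ Fourier coefficients agree; the precise $\Gamma$ and $b$ are the ones recorded in the row for Lemma \ref{lem:113} in Appendix \ref{sec:appendixvalencelemmas}. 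Finally, one checks those $b$ coefficients by computer, noting that $r_{\bm a}(n)$ and the $H$-values on the right are both elementary to tabulate.

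The main obstacle I anticipate is not conceptual but one of bookkeeping and, potentially, of computational size: one must choose $\ell_1$ (the squarefree-adjusted auxiliary parameter in $\mathcal{H}_{\ell_1,3}$) and the accompanying $U_d,V_d$ word so that the levels and characters on the two sides genuinely coincide — in particular so that the quadratic character coming out of the $U_d$'s in Lemma \ref{lem:operators}(1) lands on $\chi_{12}$ rather than on some larger-conductor character — and then verify that the resulting index $[\SL_2(\Z):\Gamma]$, hence $b$, stays within feasible range for the coefficient check. If the naive operator word produces too large a level (this is exactly the kind of difficulty that in Section \ref{sec:congcondition} forced the "reduce to a smaller $N$" trick), one would instead prove the identity first for a sub-case with smaller modulus $M$ and then recover the general statement by an elementary sieving relation among the $r_{\bm a,\bm h,N}(n)$; but for these two forms I expect the direct valence-formula argument to go through, as it did for Lemmas \ref{lem:111} and \ref{lem:112}.
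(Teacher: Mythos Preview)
Your plan is correct and follows exactly the paper's approach: the paper shows the identity is equivalent to
\[
\Theta_{\bm a}=\sum_{m\pmod M} c_{\bm a}(m)\,\mathcal{H}_{1,3}\big|\left(2+U_4\right)\big|S_{M,m},
\]
with $M=1$ for $\bm a=(1\ 1\ 3)^T$ (so no sieving) and $M=3$ for $\bm a=(1\ 3\ 9)^T$, then invokes Lemmas \ref{lem:thetamodular}, \ref{lem:Hslash}, \ref{lem:operators}, \ref{lem:valenceformula}, \ref{lem:index} together with the bounds $b=6$ (on $\Gamma_0(24)$) and $b=18$ (on $\Gamma_0(72)$) from Appendix \ref{sec:appendixvalencelemmas}. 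Your anticipated choices $\ell_2=3$ and character $\chi_{12}$ are exactly right, and the ``bookkeeping exercise'' you allude to yields precisely the operator word $2+U_4$.
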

\begin{proof}
The equivalent formula in this case is 
		\[
	\Theta_{{\bm{a}}}=\sum_{m\pmod M} c_{\bm a}(m)
\mathcal{H}_{1,3}
\big|\left(2+U_4\right)\big|S_{M,m}. \qedhere
	\]
\end{proof}

\begin{lemma}\label{lem:115}
For $\bm{a}=(1\ 1\ 5)^T$, we have,
\[
r_{{\bm{a}}}(n)=2 
H(20n) -4 H(5n)
 +10 H \left( \tfrac{4n}{5} \right) -20 H \left( \tfrac n5 \right)  .
\]
\end{lemma}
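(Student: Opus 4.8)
The plan is to follow the same template used in Lemmas \ref{lem:111}--\ref{lem:113}, now with the modular object on the class-number side being built from $\mathcal{H}_{1,5}$. First I would rewrite the desired identity in terms of theta functions. Since $H(20n),H(5n),H(4n/5),H(n/5)$ are, up to the $U$ and $V$ operators, coefficients of $\mathcal{H}$, the natural guess is that the generating function of $2H(20n)-4H(5n)+10H(4n/5)-20H(n/5)$ equals $2\mathcal{H}_{1,5}\big|(2+U_4)\big|S_{M,m}$-type expressions, exactly paralleling Lemma \ref{lem:113} but with $\ell_2=5$ in place of $\ell_2=3$ (note $\mathcal{H}_{1,5}=\mathcal{H}\big|(U_5-5V_5)$, so the $U_4$ and the scalar $2$ account for passing from level-$5$ data to the $4n$ vs.\ $n$ combination, and the $U_5$ inside $\mathcal{H}_{1,5}$ together with the $V_5$ piece produces the $20n,5n$ versus $4n/5,n/5$ contributions). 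Concretely, the claim should be equivalent to
\[
\Theta_{\bm{a}}=\sum_{m\pmod{M}} c_{\bm{a}}(m)\,\mathcal{H}_{1,5}\big|\left(2+U_4\right)\big|S_{M,m}
\]
for $\bm{a}=(1\ 1\ 5)^T$ and an appropriate modulus $M$ (with $c_{\bm a}(n)$ the constant $2$, or more precisely the constant coming from the relevant table entry, possibly $n$-dependent through a residue class).

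Next I would establish that both sides are holomorphic modular forms of weight $\tfrac32$ on a common congruence subgroup $\Gamma\subseteq\Gamma_0(4)$. By Lemma \ref{lem:thetamodular}, $\Theta_{\bm a}$ with $\bm a=(1\ 1\ 5)^T$ (so $\ell_{\bm a}=5$, $\mathcal{D}_{\bm a}=5$) is a modular form of weight $\tfrac32$ on $\Gamma_0(20)$ with character $\chi_{20}$. By Lemma \ref{lem:Hslash}, $\mathcal{H}_{1,5}$ is a modular form of weight $\tfrac32$ on $\Gamma_0(20)$ with character $\chi_{20}$; applying $U_4$ via Lemma \ref{lem:operators}(1) keeps it on $\Gamma_0(20)$ with character $\chi_{20}\chi_{16}=\chi_{20}$, and then $S_{M,m}$ via Lemma \ref{lem:operators}(2) lands it on $\Gamma_{\lcm(20,M^2,M N_{\chi_{20}},\dots),M}$ with character $\chi_{20}$. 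Taking $\Gamma$ to be the intersection of the groups coming from the two sides, I would compute $[\SL_2(\Z):\Gamma]$ using Lemma \ref{lem:index}, and then Lemma \ref{lem:valenceformula} reduces the identity to checking the first $b:=\tfrac{1}{8}[\SL_2(\Z):\Gamma]$ Fourier coefficients. The precise values of $M$, $\Gamma$, and $b$ are the ones recorded in Appendix \ref{sec:appendixvalencelemmas} (alongside the other lemmas in this section), and the final step is a direct computer verification of those $b$ coefficients.

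The main obstacle I anticipate is not conceptual but bookkeeping: correctly pinning down the modulus $M$ and the exact linear combination of $\mathcal{H}_{1,5}$-shifts so that the two sides genuinely agree as $q$-series, and then confirming that the resulting bound $b$ is small enough to verify by machine in reasonable time. One should double-check that the non-holomorphic (error) terms cancel — this is guaranteed by Lemma \ref{lem:Hslash}, which asserts $\mathcal{H}_{1,5}$ is genuinely holomorphic, but one must verify that the combination $2+U_4$ acting on $\mathcal{H}_{1,5}$ followed by sieving preserves holomorphy at all cusps (it does, since these operators send holomorphic modular forms to holomorphic modular forms). A secondary subtlety, as in Lemma \ref{lem:113}, is the appearance of the fractional arguments $H(4n/5)$ and $H(n/5)$: these are nonzero only when $5\mid n$, which is exactly what the $V_5$-component of $\mathcal{H}_{1,5}$ encodes, so one must be careful that the $U_4$ operator and the $S_{M,m}$ sieve interact correctly with that divisibility — Lemma \ref{lem:operators}(4) provides the needed commutation relations. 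Once $\Gamma$ and $b$ are correctly identified, the proof closes exactly as the previous lemmas do.
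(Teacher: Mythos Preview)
Your template is right in spirit, but the specific building block $\mathcal{H}_{1,5}$ is wrong and this is more than bookkeeping. Compute: $\mathcal{H}_{1,5}=\mathcal{H}\big|(U_5-5V_5)$ has $n$-th coefficient $H(5n)-5H(n/5)$, so $\mathcal{H}_{1,5}\big|U_4$ has $n$-th coefficient $H(20n)-5H(4n/5)$. Any linear combination $a\,\mathcal{H}_{1,5}\big|U_4+b\,\mathcal{H}_{1,5}$ therefore forces the ratio of the $H(20n)$ and $H(4n/5)$ coefficients to be $-5$; but the target identity has ratio $2:10=1:5$, i.e.\ the \emph{same} sign. No sieve $S_{M,m}$ can repair this, since sieving does not alter the ratio within a given Fourier coefficient. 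So $\mathcal{H}_{1,5}\big|(2+U_4)$ cannot work.

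The paper takes $\ell_2=2$ instead: the equivalent formula is
\[
\Theta_{\bm{a}}=2\left(\mathcal{H}_{5,2}+5\,\mathcal{H}_{1,2}\big|V_5\right)\big|U_2,
\]
with no sieve at all. Here $\mathcal{H}_{5,2}\big|U_2$ contributes $H(20n)-2H(5n)$ and $\mathcal{H}_{1,2}\big|V_5\big|U_2$ contributes $H(4n/5)-2H(n/5)$; the ratio $-2$ now matches the $2:-4$ and $10:-20$ pairs in the lemma. Both sides are weight-$\tfrac{3}{2}$ forms on $\Gamma_0(40)$ with character $\chi_{20}$ (Lemmas \ref{lem:thetamodular}, \ref{lem:Hslash}, \ref{lem:operators}), and the valence bound is $b=9$. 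The lesson is that the sign pattern of the four $H$-terms in the target dictates which $\ell_2$ to choose in $\mathcal{H}_{\ell_1,\ell_2}$; compare with Lemma \ref{lem:116}, where $\ell_2=3$ is used for $(1\ 1\ 6)^T$ because there the signs alternate differently.
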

\begin{proof}
	The equivalent formula in this case is
	\[
	\Theta_{{\bm{a}}}=2\left(\mathcal{H}_{5,2}+5\mathcal{H}_{1,2}\big|V_5\right)\big|U_2. \qedhere
	\]
\end{proof}

Let $c_{(1\ 1\ 6)^T}(n):=2(-1)^{n+1}$.
\begin{lemma}\label{lem:116}
For $\bm{a}\in \{(1\ 1\ 6)^T,(1\ 6\ 9)^T\}$, we
 have
\[
r_{{\bm{a}}}(n)=c_{\bm{a}}(n) \left(H(24n) -4 H(6n) -3 H \left( \tfrac{8n}{3} \right) +12 H \left( \tfrac{2n}{3} \right) \right).
\]
\end{lemma}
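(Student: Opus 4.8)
The plan is to follow the same template used for Lemma \ref{lem:111} through Lemma \ref{lem:115}, namely to reformulate the asserted identity between $r_{\bm{a}}(n)$ and Hurwitz class numbers as an identity between two holomorphic modular forms of weight $\tfrac32$, verify that both sides live in a common finite-dimensional space of modular forms on an explicit congruence subgroup $\Gamma$, and then invoke the valence formula (Lemma \ref{lem:valenceformula}) together with the index computation (Lemma \ref{lem:index}) to reduce the claim to a finite computer check of the first $b$ Fourier coefficients. Concretely, the first step is to rewrite the right-hand side $c_{\bm a}(n)(H(24n)-4H(6n)-3H(\tfrac{8n}{3})+12H(\tfrac{2n}{3}))$ in terms of the operators $U_d$, $V_d$, $S_{M,m}$ applied to the class number generating function $\mathcal H$; the presence of the arguments $24n$ and $8n/3$ suggests using $\mathcal H_{1,3}$ (or a suitable combination such as $\mathcal{H}_{1,3}|U_8$ together with sieving operators $S_{M,m}$), since Lemma \ref{lem:Hslash} guarantees that $\mathcal H_{1,3}= \mathcal H|(U_3-3U_1V_3)$ is a genuine holomorphic modular form of weight $\tfrac32$ on $\Gamma_0(12)$ with character $\chi_{12}$ — this is what kills the non-holomorphic completion terms coming from Lemma \ref{lem:Hcomplete}.

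Once the identity has been put in the form $\Theta_{\bm a} = \sum_{m\pmod M} c_{\bm a}(m)\, \Phi | S_{M,m}$ for an explicit modular form $\Phi$ built from $\mathcal H_{1,3}$ via $U$- and $V$-operators, I would track the modularity data on both sides. By Lemma \ref{lem:thetamodular}, $\Theta_{\bm a}$ for $\bm a=(1\ 1\ 6)^T$ (so $\ell=6$, $\mathcal D_{\bm a}=6$) is a modular form of weight $\tfrac32$ on $\Gamma_0(24)$ with character $\chi_{24}$, and for $\bm a=(1\ 6\ 9)^T$ (so $\ell=18$, $\mathcal D_{\bm a}=54$) on $\Gamma_0(72)$ with character $\chi_{24}$; on the other side, Lemma \ref{lem:operators} (1), (3) gives the modularity of $\Phi$ and then Lemma \ref{lem:operators} (2) gives the modularity of each $\Phi|S_{M,m}$ on the appropriate group $\Gamma_{\lcm(\dots),\lcm(\dots)}$. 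Intersecting the two groups yields the $\Gamma$ recorded in the table of Appendix \ref{sec:appendixvalence}, and Lemmas \ref{lem:valenceformula} and \ref{lem:index} then produce the explicit bound $b = \tfrac{1}{8}[\SL_2(\Z):\Gamma]$ such that the two sides agree identically provided they agree through the $q^b$ coefficient; checking this is a routine (if potentially large) computer verification, exactly as in the cases already treated.

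The main obstacle — as the authors flag in the proof of Theorem \ref{thm:congcondition} for the analogous hard cases — is purely computational size: the modulus $24$ (and especially $72$ for $(1\ 6\ 9)^T$) together with the sieving operators $S_{M,m}$ can push $[\SL_2(\Z):\Gamma]$, hence $b$, into a range where the coefficient check is expensive. A secondary subtlety is bookkeeping the quadratic characters correctly through $U_d$ and $V_d$ (each introduces a twist by $\chi_{4d}$), so that the characters on the two sides genuinely match $\chi_{24}$; the factor $c_{(1\ 1\ 6)^T}(n)=2(-1)^{n+1}$ is itself a sign that one is implicitly sieving modulo $2$ and that the relevant $\mathcal H$-combination must carry the compensating character, which has to be verified rather than assumed. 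Assuming the character arithmetic checks out and the finite computation is carried out (possibly after first reducing, as in Theorem \ref{thm:congcondition}, to a smaller already-verified case via an elementary identity such as $r_{(1\ 6\ 9)^T}(n)$ being expressible through $r_{(1\ 1\ 6)^T}$ on suitable residue classes), the valence formula closes the argument.
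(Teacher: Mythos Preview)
Your approach is exactly the paper's template, and essentially correct. The paper's precise equivalent formulation is
\[
\Theta_{\bm a}=-\sum_{m\pmod M} c_{\bm a}(m)\,\mathcal H_{2,3}\big|(4-U_4)\big|S_{M,m},
\]
which is the same as your $\mathcal H_{1,3}|(U_8-4U_2)$ up to sign, since $\mathcal H_{1,3}|U_2=\mathcal H_{2,3}$ by Lemma~\ref{lem:operators}(4); working with $\mathcal H_{2,3}$ has the slight advantage that its character is already $\chi_{24}$, matching $\Theta_{\bm a}$ directly. Your concerns about computational size and about needing a reduction from $(1\ 6\ 9)^T$ to $(1\ 1\ 6)^T$ are misplaced: the sieving moduli here are $M=2$ and $M=6$ (not the $N$ from Theorem~\ref{thm:congcondition}), and the resulting groups in Appendix~\ref{sec:appendixvalencelemmas} are $\Gamma_0(24)$ and $\Gamma_0(144)$, giving valence bounds $b=6$ and $b=36$ respectively --- both cases are trivial to check.
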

\begin{proof}
	The equivalent formula in this case is
		\[
	\Theta_{{\bm{a}}}= -\sum_{m\pmod{M}} c_{\bm a}(m)\mathcal{H}_{2,3}\big|\left(4-U_4\right)\big|S_{M,m}.\qedhere
	\]
\end{proof}

\begin{lemma}\label{lem:119}
We have, for $\bm{a}=(1\ 1\ 9)^T$,
\[
r_{{\bm{a}}}(n)= 12  H \left( \tfrac{4n}{9} \right) -24H\left(\tfrac{n}{9}\right)  +c(n) (H(4n) -2H(n)).
\]

\end{lemma}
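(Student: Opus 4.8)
The plan is to follow exactly the template used for Lemmas \ref{lem:111}--\ref{lem:116}, adapted to the slightly more intricate right-hand side here, which now involves a ``$9$-dissection'' term of the class number generating function together with the ``$H(4n)-2H(n)$'' piece already handled in Lemma \ref{lem:111}. First I would rewrite the claimed identity in terms of theta functions and slashed copies of $\mathcal{H}$. The term $12H(\tfrac{4n}{9})-24H(\tfrac{n}{9})$ should be realized, after the substitution $n\mapsto 9n$, as $12(\mathcal{H}|U_9)$-type data evaluated through $\mathcal{H}_{1,2}|U_2$ composed with $V_9$; more precisely, using $\mathcal{H}_{1,2}|U_2$ as the generating function of $H(4n)-2H(n)$ (as in Lemma \ref{lem:111}), the piece picking out indices divisible by $9$ is $\mathcal{H}_{1,2}|U_2|V_9$ up to the sieving operator $S_{9,0}$, while $c(n)(H(4n)-2H(n))$ is $\sum_{m} c(m)\,\mathcal{H}_{1,2}|U_2|S_{M,m}$. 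So the target identity takes the shape
\[
\Theta_{\bm a} = 12\,\mathcal{H}_{1,2}\big|U_2\big|V_9 + \sum_{m\pmod M} c(m)\,\mathcal{H}_{1,2}\big|U_2\big|S_{M,m},
\]
for a suitable modulus $M$ equal to the lcm of the entries in the relevant table of Appendix \ref{sec:appendixcs}, with the usual convention that $c(m)=0$ on residue classes not listed. (One should double-check the normalization: there may be an additional $V_9$/$S_{9,0}$ bookkeeping factor, and whether the $V_9$ term should be further sieved, but this is a routine matching of Fourier coefficients.)

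Next I would verify that both sides are holomorphic modular forms of weight $\tfrac32$ on a common congruence subgroup $\Gamma\subseteq\Gamma_0(4)$. For the left side this is Lemma \ref{lem:thetamodular} with $\bm a=(1\ 1\ 9)^T$, so $\ell=9$ and $\Theta_{\bm a}$ is modular of weight $\tfrac32$ on $\Gamma_0(36)$ with character $\chi_4$. For the right side, Lemma \ref{lem:Hslash} gives that $\mathcal{H}_{1,2}$ is modular of weight $\tfrac32$ on $\Gamma_0(8)$ with character $\chi_8$; Lemma \ref{lem:operators} (1) shows $\mathcal{H}_{1,2}|U_2$ is modular on $\Gamma_0(8)$; Lemma \ref{lem:operators} (3) handles the $V_9$ (landing on $\Gamma_0(72)$ with character $\chi_8\chi_{36}=\chi_8$); and Lemma \ref{lem:operators} (2) handles each $S_{M,m}$, raising the level to $\Gamma_{\lcm(72,M^2,M N_{\chi_8},\dots),\,M}$ or similar. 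Intersecting the two groups yields the working subgroup $\Gamma$, whose index $[\SL_2(\Z):\Gamma]$ is computed by Lemma \ref{lem:index}. Then Lemma \ref{lem:valenceformula} reduces the identity to checking that the first $b:=\tfrac{1}{8}[\SL_2(\Z):\Gamma]$ Fourier coefficients of both sides agree, which one does by computer; the value of $b$ and the group $\Gamma$ are recorded in Appendix \ref{sec:appendixvalencelemmas}.

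The main obstacle I anticipate is not the modular-forms bookkeeping but correctly identifying the generating-function incarnation of the $\tfrac{4n}{9}$ and $\tfrac{n}{9}$ terms: unlike the $12H(4n)-24H(n)$ piece, which is $\mathcal{H}|(U_4 - 2U_1V_4)$-style and matches $\mathcal{H}_{1,2}|U_2$ cleanly, the terms $H(\tfrac{4n}{9})$ and $H(\tfrac{n}{9})$ are only nonzero when $9\mid n$ and, after extracting the factor $9$, involve $H(4m)$ and $H(m)$ in the variable $m=n/9$; getting the interplay between $U_9$, $V_9$, and the sieve $S_{9,0}$ right (and in particular whether a further $S_{3,\cdot}$ or $S_{9,\cdot}$ is forced by the local conditions at $3$) requires care. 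A secondary concern is that the level of $\Gamma$ after applying $V_9$ and the sieving operators may be large enough that the coefficient check becomes computationally heavy; if so, one would fall back on the same trick used repeatedly in the proof of Theorem \ref{thm:congcondition}, reducing the $(1\ 1\ 9)^T$ case to an already-established identity (for instance for $(1\ 1\ 1)^T$ via the relation between $r_{(1\ 1\ 9)^T}$ and $r_{(1\ 1\ 1)^T}$ on indices prime to $3$, together with a direct elementary analysis when $3\mid n$). But I expect the direct valence-formula approach, exactly as in Lemma \ref{lem:111}, to go through.
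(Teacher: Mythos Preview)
Your proposal is correct and follows essentially the same approach as the paper: the equivalent formula is exactly $\Theta_{\bm a}=12\,\mathcal{H}_{1,2}|U_2|V_9+\sum_{m\pmod 3} c(m)\,\mathcal{H}_{1,2}|U_2|S_{3,m}$, and the valence-formula check on the resulting common subgroup $\Gamma_0(72)$ reduces to verifying only $18$ coefficients. Your anticipated obstacles do not materialize: no extra sieving of the $V_9$-term is needed (the $n$-th coefficient of $\mathcal{H}_{1,2}|U_2|V_9$ is already $H(4n/9)-2H(n/9)$, vanishing unless $9\mid n$), and since $M=3$ the level stays small, so the fallback reduction you sketch is unnecessary.
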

\begin{proof}
	The equivalent formula in this case is
	\[
	\Theta_{{\bm{a}}}=12\mathcal{H}_{1,2}\big|U_2\big|V_9 + \sum_{m\pmod{3}}c(m) \mathcal{H}_{1,2}\big|U_2\big|S_{3,m}.\qedhere
	\]
\end{proof}

\begin{lemma}\label{lem:1112}
For $\bm{a}\in \{(1\ 1\ 12)^T,(1\ 4\ 12)^T,(1\ 9\ 
12
)^T\}$, we have 
\[
r_{{\bm{a}}}(n)=c_{\bm a}(n)\left(H(12n) + 8 H \left( \tfrac{3n}{4} \right) -3 H \left( \tfrac{4n}{3} \right) -24 H \left( \tfrac{n}{12} \right)\right).
\]
\end{lemma}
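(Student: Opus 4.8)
The plan is to follow exactly the template set out for the other lemmas in this section, in particular the detailed proof of Lemma \ref{lem:111}. First I would note that the stated identity is equivalent, after multiplying the class number generating function $\mathcal H$ by suitable $U$-, $V$-, and $S$-operators, to an identity between two holomorphic modular forms of weight $\frac32$. Writing $M$ for the least common multiple of the moduli appearing in the relevant table of Appendix \ref{sec:appendixcs} for the three vectors $\bm a\in\{(1\ 1\ 12)^T,(1\ 4\ 12)^T,(1\ 9\ 12)^T\}$, the claim becomes
\[
\Theta_{\bm a}=\sum_{m\pmod M} c_{\bm a}(m)\,\mathcal H_{1,12}\big|\left(\cdots\right)\big|S_{M,m},
\]
where the combination of operators inside the parentheses is dictated by the arguments $12n,\ \tfrac{3n}{4},\ \tfrac{4n}{3},\ \tfrac{n}{12}$ appearing in the four class-number terms; explicitly one expects something built from $U_{12}$ (for $H(12n)$), $U_3V_4$ or $V_4U_3$-type pieces (for $H(3n/4)$), $U_4V_3$-type pieces (for $H(4n/3)$), and $V_{12}$ (for $H(n/12)$), with the coefficients $1,8,-3,-24$ matching the stated formula. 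The precise form follows the pattern of Lemmas \ref{lem:113}, \ref{lem:116}, and \ref{lem:119}, using $\mathcal H_{\ell_1,\ell_2}$ as defined in Lemma \ref{lem:Hslash} with $\ell_1=1,\ell_2=3$ to absorb the two pairs of terms with denominators $1$ and $3$.

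Next I would establish the modularity of both sides. By Lemma \ref{lem:thetamodular}, the left-hand side $\Theta_{\bm a}$ is a holomorphic modular form of weight $\frac32$ on $\Gamma_0(4\ell_{\bm a})$ with character $\chi_{4\mathcal D_{\bm a}}$; here $\ell_{\bm a}=\lcm(\bm a)$ divides $36$ and $\mathcal D_{\bm a}=a_1a_2a_3$ is a square times $12$ in each case, so the character is $\chi_{48}$ or a related quadratic character. For the right-hand side, Lemma \ref{lem:Hslash} gives that $\mathcal H_{1,3}$ is a holomorphic modular form of weight $\frac32$ on $\Gamma_0(12)$ with character $\chi_{12}$; then Lemma \ref{lem:operators} parts (1), (2), (3) track how the weight-$\frac32$ modularity and character propagate under each $U_d$, $V_d$, and $S_{M,m}$, yielding a holomorphic modular form on some $\Gamma_{N',M'}$ with an explicit quadratic character. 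One then intersects the two groups to obtain the congruence subgroup $\Gamma$ on which both sides are modular, as recorded in the relevant row of Appendix \ref{sec:appendixvalence}.

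With $\Gamma$ in hand, Lemmas \ref{lem:valenceformula} and \ref{lem:index} reduce the identity to a finite check: the difference of the two sides is a holomorphic modular form of weight $\frac32$ on $\Gamma$, so it vanishes identically provided its first $b:=\tfrac{1}{8}[\SL_2(\Z):\Gamma]$ Fourier coefficients vanish, and $[\SL_2(\Z):\Gamma]$ is computed from Lemma \ref{lem:index}. Finally I would verify the vanishing of these first $b$ coefficients by a direct computer calculation, exactly as in the other lemmas. I expect the only genuine obstacle to be bookkeeping: getting the operator combination and the associated character on the nose, and then correctly computing the index $[\SL_2(\Z):\Gamma]$ so that the number $b$ of coefficients to be checked is not underestimated; the modular-forms input and the valence-formula argument are entirely routine once those constants are pinned down. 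A secondary practical concern is that the index, and hence $b$, may be large enough that the computer check is slow, but this is the same situation as in the cases already handled and does not affect the validity of the argument.
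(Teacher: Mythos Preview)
Your approach is essentially identical to the paper's: rewrite the identity in terms of $\mathcal{H}_{1,3}$ acted on by suitable operators and sieved by $S_{M,m}$, then invoke Lemmas \ref{lem:thetamodular}, \ref{lem:Hslash}, \ref{lem:operators}, \ref{lem:valenceformula}, and \ref{lem:index} to reduce to a finite coefficient check. The paper records the precise equivalent identity as
\[
\Theta_{\bm a}=\sum_{m\pmod{M}} c_{\bm a}(m)\,\mathcal H_{1,3}\big|\bigl(U_4+8V_4\bigr)\big|S_{M,m},
\]
which is exactly what your grouping of the four class-number terms into two $\mathcal H_{1,3}$-pairs produces. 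One small slip: the symbol $\mathcal H_{1,12}$ you first wrote is not covered by Lemma \ref{lem:Hslash} (the second index must be squarefree), but you immediately correct yourself to $\ell_1=1$, $\ell_2=3$, so this is harmless.
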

\begin{proof}
	The equivalent formula in this case is 
	\[
	\Theta_{{\bm{a}}} = \sum_{m\pmod{M}} c_{\bm a}(m)\mathcal{H}_{1,3}\big|\left(U_4+8V_4\right)\big|S_{M,m}.\qedhere
	\]
\end{proof}

\begin{lemma}\label{lem:1121}
For $\bm{a}=(1\ 1\ 21)^T$, we have 
\begin{multline*}
r_{{\bm{a}}}(n)=c(n)
\left(
H(84n) -7 H \left( \tfrac{12n}{7} \right) -3 H \left( \tfrac{28n}{3} \right) +21 H \left( \tfrac{4n}{21} \right) -4 H \left( \tfrac{21n}{4} \right) +28 H \left( \tfrac{3n}{
	28
} \right) 
\right. \\
\left.
+12
H\left(\tfrac{7n}{
	12
}\right) -84 H \left( \tfrac{n}{84} \right)
\right).
\end{multline*}

\end{lemma}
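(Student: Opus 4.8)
The plan is to follow exactly the template established in Lemmas \ref{lem:111}--\ref{lem:1112}: reduce the claimed class-number identity to an equality of two weight $\tfrac32$ modular forms, identify a congruence subgroup $\Gamma\subseteq\Gamma_0(4)$ on which both sides are modular, invoke Lemmas \ref{lem:valenceformula} and \ref{lem:index} to extract a finite bound $b$, and finish by a computer check of the first $b$ Fourier coefficients. First I would rewrite the right-hand side as a slashed version of the class-number generating function. The seven Hurwitz class numbers appearing are, up to the factor $c(n)$, evaluations of $\mathcal H$ at $84n$, $\tfrac{12n}{7}$, $\tfrac{28n}{3}$, $\tfrac{4n}{21}$, $\tfrac{21n}{4}$, $\tfrac{3n}{28}$, $\tfrac{7n}{12}$, $\tfrac{n}{84}$; since $84=4\cdot 3\cdot 7$, these are precisely the terms obtained by applying to $\mathcal H$ an operator built from $U$'s and $V$'s at the primes $2,3,7$ (in the spirit of Lemma \ref{lem:Hslash}, iterated over the three prime factors). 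Concretely I expect the equivalent formula to read
\[
\Theta_{\bm a}=\sum_{m\pmod M} c(m)\,\mathcal H_{1,3}\big|\big(\text{an operator in }U_4,U_7,V_4,V_7\big)\big|S_{M,m},
\]
where $M$ is the lcm of the moduli appearing in the table in Appendix \ref{sec:appendixcs} for $\bm a=(1\ 1\ 21)^T$; the precise operator and constants are read off by matching the eight terms, analogously to how $U_4+8V_4$ appears in Lemma \ref{lem:1112} and $2+U_4$ in Lemma \ref{lem:113}.

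Next I would establish the modularity of both sides. By Lemma \ref{lem:thetamodular}, $\Theta_{(1\ 1\ 21)^T}$ is a modular form of weight $\tfrac32$ on $\Gamma_0(4\ell)$ with $\ell=\lcm(1,1,21)=21$, i.e. on $\Gamma_0(84)$ with character $\chi_{84}$. For the right-hand side, Lemma \ref{lem:Hslash} (applied with the prime factors of $21$, or iterated) gives that the relevant slash of $\mathcal H$ is a holomorphic modular form of weight $\tfrac32$ on $\Gamma_0(4\cdot 21)=\Gamma_0(84)$ with character $\chi_{84}$; then Lemma \ref{lem:operators}(2) tracks how applying $S_{M,m}$ enlarges the level to some $\Gamma_{N',L'}$. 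One intersects the two groups to get the common group $\Gamma$, and Lemmas \ref{lem:valenceformula} and \ref{lem:index} produce the explicit bound
\[
b=\tfrac{1}{8}\left[\SL_2(\Z):\Gamma\right],
\]
which would be recorded in the row of the table in Appendix \ref{sec:appendixvalencelemmas}. A final computer check of the first $b$ coefficients of both sides completes the proof, exactly as in the preceding lemmas.

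The main obstacle here, as the authors themselves flag in the proof of Lemma \ref{lem:111} and in the exceptional cases of Theorem \ref{thm:congcondition}, is purely computational: $\bm a=(1\ 1\ 21)^T$ has $\mathcal D_{\bm a}=21$ and $\ell=21$, so the level $4\ell=84$ together with the sieving modulus $M$ (which must accommodate all the denominators $3,4,7,12,28,21,84$, hence a fairly large $M$, likely a multiple of $84$) can push $[\SL_2(\Z):\Gamma]$ — and therefore $b$ — into the range of tens of millions of coefficients, comparable to the $42\,467\,328$ coefficients needed in the $(1\ 9\ 24)^T$ case. So the real work is (i) choosing the operator decomposition and $M$ so as to keep $\Gamma$ as large as possible (e.g. using that $M\mid 24$ is false here but one can still sometimes apply the sharper conclusions of Lemma \ref{lem:operators}(2)), and (ii) verifying that the resulting finite check is actually feasible; if it is not, one would instead reduce to a smaller already-proven case by an elementary congruence argument of the type $r_{\bm a,\bm h,N}(n)=\tfrac12 r_{\bm a,\bm h,N'}(n)$ used repeatedly in Section \ref{sec:congcondition}. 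Everything else — the algebra of matching the eight class-number terms to a $U/V$ operator, and the bookkeeping of levels and characters through Lemmas \ref{lem:operators} and \ref{lem:index} — is routine given the machinery already assembled.
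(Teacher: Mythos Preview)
Your overall template is correct and matches the paper's: rewrite the eight Hurwitz terms as a slash of $\mathcal H$ by $U/V$ operators, sieve by $S_{M,m}$ with the constants $c(m)$ from Appendix~\ref{sec:appendixcs}, and finish via the valence formula. However, you have one substantive misconception that distorts the rest of the argument.

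The sieving modulus $M$ is determined entirely by the constants $c_{\bm a}(n)$ in Appendix~\ref{sec:appendixcs}, which for $\bm a=(1\ 1\ 21)^T$ depend only on $n\pmod{8}$; hence $M=8$, not a multiple of $84$. The denominators $3,7,12,21,28,84$ in the Hurwitz arguments are \emph{not} handled by sieving at all --- they are absorbed into the $U/V$ operators and the $\mathcal H_{\ell_1,\ell_2}$ building blocks. Concretely the paper takes $\ell_2=7$ (not $3$) and writes
\[
\Theta_{\bm a}=\sum_{m\pmod{8}} c(m)\left(\mathcal H_{12,7}-3\,\mathcal H_{4,7}\big|V_3-4\,\mathcal H_{3,7}\big|V_4+12\,\mathcal H_{1,7}\big|V_{12}\right)\Big|S_{8,m},
\]
which one checks term-by-term reproduces the eight Hurwitz values. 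Because $M=8\mid 24$, the sharp clause of Lemma~\ref{lem:operators}(2) applies and no $\Gamma_1$-level is introduced; both sides are modular on $\Gamma_0(1344)$, and the valence bound is $b=\tfrac18[\SL_2(\Z):\Gamma_0(1344)]=384$ coefficients (Appendix~\ref{sec:appendixvalencelemmas}), not tens of millions. Your anticipated computational obstacle, and the fallback to congruence reductions of the $(1\ 9\ 24)^T$ type, simply do not arise here.
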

\begin{proof}
The equivalent formula in this case is
	\[
	\Theta_{{\bm{a}}}=\sum_{m\pmod{8}} c(m)\left(\mathcal{H}_{12,7}-3\mathcal{H}_{4,7}\big|V_3-4\mathcal{H}_{3,7}\big|V_4+12\mathcal{H}_{1,7}\big|V_{12}\right)\big|S_{8,m}.\qedhere
	\]
\end{proof}

\begin{lemma}\label{lem:1124}
For $\bm{a}\in\{(1\ 1\ 24)^T, (1\ 9\ 24)^T\}$, we have 
\[
r_{{\bm{a}}}(n)=c_{\bm a}(n)\left( H(24n)-H(6n)-3H\left(\tfrac{8n}{3}\right)+3H\left(\tfrac{2n}{3}\right) +12H\left(\tfrac{3n}{8}\right)- 36H\left(\tfrac{n}{24}\right)\right).
\]
\end{lemma}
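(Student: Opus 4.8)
The plan is to proceed exactly as in the preceding lemmas of this section. First I would recast the claimed identity as a single identity between holomorphic modular forms of weight $\tfrac32$. Writing $\mathcal H_{1,3}=\mathcal H\big|(U_3-3V_3)$, using multiplicativity of the $U$-operators, and using Lemma \ref{lem:operators} (4) to push $U$-operators past $V_3$ (so that $\mathcal H_{1,3}\big|U_8=\mathcal H\big|(U_{24}-3U_8V_3)$ and $\mathcal H_{1,3}\big|U_2=\mathcal H\big|(U_6-3U_2V_3)$), a routine comparison of $q$-expansions gives
\[
\mathcal H_{1,3}\big|\left(U_8-U_2+12V_8\right)=\sum_{n\geq 0}\left(H(24n)-H(6n)-3H\left(\tfrac{8n}{3}\right)+3H\left(\tfrac{2n}{3}\right)+12H\left(\tfrac{3n}{8}\right)-36H\left(\tfrac{n}{24}\right)\right)q^n,
\]
with the usual convention that $H$ vanishes at non-admissible (in particular non-integral) arguments. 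Thus, with $M$ the least common multiple of the moduli occurring in the relevant rows of the table in Appendix \ref{sec:appendixcs}, Lemma \ref{lem:1124} is equivalent to
\[
\Theta_{\bm{a}}=\sum_{m\pmod M}c_{\bm{a}}(m)\,\mathcal H_{1,3}\big|\left(U_8-U_2+12V_8\right)\big|S_{M,m}.
\]

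Next I would check that both sides are holomorphic modular forms of weight $\tfrac32$ on a common congruence subgroup $\Gamma\subseteq\Gamma_0(4)$. By Lemma \ref{lem:thetamodular} the left-hand side lies on $\Gamma_0(4\ell_{\bm{a}})$ with character $\chi_{4\mathcal D_{\bm{a}}}$, that is, on $\Gamma_0(96)$ for $\bm{a}=(1\ 1\ 24)^T$ and on $\Gamma_0(288)$ for $\bm{a}=(1\ 9\ 24)^T$. For the right-hand side, Lemma \ref{lem:Hslash} applies since $\gcd(1,3)=1$ and $3$ is squarefree, giving that $\mathcal H_{1,3}$ is a holomorphic modular form of weight $\tfrac32$ on $\Gamma_0(12)$ with character $\chi_{12}$; by Lemma \ref{lem:operators} (1) and (3), applying $U_8$, $U_2$ and $V_8$ keeps it holomorphic and puts it on $\Gamma_0(96)$, and Lemma \ref{lem:operators} (2) controls the effect of the sieving operators $S_{M,m}$. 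Intersecting the groups attached to the two sides gives an explicit $\Gamma$; computing $[\SL_2(\Z):\Gamma]$ via Lemma \ref{lem:index} and feeding it into Lemma \ref{lem:valenceformula} yields a bound $b$, listed in the table in Appendix \ref{sec:appendixvalencelemmas}, such that the identity holds in general once it holds for the first $b$ Fourier coefficients; a direct computer check then finishes the proof.

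The computation itself is routine, and the two points that need care are the usual ones. First, one must verify that the single building block $\mathcal H_{1,3}$ meets the coprimality and squarefreeness hypotheses of Lemma \ref{lem:Hslash} (immediate here), so that the non-holomorphic completion of $\mathcal H$ cancels and the right-hand side is genuinely a holomorphic modular form. Second, and this is the main practical obstacle, one must keep the level of $\Gamma$ — and hence the size of $b$ — under control; this is somewhat more delicate for $\bm{a}=(1\ 9\ 24)^T$, where $\ell_{\bm{a}}=72$ so the theta side already sits at level $288$ and the relevant $M$ (hence the level of $\Gamma$ and the bound $b$) is correspondingly larger, but $b$ still stays within reach of the same finite verification used throughout this section.
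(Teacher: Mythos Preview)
Your proposal is correct and follows essentially the same route as the paper. The only cosmetic difference is that the paper records the building block as $\mathcal{H}_{8,3}-\mathcal{H}_{2,3}+12\,\mathcal{H}_{1,3}\big|V_8$, which coincides with your $\mathcal{H}_{1,3}\big|(U_8-U_2+12V_8)$ by the commutator relations in Lemma~\ref{lem:operators}\,(4); the resulting levels, valence bound, and finite check are the ones listed in Appendix~\ref{sec:appendixvalencelemmas}.
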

\begin{proof}
The equivalent formula in this case is
	\[
	\Theta_{{\bm{a}}} = 
	\sum_{m\pmod{
M
}} c_{\bm a}(m)\left(\mathcal{H}_{8,3} -\mathcal{H}_{2,3} +12\mathcal{H}_{1,3}\big| V_8\right)\big|S_{
M,m
}.\qedhere
	\]
\end{proof}

\begin{lemma}\label{lem:123}
For $\bm{a}=(1\ 2\ 3)^T$, we have 
\[
r_{{\bm{a}}}(n)=c(n)\left( H(24n)-2 H \left(6n\right) +3 H \left( \tfrac{8n}{3} \right)  -6H \left( \tfrac{2n}{3} \right) \right).
\]

\end{lemma}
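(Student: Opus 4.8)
The plan is to follow the route of Lemma~\ref{lem:111}: I would recast the asserted identity between $r_{\bm{a}}(n)$ and Hurwitz class numbers as an equality of two holomorphic modular forms of weight $\tfrac32$, and then reduce it --- via the valence formula (Lemma~\ref{lem:valenceformula}) and the index computation (Lemma~\ref{lem:index}) --- to checking finitely many Fourier coefficients by computer. Since $\bm{a}=(1\ 2\ 3)^T$ has $\ell_{\bm{a}}=\mathcal{D}_{\bm{a}}=6$, Lemma~\ref{lem:thetamodular} gives that $\Theta_{\bm{a}}=\sum_{n\ge0}r_{\bm{a}}(n)q^n$ is a holomorphic modular form of weight $\tfrac32$ on $\Gamma_0(24)$ with character $\chi_{24}$.

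For the class-number side I would record the expansions $\mathcal{H}\big|U_{24}=\sum_n H(24n)q^n$, $\mathcal{H}\big|U_6=\sum_n H(6n)q^n$, $\mathcal{H}\big|U_8\big|V_3=\sum_{3\mid n}H\!\left(\tfrac{8n}{3}\right)q^n$, and $\mathcal{H}\big|U_2\big|V_3=\sum_{3\mid n}H\!\left(\tfrac{2n}{3}\right)q^n$, together with the identities $\mathcal{H}_{8,3}=\mathcal{H}\big|(U_{24}-3U_8V_3)$, $\mathcal{H}_{2,3}=\mathcal{H}\big|(U_6-3U_2V_3)$, and $\mathcal{H}_{1,2}\big|U_4=\mathcal{H}\big|(U_8-2U_2)$ (the last coming from $V_2U_4=U_2$, a special case of Lemma~\ref{lem:operators}(4)). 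A short linear computation then shows that the generating function $\sum_{n\ge0}\left(H(24n)-2H(6n)+3H\!\left(\tfrac{8n}{3}\right)-6H\!\left(\tfrac{2n}{3}\right)\right)q^n$ equals
\[
F:=\mathcal{H}_{8,3}-2\mathcal{H}_{2,3}+6\,\mathcal{H}_{1,2}\big|U_4\big|V_3,
\]
so that, with $M$ the modulus on which $c(n)=c_{(1\ 2\ 3)^T}(n)$ depends (Appendix~\ref{sec:appendixcs}), the lemma is equivalent to the theta identity $\Theta_{\bm{a}}=\sum_{m\pmod{M}}c(m)\,F\big|S_{M,m}$.

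Next I would check that both sides are holomorphic modular forms of the same weight, on a common congruence subgroup, with the same character. By Lemma~\ref{lem:Hslash}, $\mathcal{H}_{8,3}$ and $\mathcal{H}_{2,3}$ are holomorphic of weight $\tfrac32$ on $\Gamma_0(24)$ (here $4\operatorname{rad}(8)\cdot3=4\cdot2\cdot3=24$) with characters $\chi_{4\cdot8\cdot3}$ and $\chi_{24}$, both of which equal $\chi_{24}$ because $96$ and $24$ have the same squarefree part; likewise $\mathcal{H}_{1,2}$ is holomorphic of weight $\tfrac32$ on $\Gamma_0(8)$ with character $\chi_8$, so Lemma~\ref{lem:operators}(1) and~(3) make $\mathcal{H}_{1,2}\big|U_4\big|V_3$ holomorphic of weight $\tfrac32$ on $\Gamma_0(24)$ with character $\chi_8\chi_{16}\chi_{12}=\chi_{24}$. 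Hence $F$, being an integer combination of forms already known by Lemma~\ref{lem:Hslash} to be holomorphic, is a holomorphic modular form of weight $\tfrac32$ on $\Gamma_0(24)$ with character $\chi_{24}$; Lemma~\ref{lem:operators}(2) then makes $\sum_m c(m)F\big|S_{M,m}$ a holomorphic modular form of weight $\tfrac32$ on an explicitly computable congruence subgroup $\Gamma$ (obtained by sieving $\Gamma_0(24)$, with level depending on $M$ and on $N_{\chi_{24}}=24$) with character $\chi_{24}$, matching $\Theta_{\bm{a}}$.

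Since the levels of the two sides are both multiples of $24$, both sides are modular on $\Gamma$ (as $\Gamma\subseteq\Gamma_0(24)$), and Lemmas~\ref{lem:index} and~\ref{lem:valenceformula} produce a bound $b=\tfrac{1}{8}[\SL_2(\Z):\Gamma]$ such that the theta identity holds provided it holds for the first $b$ coefficients; for $\bm{a}=(1\ 2\ 3)^T$ the sieved level remains modest, so $b$ is small and the coefficient comparison is routine. The one step that is not purely mechanical is the construction of $F$: because $\mathcal{H}$ is modular only after adjoining the non-holomorphic completion terms of Lemma~\ref{lem:Hcomplete}, individual pieces such as $\mathcal{H}\big|U_{24}$ or $\mathcal{H}\big|U_8\big|V_3$ fail to be holomorphic modular forms, and it is only in the specific combinations $\mathcal{H}_{\ell_1,\ell_2}$ of Lemma~\ref{lem:Hslash} that the completion cancels; thus one must reassemble the class-number side out of these building blocks, and verifying that the resulting $F$ has the same level and character as $\Theta_{\bm{a}}$ is the main bookkeeping point --- but it amounts to routine linear algebra.
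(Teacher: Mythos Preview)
Your proposal is correct and follows essentially the same route as the paper: rewrite the class-number side as a holomorphic weight-$\tfrac32$ form, sieve by $S_{M,m}$ with the constants $c(m)$ from Appendix~\ref{sec:appendixcs} (here $M=2$), and reduce to a finite coefficient check via Lemmas~\ref{lem:valenceformula} and~\ref{lem:index}. The only cosmetic difference is the packaging of $F$: the paper writes the equivalent identity as $\Theta_{\bm{a}}=\sum_{m\pmod{2}}c(m)\,\mathcal{H}_{1,2}\big|(U_3+3V_3)\big|U_4\big|S_{2,m}$, whereas you decompose the same $F$ as $\mathcal{H}_{8,3}-2\mathcal{H}_{2,3}+6\,\mathcal{H}_{1,2}\big|U_4\big|V_3$; a short computation with Lemma~\ref{lem:operators}(4) shows the two expressions coincide, and both lead to the common subgroup $\Gamma_0(48)$ and the bound $b=12$ recorded in Appendix~\ref{sec:appendixvalencelemmas}.
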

\begin{proof}
The equivalent formula in this case is
	\[
	\Theta_{{\bm{a}}}= \sum_{m\pmod{2}}c(m)\mathcal{H}_{1,2}\big|\left(U_3+3V_3\right)\big|U_4\big|S_{2,m}.\qedhere
	\]
\end{proof}

\begin{lemma}\label{lem:125}
	
For $\bm{a}=(1\ 2\ 5)^T$, we have
\[
r_{{\bm{a}}}(n)= (-1)^{n+1}\left(H(40n)-4H(10n)-5H\left(\tfrac{8n}{5}\right)+20H\left(\tfrac{2n}{5}\right)\right).
\]

\end{lemma}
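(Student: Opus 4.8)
The plan is to follow verbatim the scheme carried out in detail for Lemma~\ref{lem:111}: recast the asserted identity as an equality of weight‑$\tfrac32$ modular forms, locate a congruence subgroup on which both sides are modular with one common character, and then reduce to a finite Fourier‑coefficient check via Lemmas~\ref{lem:valenceformula} and~\ref{lem:index}.

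First I would rewrite the class-number side using the operators of Lemma~\ref{lem:operators}. Since $\mathcal{H}_{2,5}=\mathcal{H}\big|(U_{10}-5U_{2}V_{5})$ has $n$-th Fourier coefficient $H(10n)-5H\left(\tfrac{2n}{5}\right)$, and $U_{4}$ replaces the argument $m$ by $4m$, the form $\mathcal{H}_{2,5}\big|(U_{4}-4)$ has $n$-th coefficient $H(40n)-5H\left(\tfrac{8n}{5}\right)-4H(10n)+20H\left(\tfrac{2n}{5}\right)$. Twisting by $(-1)^{n+1}$ through the sieving operators, the claim becomes
\[
\Theta_{(1\ 2\ 5)^{T}}=\sum_{m\pmod 2}(-1)^{m+1}\,\mathcal{H}_{2,5}\big|(U_{4}-4)\big|S_{2,m},
\]
which is what I would verify.

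Next I would track modularity along both sides. By Lemma~\ref{lem:thetamodular}, with $\ell=\lcm(1,2,5)=10$, the left-hand side is a modular form of weight $\tfrac32$ on $\Gamma_0(40)$ with character $\chi_{40}$. By Lemma~\ref{lem:Hslash} (note $\gcd(2,5)=1$ and $5$ is squarefree), $\mathcal{H}_{2,5}$ is modular of weight $\tfrac32$ on $\Gamma_0(40)$ with character $\chi_{40}$; by Lemma~\ref{lem:operators}(1) this persists after applying $U_{4}-4$ (the level is unchanged since $\operatorname{rad}(4)=2\mid 10$, and the twist $\chi_{16}$ is trivial on odd moduli); and since $2\mid 24$, Lemma~\ref{lem:operators}(2) shows that after applying $S_{2,m}$ we land on $\Gamma_0(80)$, still with character $\chi_{40}$. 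Intersecting, the difference of the two sides of the displayed identity is a modular form of weight $\tfrac32$ on $\Gamma_0(80)$ with character $\chi_{40}$. Lemma~\ref{lem:index} gives $[\SL_2(\Z):\Gamma_0(80)]=144$, so by Lemma~\ref{lem:valenceformula} it suffices to check that the first $\tfrac{1}{8}\cdot 144=18$ Fourier coefficients of both sides agree; this is a routine computer check (with the corresponding bound recorded in the table of Appendix~\ref{sec:appendixvalencelemmas}).

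The only genuinely non-routine step is the first one: guessing the combination $\mathcal{H}_{2,5}\big|(U_{4}-4)$ together with the global $(-1)^{n+1}$ twist. Everything after that is bookkeeping, and the resulting valence bound ($18$) is tiny, so the verification is immediate. The one point worth checking carefully before applying Lemma~\ref{lem:valenceformula} to the difference is that both sides really carry the \emph{same} character $\chi_{40}$ on $\Gamma_0(80)$ — the various $\chi_{4d}$-twists produced by the $U$-operators are all trivial on odd moduli, so this indeed holds, which is what licenses the argument.
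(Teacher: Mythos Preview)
Your proof is correct and coincides with the paper's argument: your expression $\mathcal{H}_{2,5}\big|(U_{4}-4)$ equals the paper's $\mathcal{H}_{8,5}-4\mathcal{H}_{2,5}$ (since $\mathcal{H}_{2,5}\big|U_4=\mathcal{H}_{8,5}$ by the commutator relations in Lemma~\ref{lem:operators}(4)), and your level, character, and valence bound of $18$ on $\Gamma_0(80)$ match the entry in Appendix~\ref{sec:appendixvalencelemmas} exactly.
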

\begin{proof}
The equivalent formula in this case is
	\[
	\Theta_{\bm{a}}=\sum_{m\pmod{2}} (-1)^{m+1}\left(\mathcal{H}_{8,5}-4\mathcal{H}_{2,5}\right)\big|S_{2,m}.\qedhere
	\]
\end{proof}

\begin{lemma}\label{lem:126}
We have, for $\bm a = (1\ 2\ 6)^T$,
\begin{equation*}
r_{{\bm{a}}}(n) =  c(n) \left( H(12n) -2 H(3n) +3 H \left( \tfrac{4n}{3} \right) - 6 H \left( \tfrac{n}{3} \right)\right).
\end{equation*}
\end{lemma}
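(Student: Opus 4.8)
The plan is to follow the template of Lemma \ref{lem:111} and the other identities in Section \ref{sec:lemmas}: reinterpret the claimed coefficient identity as an identity between two \emph{holomorphic} modular forms of weight $\tfrac32$, and then apply the valence formula (Lemma \ref{lem:valenceformula}) to reduce it to checking finitely many Fourier coefficients by computer. Write $\bm a=(1\ 2\ 6)^T$, let $c=c_{\bm a}$ be the (piecewise-constant) function from the table in Appendix \ref{sec:appendixcs}, and set $g(n):=H(12n)-2H(3n)+3H(\tfrac{4n}3)-6H(\tfrac n3)$. Unwinding the definitions of the operators in Section \ref{sec:prelim}, one sees that $g(n)$ is the $n$-th Fourier coefficient of
\[
\Phi:=\mathcal H_{1,3}\big|(U_4-2)\ +\ 6\,\mathcal H_{1,2}\big|U_2\big|V_3\ =\ \mathcal H_{4,3}-2\mathcal H_{1,3}+6\,\mathcal H_{1,2}\big|U_2\big|V_3,
\]
the two expressions being equal because $\mathcal H_{1,3}\big|U_4=\mathcal H_{4,3}$ (Lemma \ref{lem:operators} (1),(4)). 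Indeed $\mathcal H_{1,3}\big|(U_4-2)$ has $n$-th coefficient $H(12n)-2H(3n)-3H(\tfrac{4n}3)+6H(\tfrac n3)$, and the term $6\,\mathcal H_{1,2}\big|U_2\big|V_3$ — whose $n$-th coefficient is $6(H(\tfrac{4n}3)-2H(\tfrac n3))$, since $\mathcal H_{1,2}\big|U_2$ has $n$-th coefficient $H(4n)-2H(n)$ by Lemma \ref{lem:111} — is precisely the correction that flips the sign of the ``$V_3$-part'' on the progression $3\mid n$. As $c$ is not constant, the desired identity then takes the shape
\[
\Theta_{\bm a}=\sum_{m\pmod M}c(m)\,\Phi\big|S_{M,m},
\]
with $M$ the lcm of the moduli occurring for $\bm a$ in Appendix \ref{sec:appendixcs} (so $M\mid 24$); as in the remark preceding Lemma \ref{lem:111}, one may first merge several congruence cases by passing to lcm's of their moduli, and the equivalence with the stated coefficient identity is then immediate.

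Next I would establish modularity of both sides. By Lemma \ref{lem:thetamodular} (with $N=1$, $\ell_{\bm a}=6$, $\mathcal D_{\bm a}=12$) the left-hand side $\Theta_{\bm a}$ is a holomorphic modular form of weight $\tfrac32$ on $\Gamma_0(24)$ with character $\chi_{48}$. For the right-hand side, Lemma \ref{lem:Hslash} shows that $\mathcal H_{1,3}$, $\mathcal H_{4,3}$, $\mathcal H_{1,2}$ are holomorphic modular forms of weight $\tfrac32$ on $\Gamma_0(12)$, $\Gamma_0(24)$, $\Gamma_0(8)$ with characters $\chi_{12}$, $\chi_{48}$, $\chi_8$; applying Lemma \ref{lem:operators} (1) and (3) for $U_2$ and $V_3$, and then Lemma \ref{lem:operators} (2) for $S_{M,m}$ (using $M\mid 24$, so the sharper ``Moreover'' statement applies), puts $\sum_{m\pmod M}c(m)\,\Phi\big|S_{M,m}$ on some $\Gamma_{N_0,L_0}\subseteq\Gamma_0(4)$, again of weight $\tfrac32$ and character $\chi_{48}$. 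The essential point here — and what Lemma \ref{lem:Hslash} is for — is that $\Phi$ carries no non-holomorphic part (the would-be error terms coming from $\widehat{\mathcal H}$ cancel in each $\mathcal H_{\ell_1,\ell_2}$), so the difference $D:=\Theta_{\bm a}-\sum_{m}c(m)\,\Phi\big|S_{M,m}$ is a genuine holomorphic modular form of weight $\tfrac32$ on $\Gamma:=\Gamma_0(24)\cap\Gamma_{N_0,L_0}$, not merely a harmonic Maass form.

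Finally, Lemmas \ref{lem:valenceformula} and \ref{lem:index} furnish an explicit bound $b=\tfrac18[\SL_2(\Z):\Gamma]$ (a modest number, to be recorded in Appendix \ref{sec:appendixvalencelemmas}) with the property that $D\equiv0$ as soon as $c_D(n)=0$ for all $n\le b$; I would then verify $c_{\Theta_{\bm a}}(n)=\sum_m c(m)\,c_{\Phi|S_{M,m}}(n)$ for $n\le b$ by expanding $\Theta_{\bm a}=\vartheta_{0,1}(2\tau)\vartheta_{0,1}(4\tau)\vartheta_{0,1}(12\tau)$ on one side and the Hurwitz class number series on the other. I expect the only non-routine step to be the first: settling on the correct combination $\Phi$ of $\mathcal H_{\ell_1,\ell_2}$'s and $U$-, $V$-operators — so that the ``fractional'' class-number terms $H(\tfrac{4n}3)$ and $H(\tfrac n3)$ genuinely reassemble into the holomorphic shapes provided by Lemma \ref{lem:Hslash} — together with the modulus $M$ and the constants $c(m)$; once these are fixed, the modularity bookkeeping and the finite computation are entirely mechanical.
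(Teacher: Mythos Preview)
Your proposal is correct and follows essentially the same route as the paper: rewrite the identity as an equality of weight-$\tfrac32$ holomorphic modular forms, pin down a common congruence subgroup, and invoke the valence formula to reduce to a finite check. The only difference is cosmetic: the paper packages the class-number side as
\[
\Theta_{\bm a}=\sum_{m\pmod 4} c(m)\,\bigl(\mathcal H_{3,2}+3\,\mathcal H_{1,2}\big|V_3\bigr)\big|U_2\big|S_{4,m},
\]
whereas you write $\Phi=\mathcal H_{1,3}|(U_4-2)+6\,\mathcal H_{1,2}|U_2|V_3$; a short computation with Lemma~\ref{lem:operators}\,(4) shows $\mathcal H_{3,2}|U_2-\mathcal H_{1,3}|(U_4-2)=3\,\mathcal H_{1,2}|U_2|V_3$, so the two expressions for $\Phi$ are literally the same holomorphic form. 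With $M=4$ (the only modulus appearing for $\bm a=(1\ 2\ 6)^T$ in Appendix~\ref{sec:appendixcs}) both sides live on $\Gamma_0(48)$ and the bound $b=12$ from Appendix~\ref{sec:appendixvalencelemmas} applies.
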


\begin{proof}
The equivalent formula in this case is
	\[
	\Theta_{{\bm{a}}}=\sum_{m\pmod{4}} c(m)\left( \mathcal{H}_{3,2} +3\mathcal{H}_{1,2}\big|V_3\right)\big|U_{2}\big|S_{4,m}.\qedhere
	\]
\end{proof}

\begin{lemma}\label{lem:1210}
For $\bm{a}=(1\ 2\ 10)^T$, we have 
\[
r_{{\bm{a}}}(n)=c(n) \left( H(20n) -4 H(5n)-5 H \left( \tfrac{4n}{5} \right) +20 H \left( \tfrac{n}{5} \right)
\right).
\]
\end{lemma}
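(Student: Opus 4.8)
The plan is to follow verbatim the template used for all the preceding lemmas in Section \ref{sec:lemmas} (especially Lemmas \ref{lem:115}, \ref{lem:1210}-type cases with a level-$5$ factor), reducing the claimed class-number identity for $\bm a=(1\ 2\ 10)^T$ to an identity between two explicit holomorphic modular forms of weight $\frac32$ and then invoking the valence formula. First I would rewrite the right-hand side: since $H(20n)-4H(5n)-5H(\tfrac{4n}{5})+20H(\tfrac{n}{5})$ is, up to the $U$-operator and a $V_5$-twist, exactly the combination appearing in Lemma \ref{lem:Hslash} with $(\ell_1,\ell_2)=(1,5)$ after applying $U_4$, I expect the equivalent statement to read
\[
\Theta_{\bm a}=\sum_{m\pmod{M}} c(m)\,\mathcal{H}_{1,5}\big|\big(U_4+\text{(appropriate }V\text{-twist)}\big)\big|S_{M,m},
\]
with $M$ the lcm of the moduli occurring in the relevant table in Appendix \ref{sec:appendixcs}; unwinding the definition $\mathcal H_{1,5}=\mathcal H|(U_5-5U_1V_5)$ together with the action of $U_4$ on $\mathcal H$ and comparing Fourier coefficients via \eqref{eqn:r1Gauss}-style identities for $H(4n),H(n)$ produces precisely the four terms $H(20n),H(5n),H(\tfrac{4n}{5}),H(\tfrac{n}{5})$.

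Next I would establish modularity of both sides. By Lemma \ref{lem:thetamodular}, $\Theta_{\bm a}$ is a modular form of weight $\frac32$ on $\Gamma_{4\ell N^2,N}=\Gamma_{80,1}=\Gamma_0(80)$ (here $\ell=\lcm(1,2,10)=10$, $N=1$) with character $\chi_{4\mathcal D_{\bm a}}=\chi_{80}$. By Lemma \ref{lem:Hslash}, $\mathcal H_{1,5}$ is a modular form of weight $\frac32$ on $\Gamma_0(20)$ with character $\chi_{20}$; applying Lemma \ref{lem:operators} (1) to hit with $U_4$ keeps us on $\Gamma_0(20)$ (with a possible character twist by $\chi_{16}$, which is trivial, so the character stays $\chi_{20}$), and applying $V$-twists and then $S_{M,m}$ via Lemma \ref{lem:operators} (2) and (3) lands in $\Gamma_{L,M}$ for an explicit level $L=\lcm(80,M^2,M N_{\chi})$. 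Intersecting the two groups gives a congruence subgroup $\Gamma$ whose index in $\SL_2(\Z)$ I would compute using Lemma \ref{lem:index}; then Lemma \ref{lem:valenceformula} yields the bound $b=\frac{1}{8}[\SL_2(\Z):\Gamma]$ (recorded, as for the other lemmas, in Appendix \ref{sec:appendixvalencelemmas}), so that the identity holds once it is verified for the first $b$ Fourier coefficients, which is done by a direct machine computation.

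The main obstacle — just as the text flags for the other lemmas — is purely bookkeeping rather than conceptual: getting the constant $c(n)$ (equivalently the residue classes $m\pmod M$ and their coefficients) exactly right so that the coefficient identity is an equality and not merely a proportionality on a subset of $n$, and ensuring the various local densities / congruence obstructions at the primes $2$ and $5$ (note $a_3=10=2\cdot5$ and $25\mid n$ behaves specially, cf. the $(2\ 5\ 10)^T$ discussion in the introduction) are absorbed into the $S_{M,m}$-decomposition. Concretely, the delicate point is that $\mathcal H_{1,5}|U_4$ has a nonzero coefficient structure supported on several residue classes mod $5$ and mod $8$, and one must choose $M$ (a divisor of something like $40$) large enough that $c(n)$ depends only on $n\pmod M$ while small enough that the valence bound $b$ stays within feasible computation. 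Once $M$ and the $c(m)$ are pinned down from the table, the modularity argument and the finite coefficient check go through exactly as in Lemma \ref{lem:111}, so I would simply state the equivalent theta identity, cite Lemmas \ref{lem:thetamodular}, \ref{lem:Hslash}, \ref{lem:operators}, \ref{lem:valenceformula}, \ref{lem:index} and Appendix \ref{sec:appendixvalencelemmas}, and note that the computer check completes the proof.
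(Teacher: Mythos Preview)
Your overall strategy---rewrite the claimed identity as an equality of holomorphic weight-$\tfrac32$ forms, pin down the level via Lemmas \ref{lem:thetamodular}, \ref{lem:Hslash}, \ref{lem:operators}, and then invoke the valence formula (Lemma \ref{lem:valenceformula}) with the bound from Appendix \ref{sec:appendixvalencelemmas}---is exactly the paper's approach.  However, the specific equivalent formula you guess is not quite right, and the details matter because they determine the level and hence the bound $b$.

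The paper's equivalent identity is
\[
\Theta_{\bm a}=\sum_{m\pmod{4}} c(m)\bigl(\mathcal{H}_{4,5}-4\,\mathcal{H}_{1,5}\bigr)\big|S_{4,m},
\]
with $c(m)=1$ for $m\equiv 1,2\pmod 4$ and $c(m)=-1$ for $m\equiv 0,3\pmod 4$ (the table in Appendix \ref{sec:appendixcs}).  Since $\mathcal H_{4,5}=\mathcal H_{1,5}|U_4$, this is $\mathcal H_{1,5}|(U_4-4)$, i.e.\ $U_4$ minus four times the identity, \emph{not} $U_4$ plus a $V$-twist as you wrote.  The modulus is $M=4$ (not a divisor of $40$): there is no special behaviour at the prime $5$ that needs to be sieved out, because the $5$-adic content is already absorbed in the combination $\mathcal H_{4,5}-4\mathcal H_{1,5}$, whose $n$-th coefficient is exactly $H(20n)-4H(5n)-5H(4n/5)+20H(n/5)$.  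With $M=4$ the relevant group is $\Gamma_0(80)$ and the valence bound is $b=18$ (Appendix \ref{sec:appendixvalencelemmas}), far smaller than what your ``divisor of $40$'' guess would produce.  Once you use the correct combination and $M=4$, your argument goes through verbatim.
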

\begin{proof}
The equivalent formula in this case is
	\[
	\Theta_{{\bm{a}}} = \sum_{m\pmod{4}} c(m)\left( \mathcal{H}_{4,5}-4\mathcal{H}_{1,5}\right)\big|S_{4,m}.\qedhere
	\]
\end{proof}

\begin{lemma}\label{lem:133}
For $\bm{a}=(1\ 3\ 3)^T$, we have 
\[
r_{{\bm{a}}}(n)=2 H(36n) +4 H(9n) -6 H(4n) - 12 H(n) .
\]
\end{lemma}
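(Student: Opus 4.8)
**The plan is to prove Lemma \ref{lem:133} by following the same recipe used for Lemma \ref{lem:111} and all the subsequent class number identities.** First I would translate the claimed identity
\[
r_{(1\ 3\ 3)^T}(n)=2H(36n)+4H(9n)-6H(4n)-12H(n)
\]
into an identity between $q$-series. The generating function of the right-hand side should be expressible in terms of the functions $\mathcal{H}_{\ell_1,\ell_2}$ from Lemma \ref{lem:Hslash} together with the operators $U_d$, $V_d$, and $S_{M,m}$. Concretely, since $\mathcal{H}_{1,3}=\mathcal{H}\big|(U_3-3U_1V_3)$ produces $\sum_n\bigl(H(3n)-3H(n/3)\bigr)q^n$ and hitting with $U_4$ replaces $n$ by $4n$, I expect the equivalent formula to read
\[
\Theta_{(1\ 3\ 3)^T}=\sum_{m\pmod{M}} c(m)\,\mathcal{H}_{1,3}\big|\bigl(U_{12}+aV_{?}+\cdots\bigr)\big|S_{M,m}
\]
for suitable integer coefficients and a suitable modulus $M$; the precise combination is pinned down by matching the four terms $2H(36n)$, $4H(9n)$, $-6H(4n)$, $-12H(n)$ against the four pieces that $\mathcal{H}\big|(U_{36}-3U_4V_9)$-type combinations produce, using the sieving operators $S_{M,m}$ to split off the sign/congruence-dependent constant $c(n)$ (here apparently $c_{(1\ 3\ 3)^T}(n)$, with the stated normalization making the leading coefficient $2$). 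Working out exactly which $\mathcal{H}_{\ell_1,\ell_2}$'s and which $U,V$ are needed, and the correct modulus $M$, is the bookkeeping heart of the argument.

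**Next I would establish that both sides are holomorphic modular forms of weight $\tfrac32$ on a common congruence subgroup $\Gamma\subseteq\Gamma_0(4)$.** By Lemma \ref{lem:thetamodular}, $\Theta_{(1\ 3\ 3)^T}$ is a modular form of weight $\tfrac32$ on $\Gamma_0(4\ell)$ with $\ell=\lcm(1,3,3)=3$, i.e.\ on $\Gamma_0(12)$, with character $\chi_{4\cdot 9}=\chi_{36}$ (which on this group agrees with a simpler character). For the right-hand side, Lemma \ref{lem:Hslash} gives modularity of $\mathcal{H}_{1,3}$ on $\Gamma_0(12)$ with character $\chi_{12}$; Lemma \ref{lem:operators} (1) and (3) control the effect of $U_d$ and $V_d$, and Lemma \ref{lem:operators} (2) controls $S_{M,m}$, producing modularity on some $\Gamma_{N',M'}$. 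Intersecting the two groups, as in the proof of Theorem \ref{thm:congcondition}, gives the working group $\Gamma$; its index $[\SL_2(\Z):\Gamma]$ is computed from Lemma \ref{lem:index}.

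**Then I would apply the valence formula.** By Lemma \ref{lem:valenceformula}, the difference of the two weight-$\tfrac32$ modular forms vanishes identically provided its first $b:=\tfrac{1}{8}[\SL_2(\Z):\Gamma]$ Fourier coefficients vanish. I would compute $b$ explicitly (it will be whatever value is recorded in the table in Appendix \ref{sec:appendixvalencelemmas} for this case) and then verify the identity for the first $b$ coefficients by a direct computer calculation, expanding both $\Theta_{(1\ 3\ 3)^T}$ and the $\mathcal{H}$-combination as $q$-series. Since all coefficients agree up to order $b$, the valence formula forces equality of the two modular forms, and reading off coefficients yields the claimed class number identity.

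**The main obstacle I anticipate is purely organizational rather than conceptual: identifying the correct linear combination of shifted Eisenstein-type series $\mathcal{H}_{\ell_1,\ell_2}$ and operators that reproduces exactly $2H(36n)+4H(9n)-6H(4n)-12H(n)$ (with the right sign factor $c(n)$), and then keeping careful track of levels and characters so that the intersection group $\Gamma$ — and hence the bound $b$ — is correct.** There is also a mild subtlety in that $9$ is a perfect square, so the arguments $36n$ and $9n$ bring in the "squarefree part $3$" combinatorics of Lemma \ref{lem:Hslash} (via $\ell_1=3\mu_1^2$-type splittings); one must make sure the $U_d$ being applied is compatible with the squarefree hypothesis in Lemma \ref{lem:Hslash} or else pass through $\mathcal{H}_{1,3}\big|V_9$ and $\mathcal{H}_{1,3}$ separately. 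Once the formula is correctly set up, the remaining steps are the routine level/index computation and a finite machine check, exactly as in Lemmas \ref{lem:111}--\ref{lem:1210}.
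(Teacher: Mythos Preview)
Your approach is the same template the paper uses, but you have overcomplicated this particular case in two ways that are worth pointing out.

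First, for $\bm{a}=(1\ 3\ 3)^T$ there is \emph{no} congruence-dependent constant $c_{\bm{a}}(n)$: the stated identity holds uniformly for all $n$, so no sieving operator $S_{M,m}$ is needed (equivalently, $M=1$). The paper's equivalent $q$-series identity is simply
\[
\Theta_{(1\ 3\ 3)^T}=2\left(\mathcal{H}_{4,3}+2\mathcal{H}_{1,3}\right)\big|U_3.
\]
You can verify directly that the $n$-th coefficient of the right-hand side is $2\bigl(H(36n)-3H(4n)+2H(9n)-6H(n)\bigr)$, since $\mathcal{H}_{4,3}$ has $n$-th coefficient $H(12n)-3H(4n/3)$, $\mathcal{H}_{1,3}$ has $n$-th coefficient $H(3n)-3H(n/3)$, and $U_3$ replaces $n$ by $3n$.

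Second, the anticipated subtlety about ``$9$ being a perfect square'' does not arise: the paper avoids $U_9$ entirely by taking $(\ell_1,\ell_2)=(4,3)$ and $(1,3)$ and then pulling out the remaining factor of $3$ via a final $U_3$. This keeps $\ell_2$ squarefree as required by Lemma~\ref{lem:Hslash}.

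With this simpler formula, both sides are modular of weight $\tfrac32$ on $\Gamma_0(24)$ (Lemmas~\ref{lem:thetamodular}, \ref{lem:Hslash}, and \ref{lem:operators}(1)), and by Lemmas~\ref{lem:valenceformula} and \ref{lem:index} one only needs to check the first $\tfrac{1}{8}[\SL_2(\Z):\Gamma_0(24)]=6$ coefficients, which is immediate. The rest of your outline (valence formula, finite check) is exactly what the paper does.
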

\begin{proof}
The equivalent formula in this case is
	\[
	\Theta_{{\bm{a}}}=2\left(\mathcal{H}_{4,3} + 2\mathcal{H}_{1,3}\right)\big|U_3.\qedhere
	\]
\end{proof}

\begin{lemma}\label{lem:134}
For $\bm{a}=(1\ 3\ 4)^T$, we have 
\[
r_{{\bm{a}}}(n)=
c(n)
 \left( H(12n) -3 H \left( \tfrac{4n}{3} \right) + 2 H \left( \tfrac{3n}{4} \right) - 6 H \left( \tfrac{n}{12} \right) \right).
\]
\end{lemma}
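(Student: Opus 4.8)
The plan is to follow the strategy of Lemmas~\ref{lem:111}--\ref{lem:133}: recast the asserted identity as an equality of holomorphic modular forms of weight $\tfrac32$ and then invoke the valence formula.

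First I would express the right-hand side through the sieved class-number forms of Lemma~\ref{lem:Hslash}. Since $\mathcal{H}_{1,3}=\mathcal{H}\big|(U_3-3V_3)$ has $n$-th Fourier coefficient $H(3n)-3H(\tfrac n3)$ (with the usual convention $H(x)=0$ unless $x\in\N_0$), a direct coefficient comparison shows that the $n$-th Fourier coefficient of $\mathcal{H}_{1,3}\big|(U_4+2V_4)$ equals $H(12n)-3H(\tfrac{4n}{3})+2H(\tfrac{3n}{4})-6H(\tfrac{n}{12})$. Hence, writing $M$ for the least common multiple of the moduli appearing in the definition of $c(n)$ in Appendix~\ref{sec:appendixcs}, and using that $S_{M,m}$ isolates the coefficients with $n\equiv m\pmod M$ (on which $c(n)=c(m)$), the claimed identity is equivalent to
\[
\Theta_{{\bm{a}}}=\sum_{m\pmod{M}}c(m)\,\mathcal{H}_{1,3}\big|\left(U_4+2V_4\right)\big|S_{M,m}.
\]

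Next I would verify that both sides are holomorphic modular forms of weight $\tfrac32$ on a common congruence subgroup with a common character. By Lemma~\ref{lem:thetamodular}, the left-hand side is a modular form of weight $\tfrac32$ on $\Gamma_0(48)$ with character $\chi_{48}$, since $\ell_{\bm a}=\lcm(1,3,4)=12$ and $\mathcal{D}_{\bm a}=12$. For the right-hand side, Lemma~\ref{lem:Hslash} (with $\ell_1=1$, $\ell_2=3$) gives that $\mathcal{H}_{1,3}$ is a modular form of weight $\tfrac32$ on $\Gamma_0(12)$ with character $\chi_{12}$; Lemma~\ref{lem:operators}(1) and (3) then show that $\mathcal{H}_{1,3}\big|U_4$ and $\mathcal{H}_{1,3}\big|V_4$ are modular of weight $\tfrac32$ on $\Gamma_0(24)$, respectively $\Gamma_0(48)$, with character $\chi_{12}$ (the twist by $\chi_{16}$ being trivial on $\Gamma_0(4)$), and Lemma~\ref{lem:operators}(2) controls the effect of the sieving operator $S_{M,m}$. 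Since $\chi_{48}$ and $\chi_{12}$ agree on every matrix of $\Gamma_0(48)$, the characters match on the intersection $\Gamma$ of the two groups arising from the two sides of the identity; I would compute $\Gamma$ explicitly and record it in Appendix~\ref{sec:appendixvalencelemmas}.

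Finally, by Lemmas~\ref{lem:valenceformula} and~\ref{lem:index}, the difference of the two sides---a holomorphic modular form of weight $\tfrac32$ on $\Gamma$---vanishes identically as soon as its first $b:=\tfrac{1}{8}[\SL_2(\Z):\Gamma]$ Fourier coefficients vanish, and this many coefficients can be checked by a short computer computation, exactly as in Lemma~\ref{lem:111}. The main obstacle here is bookkeeping rather than mathematics: one must pin down the precise modulus $M$ (hence $\Gamma$ and $b$) so that the coefficient check is of manageable size, and one must track levels and characters carefully through the operators $U_4$, $V_4$, and $S_{M,m}$. The ostensibly delicate analytic point---that $\mathcal{H}_{1,3}$ looks non-holomorphic---has already been handled by Lemma~\ref{lem:Hslash}, so no new difficulty arises there.
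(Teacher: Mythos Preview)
Your proposal is correct and follows the same approach as the paper. In fact, your expression $\mathcal{H}_{1,3}\big|(U_4+2V_4)$ is literally equal to the paper's $\mathcal{H}_{4,3}+2\mathcal{H}_{1,3}\big|V_4$, since the commutator relation in Lemma~\ref{lem:operators}(4) gives $\mathcal{H}_{1,3}\big|U_4=\mathcal{H}\big|(U_{12}-3U_4V_3)=\mathcal{H}_{4,3}$; with $M=8$ from Appendix~\ref{sec:appendixcs} the resulting group is $\Gamma_0(192)$ and $b=48$, exactly as recorded in Appendix~\ref{sec:appendixvalencelemmas}.
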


\begin{proof}
The equivalent formula in this case is
	\[
	\Theta_{{\bm{a}}}=\sum_{m\pmod{8}}
	c(m)
	\left(\mathcal{H}_{4,3} + 2\mathcal{H}_{1,3}\big|V_4\right)\big|S_{8,m}.\qedhere
	\]
\end{proof}

\begin{lemma}\label{lem:136}
For $\bm{a}=(1\ 3\ 6)^T$, we have 
\[
r_{\bm{a}}(n)=c(n) \left(
-H(72n)+2H(18n)+5H(8n)-10H(2n)
\right).
\]
\end{lemma}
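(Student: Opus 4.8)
The plan is to follow the same strategy that was used for Lemmas \ref{lem:111} through \ref{lem:134}, namely to reduce the claimed Hurwitz class number identity for $\bm{a}=(1\ 3\ 6)^T$ to an equality of two weight $\tfrac32$ modular forms and then invoke the valence formula (Lemma \ref{lem:valenceformula}). First I would rewrite the right-hand side in terms of the twisted/sieved class number generating functions introduced in Lemma \ref{lem:Hslash}: the combination $-H(72n)+2H(18n)+5H(8n)-10H(2n)$ should be expressible (after an appropriate $U$- and $V$-operator manipulation on $\mathcal{H}_{2,3}$, since the relevant conductors are $72=8\cdot 9$ and $18 = 2\cdot 9$, and $8,2$ on the other side) as the $q$-expansion of something like $\bigl(\mathcal{H}_{2,3}\big|(\,\cdot U_4 - \cdot\,)\bigr)\big|S_{M,m}$, summed against the constants $c(m)$ read off from the table in Appendix \ref{sec:appendixcs}, with $c(n)$ encoding the sign/residue dependence. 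The precise operator combination is determined by matching the four Hurwitz terms: $\mathcal H_{2,3}$ already produces the $H(24n')$ and $H(6n')$ type terms with the right "squarefree part $6$" structure, and applying $U_4$ versus the identity rescales $n\mapsto 4n$, which converts discriminants $24n,6n$ into $96n, 24n$ — so in fact one more layer (a $U_3$ or a $V$-twist relating the primes $2$ and $3$ parts) will be needed to land on $72n,18n,8n,2n$; I would work this out by comparing with how Lemma \ref{lem:136}'s siblings (Lemmas \ref{lem:116}, \ref{lem:123}, \ref{lem:126}) handle the analogous four-term patterns.

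Once the equivalent identity
\[
\Theta_{\bm{a}} = \sum_{m\pmod{M}} c(m)\,\bigl(\text{explicit operator combination applied to }\mathcal{H}_{2,3}\bigr)\big|S_{M,m}
\]
is in hand, the modularity bookkeeping is routine: Lemma \ref{lem:thetamodular} gives that $\Theta_{(1\ 3\ 6)^T}$ is a modular form of weight $\tfrac32$ on $\Gamma_0(4\ell)$ (here $\ell = \lcm(1,3,6)=6$, so $\Gamma_0(24)$) with character $\chi_{4\mathcal{D}}=\chi_{72}=\chi_{8}$-times-$\chi_9$-trivial, i.e. $\chi_8$; Lemma \ref{lem:Hslash} gives modularity of $\mathcal{H}_{2,3}$ on $\Gamma_0(24)$ with character $\chi_{24}$; Lemma \ref{lem:operators}(1),(3) tracks the $U_4$ and $V_d$ operators; and Lemma \ref{lem:operators}(2) tracks the sieving operator $S_{M,m}$, producing a common congruence subgroup $\Gamma$ for which both sides are holomorphic modular forms of weight $\tfrac32$. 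Then Lemmas \ref{lem:valenceformula} and \ref{lem:index} yield an explicit bound $b = \tfrac{1}{8}[\SL_2(\Z):\Gamma]$ such that the identity holds in full as soon as it holds for the first $b$ Fourier coefficients, and a finite computer check finishes the proof — exactly as recorded in the table in Appendix \ref{sec:appendixvalencelemmas}.

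The main obstacle I anticipate is not conceptual but combinatorial: pinning down the exact operator combination and the exact constants $c(m)$ (and the modulus $M$) so that the four Hurwitz terms $-H(72n)+2H(18n)+5H(8n)-10H(2n)$ come out with precisely the right coefficients and sign pattern. The coefficients $-1,2,5,-10$ are not the "generic" ratios one gets from a single $\mathcal{H}_{\ell_1,\ell_2}$, so some care is needed — very likely one needs a linear combination involving both a $U_4$ piece and a plain (identity or $V$) piece, analogous to the $(2+U_4)$ in Lemma \ref{lem:113} or the $(4-U_4)$ in Lemma \ref{lem:116}, and possibly an extra twist by the prime $3$ to move between the "discriminant $24$" and "discriminant $72$" worlds. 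A secondary, purely practical obstacle is that the index $[\SL_2(\Z):\Gamma]$ may be large enough that the computer verification is nontrivial, though since $(1\ 3\ 6)^T$ is not on the list of exceptional cases in the proof of Theorem \ref{thm:congcondition}, this is expected to be manageable. Everything else — the modularity, the valence formula bound, the final check — is mechanical given the machinery already set up in Section \ref{sec:prelim}.
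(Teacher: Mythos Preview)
Your plan is exactly the paper's approach: express the right-hand side through the $\mathcal H_{\ell_1,\ell_2}$ generating functions, invoke modularity, and finish with the valence formula plus a short computer check. The one point where your guess misses is the choice of building block: the four Hurwitz terms factor as
\[
-(H(72n)-2H(18n))+5(H(8n)-2H(2n)),
\]
and each parenthesis is precisely $\mathcal H_{9,2}|U_4(n)$ and $\mathcal H_{1,2}|U_4(n)$ respectively (the $1:-2$ ratio is the signature of $\mathcal H_{*,2}$, not of $\mathcal H_{2,3}$, whose ratio is $1:-3$). The paper's equivalent identity is therefore
\[
\Theta_{\bm a}=\sum_{m\pmod 2} c(m)\,\bigl(5\mathcal H_{1,2}-\mathcal H_{9,2}\bigr)\big|U_4\big|S_{2,m},
\]
with $M=2$ (and $c(m)$ as in Appendix \ref{sec:appendixcs}); both sides land on $\Gamma_0(48)$, so only $12$ coefficients need checking.
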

\begin{proof}
The equivalent formula in this case is
	\[
	\Theta_{{\bm{a}}}=\sum_{m\pmod{2}} c(m) \left(
5\mathcal{H}_{1,2}-\mathcal{H}_{9,2}
\right)\big| U_4\big| S_{2,m}.\qedhere
	\]
\end{proof}

\begin{lemma}\label{lem:1310}
	For $\bm{a}=(1\ 3\ 10)^T$, we have 
	\begin{align*}
	r_{{\bm a}}(n) = \tfrac 12 H(120n)-2H\left(\tfrac{15n}{2}\right)-\tfrac 52 H\left(\tfrac{24n}{5}\right)+10 H\left(\tfrac{3n}{10}\right)-\tfrac 32 H\left(\tfrac{40 n}{3}\right)+6H\left(\tfrac{5n}{6}\right)\\
	+\tfrac{15}{2} H\left(\tfrac{8n}{15}\right)-30 H\left(\tfrac{n}{30}\right).
	\end{align*}
\end{lemma}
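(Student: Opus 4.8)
The plan is to follow exactly the template set out at the start of Section~\ref{sec:lemmas} and illustrated in detail in the proof of Lemma~\ref{lem:111}: write both sides of the claimed identity as holomorphic modular forms of weight $\tfrac32$ on a common congruence subgroup $\Gamma\subseteq\Gamma_0(4)$, invoke Lemmas~\ref{lem:valenceformula} and~\ref{lem:index} to reduce to a finite check, and then verify the required number of Fourier coefficients by computer. Concretely, I first rewrite the eight-term Hurwitz-class-number combination on the right-hand side in terms of the building blocks $\mathcal H_{\ell_1,\ell_2}$ from Lemma~\ref{lem:Hslash}. Here $\bm a=(1\ 3\ 10)^T$, so $\mathcal D_{\bm a}=30$ and $\ell_{\bm a}=30$; the arguments $120n,\tfrac{15n}{2},\tfrac{24n}{5},\tfrac{3n}{10},\tfrac{40n}{3},\tfrac{5n}{6},\tfrac{8n}{15},\tfrac{n}{30}$ pair up (as in Lemmas~\ref{lem:115}, \ref{lem:1210}, \ref{lem:1124}) into differences $\mathcal H\big|(U_{\ell_1\ell_2}-\ell_2 U_{\ell_1}V_{\ell_2})$ after suitable $V$-twists, so that the equivalent statement becomes
\[
\Theta_{\bm a}=\Big(\mathcal H_{8,15}-3\,\mathcal H_{8,5}\big|V_3-5\,\mathcal H_{8,3}\big|V_5+15\,\mathcal H_{8,1}\big|V_{15}\Big)\big|(\text{suitable }U\text{ and }S_{M,m}\text{ operators}),
\]
with the precise coefficients $\tfrac12,-2,-\tfrac52,10,-\tfrac32,6,\tfrac{15}{2},-30$ forcing the normalization (the leading $\tfrac12$ matches the overall factor $\tfrac12$ in front of $\mathcal H_{8,15}$). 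One should double-check the pairing of the $\tfrac{15n}{2}$ and $\tfrac{40n}{3}$ and $\tfrac{5n}{6}$ terms against the $U_4$/$U_3$-type manipulations used in the odd-divisor lemmas above, since the denominators $2,3,5,6$ mean several $\mathcal H_{\ell_1,\ell_2}$ with $\ell_2\in\{3,5,15\}$ and $\ell_1$ a multiple of $8$ enter simultaneously.

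Next I would track the modularity. By Lemma~\ref{lem:thetamodular}, $\Theta_{\bm a}$ is a modular form of weight $\tfrac32$ on $\Gamma_0(4\ell_{\bm a})=\Gamma_0(120)$ with character $\chi_{4\mathcal D_{\bm a}}=\chi_{120}$. By Lemma~\ref{lem:Hslash}, each $\mathcal H_{\ell_1,\ell_2}$ is modular of weight $\tfrac32$ on $\Gamma_0(4\operatorname{rad}(\ell_1)\ell_2)$ with character $\chi_{4\ell_1\ell_2}$; applying $V_d$ multiplies the level by $d$ and adjusts the character by $\chi_{4d}$ (Lemma~\ref{lem:operators}(3)), applying $U_d$ adjusts the level to $4\lcm(\cdot,\operatorname{rad}(d))$ and the character by $\chi_{4d}$ (Lemma~\ref{lem:operators}(1)), and the sieving operators $S_{M,m}$ enlarge the level to the appropriate $\Gamma_{N',M'}$ dictated by Lemma~\ref{lem:operators}(2) while preserving the character. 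Intersecting the two resulting groups gives the subgroup $\Gamma=\Gamma_{N',M'}$ recorded in the table of Appendix~\ref{sec:appendixvalencelemmas}, and Lemma~\ref{lem:index} computes $[\SL_2(\Z):\Gamma]$, so that Lemma~\ref{lem:valenceformula} yields the bound $b=\tfrac{1}{8}[\SL_2(\Z):\Gamma]$ on the number of coefficients to check. Since the modulus $M$ for $(1\ 3\ 10)^T$ is larger (the denominators $2,3,5$ all appear, forcing $M$ divisible by $\lcm$ of the relevant small numbers), $b$ will be comparatively large but — unlike the seven exceptional cases handled separately in Section~\ref{sec:congcondition} — still within range of a direct computer verification.

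The main obstacle is not conceptual but bookkeeping: one must get the exact linear combination of $\mathcal H_{\ell_1,\ell_2}\big|V_d\big|U_{d'}\big|S_{M,m}$ right, including the signs and the rational coefficients $\tfrac12,\tfrac52,\tfrac32,\tfrac{15}{2}$, and then compute the correct intersection group $\Gamma$ and index. The rational coefficients are a mild nuisance: they arise because the pairing $H(4\ell_1\ell_2 n)$ versus $H\big(\tfrac{\ell_1 n}{\ell_2}\big)$ in Lemma~\ref{lem:Hslash} naturally produces integer-coefficient combinations only after clearing a factor of $\ell_2$, so the half-integer coefficients here reflect an extra factor of $2$ coming from the even part $a_1=1$ interacting with the denominators; one resolves this exactly as in Lemmas~\ref{lem:115} and~\ref{lem:1210} by absorbing it into an overall $U_2$ or into the normalization of $\mathcal H_{\ell_1,\ell_2}$. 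Once the identity is written purely in terms of the operators, the remaining steps (level, character, index, coefficient check) are entirely mechanical, and I would simply cite the relevant row of Appendix~\ref{sec:appendixvalencelemmas} for $\Gamma$ and $b$ and report that the first $b$ coefficients agree, exactly as done for the other lemmas in this section.
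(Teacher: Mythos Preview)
Your overall strategy---package both sides as weight-$\tfrac32$ modular forms, intersect the groups, apply the valence formula, and finish by computer---is exactly what the paper does. But your proposed reformulation of the right-hand side is both more complicated than necessary and not quite right, and the paper's choice is substantially cleaner.

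The paper's equivalent form is simply
\[
\Theta_{\bm a}=\tfrac12\,\mathcal H_{1,3}\big|\,(U_5-5V_5)\big|(U_8-4V_2),
\]
with \emph{no} sieving operators $S_{M,m}$ at all (note that the statement of the lemma has no factor $c(n)$ out front, unlike most of the other lemmas). The point is that Lemma~\ref{lem:Hslash} is only needed once, to pass from $\widehat{\mathcal H}$ to the holomorphic $\mathcal H_{1,3}$; once you are working with a genuine holomorphic modular form, the further operators $U_5-5V_5$ and $U_8-4V_2$ automatically preserve holomorphy, and their composite produces all eight Hurwitz terms with the correct coefficients. Your attempt to pair the eight terms into four independent ``holomorphy-correcting'' differences $\mathcal H_{8,\ell_2}$ runs into trouble: $\mathcal H_{8,1}=\mathcal H\big|(U_8-U_8V_1)=0$, so the last summand in your displayed combination is identically zero, and the pairing you sketch for the $H(15n/2)$ and $H(5n/6)$ terms does not match the shape $H(\ell_1\ell_2 n)-\ell_2 H(\ell_1 n/\ell_2)$ of Lemma~\ref{lem:Hslash}.

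The practical payoff of the paper's factored form is that the common group is just $\Gamma_0(120)$ (Appendix~\ref{sec:appendixvalencelemmas}), so only $36$ coefficients need to be checked---far fewer than your ``comparatively large'' expectation. In short: drop the $S_{M,m}$ and the four-term $\mathcal H_{8,\cdot}$ decomposition, start from $\mathcal H_{1,3}$, and apply $(U_5-5V_5)(U_8-4V_2)$.
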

\begin{proof}
The equivalent formula in this case is
	\[
	\Theta_{{\bm{a}}}=\tfrac{1}{2}\mathcal{H}_{1,3}\big| \left(U_5-5V_5\right)\big|\left(U_8-4V_2\right).\qedhere
	\]
\end{proof}

\begin{lemma}\label{lem:1312}
	We have, for $\bm{a}= (1\ 3\ 12)^T$,
	\begin{equation*}
	r_{{\bm a}}(n)= c(n) \left(H(36n)-3H(4n)+2
	H\left(\tfrac{9n}{4}\right)
	-6H\left(\tfrac{n}{
		4
	}\right)\right).
	\end{equation*}
\end{lemma}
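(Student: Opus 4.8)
The plan is to follow the template used for the other lemmas in this section, reducing the assertion to Lemma~\ref{lem:134} twisted by $U_3$. First I would rewrite the right-hand side in terms of the holomorphic building blocks $\mathcal{H}_{\ell_1,\ell_2}$ of Lemma~\ref{lem:Hslash}. Since $\mathcal{H}_{4,3}=\mathcal H\big|(U_{12}-3U_4V_3)$ has $n$-th Fourier coefficient $H(12n)-3H\big(\tfrac{4n}{3}\big)$ and $\mathcal{H}_{1,3}\big|V_4=\mathcal H\big|(U_3-3V_3)\big|V_4$ has $n$-th coefficient $H\big(\tfrac{3n}{4}\big)-3H\big(\tfrac{n}{12}\big)$, applying $U_3$ (which commutes with $V_4$ by Lemma~\ref{lem:operators}~(4)) triples the argument, so that $\left(\mathcal{H}_{4,3}+2\mathcal{H}_{1,3}\big|V_4\right)\big|U_3$ has $n$-th coefficient $H(36n)-3H(4n)+2H\big(\tfrac{9n}{4}\big)-6H\big(\tfrac{n}{4}\big)$. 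Hence, with $M$ the lcm of the moduli in the table for $\bm a=(1\ 3\ 12)^T$ in Appendix~\ref{sec:appendixcs}, the lemma is equivalent to
\[
\Theta_{(1\ 3\ 12)^T}=\sum_{m\pmod M}c(m)\left(\mathcal{H}_{4,3}+2\mathcal{H}_{1,3}\big|V_4\right)\big|U_3\big|S_{M,m}.
\]

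Next I would verify that both sides are holomorphic modular forms of weight $\tfrac32$ on a common congruence subgroup $\Gamma\subseteq\Gamma_0(4)$ with the same character. By Lemma~\ref{lem:Hslash}, $\mathcal{H}_{4,3}$ is modular on $\Gamma_0(24)$ with character $\chi_{48}$ and $\mathcal{H}_{1,3}$ on $\Gamma_0(12)$ with character $\chi_{12}$; by Lemma~\ref{lem:operators}~(3), $\mathcal{H}_{1,3}\big|V_4$ then lives on $\Gamma_0(48)$ with character $\chi_{12}$ (as $\chi_{16}$ is trivial). Applying $U_3$ via Lemma~\ref{lem:operators}~(1) keeps the level a divisor of $48$ and multiplies the character by $\chi_{12}$; since $\chi_{48}$ and $\chi_{12}$ are both the Kronecker symbol of the fundamental discriminant $12$, the product is trivial, so $\left(\mathcal{H}_{4,3}+2\mathcal{H}_{1,3}\big|V_4\right)\big|U_3$ is modular of weight $\tfrac32$ on $\Gamma_0(48)$ with trivial character, and Lemma~\ref{lem:operators}~(2) then produces the claimed modularity on an explicit $\Gamma_{\ast,\ast}$. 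On the other side, Lemma~\ref{lem:thetamodular} (with $N=1$, $\ell=12$, $\mathcal D_{\bm a}=36$) shows $\Theta_{(1\ 3\ 12)^T}$ is a modular form of weight $\tfrac32$ on $\Gamma_0(48)$ with character $\chi_{144}$, which is trivial. Intersecting the two groups yields the common $\Gamma$.

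Finally I would invoke the valence formula: by Lemmas~\ref{lem:valenceformula} and~\ref{lem:index}, the difference of the two sides vanishes identically once its first $b=\tfrac18[\SL_2(\Z):\Gamma]$ Fourier coefficients vanish, where $[\SL_2(\Z):\Gamma]$ is computed from Lemma~\ref{lem:index}; this gives the bound recorded in Appendix~\ref{sec:appendixvalencelemmas}, which one then checks on a computer. I expect the only genuinely non-routine point to be pinning down the precise combination of $\mathcal{H}$-operators above (which is essentially forced by reading off Lemma~\ref{lem:134} and applying $U_3$); everything afterward is bookkeeping of levels and characters plus a finite verification, and since $(1\ 3\ 12)^T$ is not among the exceptional forms of Theorem~\ref{thm:congcondition}, the value of $b$ is small enough that the computation is unproblematic.
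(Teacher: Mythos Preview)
Your proposal is correct and follows essentially the same approach as the paper: the paper records the equivalent identity as $\Theta_{\bm a}=\sum_{m\pmod 8}c(m)\,\mathcal{H}_{1,3}\big|(U_{12}+2U_3V_4)\big|S_{8,m}$, and your combination $(\mathcal{H}_{4,3}+2\mathcal{H}_{1,3}\big|V_4)\big|U_3$ is literally the same operator applied to $\mathcal H$ (via $V_3U_3=\mathrm{id}$ and $V_4U_3=U_3V_4$). The level/character bookkeeping you give lands on $\Gamma_0(192)$ with trivial character, matching the paper's Appendix~\ref{sec:appendixvalencelemmas} and yielding the bound $b=48$.
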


\begin{proof}
The equivalent formula in this case is
	\[
	\Theta_{{\bm{a}}}=\sum_{m\pmod{8}} 
	c(m)
	\mathcal{H}_{1,3}\big| \left(U_{12}+2U_{3}V_4\right) \big|S_{8,m}.\qedhere
	\]
\end{proof}

\begin{lemma}\label{lem:1318}
	For $\bm{a}=(1\ 3\ 18)^T$, we have 
\[
r_{{\bm a}}(n)= c(n) \left(H(24n)-2H(6n) -5H\left(\tfrac{8n}{3}\right) +10 H \left(\tfrac{2n}{3} \right) \right).
\]
\end{lemma}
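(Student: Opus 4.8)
The plan is to follow the uniform strategy used throughout Section~\ref{sec:lemmas}: reformulate the claimed Hurwitz class number identity for $\bm{a}=(1\ 3\ 18)^T$ as an equality of two explicit modular forms of weight $\tfrac32$, then invoke the valence formula (Lemma~\ref{lem:valenceformula}) together with the index computation (Lemma~\ref{lem:index}) to reduce the identity to a finite coefficient check. First I would write the right-hand side of the class number identity as a linear combination of translates and dilates of $\mathcal H$: the combination $H(24n)-2H(6n)-5H(\tfrac{8n}{3})+10H(\tfrac{2n}{3})$ naturally pairs the terms with argument having $3$ in the ``numerator'' against those with $3$ in the ``denominator,'' so it should be expressible through $\mathcal{H}_{2,3}=\mathcal H\big|(U_6-3U_2V_3)$ (in the notation of Lemma~\ref{lem:Hslash}) followed by a sieving operator. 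Concretely I expect the equivalent formula to take the shape
\[
\Theta_{\bm{a}}=\sum_{m\pmod{M}} c(m)\,\mathcal{H}_{2,3}\big|\left(U_4-4\right)\big|S_{M,m},
\]
for the appropriate modulus $M$ and constants $c(m)$ recorded in Appendix~\ref{sec:appendixcs}; note the structural similarity to Lemma~\ref{lem:116}, whose right-hand side $H(24n)-4H(6n)-3H(\tfrac{8n}{3})+12H(\tfrac{2n}{3})$ is of the same type and is handled by $-\mathcal{H}_{2,3}\big|(4-U_4)\big|S_{M,m}$.

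Next I would verify modularity of both sides. By Lemma~\ref{lem:thetamodular}, $\Theta_{\bm{a}}$ for $\bm{a}=(1\ 3\ 18)^T$ (so $\ell=18$, $\mathcal{D}_{\bm a}=54$) is a modular form of weight $\tfrac32$ on $\Gamma_0(72)$ with character $\chi_{216}=\chi_{24}$. By Lemma~\ref{lem:Hslash}, $\mathcal{H}_{2,3}$ is a modular form of weight $\tfrac32$ on $\Gamma_0(24)$ with character $\chi_{24}$; applying $U_4$ keeps it on $\Gamma_0(24)$ with an adjusted character (Lemma~\ref{lem:operators}~(1)), and applying the sieve $S_{M,m}$ with $M\mid 24$ lands it, via Lemma~\ref{lem:operators}~(2), on a group of the form $\Gamma_{\lcm(24,M^2,MN_\chi,\dots),*}$. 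I would then intersect the two ambient groups to obtain the common congruence subgroup $\Gamma\subseteq\Gamma_0(4)$ on which both sides are modular forms of weight $\tfrac32$ with the same character, compute $[\SL_2(\Z):\Gamma]$ by Lemma~\ref{lem:index}, and read off from Lemma~\ref{lem:valenceformula} the bound $b=\tfrac18[\SL_2(\Z):\Gamma]$ on the number of Fourier coefficients that must agree; this $b$ is the value tabulated in Appendix~\ref{sec:appendixvalencelemmas}. Finally, one checks the first $b$ coefficients of both $q$-expansions by computer, and the valence formula forces the difference to vanish identically, yielding the identity and hence (by extracting the $n$-th coefficient) the claimed formula for $r_{\bm a}(n)$.

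The main obstacle I anticipate is purely bookkeeping rather than conceptual: getting the exact combination of operators and the constants $c(m)$ right so that the non-holomorphic (incomplete gamma) parts of the $\widehat{\mathcal H}$-side cancel and the holomorphic parts reproduce $\Theta_{\bm a}$. The cancellation of the non-holomorphic part is guaranteed in the building blocks $\mathcal{H}_{\ell_1,\ell_2}$ by Lemma~\ref{lem:Hslash}, but one must ensure that the subsequent $U_4$, the scalar shift, and the sieve do not reintroduce it and that the weight-$\tfrac32$ character bookkeeping ($\chi_{24}$ throughout) is consistent on the intersected group. A secondary concern is that the modulus $M$ and the resulting index could be large enough that the coefficient check is computationally heavy, as already happened for several cases in the proof of Theorem~\ref{thm:congcondition}; should that occur, the fallback is to reduce to a smaller modulus by an elementary congruence argument of the type $r_{\bm a,\bm h,N}(n)=\tfrac12 r_{\bm a,\bm h,N'}(n)$ used there, but for this single-lemma statement I expect the direct valence-formula verification to go through.
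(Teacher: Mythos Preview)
Your overall strategy is exactly the paper's: rewrite the class-number side as a holomorphic modular form built from some $\mathcal{H}_{\ell_1,\ell_2}$, match levels and characters with $\Theta_{\bm a}$, and finish via the valence formula. However, your proposed operator combination is wrong and cannot be fixed by adjusting constants. The function $\mathcal{H}_{2,3}\big|(U_4-4)$ has $n$-th coefficient $H(24n)-4H(6n)-3H(\tfrac{8n}{3})+12H(\tfrac{2n}{3})$, which is the expression from Lemma~\ref{lem:116}, not the one needed here. More to the point, no linear combination of $\mathcal{H}_{2,3}$ and $\mathcal{H}_{2,3}\big|U_4$ can work: in any such combination the ratio of the $H(24n)$ coefficient to the $H(\tfrac{8n}{3})$ coefficient is forced to be $1:(-3)$, whereas the target requires $1:(-5)$. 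So this is not merely bookkeeping; the building block $\mathcal{H}_{2,3}$ is the wrong choice.

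The paper instead starts from $\mathcal{H}_{1,2}$ and takes
\[
\Theta_{\bm{a}}=\sum_{m\pmod{6}} c(m)\,\mathcal{H}_{1,2}\big|(U_{3}-5V_{3})\big|U_4\big|S_{6,m},
\]
where the free coefficient in $U_3-5V_3$ produces the required $-5$ (and the $-2$ and $+10$ come from the built-in $-2$ in $\mathcal{H}_{1,2}=\mathcal H|(U_2-2V_2)$). With this choice the common level is $\Gamma_0(144)$, so by Lemmas~\ref{lem:valenceformula} and~\ref{lem:index} only the first $36$ coefficients need to be checked. Your concern about reintroducing a non-holomorphic part is unfounded: once $\mathcal{H}_{1,2}$ is holomorphic by Lemma~\ref{lem:Hslash}, the subsequent $U$, $V$, and $S$ operators all preserve holomorphicity.
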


\begin{proof}
The equivalent formula in this case is
	\[
	\Theta_{{\bm{a}}}= \sum_{m\pmod{6}} c(m)\mathcal{H}_{1,2}\big|\left(U_{3}- 5V_3\right)\big|U_4 \big|S_{6,m}. \qedhere
	\]
\end{proof}

\begin{lemma}\label{lem:1330}
	We have, for $\bm a= (1\ 3\ 30)^T$,
	\begin{multline*}
	r_{{\bm a}}(n)=c(n)\Big(H(360n)-2H(90n)-3H(40n)+6H(10n)-5H\left(\tfrac{72n}{5}\right)+15H\left(\tfrac{8n}{5}\right)\\	
+10H\left(\tfrac{18n}{5}\right)-30H\left(\tfrac{2n}{5}\right)\Big).
	\end{multline*}
\end{lemma}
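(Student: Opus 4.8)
The plan is to follow verbatim the strategy used for Lemmas \ref{lem:111}--\ref{lem:1330}: translate the claimed Hurwitz class number identity into an identity of holomorphic weight-$\tfrac32$ modular forms, use the valence formula to bound the number of Fourier coefficients whose agreement has to be checked, and then verify those finitely many coefficients by computer.

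First I would recast the right-hand side. Each of the eight arguments occurring is of the form $\tfrac{360n}{k^2}$ with $k\mid 30$: the integral ones $360n,90n,40n,10n$ correspond to $k=1,2,3,6$, and the fractional ones $\tfrac{72n}{5},\tfrac{8n}{5},\tfrac{18n}{5},\tfrac{2n}{5}$ correspond to $k=5,15,10,30$, i.e.\ to $k=5j$ with $j\mid 6$; moreover in every case the coefficient of $H(\tfrac{360n}{k^2})$ equals $\mu(k)\,k$. Using that $(\mathcal H_{\ell_1,5})_n=H(5\ell_1 n)-5H(\tfrac{\ell_1 n}{5})$, with the second term understood to vanish unless $5\mid n$, and that $72/j^2\in\{72,18,8,2\}$ for $j\mid 6$, this identifies the bracketed expression with the $n$-th Fourier coefficient of $\mathcal H_{72,5}-2\mathcal H_{18,5}-3\mathcal H_{8,5}+6\mathcal H_{2,5}$ (the weight $\mu(j)\,j$ being attached to $\mathcal H_{72/j^2,\,5}$). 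Hence the claim is equivalent to
\[
\Theta_{\bm{a}}=\sum_{m\pmod M} c(m)\bigl(\mathcal H_{72,5}-2\mathcal H_{18,5}-3\mathcal H_{8,5}+6\mathcal H_{2,5}\bigr)\big|S_{M,m},
\]
where $M$ is the least common multiple of the moduli of the congruence conditions defining $c=c_{\bm{a}}$ in Appendix \ref{sec:appendixcs}; this is completely parallel to the equivalent formulas proved in Lemma \ref{lem:1124} and, for the neighbouring form $(1\ 3\ 10)^T$, in Lemma \ref{lem:1310}.

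Next I would verify modularity of both sides. By Lemma \ref{lem:thetamodular} the left-hand side is a holomorphic modular form of weight $\tfrac32$ on $\Gamma_0(4\ell)=\Gamma_0(120)$ with character $\chi_{4\mathcal D_{\bm{a}}}=\chi_{360}$. By Lemma \ref{lem:Hslash}, applied with $\ell_2=5$ (squarefree) and each $\ell_1\in\{72,18,8,2\}$ coprime to $5$, every $\mathcal H_{\ell_1,5}$ is a genuinely holomorphic modular form of weight $\tfrac32$ on $\Gamma_0(4\operatorname{rad}(\ell_1)\cdot 5)$, a group containing $\Gamma_0(120)$, and the four characters $\chi_{4\ell_1\cdot 5}$ all induce the same character $\chi_{360}$ on $\Gamma_0(120)$; in particular the right-hand side needs no non-holomorphic completion. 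Lemma \ref{lem:operators} (2) then shows that applying $S_{M,m}$ produces a holomorphic modular form of weight $\tfrac32$ with character $\chi_{360}$ on a congruence subgroup of the form $\Gamma_{\lcm(120,\,M^2,\,M N_\chi,\,M),\,M}$, with $M^2$ replaced by $4M^2$ when $M\equiv 2\pmod 4$ and with the $\Gamma_1$-part trivial when $M\mid 24$, where $N_\chi$, the conductor of $\chi_{360}$, divides $40$.

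Finally, letting $\Gamma$ be the intersection of the two congruence subgroups above (the smaller of the two), Lemma \ref{lem:index} computes $[\SL_2(\Z):\Gamma]$, and Lemma \ref{lem:valenceformula} reduces the identity to the vanishing of the first $b:=\tfrac18[\SL_2(\Z):\Gamma]$ Fourier coefficients of the difference of the two sides; these are checked by computer, using \eqref{3The} on the left and $\mathcal H(\tau)=\sum_D H(D)q^D$ together with the operator definitions on the right, and the resulting $\Gamma$ and $b$ are recorded in Appendix \ref{sec:appendixvalencelemmas}. I do not expect a genuine obstacle here: the only step requiring thought is the first one---recognising the operator combination and then reading off $M$ and the values $c(m)$ from a handful of computed coefficients---since the non-holomorphic parts of the four functions $\mathcal H_{\ell_1,5}$ are guaranteed to cancel in the relevant combination by Lemma \ref{lem:Hslash}, and the size of the final coefficient check, while it grows with $M$, stays modest.
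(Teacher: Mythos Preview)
Your proof is correct and arrives at the same congruence subgroup $\Gamma_0(240)$ and the same valence bound $b=72$ as recorded in Appendix \ref{sec:appendixvalencelemmas}. The paper, however, chooses a different operator decomposition: its equivalent formula is
\[
\Theta_{{\bm{a}}}=\sum_{m\pmod{2}} c(m)\left(\mathcal{H}_{40,3} -5\mathcal{H}_{8,3}\big|V_5-2\mathcal{H}_{20,3}\big|V_2+10\mathcal{H}_{4,3}\big|V_{10}\right)\big|U_3\big| S_{2,m},
\]
built from the functions $\mathcal{H}_{\ell_1,3}$ together with $U_3$ and auxiliary $V$-operators, and then needs the elementary observation $H(D)=0$ for $2\parallel D$ to recover the full class-number combination (the $V_2$- and $V_{10}$-pieces only contribute for even $n$, but the terms $H(90n),H(10n),H(\tfrac{18n}{5}),H(\tfrac{2n}{5})$ vanish for odd $n$ anyway). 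Your choice $\ell_2=5$ with $\mathcal{H}_{72,5},\mathcal{H}_{18,5},\mathcal{H}_{8,5},\mathcal{H}_{2,5}$ reproduces all eight Hurwitz terms directly, with no extra step; the paper's choice $\ell_2=3$ keeps the presentation parallel to the surrounding lemmas (compare Lemmas \ref{lem:1124}, \ref{lem:146}, \ref{lem:2312}) at the price of that auxiliary vanishing remark.
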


\begin{proof}
The equivalent formula in this case is
	\[
	\Theta_{{\bm{a}}}=\sum_{m\pmod{2}} 
	c(m)
	\left(\mathcal{H}_{40,3} -5
	\mathcal{H}_{8,3}
	\big|V_5-2\mathcal{H}_{20,3}|V_2+10\mathcal{H}_{4,3}|V_{10}\right)\big|U_3\big| S_{2,m}. 
	\]
	The claimed identity follows using that $\mathcal{H}(n)=0$ if $2\parallel n$.\qedhere
\end{proof}

\begin{lemma}\label{lem:146}
For $\bm{a}\in\{(1\ 4\ 6)^T, (1\ 4\ 24)^T\}$,  we have 
\[
r_{{\bm{a}}}(n)= c_{\bm a}(n) \left(2 H(24n) -5 H(6n) -6 H \left( \tfrac{8n}{3} \right) +15H\left( \tfrac{2n}{3} \right)\right).
\]
\end{lemma}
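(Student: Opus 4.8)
The plan is to follow exactly the template already laid out in the proof of Lemma~\ref{lem:111} and used verbatim (modulo bookkeeping) in Lemmas~\ref{lem:112}--\ref{lem:1330}: reduce the asserted class-number identity for $\bm{a}\in\{(1\ 4\ 6)^T,(1\ 4\ 24)^T\}$ to an equality of holomorphic modular forms of weight $\tfrac32$, bound the number of Fourier coefficients that must agree via the valence formula, and verify that many coefficients by computer. Concretely, using $\mathcal{H}_{\ell_1,\ell_2}=\mathcal{H}\big|(U_{\ell_1\ell_2}-\ell_2 U_{\ell_1}V_{\ell_2})$ from Lemma~\ref{lem:Hslash}, one checks that the right-hand side $2H(24n)-5H(6n)-6H\!\left(\tfrac{8n}{3}\right)+15H\!\left(\tfrac{2n}{3}\right)$ is the $n$-th coefficient of a combination of the shape $\big(\mathcal{H}_{8,3}-\text{(something)}\cdot\mathcal{H}_{2,3}\big)$, possibly precomposed with a $V_d$ and a sieving operator $S_{M,m}$ with $c_{\bm a}(m)$ as listed in Appendix~\ref{sec:appendixcs}; unwinding the definitions of $U,V,S$ on $\mathcal{H}$ and matching the arguments $24n,6n,\tfrac{8n}{3},\tfrac{2n}{3}$ against the scalings $8\cdot 3,\,2\cdot 3,\dots$ pins down the exact operator expression. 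So the equivalent statement to prove will read, for $M$ the lcm of the relevant moduli in Appendix~\ref{sec:appendixcs},
\[
\Theta_{\bm{a}}=\sum_{m\pmod{M}} c_{\bm a}(m)\left(2\mathcal{H}_{8,3}-5\mathcal{H}_{2,3}\right)\big|S_{M,m},
\]
(with a $V_d$ inserted if the two $\bm a$'s require different inner scalings, just as $(1\ 4\ 6)^T$ vs.\ $(1\ 4\ 24)^T$ differ by a factor $4$ in the last entry, handled by a $V_4$ as in the pattern of Lemma~\ref{lem:1124}).

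Next I would establish modularity of both sides. By Lemma~\ref{lem:thetamodular}, $\Theta_{\bm a}$ is a holomorphic modular form of weight $\tfrac32$ on $\Gamma_0(4\ell_{\bm a})$ with character $\chi_{4\mathcal{D}_{\bm a}}=\chi_{24}$ (as $a_1a_2a_3=24$ up to squares in both cases). By Lemma~\ref{lem:Hslash}, $\mathcal{H}_{8,3}$ and $\mathcal{H}_{2,3}$ are holomorphic modular forms of weight $\tfrac32$ on $\Gamma_0(4\operatorname{rad}(8)\cdot 3)=\Gamma_0(24)$ and $\Gamma_0(24)$ respectively, with character $\chi_{24}$; applying Lemma~\ref{lem:operators}(2) and (3) to incorporate the $V_d$ and $S_{M,m}$ pushes this up to a form on some explicit $\Gamma_{N',M}$ with the same character. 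Intersecting the two resulting groups gives a congruence subgroup $\Gamma\subseteq\Gamma_0(4)$; Lemma~\ref{lem:index} computes $[\SL_2(\Z):\Gamma]$, and Lemma~\ref{lem:valenceformula} then gives the bound $b=\tfrac{3}{24}[\SL_2(\Z):\Gamma]=\tfrac18[\SL_2(\Z):\Gamma]$ on the number of coefficients to check. These data ($\Gamma$ and $b$) are precisely what is recorded in the row for this case in the table in Appendix~\ref{sec:appendixvalencelemmas}, so the write-up can simply cite that table, exactly as Lemma~\ref{lem:112} does. Finally one performs the finite computer check of the first $b$ Fourier coefficients of both sides, completing the proof.

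The only genuinely non-routine step is the first one: identifying the correct operator expression, i.e.\ guessing which linear combination of $\mathcal{H}_{\ell_1,\ell_2}$-type functions (with which inner $V_d$ and which sieve $S_{M,m}$, and which constants $c_{\bm a}(m)$) reproduces $2H(24n)-5H(6n)-6H(\tfrac{8n}{3})+15H(\tfrac{2n}{3})$. This is where the structure $24=2^3\cdot 3$, $6=2\cdot 3$, and the ``$3\mid$'' shrinking in the terms $H(\tfrac{8n}{3}),H(\tfrac{2n}{3})$ must be matched against $\mathcal{H}|(U_{\ell_1\ell_2}-\ell_2U_{\ell_1}V_{\ell_2})$; once the shape is correctly posited, everything downstream (modularity via Lemmas~\ref{lem:thetamodular}, \ref{lem:operators}, \ref{lem:Hslash}, the index via Lemma~\ref{lem:index}, the bound via Lemma~\ref{lem:valenceformula}, and the computer verification) is mechanical. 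Since the two forms $(1\ 4\ 6)^T$ and $(1\ 4\ 24)^T$ are grouped together, one also has to verify that the same operator identity covers both, with the difference absorbed into $c_{\bm a}(m)$ and possibly a single $V_4$; this mirrors the grouping already carried out in Lemmas~\ref{lem:111}, \ref{lem:112}, \ref{lem:113}, \ref{lem:116}, \ref{lem:1112}, and \ref{lem:1124}, so no new ideas are needed, only careful bookkeeping that the tables in Appendices~\ref{sec:appendixcs} and~\ref{sec:appendixvalencelemmas} already encapsulate.
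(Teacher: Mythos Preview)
Your proposal is correct and follows essentially the same approach as the paper. Your candidate operator expression $2\mathcal{H}_{8,3}-5\mathcal{H}_{2,3}$ is literally equal to the paper's $\mathcal{H}_{2,3}\big|(-5+2U_4)$ (since $\mathcal{H}_{8,3}=\mathcal{H}_{2,3}\big|U_4$ by the commutator relations in Lemma~\ref{lem:operators}(4)), and no extra $V_d$ is needed to handle the two cases---both use $M=8$ with only the constants $c_{\bm a}(m)$ differing, yielding the subgroup $\Gamma_0(192)$ and the bound $b=48$ recorded in Appendix~\ref{sec:appendixvalencelemmas}.
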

\begin{proof}
The equivalent formula in this case is
	\[
	\Theta_{{\bm{a}}}=\sum_{m\pmod{8}} c_{\bm a}(m)\mathcal{H}_{2,3}\big|\left(
-5+2U_4
\right)\big|S_{8,m}. \qedhere
	\]
\end{proof}

\begin{lemma}\label{lem:155}
	For $\bm{a}=(1\ 5\ 5)^T$, we have
\[
r_{{\bm{a}}}(n)=-2 H(100n) +4 H (25n) +14 H(4n) -28 H(n)
.
\]
\end{lemma}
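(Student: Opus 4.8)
The plan is to follow the strategy of Lemma~\ref{lem:111} and the lemmas after it. First I would note that, by the definition of $\mathcal H_{\ell_1,\ell_2}$, the action of $U_d$, and the fact that the $m$-th Fourier coefficient of $\mathcal H$ is $H(m)$ (read as $0$ when $m\not\equiv 0,3\pmod 4$), the $n$-th Fourier coefficient of $\mathcal H_{1,2}\big|U_2$ is $H(4n)-2H(n)$ and that of $\mathcal H_{25,2}\big|U_2$ is $H(100n)-2H(25n)$; since $\gcd(25,2)=1$ and $2$ is squarefree, $\mathcal H_{25,2}$ is of the type treated in Lemma~\ref{lem:Hslash}. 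Comparing coefficients of $q^n$ then shows that the stated identity is equivalent to
\[
\Theta_{\bm a}=\bigl(14\,\mathcal H_{1,2}-2\,\mathcal H_{25,2}\bigr)\big|U_2 .
\]

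Next I would locate a congruence subgroup on which both sides are modular. By Lemma~\ref{lem:thetamodular} (here $\ell_{\bm a}=\lcm(1,5,5)=5$ and $\mathcal D_{\bm a}=25$) the left-hand side is a holomorphic modular form of weight $\tfrac32$ on $\Gamma_0(20)$ with character $\chi_{100}$, and $\chi_{100}$ is trivial on $\Gamma_0(20)$. By Lemma~\ref{lem:Hslash}, $\mathcal H_{1,2}$ and $\mathcal H_{25,2}$ are holomorphic modular forms of weight $\tfrac32$ on $\Gamma_0(8)$ and $\Gamma_0(40)$ with characters $\chi_8$ and $\chi_{200}$ respectively; as every $\left(\begin{smallmatrix}a&b\\c&d\end{smallmatrix}\right)\in\Gamma_0(40)$ has $\gcd(d,5)=1$, one has $\chi_{200}(d)=\chi_8(d)$ on $\Gamma_0(40)$, so $14\,\mathcal H_{1,2}-2\,\mathcal H_{25,2}$ is modular on $\Gamma_0(40)$ with character $\chi_8$, and by Lemma~\ref{lem:operators}~(1) applying $U_2$ yields a holomorphic modular form of weight $\tfrac32$ on $\Gamma_0\bigl(4\lcm(10,2)\bigr)=\Gamma_0(40)$ with character $\chi_8\chi_8=1$. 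Hence the difference of the two sides is a holomorphic modular form of weight $\tfrac32$ on $\Gamma_0(40)$ with trivial character.

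Finally I would invoke the valence formula. By Lemma~\ref{lem:index}, $[\SL_2(\Z):\Gamma_0(40)]=72$, so Lemma~\ref{lem:valenceformula} shows that the difference vanishes identically provided its coefficients of $q^n$ vanish for all $0\le n\le 9=\tfrac{1}{8}[\SL_2(\Z):\Gamma_0(40)]$; this is readily checked from the definitions (for instance $r_{\bm a}(0)=1=-12H(0)$ and $r_{\bm a}(1)=2=-2H(100)+14H(4)$, using $H(100)=\tfrac52$, $H(4)=\tfrac12$, $H(1)=H(25)=0$). I do not foresee a genuine obstacle: the entire analytic content — the modularity and non-holomorphic completion of $\mathcal H$, and the behaviour of the operators $U_d$, $V_d$, $S_{M,m}$ — is already packaged into Lemmas~\ref{lem:thetamodular}--\ref{lem:operators}. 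The only mildly delicate points are guessing the correct linear combination $14\,\mathcal H_{1,2}-2\,\mathcal H_{25,2}$ (determined by matching the four Hurwitz class-number terms) and the routine bookkeeping of levels and characters that pins everything down to $\Gamma_0(40)$ and the bound $9$.
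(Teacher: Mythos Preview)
Your proposal is correct and follows essentially the same approach as the paper: the paper's equivalent identity $\Theta_{\bm a}=-2(\mathcal H_{25,2}-7\mathcal H_{1,2})|U_2$ is exactly your $(14\mathcal H_{1,2}-2\mathcal H_{25,2})|U_2$, and your subgroup $\Gamma_0(40)$ and coefficient bound $9$ match the entry in Appendix~\ref{sec:appendixvalencelemmas}. You have simply spelled out in more detail the level/character bookkeeping and the verification of the first coefficients that the paper leaves implicit.
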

\begin{proof}
The equivalent formula in this case is
	\[
	\Theta_{{\bm{a}}} = -2 \left(\mathcal{H}_{25,2}-7\mathcal{H}_{1,2}\right)\big| U_2. \qedhere
	\]
\end{proof}

\begin{lemma}\label{lem:158}
 We have, for $\bm a =(1\ 5\ 8)^T$,
\[
 r_{{\bm a}}(n)= c(n)\left(  H(160n)- H(40n)-5H\left(\tfrac{32n }{5}\right)  +  5H \left( \tfrac{8n}{5} \right)-12 \delta_{8\mid n} \left( H\left(\tfrac{5n}{2}\right) -5 H\left(\tfrac{n}{10}\right)\right)\right).
\]
\end{lemma}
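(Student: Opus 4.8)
The plan is to follow exactly the template laid out in the proof of Lemma~\ref{lem:111} and used for all the preceding lemmas in this section. First I would observe that the stated identity for $r_{\bm a}(n)$ with $\bm a=(1\ 5\ 8)^T$ is, term by term, the Fourier expansion of a combination of the twisted class number generating functions $\mathcal H_{\ell_1,\ell_2}$ (from Lemma~\ref{lem:Hslash}) hit with $U$- and $V$-operators; concretely, the $\delta_{8\mid n}$ factor signals an application of a sieving operator $S_{M,m}$ in the residue class $0\pmod 8$. So the first step is to rewrite the claimed formula as an identity of the shape
\[
\Theta_{\bm a}=\sum_{m\pmod M} c(m)\, \mathcal{G}\big|S_{M,m},
\]
where $\mathcal G$ is an explicit combination such as $\bigl(\mathcal H_{32,5}-\mathcal H_{8,5}\bigr)\big|U_{?}$ plus a piece like $-12\bigl(\mathcal H_{5,2}-5\mathcal H_{1,2}\bigr)\big|\text{(operators)}$ that is then restricted to $n\equiv 0\pmod 8$, with $M$ the lcm of the relevant moduli coming from the table in Appendix~\ref{sec:appendixcs}. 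Matching the eight Hurwitz terms $H(160n),H(40n),H(32n/5),H(8n/5),H(5n/2),H(n/10)$ against the definition of $\mathcal H_{\ell_1,\ell_2}=\mathcal H|(U_{\ell_1\ell_2}-\ell_2 U_{\ell_1}V_{\ell_2})$ pins down the exact operator combination; this bookkeeping is the only genuinely form-specific work.

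Next I would establish modularity of both sides on a common congruence subgroup $\Gamma\subseteq\Gamma_0(4)$. The left side is a modular form of weight $\tfrac32$ on $\Gamma_0(4\ell)$ with character $\chi_{4\mathcal D_{\bm a}}$ by Lemma~\ref{lem:thetamodular} (here $\ell=\lcm(1,5,8)=40$ and $\mathcal D_{\bm a}=40$, so the character is $\chi_{160}$). For the right side, $\mathcal H_{\ell_1,\ell_2}$ is modular of weight $\tfrac32$ on $\Gamma_0(4\operatorname{rad}(\ell_1)\ell_2)$ with character $\chi_{4\ell_1\ell_2}$ by Lemma~\ref{lem:Hslash}; applying $U_d$, $V_d$, and $S_{M,m}$ and tracking the level and character via Lemma~\ref{lem:operators} parts (1), (2), (3) yields modularity on some $\Gamma_{N',L'}$. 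Intersecting the two groups gives $\Gamma$, whose index in $\SL_2(\Z)$ is computed from Lemma~\ref{lem:index}. The $\delta_{8\mid n}$ piece needs the $S_{8,0}$ operator, and since $8\equiv 0\pmod 4$ (i.e.\ $8\not\equiv 2\pmod 4$) the first, simpler branch of Lemma~\ref{lem:operators}(2) applies, though one must check whether $8\mid 24$ lets one use the sharper level; it does, so the level growth is $\lcm(N,64,8N_\chi,8L)$.

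Then the valence formula, Lemma~\ref{lem:valenceformula}, reduces the identity to a finite check: both sides are holomorphic modular forms of weight $\tfrac32$ on $\Gamma$, so their difference vanishes identically once its first $b=\tfrac18[\SL_2(\Z):\Gamma]$ Fourier coefficients vanish. (One must first confirm holomorphicity at all cusps of $\mathcal G$, which for the $\mathcal H_{\ell_1,\ell_2}$ pieces follows because Lemma~\ref{lem:Hslash} already asserts they are honest holomorphic modular forms, the non-holomorphic completion having been arranged to cancel, and the $U,V,S$ operators preserve holomorphicity at cusps.) Finally I would verify these $b$ coefficients by direct computation, expanding $\Theta_{(1\ 5\ 8)^T}$ as a product of three theta series and expanding the class-number side using a table of $H(D)$. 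The value of $b$ and the precise group $\Gamma$ are recorded in the table in Appendix~\ref{sec:appendixvalencelemmas}.

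The main obstacle is purely organizational rather than conceptual: correctly identifying the operator combination $\mathcal G$ so that its $q$-expansion reproduces all eight Hurwitz terms with the stated integer coefficients, including getting the $U_d$ versus $V_d$ placements right so that the arguments $160n,40n,\tfrac{32n}{5},\tfrac{8n}{5}$ and the $\delta_{8\mid n}$-restricted arguments $\tfrac{5n}{2},\tfrac{n}{10}$ all come out correctly; and then controlling the size of $[\SL_2(\Z):\Gamma]$, since the extra factor of $5$ in the level (from the ``$/5$'' arguments) together with the $S_{8,0}$ sieve can push $b$ into the range where the computer check is heavy, mirroring the situation already flagged for the harder cases in the proof of Theorem~\ref{thm:congcondition}. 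Everything else is a mechanical application of Lemmas~\ref{lem:valenceformula}, \ref{lem:index}, \ref{lem:operators}, \ref{lem:thetamodular}, and \ref{lem:Hslash}, exactly as in Lemma~\ref{lem:111}.
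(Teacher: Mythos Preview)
Your approach is exactly the paper's: the equivalent identity is
\[
\Theta_{\bm a}=\sum_{m\pmod 8} c(m)\bigl(\mathcal H_{32,5}-\mathcal H_{8,5}-12\,\mathcal H_{2,5}\big|V_4\bigr)\big|S_{8,m},
\]
and the valence check (on $\Gamma_0(320)$, only $72$ coefficients) finishes it. One small correction: the $\delta_{8\mid n}$ is not produced by an extra $S_{8,0}$ sieve but by the $V_4$ on $\mathcal H_{2,5}$ together with the automatic vanishing of $H(5n/2)$ when $4\mid n$ but $8\nmid n$ (since then $5n/2\equiv 2\pmod 4$); this keeps the level small and the check light, contrary to your worry.
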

\begin{proof}
The equivalent formula in this case is
	\[
	\Theta_{{\bm{a}}}=\sum_{m\pmod{8}} c(m)\left( \mathcal{H}_{32,5}-\mathcal{H}_{8,5}-12\mathcal{H}_{2,5}\big|V_4\right)\big|S_{8,m}. \qedhere
	\]
\end{proof}

\begin{lemma}\label{lem:1510}
	We have, for $\bm a = (1\ 5\ 10)^T$,
	\begin{equation*}
	r_{{\bm a}}(n)= 
(-1)^{n+1}
\left( H(200n)-4 H(50n)- 5 H(8n) 
 +20 H(2n) \right).
	\end{equation*}
\end{lemma}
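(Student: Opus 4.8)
The plan is to proceed exactly as in the other lemmas of Section~\ref{sec:lemmas}: rewrite the asserted class number identity as an equality of two holomorphic modular forms of weight $\tfrac32$, deduce modularity on a common congruence subgroup from Lemmas~\ref{lem:thetamodular}, \ref{lem:Hslash}, and \ref{lem:operators}, and then invoke Lemmas~\ref{lem:valenceformula} and \ref{lem:index} to reduce to checking finitely many Fourier coefficients, which is done by machine.

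First I would identify the operator expression for the right-hand side. The $n$-th coefficient of $\mathcal H_{4,5}$ equals $H(20n)-5H(\tfrac{4n}{5})$ and that of $\mathcal H_{1,5}$ equals $H(5n)-5H(\tfrac{n}{5})$, so the $(10n)$-th coefficient of $\mathcal H_{4,5}-4\mathcal H_{1,5}$ is precisely $H(200n)-5H(8n)-4H(50n)+20H(2n)$. Passing from the index $n$ to the index $10n$ is effected by $U_{10}$, and the factor $(-1)^{n+1}$ is produced by the sieve $\sum_{m\pmod 2}(-1)^{m+1}S_{2,m}$. Thus Lemma~\ref{lem:1510} is equivalent to
\[
\Theta_{\bm a}=\sum_{m\pmod 2}(-1)^{m+1}\bigl(\mathcal H_{4,5}-4\mathcal H_{1,5}\bigr)\big|U_{10}\big|S_{2,m},
\]
with $\bm a=(1\ 5\ 10)^T$; note the parallel with Lemma~\ref{lem:1210}, where the same combination $\mathcal H_{4,5}-4\mathcal H_{1,5}$ occurs but is sieved directly because there $\mathcal D_{\bm a}$ is smaller and no rescaling by $U_{10}$ is needed.

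Next I would verify that both sides are holomorphic modular forms of weight $\tfrac32$ for a common $\Gamma\subseteq\Gamma_0(4)$ with a common character. By Lemma~\ref{lem:thetamodular} (here $\ell_{\bm a}=10$ and $\mathcal D_{\bm a}=50$) the left-hand side lies on $\Gamma_0(40)$ with character $\chi_{200}$. On the right, Lemma~\ref{lem:Hslash} gives $\mathcal H_{4,5}$ on $\Gamma_0(40)$ with character $\chi_{80}$ and $\mathcal H_{1,5}$ on $\Gamma_0(20)$ with character $\chi_{20}$; applying $U_{10}$ (Lemma~\ref{lem:operators}(1)) and then $S_{2,m}$ (Lemma~\ref{lem:operators}(2), the case $M\equiv 2\pmod 4$) lands us on a group of the form $\Gamma_{L,2}$, and a short Nebentypus check confirms that the two characters agree on the intersection $\Gamma$ (both restrict to $\chi_8$ on odd arguments). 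Then Lemmas~\ref{lem:valenceformula} and \ref{lem:index} reduce the identity to the vanishing of the first $b=\tfrac1{8}[\SL_2(\Z):\Gamma]$ Fourier coefficients of the difference; the explicit $\Gamma$ and $b$ are those recorded in Appendix~\ref{sec:appendixvalencelemmas}, and I would conclude by verifying these coefficients with a computer.

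The only genuinely creative step is the first one: guessing the linear combination $\mathcal H_{4,5}-4\mathcal H_{1,5}$ together with the twists $U_{10}$ and $S_{2,m}$ that reproduces the four class-number terms and the alternating sign. After that, the level and character bookkeeping through the $U$- and $S$-operators and the valence-formula count are entirely mechanical, following verbatim the detailed model given in the proof of Lemma~\ref{lem:111}.
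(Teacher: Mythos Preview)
Your proposal is correct and follows the same strategy as the paper. The only cosmetic difference is the operator packaging: the paper writes the equivalent identity as
\[
\Theta_{\bm a}=\sum_{m\pmod 2}(-1)^{m}\,\mathcal H_{2,5}\big|(4-U_4)\big|U_5\big|S_{2,m},
\]
whereas you use $(\mathcal H_{4,5}-4\mathcal H_{1,5})\big|U_{10}$ with the sign $(-1)^{m+1}$; since $\mathcal H_{2,5}\big|U_{20}=\mathcal H_{4,5}\big|U_{10}$ and $\mathcal H_{2,5}\big|U_{5}=\mathcal H_{1,5}\big|U_{10}$ as $q$-series, these are literally the same function and the level/character bookkeeping and the valence bound from Appendix~\ref{sec:appendixvalencelemmas} go through unchanged.
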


\begin{proof}
The equivalent formula in this case is
	\[
	\Theta_{{\bm{a}}}=\sum_{m\pmod{2}} 
	(-1)^{m}
	\mathcal{H}_{2,5}\big|\left(4-U_{4}\right)\big|U_5\big|S_{2,m}. \qedhere
	\]
\end{proof}

\begin{lemma}\label{lem:1525}
	We have for, $\bm a = (1\ 5\ 25)^T$,
	\begin{equation*}
	r_{{\bm a}}(n)= c(n)\left( H(20n)- 2 H(5n)  - 7H\left(\tfrac{4n}{5}\right) +14 H\left(\tfrac{n}{5}\right)\right).
	\end{equation*}
\end{lemma}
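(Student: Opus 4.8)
The plan is to reduce the asserted Hurwitz class number identity, exactly as in the proofs of the preceding lemmas of this section, to an equality of weight $\tfrac{3}{2}$ holomorphic modular forms, and then to apply the valence formula. The first task is to write the right-hand side as an explicit combination of the functions $\mathcal{H}_{\ell_1,\ell_2}$ from Lemma \ref{lem:Hslash} acted on by $U$-, $V$-, and $S$-operators. Since $\mathcal{H}_{1,2}\big|U_2$ has $n$-th Fourier coefficient $H(4n)-2H(n)$ and $\mathcal{H}_{5,2}\big|U_2$ has $n$-th coefficient $H(20n)-2H(5n)$, while $V_5$ amounts to replacing the Hurwitz discriminant $m$ by $m/5$, the function $\bigl(\mathcal{H}_{5,2}-7\,\mathcal{H}_{1,2}\big|V_5\bigr)\big|U_2$ has $n$-th coefficient
\[
H(20n)-2H(5n)-7H\!\left(\tfrac{4n}{5}\right)+14H\!\left(\tfrac{n}{5}\right).
\]
Writing $M$ for the least common multiple of the moduli appearing in the table for $c(n)$ in Appendix \ref{sec:appendixcs} and amalgamating the congruence cases as in the proof of Theorem \ref{thm:congcondition}, the lemma becomes equivalent to the modular form identity
\[
\Theta_{\bm{a}}=\sum_{m\pmod{M}}c(m)\,\bigl(\mathcal{H}_{5,2}-7\,\mathcal{H}_{1,2}\big|V_5\bigr)\big|U_2\big|S_{M,m},\qquad \bm{a}=(1\ 5\ 25)^T.
\]

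Next I would check that both sides are holomorphic modular forms of weight $\tfrac{3}{2}$ on a common congruence subgroup. By Lemma \ref{lem:thetamodular} (with $N=1$, $\ell_{\bm{a}}=25$, $\mathcal{D}_{\bm{a}}=125$), the left-hand side lies on $\Gamma_0(100)$ with character $\chi_{500}$. On the right-hand side, Lemma \ref{lem:Hslash} gives that $\mathcal{H}_{5,2}$ is a modular form of weight $\tfrac{3}{2}$ on $\Gamma_0(40)$ with character $\chi_{40}$ and $\mathcal{H}_{1,2}$ on $\Gamma_0(8)$ with character $\chi_{8}$; applying $V_5$, then $U_2$, then $S_{M,m}$ and tracking the levels and characters via Lemma \ref{lem:operators} (1), (2), (3) shows that the right-hand side is a holomorphic modular form of weight $\tfrac{3}{2}$ on some $\Gamma_{A,B}$, again with character $\chi_{500}$. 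Intersecting this group with $\Gamma_0(100)$ yields the congruence subgroup $\Gamma$ on which the identity is to be verified, and Lemma \ref{lem:index} computes $[\SL_2(\Z):\Gamma]$.

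Finally, Lemma \ref{lem:valenceformula} reduces the claimed identity to the vanishing of the Fourier coefficients of the difference of the two sides up to index $b:=\tfrac{1}{8}[\SL_2(\Z):\Gamma]$, which is confirmed by a direct computer check; the relevant $\Gamma$ and $b$ are recorded in the table of Appendix \ref{sec:appendixvalencelemmas}. The only genuinely nonroutine point is the first one: pinning down the precise linear combination of the $\mathcal{H}_{\ell_1,\ell_2}$ and operators that reproduces the right-hand side of the lemma, and doing so in a way that keeps the index $[\SL_2(\Z):\Gamma]$ small enough that the finite coefficient check is feasible. Once the combination is written down, holomorphy is immediate from Lemma \ref{lem:Hslash}, the modularity bookkeeping is routine, and the remainder is a finite computation.
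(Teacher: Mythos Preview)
Your proposal is correct and follows the same route as the paper: the paper's proof simply records the equivalent modular-form identity
\[
\Theta_{\bm{a}}=\sum_{m\pmod{5}} c(m)\,\bigl(\mathcal{H}_{5,2}-7\,\mathcal{H}_{1,2}\big|V_5\bigr)\big|U_2\big|S_{5,m},
\]
which is exactly your formula with $M=5$ (the lcm of the moduli in the $c$-table), and then defers to the general scheme of Section~\ref{sec:lemmas} together with the entry $\Gamma=\Gamma_{200,5}$, $b=180$ in Appendix~\ref{sec:appendixvalencelemmas}. Your identification of the Fourier coefficients of $\bigl(\mathcal{H}_{5,2}-7\,\mathcal{H}_{1,2}\big|V_5\bigr)\big|U_2$ and the modularity bookkeeping via Lemmas~\ref{lem:thetamodular}, \ref{lem:Hslash}, and \ref{lem:operators} are precisely what underlies that table entry.
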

\begin{proof}
The equivalent formula in this case is
	\[
	\Theta_{{\bm{a}}}=\sum_{m\pmod{5}} c(m) \left(\mathcal{H}_{5,2}-7\mathcal{H}_{1,2}\big|V_5\right)\big|U_2\big|S_{5,m}. \qedhere
	\]
\end{proof}

\begin{lemma}\label{lem:1540}
	We have, for $\bm a = (1\ 5\ 40)^T$,
		\begin{equation*}
	r_{{\bm a}}(n)= c(n)\left(
{
H(200n)- H(50n)-5
}
H(8n) 
{
+5
}
H(2n)
{
-6
}
\delta_{8\mid n}
\left(H\left(\tfrac{25n}{2}\right)-5 H\left(\tfrac{n}{2}\right)\right)\right).
	\end{equation*}
\end{lemma}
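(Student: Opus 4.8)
The plan is to follow the same template used for the other lemmas in Section \ref{sec:lemmas}. First, I would recast the claimed identity as an identity of weight-$\tfrac32$ modular forms. With $\mathcal D_{\bm a}=200$ and $\ell_{\bm a}=40$, the equivalent formula is
\[
\Theta_{\bm a}=\sum_{m\pmod 8} c(m)\left(\mathcal H_{8,5}-\mathcal H_{2,5}-6\,\mathcal H_{4,5}\big|V_8\right)\big|U_5\big|S_{8,m}.
\]
That this is equivalent to the claimed identity follows by comparing Fourier coefficients: after hitting with $U_5$, the summand $\mathcal H_{8,5}$ contributes $H(200n)-5H(8n)$, the summand $-\mathcal H_{2,5}$ contributes $-H(50n)+5H(2n)$, and $-6\,\mathcal H_{4,5}\big|V_8$ contributes $-6\delta_{8\mid n}\big(H(\tfrac{25n}{2})-5H(\tfrac n2)\big)$, the operator $V_8$ being exactly what turns the divisibility condition into $8\mid n$; the outer sieve $S_{8,m}$ records the dependence of $c$ on $n\pmod 8$. (Alternatively one could replace $\mathcal H_{4,5}\big|V_8$ by $\mathcal H_{2,5}\big|V_4$ at the cost of a short vanishing-of-$H$ argument as in the proof of Lemma \ref{lem:158}.)

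Second, I would establish modularity of both sides. By Lemma \ref{lem:thetamodular}, the left-hand side is a holomorphic modular form of weight $\tfrac32$ on $\Gamma_0(160)$ with character $\chi_{800}$. For the right-hand side, Lemma \ref{lem:Hslash} gives that $\mathcal H_{8,5}$, $\mathcal H_{2,5}$ and $\mathcal H_{4,5}$ are holomorphic modular forms of weight $\tfrac32$ on $\Gamma_0(40)$; applying $V_8$ (Lemma \ref{lem:operators} (3)), then $U_5$ (Lemma \ref{lem:operators} (1)), then $S_{8,m}$ (Lemma \ref{lem:operators} (2), using $8\mid 24$) keeps each summand a holomorphic modular form of weight $\tfrac32$ on $\Gamma_0(320)$, and a check of conductors shows its character agrees there with $\chi_{800}$. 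Hence both sides lie in the space of weight-$\tfrac32$ forms on $\Gamma_0(320)$ with that character, and by Lemmas \ref{lem:valenceformula} and \ref{lem:index} the identity holds as soon as it holds for the first $\tfrac18[\SL_2(\Z):\Gamma_0(320)]=72$ Fourier coefficients, which I would then confirm by a short computer check.

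The step I expect to be the main obstacle is not any individual manipulation but (i) guessing the correct right-hand side, in particular recognizing that the $\delta_{8\mid n}$-correction is produced cleanly by the single piece $\mathcal H_{4,5}\big|V_8\big|U_5$ rather than by a combination requiring cancellations, and (ii) keeping careful track of the conductors of the characters through the successive applications of Lemma \ref{lem:operators}, since a slip there changes the group and hence the number of coefficients that must be verified. Once the equivalent identity and the relevant congruence subgroup are pinned down, the rest is a routine finite computation.
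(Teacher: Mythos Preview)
Your proposal is correct and follows essentially the same route as the paper. The paper packages the right-hand side more compactly as
\[
\Theta_{\bm a}=-\sum_{m\pmod 8} c(m)\,\mathcal H_{2,5}\big|\bigl(1-U_4+6V_4\bigr)\big|U_5\big|S_{8,m},
\]
which is your expression once one uses $\mathcal H_{2,5}\big|U_4=\mathcal H_{8,5}$ and replaces $\mathcal H_{4,5}\big|V_8$ by $\mathcal H_{2,5}\big|V_4$ (exactly the alternative you mention, justified by the vanishing $H(m)=0$ for $m\equiv 2\pmod 4$); the resulting group $\Gamma_0(320)$ and bound of $72$ coefficients agree with the paper's table.
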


\begin{proof}
The equivalent formula in this case is
	\[
	\Theta_{{\bm{a}}}=-\sum_{m\pmod{8}} c(m)\mathcal{H}_{2,5}\big|\left(
{
1-U_{4}+6V_4
}
\right)\big|U_5\big|S_{8,m}. \qedhere
	\]
\end{proof}

\begin{lemma}\label{lem:166}
	We have, for $\bm a \in \{ (1\ 6\ 6)^T,(1\ 12\ 12)^T\}$,
	\begin{equation*}
	r_{{\bm a}}(n)= 
c_{\bm{a}}(n)
\left( H(36n)-3H(4n) - 4 H(9n)+12 H(n)\right).
	\end{equation*}
\end{lemma}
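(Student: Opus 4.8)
The plan is to follow exactly the strategy used in the proof of Lemma~\ref{lem:111}: rewrite the asserted class number identity as an identity between weight $\tfrac32$ holomorphic modular forms, bound via the valence formula the number of Fourier coefficients that must agree, and then check those finitely many coefficients by computer. The first step is to identify the holomorphic modular form realizing the right-hand side. A short computation from the definitions of $U_d$, $V_d$ and $\mathcal{H}_{\ell_1,\ell_2}$ (using $\mathcal{H}|U_d$ has $n$-th coefficient $H(dn)$ and $\mathcal{H}|V_d$ has $n$-th coefficient $H(n/d)$) shows that the $n$-th coefficient of $\left(\mathcal{H}_{4,3}-4\mathcal{H}_{1,3}\right)\big|U_3$ is exactly
\[
H(36n)-3H(4n)-4H(9n)+12H(n).
\]
Hence, using the sieving identity for the translation operators $S_{M,m}$ exactly as in Section~\ref{sec:congcondition}, the lemma is equivalent to
\[
\Theta_{\bm{a}}=\sum_{m\pmod{M}} c_{\bm{a}}(m)\left(\mathcal{H}_{4,3}-4\mathcal{H}_{1,3}\right)\big|U_3\big|S_{M,m},
\]
where $M$ is the least common multiple of the moduli occurring for $\bm{a}\in\{(1\ 6\ 6)^T,(1\ 12\ 12)^T\}$ in the tables of Appendix~\ref{sec:appendixcs}.

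Next I would verify that both sides are holomorphic modular forms of weight $\tfrac32$ on a common congruence subgroup $\Gamma\subseteq\Gamma_0(4)$ with trivial character. By Lemma~\ref{lem:thetamodular} the left-hand side is modular of weight $\tfrac32$ on $\Gamma_0(4\ell_{\bm{a}})$ with character $\chi_{4\mathcal{D}_{\bm{a}}}$, which is trivial since $4\mathcal{D}_{\bm{a}}\in\{144,576\}$ is a perfect square. For the right-hand side, Lemma~\ref{lem:Hslash} gives that $\mathcal{H}_{4,3}$ and $\mathcal{H}_{1,3}$ are holomorphic modular forms of weight $\tfrac32$ on $\Gamma_0(24)$ and $\Gamma_0(12)$ respectively, both with character $\chi_{12}$ (note $\chi_{48}=\chi_{12}$ as $48/12$ is a square), so $\mathcal{H}_{4,3}-4\mathcal{H}_{1,3}$ is modular on $\Gamma_0(24)$ with character $\chi_{12}$; Lemma~\ref{lem:operators}~(1) then shows $\left(\mathcal{H}_{4,3}-4\mathcal{H}_{1,3}\right)\big|U_3$ is modular on $\Gamma_0(24)$ with character $\chi_{12}\chi_{12}$, which is trivial on $\Gamma_0(24)$, and Lemma~\ref{lem:operators}~(2) controls the sieve $S_{M,m}$ and yields a modular form on an explicit group $\Gamma_{N',M'}$. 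The group $\Gamma$ is the intersection of this group with $\Gamma_0(4\ell_{\bm{a}})$, and its index in $\SL_2(\Z)$ is read off from Lemma~\ref{lem:index}.

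Finally, by Lemma~\ref{lem:valenceformula} the two sides of the displayed identity coincide for all $n$ as soon as their first $b=\tfrac18\left[\SL_2(\Z):\Gamma\right]$ Fourier coefficients coincide, and I would confirm this with a short computer calculation; the precise $\Gamma$ and $b$ are those recorded in Appendix~\ref{sec:appendixvalencelemmas}. I do not expect any conceptual obstacle beyond what is already handled in the proof of Lemma~\ref{lem:111}: the only genuine work is guessing the correct combination $\mathcal{H}_{4,3}-4\mathcal{H}_{1,3}$ in the first place and keeping the level $\Gamma$ (hence the bound $b$) as small as the hypotheses of Lemma~\ref{lem:operators}~(2) permit — in particular exploiting the refinement available when $M\mid 24$ — so that the coefficient check remains computationally feasible. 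If $b$ turned out to be inconveniently large for one of the two forms, one could alternatively deduce that case from the other by a reduction argument of the type used in the proof of Theorem~\ref{thm:congcondition}.
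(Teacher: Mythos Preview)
Your proposal is correct and follows essentially the same approach as the paper: the paper's proof simply records the equivalent formula
\[
\Theta_{\bm{a}}=\sum_{m\pmod{4}} c_{\bm{a}}(m)\left(\mathcal{H}_{4,3}-4\mathcal{H}_{1,3}\right)\big|U_3\big|S_{4,m},
\]
which is exactly the combination you identified, and then defers to the table in Appendix~\ref{sec:appendixvalencelemmas} (group $\Gamma_0(48)$, bound $b=12$) for the valence check. Your verification of the coefficient identity, the triviality of the characters, and the level computations are all accurate and more explicit than the paper's terse treatment.
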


\begin{proof}
The equivalent formula in this case is 
	\[
	\Theta_{{\bm{a}}}=\sum_{m\pmod{4}} 
c_{\bm{a}}(n)
\left(\mathcal{H}_{4,3}-4\mathcal{H}_{1,3}\right)\big|U_3\big|S_{4,m}. \qedhere
	\]
\end{proof}

\begin{lemma}\label{lem:1616}
	We have, for $\bm a \in\{ (1\ 6\ 16)^T, (1\ 16\ 24)^T\}$,	
\[
r_{{\bm{a}}}(n)=c_{\bm a}(n)\left( H(24n)-H(6n)-3H\left(\tfrac{8n}{3}\right)+3H\left(\tfrac{2n}{3}\right) -12H\left(\tfrac{3n}{8}\right)+ 36H\left(\tfrac{n}{24}\right)\right).
\]
\end{lemma}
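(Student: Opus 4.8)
The plan is to follow the template used for the other lemmas in this section. Recall that $\mathcal{H}\big|U_d$ (resp.\ $\mathcal{H}\big|V_d$) has $n$-th Fourier coefficient $H(dn)$ (resp.\ $H(n/d)$, with the convention that $H$ vanishes off non-negative integers $\equiv 0,3 \pmod{4}$). Hence, by the definition $\mathcal{H}_{\ell_1,\ell_2}=\mathcal{H}\big|\left(U_{\ell_1\ell_2}-\ell_2U_{\ell_1}V_{\ell_2}\right)$ of Lemma~\ref{lem:Hslash}, the combination $\mathcal{H}_{8,3}-\mathcal{H}_{2,3}-12\mathcal{H}_{1,3}\big|V_8$ has $n$-th coefficient
\[
H(24n)-H(6n)-3H\!\left(\tfrac{8n}{3}\right)+3H\!\left(\tfrac{2n}{3}\right)-12H\!\left(\tfrac{3n}{8}\right)+36H\!\left(\tfrac{n}{24}\right).
\]
Letting $M$ be the least common multiple of the moduli appearing in the tables of Appendix~\ref{sec:appendixcs} for the given $\bm{a}$ and multiplying by $c_{\bm{a}}$ residue class by residue class, exactly as in the proof of Theorem~\ref{thm:congcondition}, the claimed identity is equivalent to
\[
\Theta_{\bm{a}}=\sum_{m\pmod{M}}c_{\bm{a}}(m)\left(\mathcal{H}_{8,3}-\mathcal{H}_{2,3}-12\mathcal{H}_{1,3}\big|V_8\right)\big|S_{M,m}.
\]
This is precisely the identity proved in Lemma~\ref{lem:1124}, but with the sign of the final summand reversed.

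To prove this identity I would show that both sides are holomorphic modular forms of weight $\tfrac{3}{2}$ on one common congruence subgroup $\Gamma\subseteq\Gamma_0(4)$. The left-hand side is such a form by Lemma~\ref{lem:thetamodular} (here $\ell_{\bm{a}}=48$, so $\Theta_{\bm{a}}$ lies on $\Gamma_0(192)$ with character $\chi_{4\mathcal{D}_{\bm{a}}}$). For the right-hand side, Lemma~\ref{lem:Hslash} shows that $\mathcal{H}_{8,3}$, $\mathcal{H}_{2,3}$, and $\mathcal{H}_{1,3}$ are holomorphic modular forms of weight $\tfrac{3}{2}$ on $\Gamma_0(24)$, $\Gamma_0(24)$, and $\Gamma_0(12)$ with characters $\chi_{96}$, $\chi_{24}$, and $\chi_{12}$, respectively; applying $V_8$ via Lemma~\ref{lem:operators}~(3) and then the sieving operator $S_{M,m}$ via Lemma~\ref{lem:operators}~(2) produces a modular form on an explicit group of the form $\Gamma_{N',L'}$. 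Taking $\Gamma$ to be the intersection of the groups coming from the two sides, computing $[\SL_2(\Z):\Gamma]$ by Lemma~\ref{lem:index}, and invoking Lemma~\ref{lem:valenceformula}, the identity reduces to the statement that the first $b=\tfrac{1}{8}[\SL_2(\Z):\Gamma]$ Fourier coefficients of both sides coincide; the resulting $\Gamma$ and $b$ are tabulated in Appendix~\ref{sec:appendixvalencelemmas}. This finite check is then performed by computer.

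I do not expect a conceptual obstacle here. The potentially dangerous non-holomorphic (incomplete gamma) contributions already cancel in $\widehat{\mathcal{H}}_{8,3}-\widehat{\mathcal{H}}_{2,3}-12\widehat{\mathcal{H}}_{1,3}\big|V_8$ by Lemma~\ref{lem:Hslash}, so the stated combination of $\mathcal{H}$'s is genuinely holomorphic and the valence formula applies. The step needing the most care is the bookkeeping of characters and levels, namely verifying that $\chi_{4\mathcal{D}_{\bm{a}}}$ (which for both $\bm{a}=(1\ 6\ 16)^T$ and $\bm{a}=(1\ 16\ 24)^T$ agrees with $\chi_{24}$ on odd arguments) coincides with the character obtained on the right-hand side after the operations $V_8$ and $S_{M,m}$, so that both sides really are modular on the same $\Gamma$. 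The only practical concern is the size of $b$, but since $N=1$ (no congruence restriction on $\bm{x}$) the level remains small enough for the computer check to be feasible, in contrast to the exceptional cases $((1\ 6\ 16)^T,24)$ and $((1\ 16\ 24)^T,12)$ encountered in the proof of Theorem~\ref{thm:congcondition}.
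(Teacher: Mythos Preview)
Your proposal is correct and follows essentially the same route as the paper. The only cosmetic difference is that the paper packages the class-number side as $\mathcal{H}_{1,3}\big|(U_8-U_2-12V_8)$, whereas you write it as $\mathcal{H}_{8,3}-\mathcal{H}_{2,3}-12\mathcal{H}_{1,3}\big|V_8$; these coincide by the commutation relations of Lemma~\ref{lem:operators}~(4) (indeed $\mathcal{H}_{1,3}\big|U_d=\mathcal{H}_{d,3}$ for $d\in\{2,8\}$ since $\gcd(d,3)=1$). One small remark: you need not argue that non-holomorphic parts cancel across the three summands, since Lemma~\ref{lem:Hslash} already gives holomorphicity of each $\mathcal{H}_{\ell_1,\ell_2}$ individually.
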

\begin{proof}
The equivalent formula in this case is
\[
\Theta_{\bm{a}}=\sum_{m\pmod{32}}c_{\bm a} (m) \mathcal{H}_{1,3}\big|\left(U_8-U_2-12V_8\right)\big|S_{32,m}.\qedhere
\]
\end{proof}

\begin{lemma}\label{lem:1618}
For $\bm a =(1\ 6\ 18)^T$, we have 
\[
r_{{\bm a}}(n)=  c(n) \left( H(12n) -2 H(3n) -5 H \left(\tfrac{4n}{3} \right) + 10 H \left( \tfrac{n}{3} \right)\right).
\]
\end{lemma}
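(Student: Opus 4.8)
The plan is to follow the same strategy as in the preceding lemmas: translate the arithmetic identity into an equality of holomorphic weight-$\tfrac32$ modular forms and then close the argument with the valence formula and a finite computer check. First I would identify the right-hand side as a linear combination of coefficients of a modular form built from $\mathcal H$. Using the coefficient formula for $\mathcal H_{\ell_1,\ell_2}$ from Lemma \ref{lem:Hslash} together with the definitions of the operators $U_d$ and $V_d$, one computes directly that the $n$-th Fourier coefficient of $\mathcal H_{1,2}\big|(U_3-5V_3)\big|U_2$ equals exactly $H(12n)-2H(3n)-5H(\tfrac{4n}{3})+10H(\tfrac{n}{3})$. Hence, letting $M$ be the lcm of the moduli appearing in the table defining $c(n)$ in Appendix \ref{sec:appendixcs}, the lemma is equivalent to the modular form identity
\[
\Theta_{\bm a}=\sum_{m\pmod M}c(m)\,\mathcal H_{1,2}\big|(U_3-5V_3)\big|U_2\big|S_{M,m}.
\]

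Next I would establish modularity of both sides on a common congruence subgroup $\Gamma\subseteq\Gamma_0(4)$. For the left-hand side, Lemma \ref{lem:thetamodular} with $\bm a=(1\ 6\ 18)^T$ and $N=1$ (so $\ell=18$ and $\mathcal{D}_{\bm a}=108$) gives that $\Theta_{\bm a}$ is a holomorphic modular form of weight $\tfrac32$ on $\Gamma_0(72)$ with character $\chi_{432}$. For the right-hand side, I would start from Lemma \ref{lem:Hslash} (applicable since $\gcd(1,2)=1$ and $2$ is squarefree), which gives that $\mathcal H_{1,2}$ is a holomorphic modular form of weight $\tfrac32$ on $\Gamma_0(8)$ with character $\chi_8$, and then push it through $V_3$ (Lemma \ref{lem:operators} (3)), $U_3$ and $U_2$ (Lemma \ref{lem:operators} (1)), and finally $S_{M,m}$ (Lemma \ref{lem:operators} (2)), tracking the level and the real character at every step; note that the non-holomorphic completion $\widehat{\mathcal H}$ never enters the argument, since $\mathcal H_{1,2}$ is already holomorphic by Lemma \ref{lem:Hslash}. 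The difference of the two sides is then a holomorphic modular form of weight $\tfrac32$ on the intersection $\Gamma$ of the two groups.

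To finish, I would compute $[\SL_2(\Z):\Gamma]$ by Lemma \ref{lem:index} and apply Lemma \ref{lem:valenceformula}: the difference vanishes identically as soon as its first $b:=\big\lfloor\tfrac18[\SL_2(\Z):\Gamma]\big\rfloor$ Fourier coefficients vanish, with $\Gamma$ and $b$ recorded in the corresponding row of the table in Appendix \ref{sec:appendixvalencelemmas}. One then checks those $b$ coefficients by a direct computation, which completes the proof. The main obstacle is essentially bookkeeping rather than conceptual: one must carefully propagate the conductor of the character $\chi_{4d}$ and the level through the composition $(U_3-5V_3)\,U_2\,S_{M,m}$ — in particular the level contributed by $S_{M,m}$, which depends on whether $M\equiv2\pmod4$ and on whether $M\mid 24$ — so as to obtain a group $\Gamma$, and hence a bound $b$, small enough for the coefficient check to be feasible. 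If $b$ were too large to verify in practice, one would instead reduce the identity to an already-established case, in the spirit of the exceptional forms treated in the proof of Theorem \ref{thm:congcondition}.
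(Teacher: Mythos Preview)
Your proposal is correct and follows essentially the same approach as the paper. Your equivalent formula $\Theta_{\bm a}=\sum_{m\pmod M}c(m)\,\mathcal H_{1,2}\big|(U_3-5V_3)\big|U_2\big|S_{M,m}$ is in fact identical to the paper's $\Theta_{\bm a}=\sum_{m\pmod{12}} c(m)\left(\mathcal{H}_{3,2}-5\mathcal{H}_{1,2}|V_{3}\right)\big|U_2 \big|S_{12,m}$, since $\mathcal H_{1,2}\big|U_3=\mathcal H_{3,2}$ by the commutation relations in Lemma~\ref{lem:operators}~(4); the paper then simply records $\Gamma=\Gamma_0(144)$ and the bound $b=36$ from Appendix~\ref{sec:appendixvalencelemmas} and defers the coefficient check to the computer, exactly as you outline.
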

\begin{proof}
The equivalent formula in this case is 
	\[
	\Theta_{{\bm{a}}}=\sum_{m\pmod{12}} c(m)\left(\mathcal{H}_{3,2}-5\mathcal{H}_{1,2}|V_{3}\right)\big|U_2 \big|S_{12,m}. \qedhere
	\]
\end{proof}

\begin{lemma}\label{lem:1624}
For $\bm a=(1\ 6\ 24)^T$, we have
\[
r_{{\bm a}}(n) = c(n)\left( 2H(36n)-6 H(4n)-5H\left(9n\right) +15H\left(n\right)\right).
\]
\end{lemma}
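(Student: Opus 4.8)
The plan is to run the same argument used for the other lemmas in this section, as spelled out in detail in the proof of Lemma~\ref{lem:111}. First I recast the asserted identity as an identity of $q$-series. Using $\mathcal{H}|U_3(\tau)=\sum_{n\ge 0}H(3n)q^n$ (with the usual convention $H(D)=0$ for $D\not\equiv0,3\pmod4$) and the definition $\mathcal{H}_{\ell_1,\ell_2}=\mathcal{H}|(U_{\ell_1\ell_2}-\ell_2U_{\ell_1}V_{\ell_2})$ of Lemma~\ref{lem:Hslash}, a direct comparison of Fourier coefficients shows that $\mathcal{H}_{4,3}|U_3$ has $n$-th coefficient $H(36n)-3H(4n)$ and $\mathcal{H}_{1,3}|U_3$ has $n$-th coefficient $H(9n)-3H(n)$, so that
\[
\left(2\mathcal{H}_{4,3}-5\mathcal{H}_{1,3}\right)\big|U_3=\sum_{n\ge 0}\left(2H(36n)-6H(4n)-5H(9n)+15H(n)\right)q^n.
\]
Hence, with $M$ and $c(n)$ the modulus and the residue-class function recorded for $\bm a=(1\ 6\ 24)^T$ in the table in Appendix~\ref{sec:appendixcs}, the lemma is equivalent to
\[
\Theta_{\bm a}=\sum_{m\pmod M}c(m)\left(2\mathcal{H}_{4,3}-5\mathcal{H}_{1,3}\right)\big|U_3\big|S_{M,m},
\]
where the passage between the two forms uses $\sum_{m\pmod M}f|S_{M,m}=f$ together with the fact that $c(n)$ depends only on $n\pmod M$, exactly as at the start of the proof of Theorem~\ref{thm:congcondition}.

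It then remains to check that the two sides are holomorphic modular forms of weight $\tfrac32$ on a common congruence subgroup, and to compare enough coefficients. By Lemma~\ref{lem:thetamodular}, $\Theta_{\bm a}$ is a modular form of weight $\tfrac32$ on $\Gamma_0(96)$ with character $\chi_{576}$ (here $\ell_{\bm a}=24$ and $\mathcal{D}_{\bm a}=144$). By Lemma~\ref{lem:Hslash}, $\mathcal{H}_{4,3}$ and $\mathcal{H}_{1,3}$ are modular of weight $\tfrac32$ on $\Gamma_0(24)$ and $\Gamma_0(12)$ with characters $\chi_{48}$ and $\chi_{12}$; applying Lemma~\ref{lem:operators}(1) to $U_3$ (note $\chi_{48}\chi_{12}=\chi_{144}=\chi_{576}$ as functions) shows $(2\mathcal{H}_{4,3}-5\mathcal{H}_{1,3})|U_3$ is a holomorphic modular form of weight $\tfrac32$ on $\Gamma_0(24)$ with character $\chi_{576}$, and then Lemma~\ref{lem:operators}(2) applied to $S_{M,m}$ places the right-hand side on a group $\Gamma_{N_0,M_0}$ with $N_0,M_0$ the relevant lcm's dictated by that lemma, still with character $\chi_{576}$. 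Intersecting with $\Gamma_0(96)$ gives a congruence subgroup $\Gamma$; Lemma~\ref{lem:index} computes $[\SL_2(\Z):\Gamma]$ and Lemma~\ref{lem:valenceformula} reduces the claim to verifying the vanishing of the first $b:=\tfrac18[\SL_2(\Z):\Gamma]$ Fourier coefficients of the difference of the two sides. One records $\Gamma$ and $b$ in Appendix~\ref{sec:appendixvalencelemmas} and checks those coefficients by a short computation.

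I expect the only genuine difficulty to be bookkeeping rather than mathematics. The combination $2\mathcal{H}_{4,3}-5\mathcal{H}_{1,3}$ is forced by the shapes $H(36n)-3H(4n)$ and $H(9n)-3H(n)$, and the modulus $M$ together with $c(n)$ are pinned down by matching the first few coefficients; the real work is to track the level and character correctly through $U_3$ and $S_{M,m}$ (in particular determining the conductor attached to $\chi_{576}$ and whether $M\equiv 2\pmod4$ when invoking Lemma~\ref{lem:operators}(2)) and to make sure the resulting bound $b$ is small enough to carry out the finite check in practice. Since $\ell_{\bm a}=24$ is modest and $N=1$, I anticipate $b$ is of manageable size, so no reduction to an already-established identity (of the kind used in some cases of the proof of Theorem~\ref{thm:congcondition}) should be needed here.
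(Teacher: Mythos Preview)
Your proposal is correct and follows essentially the same approach as the paper: you arrive at exactly the paper's equivalent formula $\Theta_{\bm a}=\sum_{m\pmod{16}}c(m)(2\mathcal H_{4,3}-5\mathcal H_{1,3})|U_3|S_{16,m}$, and the resulting group and bound recorded in Appendix~\ref{sec:appendixvalencelemmas} are $\Gamma_{768,16}$ and $b=1\,536$. One small remark on the character bookkeeping: $\chi_{48}\chi_{12}$ and $\chi_{12}\chi_{12}$ are both trivial on integers prime to $24$ (since $576=24^2$ and $144=12^2$), so the characters of $\mathcal H_{4,3}|U_3$ and $\mathcal H_{1,3}|U_3$ indeed agree with $\chi_{4\mathcal D_{\bm a}}=\chi_{576}$ on the relevant congruence subgroup.
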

\begin{proof}
The equivalent formula in this case is
	\[
	\Theta_{{\bm{a}}}=\sum_{m\pmod{16}}c(m)\left(2\mathcal{H}_{4,3}-5\mathcal{H}_{1,3}\right)\big|U_3\big|S_{16,m}. \qedhere
	\]
\end{proof}

\begin{lemma}\label{lem:1840}
	We have, for $\bm a =(1\ 8\ 40)^T$,
	\begin{equation*}
	r_{{\bm a}}(n)= c(n) \left( 2 H(20n) -5H(5n)-10 H \left( \tfrac{4n}{5} \right) +25 H \left( \tfrac n5 \right)\right).
	\end{equation*}	
\end{lemma}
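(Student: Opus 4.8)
The plan is to follow the recipe used throughout this section: turn the class number identity into an equality of two weight-$\tfrac32$ modular forms and then compare finitely many Fourier coefficients via the valence formula. Write $\bm a=(1\ 8\ 40)^T$, so that $\ell_{\bm a}=40$ and $\mathcal D_{\bm a}=320$. Recalling (as in the proof of Lemma~\ref{lem:Hslash}) that the $n$-th Fourier coefficient of $\mathcal H_{\ell_1,\ell_2}$ equals $H(\ell_1\ell_2 n)-\ell_2 H(\ell_1 n/\ell_2)$, with $H$ interpreted as $0$ off the discriminants, the combination $2\mathcal H_{4,5}-5\mathcal H_{1,5}$ has $n$-th coefficient $2H(20n)-5H(5n)-10H(4n/5)+25H(n/5)$, which is exactly the bracketed expression in the statement. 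Hence, letting $M$ be the least common multiple of the moduli occurring in the table for $c=c_{\bm a}$ in Appendix~\ref{sec:appendixcs}, the assertion is equivalent to
\[
\Theta_{\bm a}=\sum_{m\pmod M} c(m)\left(2\mathcal H_{4,5}-5\mathcal H_{1,5}\right)\big|S_{M,m}.
\]

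To establish this, I would first assemble the modular data of both sides. By Lemma~\ref{lem:thetamodular} (with $N=1$) the left-hand side is a holomorphic modular form of weight $\tfrac32$ on $\Gamma_0(4\ell_{\bm a})=\Gamma_0(160)$ with character $\chi_{4\mathcal D_{\bm a}}$, which on this group coincides with the quadratic character $\chi_5$. By Lemma~\ref{lem:Hslash}, applicable since $\gcd(4,5)=\gcd(1,5)=1$ and $5$ is squarefree, the forms $\mathcal H_{4,5}$ and $\mathcal H_{1,5}$ are modular of weight $\tfrac32$ on $\Gamma_0(40)$ and $\Gamma_0(20)$ with characters $\chi_{80}$ and $\chi_{20}$, each of which again restricts to $\chi_5$; thus $2\mathcal H_{4,5}-5\mathcal H_{1,5}$ is a modular form of weight $\tfrac32$ on $\Gamma_0(40)$ with character $\chi_5$. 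Sieving with $S_{M,m}$ via Lemma~\ref{lem:operators}~(2) then places the right-hand side on an explicit group of the form $\Gamma_{\ast,M}$, and intersecting it with $\Gamma_0(160)$ produces a congruence subgroup $\Gamma\subseteq\Gamma_0(4)$ on which both sides of the displayed equation are weight-$\tfrac32$ modular forms with character $\chi_5$.

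With $\Gamma$ in hand, I would invoke Lemmas~\ref{lem:valenceformula} and~\ref{lem:index} to reduce the identity to checking that the two $q$-expansions agree through the $b$-th coefficient, where $b=\tfrac18[\SL_2(\Z):\Gamma]$ is the bound listed in the row for $\bm a=(1\ 8\ 40)^T$ in Appendix~\ref{sec:appendixvalencelemmas}; this finite verification is then done by computer, just as in Lemmas~\ref{lem:1525} and~\ref{lem:1210}, which handle the structurally identical forms $(1\ 5\ 25)^T$ and $(1\ 2\ 10)^T$.

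I do not expect a genuine conceptual obstacle; the difficulty is purely logistical. The points requiring care are: pinning down the period $M$ of $c$ (and, in applying Lemma~\ref{lem:operators}~(2), noting whether $M\equiv 2\pmod4$, which enlarges the resulting level by a factor of $4$); checking that the characters $\chi_{4\mathcal D_{\bm a}},\chi_{80},\chi_{20}$ restrict to one and the same Dirichlet character on the common group $\Gamma$, so that the valence formula may legitimately be applied to the difference of the two sides; and making sure the resulting $b$ is small enough for the coefficient comparison to be practical. In contrast to several of the other $(1\ \ast\ \ast)^T$ cases in this section, no $\delta_{8\mid n}$ correction term enters here, so once $M$ and $\Gamma$ are determined the remainder is routine.
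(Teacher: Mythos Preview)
Your proposal is correct and takes essentially the same approach as the paper: the paper's proof simply records the equivalent identity
\[
\Theta_{\bm a}=\sum_{m\pmod{16}} c(m)\left(2\mathcal H_{4,5}-5\mathcal H_{1,5}\right)\big|S_{16,m},
\]
and appeals to the table in Appendix~\ref{sec:appendixvalencelemmas} (group $\Gamma_{1280,16}$, bound $2304$), which is exactly the recipe you outline. Your computation that $M=\lcm(8,16)=16$, that $\chi_{4\mathcal D_{\bm a}}$, $\chi_{80}$, $\chi_{20}$ all coincide with $\chi_5$ on the relevant groups, and that Lemma~\ref{lem:operators}~(2) applies with $M=16\not\equiv 2\pmod 4$, is correct.
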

\begin{proof}
The equivalent formula in this case is
	\[
	\Theta_{{\bm{a}}}=\sum_{m\pmod{16}} c(m)\left(2\mathcal{H}_{4,5}-5\mathcal{H}_{1,5}\right)\big|S_{16,m}. \qedhere
	\]
\end{proof}

\begin{lemma}\label{lem:199}
	We have, for $\bm a =(1\ 9\ 9)^T$,
	\begin{equation*}
	r_{{\bm a}}(n)=  12 H \left( \tfrac{4n}{9} \right) -24 H \left( \tfrac n9 \right) + 4\delta_{n \equiv 1 \pmod 3} \left(H(4n) -2 H(n) 
	\right).
	\end{equation*} 
\end{lemma}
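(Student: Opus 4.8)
The plan is to recast the asserted identity as an equality of holomorphic modular forms of weight $\tfrac32$ and then invoke the valence formula, exactly as in the other lemmas of this section; set $\bm a=(1\ 9\ 9)^T$ throughout. By the computation carried out in the proof of Lemma~\ref{lem:111}, the series $\mathcal H_{1,2}\big|U_2$ has $n$-th Fourier coefficient $H(4n)-2H(n)$. Hence $\mathcal H_{1,2}\big|U_2\big|V_9$ has $n$-th coefficient $H\big(\tfrac{4n}{9}\big)-2H\big(\tfrac n9\big)$ and $\mathcal H_{1,2}\big|U_2\big|S_{3,1}$ has $n$-th coefficient $\delta_{n\equiv1\pmod{3}}\big(H(4n)-2H(n)\big)$, so the claim of the lemma is equivalent to
\[
\Theta_{\bm a}=12\,\mathcal H_{1,2}\big|U_2\big|V_9+4\,\mathcal H_{1,2}\big|U_2\big|S_{3,1}.
\]
This has the same shape as the identity used for $(1\ 1\ 9)^T$ in Lemma~\ref{lem:119}, here with the mod-$3$ coefficient equal to $4$ on the residue class $1$ and $0$ on the classes $0$ and $2$. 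As a consistency check, the two terms on the right have disjoint support (the first supported on $9\mid n$, the second on $n\equiv1\pmod{3}$), which matches the elementary facts that $r_{\bm a}(n)=0$ whenever $n\equiv2\pmod{3}$ or $3\parallel n$, while $r_{\bm a}(9m)=r_{\bm 1}(m)$ via $x_1\mapsto x_1/3$, so that the $9\mid n$ part of the identity is just \eqref{eqn:r1notGauss}.

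Next I would track the modularity of both sides. By Lemma~\ref{lem:thetamodular} (with $\ell=9$ and $\mathcal D_{\bm a}=81$) the left-hand side is a holomorphic modular form of weight $\tfrac32$ on $\Gamma_0(36)$ with character $\chi_{324}$; since $324=18^2$, this character restricts to the trivial character on $\Gamma_0(72)$. On the right, Lemma~\ref{lem:Hslash} gives that $\mathcal H_{1,2}$ is a modular form of weight $\tfrac32$ on $\Gamma_0(8)$ with character $\chi_8$; Lemma~\ref{lem:operators}~(1) then shows $\mathcal H_{1,2}\big|U_2$ is modular of weight $\tfrac32$ on $\Gamma_0(8)$ with trivial character, Lemma~\ref{lem:operators}~(3) shows $\mathcal H_{1,2}\big|U_2\big|V_9$ is modular on $\Gamma_0(72)$, and Lemma~\ref{lem:operators}~(2) (applicable since $3\mid24$ and $3\not\equiv2\pmod{4}$) shows $\mathcal H_{1,2}\big|U_2\big|S_{3,1}$ is modular on $\Gamma_0(72)$, both with trivial character. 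Therefore both sides of the displayed identity are holomorphic modular forms of weight $\tfrac32$ on $\Gamma=\Gamma_0(72)$ with trivial character.

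Finally I would apply Lemmas~\ref{lem:valenceformula} and~\ref{lem:index}: since $[\SL_2(\Z):\Gamma_0(72)]=72\big(1+\tfrac12\big)\big(1+\tfrac13\big)=144$, the difference of the two forms vanishes identically once its Fourier coefficients vanish for all $n\le\tfrac18\cdot144=18$. This reduces the lemma to comparing $r_{\bm a}(n)$ with $12\big(H(\tfrac{4n}{9})-2H(\tfrac n9)\big)+4\delta_{n\equiv1\pmod{3}}\big(H(4n)-2H(n)\big)$ for $0\le n\le18$, which I would do by machine. The one place demanding care is the level-and-character bookkeeping after the operators --- in particular noticing that the a priori nontrivial nebentypus $\chi_{324}$ of $\Theta_{\bm a}$ becomes trivial on $\Gamma_0(72)$, so the valence formula applies to the genuine difference of the two forms; once that is in hand, the coefficient check is routine.
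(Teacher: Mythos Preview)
Your proposal is correct and follows essentially the same approach as the paper: the paper records the equivalent identity as $\Theta_{\bm a}=4\,\mathcal H_{1,2}\big|U_2\big|(3V_9+S_{3,1})$, which is exactly your displayed formula, works on $\Gamma_0(72)$, and checks the first $18$ coefficients. Your additional consistency remark about the disjoint supports and the reduction of the $9\mid n$ part to \eqref{eqn:r1notGauss} is a nice touch not present in the paper.
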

\begin{proof}
The equivalent formula in this case is
	\[
	\Theta_{{\bm{a}}}=
	4 \mathcal H_{1,2} \big| U_2\big| (3 V_9+S_{3,1}). \qedhere
	\]
\end{proof}

{

\begin{lemma}\label{lem:1921}
	We have, for $\bm{a}=(1\ 9\ 21)^T$,
	\begin{multline*}
	r_{{\bm a}}(n)=c(n) \Big( H(84n)-  2H(21n) -7 H \left( \tfrac{12n}{7} \right)  +14 H \left( \tfrac{3n}{7} \right) -3 H \left( \tfrac{28n}{3} \right) \\ + 21 H \left( \tfrac{4n}{21} \right)+6 H\left(\tfrac{7n}{3}\right)- 42 H \left( \tfrac{n}{21} \right)
	\Big).
	\end{multline*}
\end{lemma}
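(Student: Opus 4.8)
The plan is to follow the template of the preceding lemmas: recast the asserted class‑number identity as an equality of holomorphic modular forms of weight $\tfrac32$, and then close the argument with the valence formula (Lemma~\ref{lem:valenceformula}). Recall that the $n$‑th Fourier coefficient of $\mathcal{H}\big|U_d$ is $H(dn)$ and that of $\mathcal{H}\big|V_d$ is $H(n/d)$ (extending $H$ by zero outside discriminants). Hence, by Lemma~\ref{lem:Hslash}, the $n$‑th coefficient of $\mathcal{H}_{3,7}=\mathcal{H}\big|(U_{21}-7U_3V_7)$ is $H(21n)-7H(3n/7)$, and that of $\mathcal{H}_{1,7}\big|V_3$ is $H(7n/3)-7H(n/21)$; applying the operator $U_4-2$ to $\mathcal{H}_{3,7}-3\mathcal{H}_{1,7}\big|V_3$ then produces exactly the eight terms
\[
H(84n),\ -2H(21n),\ -7H(12n/7),\ 14H(3n/7),\ -3H(28n/3),\ 21H(4n/21),\ 6H(7n/3),\ -42H(n/21)
\]
appearing in the statement. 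Writing $M$ for the least common multiple of the moduli attached to $c(n)$ in Appendix~\ref{sec:appendixcs}, the lemma is therefore equivalent to
\[
\Theta_{(1\ 9\ 21)^T}=\sum_{m\pmod{M}}c(m)\left(\mathcal{H}_{3,7}-3\mathcal{H}_{1,7}\big|V_3\right)\big|\left(U_4-2\right)\big|S_{M,m},
\]
which can also be rewritten in the form $\sum_{m\pmod{M}}c(m)\left(\mathcal{H}_{12,7}-3\mathcal{H}_{4,7}\big|V_3-2\mathcal{H}_{3,7}+6\mathcal{H}_{1,7}\big|V_3\right)\big|S_{M,m}$ using Lemma~\ref{lem:operators}~(4); here $U_4-2$ plays the role of the operator $U_4-4V_4$ governing the companion form $(1\ 1\ 21)^T$ in Lemma~\ref{lem:1121}.

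I would then verify that both sides of this identity are holomorphic modular forms of weight $\tfrac32$ on one common congruence subgroup $\Gamma\subseteq\Gamma_0(4)$. By Lemma~\ref{lem:thetamodular}, the left‑hand side is modular on $\Gamma_{4\ell_{\bm a},1}=\Gamma_0(252)$ with character $\chi_{4\mathcal{D}_{\bm a}}$ (which as a Kronecker symbol is $\chi_{21}$). For the right‑hand side, Lemma~\ref{lem:Hslash} gives $\mathcal{H}_{3,7}$ on $\Gamma_0(84)$ with character $\chi_{84}$ and $\mathcal{H}_{1,7}$ on $\Gamma_0(28)$ with character $\chi_{28}$, and Lemma~\ref{lem:operators}~(1)--(3) tracks the successive effect of $V_3$, of $U_4$, and of the sieving operators $S_{M,m}$, landing the whole sum on an explicit group $\Gamma_{N',M'}$ whose character is again $\chi_{21}$. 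Intersecting the two groups yields $\Gamma$, and Lemmas~\ref{lem:valenceformula} and~\ref{lem:index} then provide a finite bound $b$ (a fixed multiple of $[\SL_2(\Z):\Gamma]$, recorded in Appendix~\ref{sec:appendixvalencelemmas}) such that the displayed identity holds for all $n$ as soon as it holds for the first $b$ Fourier coefficients.

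The final step is to expand $\Theta_{(1\ 9\ 21)^T}=\vartheta_{0,1}(2\tau)\vartheta_{0,1}(18\tau)\vartheta_{0,1}(42\tau)$ and the class‑number side to order $q^{b}$ and compare coefficients by machine. I expect the main difficulty to be bookkeeping rather than conceptual: since $\ell_{(1\ 9\ 21)^T}=63$ and the perfect square $9$ (which, notably, contributes no $V_9$‑ or $H(\cdot/9)$‑term, only a congruence restriction encoded in $c(n)$) forces $M$, and hence the level of the sieved right‑hand side, to absorb an extra power of $3$, the index $[\SL_2(\Z):\Gamma]$ and the resulting bound $b$ are among the largest occurring in this section. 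One therefore has to take $\Gamma$ as large as the modularity statements allow and pin down the exact modulus $M$ and the table of values of $c(n)$ before running the verification; the same $q$‑expansion is, incidentally, what feeds into the $N=21$ case of Theorem~\ref{thm:congcondition} after reducing $\bm{h}$ modulo $3$.
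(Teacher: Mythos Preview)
Your approach is correct and is essentially the same as the paper's. The paper writes the equivalent identity as
\[
\Theta_{\bm a}=-\sum_{m\pmod{3}}c(m)\,\mathcal{H}_{1,3}\big|(2-U_4)\big|(U_7-7V_7)\big|S_{3,m},
\]
which, after commuting the operators via Lemma~\ref{lem:operators}~(4) and using $\mathcal{H}_{1,3}\big|(U_7-7V_7)=\mathcal{H}_{3,7}-3\mathcal{H}_{1,7}\big|V_3$, is literally your formula; so your grouping by $\mathcal{H}_{\cdot,7}$ and the paper's by $\mathcal{H}_{1,3}$ differ only in the order in which Lemma~\ref{lem:Hslash} is invoked. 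One small correction: here $M=3$ (from Appendix~\ref{sec:appendixcs}), and since $M\mid 24$ the second clause of Lemma~\ref{lem:operators}~(2) applies, so the sieved right-hand side already lives on $\Gamma_0(504)$ with bound $b=144$ (Appendix~\ref{sec:appendixvalencelemmas})---far from ``among the largest'' in the section, and no extra power of $3$ enters the level.
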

\begin{proof}
The equivalent formula in this case is 
		\[
	\Theta_{{\bm{a}}}=-\sum_{m\pmod{
3
}} c(m)\mathcal{H}_{1,3}\big|\left(2-U_4\right)\big|\left(U_7-7V_7\right)\big|S_{
3,m
}. \qedhere
	\]
\end{proof}

\begin{lemma}\label{lem:12121}
	
	We have, for  $\bm{a}=(1\ 21\ 21)^T$,
\begin{multline*}
r_{{\bm a}}(n)=H(1764n)- 2H(441n) -3H(196n) + 6H(49n) - 7H(36n) +14H(9n) \\+21H(4n) -42H(n).
\end{multline*}
\end{lemma}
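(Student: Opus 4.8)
The plan is to imitate the proof of Lemma~\ref{lem:111}: recognize both sides of the asserted identity as holomorphic modular forms of weight $\tfrac32$ on one common congruence subgroup $\Gamma\subseteq\Gamma_0(4)$, and then invoke the valence formula (Lemma~\ref{lem:valenceformula}) to reduce the identity to a finite check of Fourier coefficients.

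First I would recast the right-hand side in terms of the functions $\mathcal H_{\ell_1,\ell_2}$ of Lemma~\ref{lem:Hslash}. Using that $V_dU_d$ is the identity, one computes
\[
\mathcal H_{m,2}\big|U_2=\mathcal H\big|U_{4m}-2\,\mathcal H\big|U_m,
\]
so the $n$-th Fourier coefficient of $\mathcal H_{m,2}\big|U_2$ is $H(4mn)-2H(mn)$. Since $1764=4\cdot441$, $196=4\cdot49$, $36=4\cdot9$, and $4=4\cdot1$, grouping the eight terms of $r_{\bm{a}}(n)$ into these four pairs shows that the claimed formula is equivalent to
\[
\Theta_{\bm{a}}=\bigl(\mathcal H_{441,2}-3\,\mathcal H_{49,2}-7\,\mathcal H_{9,2}+21\,\mathcal H_{1,2}\bigr)\big|U_2.
\]

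Next I would pin down the modularity. By Lemma~\ref{lem:thetamodular} (with $N=1$, $\ell=\ell_{\bm{a}}=21$, and $\mathcal D_{\bm{a}}=441$) the left-hand side is a holomorphic modular form of weight $\tfrac32$ on $\Gamma_0(84)$ with character $\chi_{1764}$. On the right, Lemma~\ref{lem:Hslash} gives that $\mathcal H_{441,2},\mathcal H_{49,2},\mathcal H_{9,2},\mathcal H_{1,2}$ are holomorphic modular forms of weight $\tfrac32$ on $\Gamma_0(168),\Gamma_0(56),\Gamma_0(24),\Gamma_0(8)$ with characters $\chi_{3528},\chi_{392},\chi_{72},\chi_8$; because $441,49,9$ are perfect squares, each of these four characters restricts to $\chi_8$ on $\Gamma_0(168)$, so the displayed linear combination is a modular form of weight $\tfrac32$ on $\Gamma_0(168)$ with character $\chi_8$. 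Applying $U_2$ via Lemma~\ref{lem:operators}~(1) keeps the level at $\Gamma_0(168)$ (since $4\operatorname{lcm}(42,2)=168$) and multiplies the character by $\chi_{8}$, hence yields the trivial character, which agrees with $\chi_{1764}$ on $\Gamma_0(168)$. Thus both sides are holomorphic modular forms of weight $\tfrac32$ on $\Gamma=\Gamma_0(168)$ with the same character; since $[\SL_2(\Z):\Gamma_0(168)]=384$ by Lemma~\ref{lem:index}, Lemma~\ref{lem:valenceformula} reduces the claim to checking the first $\tfrac18\cdot384=48$ Fourier coefficients, which is a short computer verification.

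The only genuinely delicate part is the bookkeeping that makes this valence-formula bound rigorous: one must confirm that the four characters $\chi_{8\ell_1}$ truly coincide on $\Gamma_0(168)$ (so that the combination has a single nebentypus), that $U_2$ does not raise the level past $168$, and that the index, and hence the bound $48$, is computed correctly. I expect no obstruction from holomorphy at the cusps, since the functions $\mathcal H_{\ell_1,\ell_2}$ are holomorphic modular forms by Lemma~\ref{lem:Hslash}, nor from the final finite check, exactly as in the other cases treated in this section.
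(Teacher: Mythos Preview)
Your argument is correct and lands on exactly the same congruence subgroup $\Gamma_0(168)$ and coefficient bound $48$ that the paper records in Appendix~\ref{sec:appendixvalencelemmas}. The only difference is the decomposition you choose: the paper rewrites the right-hand side as
\[
\Theta_{\bm{a}}=\mathcal{H}_{1,7}\big|\left(2-U_4\right)\big|\left(3-U_9\right)\big|U_7,
\]
i.e.\ it starts from a single $\mathcal{H}_{1,7}$ and builds up the eight class-number terms via the operator product $(U_{49}-7)(6-2U_9-3U_4+U_{36})$, whereas you group the eight terms into four pairs $H(4mn)-2H(mn)$ and express each as $\mathcal{H}_{m,2}\big|U_2$ with $m\in\{441,49,9,1\}$. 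Both routes are equally valid; the paper's version has the minor aesthetic advantage of involving only one $\mathcal{H}_{\ell_1,\ell_2}$ and making the factored structure $(2-U_4)(3-U_9)$ visible, while yours makes the pairwise grouping of the statement transparent and requires slightly less operator-commutation bookkeeping. Since $441,49,9,1$ are all perfect squares the four characters $\chi_{8m}$ are literally equal to $\chi_8$ (not merely equal on restriction), so your character check is in fact a bit cleaner than you suggest.
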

\begin{proof}
The equivalent formula in this case is
	\[
	\Theta_{\bm{a}}=\mathcal{H}_{1,7}\big|\left(2-U_4\right)\big|\left(3-U_9\right)\big|U_7.\qedhere
	\]
\end{proof}

\begin{lemma}\label{lem:12424}  
	We have, for  $\bm{a}=(1\ 24\ 24)^T$,
\[
r_{{\bm a}}(n)=c(n) \left( 2H(36n) -5H(9n)-6 H(4n) +15H(n)\right).
\]
\end{lemma}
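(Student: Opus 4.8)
The plan is to follow verbatim the strategy used throughout Section~\ref{sec:lemmas} and the $\bm a$-specific lemmas above: translate the asserted class number identity into an identity of $q$-series, recognize both sides as holomorphic modular forms of weight $\tfrac32$ on a common congruence subgroup $\Gamma\subseteq\Gamma_0(4)$, and then invoke the valence formula (Lemma~\ref{lem:valenceformula}) together with the index computation (Lemma~\ref{lem:index}) to reduce the identity to a finite check of Fourier coefficients, which is then done by computer. Concretely, for $\bm a=(1\ 24\ 24)^T$ I expect the equivalent formula to read
\[
\Theta_{\bm a}=\sum_{m\pmod{M}} c(m)\left(2\mathcal H_{4,3}-5\mathcal H_{1,3}\right)\big|U_3\big|S_{M,m},
\]
with $M$ chosen as the lcm of the moduli appearing in the table for $c(n)=c_{(1\ 24\ 24)^T}(n)$ in Appendix~\ref{sec:appendixcs} (this matches the shape of Lemmas~\ref{lem:166}, \ref{lem:1624} and \ref{lem:12424}, where the $\mathcal H_{\ell_1,\ell_2}$-combination $2\mathcal H_{4,3}-5\mathcal H_{1,3}$ produces the coefficient pattern $2H(36n)-6H(4n)-5H(9n)+15H(n)$ after applying $U_3$ and using $\mathcal H_{\ell_1,\ell_2}=\mathcal H|(U_{\ell_1\ell_2}-\ell_2 U_{\ell_1}V_{\ell_2})$ from Lemma~\ref{lem:Hslash}).

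First I would verify the equivalence of the two formulations: expand $\mathcal H_{4,3}|U_3$ and $\mathcal H_{1,3}|U_3$ coefficient-by-coefficient using $\mathcal H(\tau)=\sum_{D\geq0,\,D\equiv0,3(4)}H(D)q^D$ and the explicit actions of $U_d$, $V_d$, $S_{M,m}$ from the definitions in Section~\ref{sec:prelim}; the $S_{M,m}$ sieve simply records the residue-class dependence of the coefficient $c(n)$, so once the unsieved combination $(2\mathcal H_{4,3}-5\mathcal H_{1,3})|U_3$ is shown to have $n$-th coefficient $2H(36n)-6H(4n)-5H(9n)+15H(n)$, multiplication by $c(n)$ follows from the displayed translation-invariance identity $\sum_{m\equiv j(D)}f|S_{M,m}=f|S_{D,j}$ already recalled in the proof of Theorem~\ref{thm:congcondition}. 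Second, I would establish modularity: by Lemma~\ref{lem:Hslash}, $\mathcal H_{4,3}$ and $\mathcal H_{1,3}$ are modular of weight $\tfrac32$ on $\Gamma_0(24)$ with character $\chi_{48}$ and $\chi_{12}$ respectively (here $\ell_1\in\{1,4\}$, $\ell_2=3$, so $\operatorname{rad}(\ell_1)\ell_2\in\{3,6\}$ and $4\operatorname{rad}(\ell_1)\ell_2\mid24$); applying $U_3$ via Lemma~\ref{lem:operators}(1) and then $S_{M,m}$ via Lemma~\ref{lem:operators}(2) gives a modular form of weight $\tfrac32$ on $\Gamma_{N',L'}$ for explicit $N',L'$ depending on $M$. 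The left side $\Theta_{(1\ 24\ 24)^T}$ is, by Lemma~\ref{lem:thetamodular}, a modular form of weight $\tfrac32$ on $\Gamma_{4\cdot24\cdot1^2,1}=\Gamma_0(96)$ with character $\chi_{4\cdot24^2}=\chi_{4}$ (so really on $\Gamma_0(96)$, as $\ell_{(1\ 24\ 24)^T}=24$). Intersecting the two groups yields $\Gamma$, Lemma~\ref{lem:index} computes $[\SL_2(\Z):\Gamma]$, and Lemma~\ref{lem:valenceformula} gives the coefficient bound $b=\tfrac{1}{8}[\SL_2(\Z):\Gamma]$ beyond which equality of the first $b$ coefficients forces equality of the forms.

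Finally I would record the finite computer check of the first $b$ coefficients; since $\bm a=(1\ 24\ 24)^T$ does not appear in the exceptional list in the proof of Theorem~\ref{thm:congcondition}, there is no reason to expect the bound $b$ to be prohibitively large, and the verification proceeds exactly as in Lemma~\ref{lem:111}. The main obstacle here is not conceptual but bookkeeping: one must correctly determine the level and character of $(2\mathcal H_{4,3}-5\mathcal H_{1,3})|U_3|S_{M,m}$ — in particular the character must come out to match $\chi_{4}$ (the trivial-twisted character of $\Theta_{(1\ 24\ 24)^T}$), which forces a compatibility among the conductor contributions $\chi_{48}\chi_{12}$, $\chi_{12}$ (from $U_3$, i.e.\ $\chi_{4\cdot3}=\chi_{12}$) and the $S_{M,m}$ level-raising — and one must pin down the precise value of $M$ from the Appendix~\ref{sec:appendixcs} table so that the index $[\SL_2(\Z):\Gamma]$, and hence $b$, is correctly computed before running the numerical check. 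I expect that, as in Lemma~\ref{lem:1616}, the combination is engineered so that the $V_8$-type and $S_{M,m}$-type operations keep $M\mid 24$ (or a small multiple), keeping $\Gamma$ of manageable index.
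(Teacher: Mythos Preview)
Your proposal is correct and follows essentially the same route as the paper. The paper writes the equivalent identity as $\Theta_{\bm a}=-\sum_{m\pmod{16}}c(m)\,\mathcal H_{1,3}\big|(5-2U_4)\big|U_3\big|S_{16,m}$, which is the same as your $(2\mathcal H_{4,3}-5\mathcal H_{1,3})\big|U_3$ formulation via the identity $\mathcal H_{1,3}\big|U_4=\mathcal H_{4,3}$ (coming from the commutator relation in Lemma~\ref{lem:operators}(4)); the modulus is $M=16$ and the resulting group from Appendix~\ref{sec:appendixvalencelemmas} is $\Gamma_{768,16}$ with bound $b=1536$.
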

\begin{proof}
The equivalent formula in this case is
	\[
	\Theta_{\bm{a}}=-\sum_{m\pmod{16}}
	c(m)
	\mathcal{H}_{1,3}\big|\left(5-2U_4\right)\big|U_3\big|S_{16,m}.\qedhere
	\]
\end{proof}

\begin{lemma}\label{lem:223}
	For $\bm{a}\in\{(2\ 2\ 3)^T, (2\ 3\ 18)^T, (3\ 4\ 4)^T, (3\ 4\ 36)^T
\}$, we have 
\[
r_{{\bm{a}}}(n)=c_{\bm a}(n)\left( H(12n) -4 H(3n) -3 H \left( \tfrac{4n}{3} \right)+12H\left(\tfrac{n}{3}\right)\right).
\]
\end{lemma}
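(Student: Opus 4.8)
The plan is to run the same machine as in Lemma~\ref{lem:111} and the lemmas following it: convert the arithmetic statement into an equality of two holomorphic modular forms of weight~$\tfrac32$ and then annihilate the difference with the valence formula. Denote by $c_{\bm a}(n)$ the residue-class constants listed in Appendix~\ref{sec:appendixcs} and by $M$ the least common multiple of the moduli occurring there. I claim the lemma is equivalent to
\[
\Theta_{\bm a}=\sum_{m\pmod M}c_{\bm a}(m)\,\mathcal{H}_{1,3}\big|\left(U_4-4\right)\big|S_{M,m}.
\]
To verify that this is the correct combination, recall from Lemma~\ref{lem:Hslash} that $\mathcal{H}_{1,3}=\mathcal{H}\big|\left(U_3-3V_3\right)$, so the $n$-th Fourier coefficient of $\mathcal{H}_{1,3}$ is $H(3n)-3H\!\left(\tfrac n3\right)$ with the convention $H\!\left(\tfrac n3\right):=0$ when $3\nmid n$. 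Applying $U_4$ and subtracting four times the original series turns this into
\[
H(12n)-3H\!\left(\tfrac{4n}{3}\right)-4H(3n)+12H\!\left(\tfrac n3\right),
\]
which is exactly the bracket in the statement; the subsequent $S_{M,m}$ and the factor $c_{\bm a}(m)$ merely record the residue-dependent constant. Comparing $q$-expansions, the displayed identity is equivalent to the asserted formula. (One could equally write the right-hand side as $-\sum_m c_{\bm a}(m)\,\mathcal{H}_{1,3}\big|\left(4-U_4\right)\big|S_{M,m}$, mirroring Lemma~\ref{lem:116}.)

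Next I would establish the modularity of both sides. By Lemma~\ref{lem:thetamodular}, $\Theta_{\bm a}$ is a holomorphic modular form of weight $\tfrac32$ on $\Gamma_0(4\ell_{\bm a})$ with character $\chi_{4\mathcal{D}_{\bm a}}$, where $\ell_{\bm a}=\lcm(\bm a)$; for the four forms in the statement $\ell_{\bm a}\in\{6,18,12,36\}$. On the other side, Lemma~\ref{lem:Hslash} gives that $\mathcal{H}_{1,3}$ is a modular form of weight $\tfrac32$ on $\Gamma_0(12)$ with character $\chi_{12}$; Lemma~\ref{lem:operators}(1) controls the level after hitting with $U_4$, and Lemma~\ref{lem:operators}(2) controls the level of the sieved form $\mathcal{H}_{1,3}\big|\left(U_4-4\right)\big|S_{M,m}$, which lies on a group $\Gamma_{N',M'}$ with $N',M'$ read off from that lemma. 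For each of $(2\ 2\ 3)^T$, $(2\ 3\ 18)^T$, $(3\ 4\ 4)^T$, $(3\ 4\ 36)^T$ the product $\mathcal{D}_{\bm a}$ differs from $12$ by a perfect square, so on the intersection $\Gamma$ of the two groups — which, being contained in $\Gamma_0(12)$, forces the entry $d$ of each element to be coprime to $6$ — the characters $\chi_{4\mathcal{D}_{\bm a}}$ and $\chi_{12}$ coincide. Hence the difference of the two sides is a holomorphic modular form of weight $\tfrac32$ on $\Gamma$.

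Finally, by Lemmas~\ref{lem:valenceformula} and~\ref{lem:index} this difference is identically zero provided its first $\tfrac18\,[\SL_2(\Z):\Gamma]$ Fourier coefficients vanish, where $[\SL_2(\Z):\Gamma]$ is evaluated via Lemma~\ref{lem:index}; the resulting bound $b$ and the group $\Gamma$ are exactly what appears in the corresponding row of Appendix~\ref{sec:appendixvalencelemmas}, and a finite coefficient check by computer then finishes the proof. The only non-routine step is spotting the combination $\mathcal{H}_{1,3}\big|\left(U_4-4\right)$ and the constants $c_{\bm a}(n)$; everything afterwards is bookkeeping with the operator lemmas, just as in Lemma~\ref{lem:111}. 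The main practical difficulty is keeping the level of $\mathcal{H}_{1,3}\big|\left(U_4-4\right)\big|S_{M,m}$ small enough for the coefficient check to be feasible, since Lemma~\ref{lem:operators}(2) can inflate the level by a factor $M^2$ (or $4M^2$ if $M\equiv2\pmod4$); the remedy, used throughout the paper, is to choose $M$ minimal — a divisor of~$24$ whenever possible, so that the sharper clause of Lemma~\ref{lem:operators}(2) applies — and, if some $\bm a$ remains computationally out of reach, to deduce its identity from a smaller one as in the proof of Theorem~\ref{thm:congcondition}.
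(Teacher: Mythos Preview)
Your proposal is correct and follows exactly the same approach as the paper, which records the equivalent formula as $\Theta_{\bm a}=-\sum_{m\pmod M}c_{\bm a}(m)\,\mathcal{H}_{1,3}\big|(4-U_4)\big|S_{M,m}$ (the sign-flipped version you already anticipate) and then appeals to the table in Appendix~\ref{sec:appendixvalencelemmas} together with a computer check. Your additional remarks on the character match and on keeping $M$ small are accurate elaborations of the bookkeeping that the paper leaves implicit.
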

\begin{proof}
The equivalent formula in this case is
\[
\Theta_{{\bm{a}}}=-\sum_{m\pmod{M}} c_{\bm a}(m)\mathcal{H}_{1,3}\big|\left(4-U_4\right)\big|S_{M,m}. \qedhere
\]
\end{proof}

 \begin{lemma}\label{lem:233}
 	For $\bm{a}=(2\ 3\ 3)^T$, we have
 	\[
 	r_{{\bm{a}}}(n)=2 (-1)^n \left( H(72n) - 4  H(18n) -3 H(8n) + 12 H(2n)\right).
 	\]
 \end{lemma}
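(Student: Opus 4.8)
The plan is to reproduce the strategy used for the earlier lemmas of this section. First I would rewrite the asserted class-number identity as an equality between the theta series $\Theta_{\bm{a}}$ and an explicit holomorphic modular form of weight $\tfrac32$ assembled from the functions $\mathcal H_{\ell_1,\ell_2}$ of Lemma \ref{lem:Hslash}; then I would verify that both sides are modular forms on one and the same congruence subgroup $\Gamma\subseteq\Gamma_0(4)$; and finally I would use the valence formula to reduce the identity to the vanishing of finitely many Fourier coefficients, which is a routine computer check.

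For the first step I would assemble the right building blocks. Collapsing the relevant compositions of $U$- and $V$-operators by Lemma \ref{lem:operators} (4) (e.g.\ $\mathcal H|U_9V_2U_4=\mathcal H|U_{18}$, $\mathcal H|V_2U_4=\mathcal H|U_2$, $\mathcal H|V_3U_6=\mathcal H|U_2$), one sees that the $n$-th Fourier coefficients of $\mathcal H_{9,2}\big|U_4$, of $\mathcal H_{1,2}\big|U_4$ and of $\mathcal H_{1,3}\big|U_6$ are $H(72n)-2H(18n)$, $H(8n)-2H(2n)$ and $H(18n)-3H(2n)$. Hence $\mathcal G:=\bigl(\mathcal H_{9,2}-3\mathcal H_{1,2}\bigr)\big|U_4-2\,\mathcal H_{1,3}\big|U_6$ has $n$-th coefficient exactly $H(72n)-4H(18n)-3H(8n)+12H(2n)$, and, since $\sum_{m\pmod 2}(-1)^{m+1}g\big|S_{2,m}$ has $n$-th coefficient $(-1)^{n+1}$ times that of $g$, the claim of Lemma \ref{lem:233} becomes equivalent to the modular identity
\[
\Theta_{\bm{a}}=2\sum_{m\pmod 2}(-1)^{m+1}\,\mathcal G\big|S_{2,m}.
\]

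For the modularity bookkeeping, Lemma \ref{lem:thetamodular} with $N=1$ (so $\ell_{\bm{a}}=6$ and $\mathcal{D}_{\bm{a}}=18$) shows that the left-hand side is a holomorphic modular form of weight $\tfrac32$ on $\Gamma_0(24)$ with character $\chi_{72}$. On the right, Lemma \ref{lem:Hslash} gives that $\mathcal H_{9,2},\mathcal H_{1,2},\mathcal H_{1,3}$ are holomorphic modular forms of weight $\tfrac32$ on $\Gamma_0(24),\Gamma_0(8),\Gamma_0(12)$ with characters $\chi_{72},\chi_8,\chi_{12}$, and Lemma \ref{lem:operators} (1) and (2) propagate holomorphic modularity through $U_4$, $U_6$ and $S_{2,m}$ onto explicit larger subgroups; intersecting these with $\Gamma_0(24)$ gives the common group $\Gamma$, recorded together with the resulting valence bound $b$ in Appendix \ref{sec:appendixvalencelemmas}. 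I expect this to be the most delicate step: the right-hand side must be built solely from the functions $\mathcal H_{\ell_1,\ell_2}$ and their images under $U$, $V$ and $S$, never from a bare $\mathcal H|U_d$, since it is precisely those linear combinations whose non-holomorphic completions cancel, so that the holomorphic valence formula is legitimately available rather than only the transformation law of $\widehat{\mathcal H}$.

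Finally, the difference of the two sides of the displayed identity is a holomorphic modular form of weight $\tfrac32$ on $\Gamma$, so by Lemmas \ref{lem:valenceformula} and \ref{lem:index} it vanishes identically once its first $b=\tfrac18[\SL_2(\Z):\Gamma]$ Fourier coefficients vanish; here $\Gamma$ has small index, so $b$ is modest and the remaining verification is an immediate computer check. (Alternatively, as for several cases in the proof of Theorem \ref{thm:congcondition}, one could instead reduce to an identity already established by an elementary congruence argument on representations, but the route above is the cleanest.)
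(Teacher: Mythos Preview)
Your approach is the paper's method: rewrite the claim as an identity between $\Theta_{\bm a}$ and a holomorphic weight-$\tfrac32$ form built from the $\mathcal H_{\ell_1,\ell_2}$, locate a common congruence subgroup, and finish with the valence formula. One slip: your displayed identity should carry $(-1)^m$, not $(-1)^{m+1}$, since you need the $n$-th coefficient on the right to equal $2(-1)^n$ times that of $\mathcal G$ (you correctly observe that $(-1)^{m+1}$ produces $(-1)^{n+1}$, but then forget to flip the sign). The paper packages the holomorphic side more compactly as
\[
\Theta_{\bm a}=2\sum_{m\pmod 2}(-1)^m\,\mathcal H_{4,3}\big|(U_2-4V_2)\big|U_3\big|S_{2,m},
\]
using a single $\mathcal H_{\ell_1,\ell_2}$ rather than three; your decomposition via $\mathcal H_{9,2},\mathcal H_{1,2},\mathcal H_{1,3}$ is equally valid and lands on the same group $\Gamma_0(48)$ with valence bound $12$.
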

 \begin{proof}
The equivalent formula in this case is
 		\begin{equation*}
 	\Theta_{{\bm{a}}}=  2\sum_{m\pmod{2}}(-1)^m \mathcal{H}_{4,3}\big|\left(U_2 -4V_2\right)\big|U_3 \big|S_{2,m}. \qedhere
 	\end{equation*}
 \end{proof}

\begin{lemma}\label{lem:236}
For $\bm{a}=(2\ 3\ 6)^T$, we have
\[
r_{{\bm a}}(n)=c(n)\left(H(36n)-2H(9n)-5H(4n)+10H(n)\right).
\]
\end{lemma}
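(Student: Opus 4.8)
The plan is to follow the uniform strategy of Section~\ref{sec:lemmas}: recast the asserted Hurwitz class number identity as an equality of two holomorphic modular forms of weight $\tfrac32$, and then reduce that equality to a finite Fourier coefficient check via the valence formula. Here $\bm a=(2\ 3\ 6)^T$.

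\emph{Step 1: translation into theta functions.} Let $M$ be the least common multiple of the moduli occurring in the definition of $c(n)$ in Appendix~\ref{sec:appendixcs}. Recall from the proof of Lemma~\ref{lem:111} that $\mathcal H_{1,2}\big|U_2$ has $n$-th Fourier coefficient $H(4n)-2H(n)$. Hence $\mathcal H_{1,2}\big|U_2\big|U_9$ has $n$-th coefficient $H(36n)-2H(9n)$, so $\mathcal H_{1,2}\big|U_2\big|(U_9-5)$ has $n$-th coefficient
\[
\bigl(H(36n)-2H(9n)\bigr)-5\bigl(H(4n)-2H(n)\bigr)=H(36n)-2H(9n)-5H(4n)+10H(n).
\]
Splitting into residue classes modulo $M$ with the weights $c(m)$ via the sieving operators $S_{M,m}$, the lemma becomes equivalent to the modular identity
\[
\Theta_{\bm a}=\sum_{m\pmod M}c(m)\,\mathcal H_{1,2}\big|U_2\big|(U_9-5)\big|S_{M,m}.
\]
(The exact placement of $U_2$ and $U_9$ and the overall sign depend only on how $c$ is normalized in the Appendix; since $U_2$ and $U_9$ commute one may equally write $\mathcal H_{1,2}\big|(U_{18}-5U_2)$.)

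\emph{Step 2: modularity of both sides.} By Lemma~\ref{lem:thetamodular} (here $N=1$, $\ell_{\bm a}=6$, $\mathcal D_{\bm a}=36$) the left-hand side is a holomorphic modular form of weight $\tfrac32$ on $\Gamma_0(24)$ with character $\chi_{144}$. For the right-hand side, Lemma~\ref{lem:Hslash} (with $\ell_1=1$, $\ell_2=2$) shows that $\mathcal H_{1,2}$ is a holomorphic modular form of weight $\tfrac32$ on $\Gamma_0(8)$ with character $\chi_8$; applying $U_2$ and then $U_9$ preserves modularity on explicit groups $\Gamma_0(8)$ and $\Gamma_0(24)$ with explicit quadratic characters by Lemma~\ref{lem:operators}~(1), the scalar term $5\cdot\mathrm{id}$ changes nothing, and Lemma~\ref{lem:operators}~(2) records the level and character after applying $S_{M,m}$. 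Intersecting the group coming from the left-hand side with the one coming from the right-hand side produces a congruence subgroup $\Gamma\subseteq\Gamma_0(4)$ (recorded in the table in Appendix~\ref{sec:appendixvalencelemmas}) on which both sides satisfy modularity of weight $\tfrac32$ with the same character.

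\emph{Step 3: valence formula and a finite check.} The difference of the two sides is then a holomorphic modular form of weight $\tfrac32$ on $\Gamma$, so by Lemmas~\ref{lem:valenceformula} and~\ref{lem:index} it vanishes identically as soon as its first $b:=\tfrac18[\SL_2(\Z):\Gamma]$ Fourier coefficients vanish; one checks these finitely many coefficients by direct computation, exactly as in the case $\bm a=(1\ 2\ 8)^T$ carried out in detail in the proof of Lemma~\ref{lem:111}. There is no new conceptual ingredient here; the only genuine work is the bookkeeping of levels and characters in Step~2 and the size of the computer verification in Step~3, the latter being the main practical obstacle (for the larger moduli $M$).
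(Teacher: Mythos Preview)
Your proposal is correct and follows essentially the same approach as the paper. Your equivalent modular identity $\Theta_{\bm a}=\sum_{m\pmod M}c(m)\,\mathcal H_{1,2}\big|U_2\big|(U_9-5)\big|S_{M,m}$ is algebraically identical to the paper's $\Theta_{\bm a}=-\sum_{m\pmod 4}c(m)\,\mathcal H_{1,2}\big|(5-U_9)\big|U_2\big|S_{4,m}$ (note $-(5-U_9)=U_9-5$ and $U_2,U_9$ commute), and the modularity/valence-formula reduction is the same; here $M=4$, the common group from Appendix~\ref{sec:appendixvalencelemmas} is $\Gamma_0(48)$, and only $12$ coefficients need checking, so the computer verification is in fact trivial rather than a practical obstacle.
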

\begin{proof}
The equivalent formula in this case is
\[
\Theta_{{\bm{a}}}=-\sum_{m\pmod{4}}c(m)\mathcal{H}_{1,2}\big|\left(5-U_9\right)\big|U_2\big|S_{4,m}. \qedhere
\]
\end{proof}

\begin{lemma}\label{lem:238}
	We have, for $\bm a \in \{ (2\ 3\ 8)^T, (3\ 8\ 8)^T, (3\ 8\ 72)^T\}$,
	\[
	r_{{\bm a}}(n)=c_{\bm a}(n)\left( 2H(12n) -5 H(3n)-6 H\left( \tfrac{4n}{3}\right)  +15 H \left( \tfrac{n}{3}\right) \right).
	\]
\end{lemma}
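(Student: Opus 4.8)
The plan is to follow the uniform strategy already outlined in Section~\ref{sec:lemmas}: rewrite the desired identity as an equality of $q$-series, recognize both sides as holomorphic modular forms of weight $\tfrac32$ on a common congruence subgroup $\Gamma$, and then invoke Lemmas~\ref{lem:valenceformula} and~\ref{lem:index} to reduce the claim to a finite computer check of the first $b$ Fourier coefficients. Concretely, I would first observe that the claimed formula is equivalent to
\[
\Theta_{\bm{a}}=\sum_{m\pmod{M}} c_{\bm{a}}(m)\,\mathcal{H}_{1,3}\big|\left(2U_4-5\right)\big|S_{M,m},
\]
for $M$ the lcm of the moduli appearing in the relevant entries of Appendix~\ref{sec:appendixcs}; here the operator $2U_4-5$ applied to $\mathcal H_{1,3}$ produces exactly the linear combination $2H(12n)-5H(3n)-6H(\tfrac{4n}{3})+15H(\tfrac n3)$ of Hurwitz class numbers (using $\mathcal H_{1,3}=\mathcal H|(U_3-3U_1V_3)$ as in Lemma~\ref{lem:Hslash}, together with the fact that $U_4$ acts on $\mathcal H$ by $H(n)\mapsto H(4n)$).

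Next I would establish modularity of each side. By Lemma~\ref{lem:thetamodular}, $\Theta_{\bm a}$ is a modular form of weight $\tfrac32$ on $\Gamma_0(4\ell_{\bm a})$ with character $\chi_{4\mathcal D_{\bm a}}$; for the three forms in the list one checks that $\ell_{\bm a}$ and $\mathcal D_{\bm a}$ give the same character $\chi_{12}$, which is why they can be treated together. For the right-hand side, Lemma~\ref{lem:Hslash} gives that $\mathcal H_{1,3}$ is modular of weight $\tfrac32$ on $\Gamma_0(12)$ with character $\chi_{12}$; applying $U_4$ keeps it modular on an explicit $\Gamma_0(\cdot)$ via Lemma~\ref{lem:operators}~(1), and applying $S_{M,m}$ lands it on a group $\Gamma_{\ast,\ast}$ via Lemma~\ref{lem:operators}~(2). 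Intersecting the two groups yields the $\Gamma$ recorded in the table of Appendix~\ref{sec:appendixvalencelemmas}, and I would also need to confirm holomorphicity at every cusp---this is automatic for $\Theta_{\bm a}$ and follows for $\mathcal H_{1,3}$ from the fact that the non-holomorphic completion $\widehat{\mathcal H}_{1,3}$ has no non-holomorphic part (as shown in the proof of Lemma~\ref{lem:Hslash}), so the relevant combination is genuinely a holomorphic modular form rather than a harmonic Maass form.

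With both sides identified as holomorphic modular forms on $\Gamma$, Lemma~\ref{lem:valenceformula} together with the index formula of Lemma~\ref{lem:index} gives the explicit bound $b=\tfrac{1}{8}[\SL_2(\Z):\Gamma]$, after which it suffices to verify the identity for the first $b$ Fourier coefficients, which is done by computer as in the earlier lemmas. The main obstacle I anticipate is bookkeeping rather than conceptual: one must track the conductors and levels carefully through the chain of operators $U_4$ and $S_{M,m}$ so that the resulting $\Gamma$ (and hence $b$) is correct and not wastefully large, and one must make sure the character stays $\chi_{12}$ throughout (the twists $\chi_{4d}$ introduced by $U_d$ in Lemma~\ref{lem:operators}~(1) must combine correctly); once $\Gamma$ and $b$ are pinned down, the remaining verification is a routine finite check, and in particular no new idea beyond the template of Lemma~\ref{lem:111} is required.
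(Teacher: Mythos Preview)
Your proposal is correct and follows essentially the same approach as the paper: the paper's proof records exactly the equivalent operator identity
\[
\Theta_{\bm{a}}=-\sum_{m\pmod{M}} c_{\bm a}(m)\,\mathcal{H}_{1,3}\big|\left(5-2U_4\right)\big|S_{M,m},
\]
which is identical to your $\mathcal{H}_{1,3}|(2U_4-5)$ formulation, and then defers to the same modularity lemmas and valence-formula bound (recorded in Appendix~\ref{sec:appendixvalencelemmas}) plus a computer check.
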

\begin{proof}
The equivalent formula in this case is
	\[
	\Theta_{{\bm{a}}}=-\sum_{m\pmod{M}} c_{\bm a}(m)\mathcal{H}_{1,3}\big|\left(5-2U_4\right)\big|S_{
M,m
}. \qedhere
	\]
\end{proof}

\begin{lemma}\label{lem:239}
	For $\bm{a}=(2\ 3\ 9)^T$, we
	have
	\[
	r_{{\bm{a}}}(n)= c(n)\left(H(24n)-2H(6n)-5H\left(\tfrac{8n}{3}\right)+10H\left(\tfrac{2n}{3}\right)\right).
	\]
\end{lemma}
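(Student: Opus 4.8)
The plan is to follow the template of Lemma \ref{lem:111}: reduce the asserted class number identity to an identity of weight-$\tfrac32$ modular forms on a congruence subgroup, and then verify that identity on finitely many Fourier coefficients via the valence formula. First I would use the coefficient formulas $c_{\mathcal{H}|U_d}(n)=H(dn)$ and $c_{\mathcal{H}|V_d}(n)=H(n/d)$ (with $H(x)=0$ unless $x\in\N_0$) together with $\mathcal{H}_{1,2}=\mathcal{H}\big|(U_2-2V_2)$ from Lemma \ref{lem:Hslash} to compute that the $n$-th Fourier coefficient of $\mathcal{H}_{1,2}\big|(U_3-5V_3)\big|U_4$ is precisely $H(24n)-2H(6n)-5H(\tfrac{8n}{3})+10H(\tfrac{2n}{3})$. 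Since $\bm{a}=(2\ 3\ 9)^T$ has the same right-hand side as Lemma \ref{lem:1318}, this shows the claim is equivalent to the theta identity
\[
\Theta_{\bm{a}}=\sum_{m\pmod{M}} c(m)\,\mathcal{H}_{1,2}\big|(U_3-5V_3)\big|U_4\big|S_{M,m},
\]
where $M$ is the least common multiple of the moduli appearing in the relevant table of Appendix \ref{sec:appendixcs} and the operator $S_{M,m}$ isolates the residue classes on which $c$ is constant.

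Next I would verify that both sides are holomorphic modular forms of weight $\tfrac32$ on a common congruence subgroup. Since $\ell_{\bm{a}}=\lcm(2,3,9)=18$ and $\mathcal{D}_{\bm{a}}=54$, Lemma \ref{lem:thetamodular} shows the left-hand side lies on $\Gamma_0(72)$ with character $\chi_{216}=\chi_{24}$. On the right, Lemma \ref{lem:Hslash} gives $\mathcal{H}_{1,2}$ on $\Gamma_0(8)$ with character $\chi_8$; applying Lemma \ref{lem:operators}(3) for $V_3$, part (1) for $U_3$ and $U_4$, and part (2) for $S_{M,m}$ produces a modular form of weight $\tfrac32$ on an explicit group $\Gamma_{N',L'}$, whose character simplifies to $\chi_{8}\chi_{12}=\chi_{24}$ (note that $\chi_{16}$ is trivial on odd integers). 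Intersecting with $\Gamma_0(72)$ gives the subgroup $\Gamma\subseteq\Gamma_0(4)$ on which the identity lives.

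Finally, Lemmas \ref{lem:valenceformula} and \ref{lem:index} reduce the identity to the statement that a single weight-$\tfrac32$ form on $\Gamma$ vanishes, hence to checking agreement of the two $q$-expansions through order $b=\tfrac18[\SL_2(\Z):\Gamma]$ (the value recorded in Appendix \ref{sec:appendixvalencelemmas}); this is done by computer. The main obstacle, as in the companion cases, is bookkeeping rather than conceptual: one must track level, conductor and character through each operator in Lemma \ref{lem:operators} carefully enough to obtain $\Gamma$ (and hence $b$) as small as possible, and to confirm that the characters on the two sides genuinely coincide. Once $\Gamma$ and $b$ are pinned down, the remaining computation is routine.
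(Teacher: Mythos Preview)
Your proposal is correct and matches the paper's proof essentially line for line: the paper rewrites the claim as $\Theta_{\bm{a}}=\sum_{m\pmod{6}} c(m)\,\mathcal{H}_{1,2}\big|(U_3-5V_3)\big|U_4\big|S_{6,m}$ (so $M=6$ from Appendix~\ref{sec:appendixcs}), lands on $\Gamma_0(144)$, and checks the first $36$ coefficients via Lemmas~\ref{lem:valenceformula} and~\ref{lem:index}. Your observation that the right-hand side coincides with that of Lemma~\ref{lem:1318} is exactly what the paper exploits.
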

\begin{proof}
The equivalent formula in this case is
\[
\Theta_{{\bm{a}}}=\sum_{m\pmod{6}} c(m)\mathcal{H}_{1,2}\big|\left(U_3-5V_3\right)\big|U_4\big|S_{6,m}.\qedhere
\]
\end{proof}

\begin{lemma}\label{lem:2312}
	For $\bm{a}\in\{(2\ 3\ 12)^T, (3\ 8\ 12)^T\}
 $, we have 
\begin{equation*}
r_{{\bm a}} (n)=
 c_{\bm{a}}(n)
\left(2H(72n) -5H(18n)-6 H(8n) +15H(2n)\right).
\end{equation*}
\end{lemma}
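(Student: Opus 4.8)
The plan is to follow the same template as the other lemmas of this section. First I would translate the asserted formula into an identity of $q$-series. Writing $f(n):=2H(72n)-5H(18n)-6H(8n)+15H(2n)$, so that the claim reads $r_{\bm a}(n)=c_{\bm a}(n)f(n)$, one unwinds the operator definitions: the coefficient of $q^n$ in $\mathcal H_{1,3}$ is $H(3n)-3H(\tfrac n3)$ (with the convention $H(x):=0$ for $x\notin\Z_{\ge 0}$), so the coefficient of $q^n$ in $-\mathcal H_{1,3}\big|(5-2U_4)$ is $2H(12n)-5H(3n)-6H(\tfrac{4n}{3})+15H(\tfrac n3)$ --- precisely the series underlying Lemma \ref{lem:238} --- and hitting with $U_6$ replaces $n$ by $6n$, so that $\sum_{n\ge 0}f(n)q^n=-\mathcal H_{1,3}\big|(5-2U_4)\big|U_6$. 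Taking $M$ to be the $\lcm$ of the moduli in the relevant table of Appendix \ref{sec:appendixcs} and using, as in the proof of Theorem \ref{thm:congcondition}, that $\sum_{m\equiv j\,(D)}g\big|S_{M,m}=g\big|S_{D,j}$ for $D\mid M$ together with the fact that $c_{\bm a}(n)$ depends only on $n\pmod M$, the claim becomes equivalent to
\[
\Theta_{\bm a}=-\sum_{m\pmod M}c_{\bm a}(m)\,\mathcal H_{1,3}\big|(5-2U_4)\big|U_6\big|S_{M,m}.
\]

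Next I would show that both sides of this identity are holomorphic modular forms of weight $\tfrac32$ on a common congruence subgroup $\Gamma\subseteq\Gamma_0(4)$ with a common character, and then invoke the valence formula. By Lemma \ref{lem:thetamodular} (with $N=1$) the left-hand side is modular of weight $\tfrac32$ on $\Gamma_0(4\ell_{\bm a})$ with character $\chi_{4\mathcal D_{\bm a}}$, i.e. on $\Gamma_0(48)$ for $\bm a=(2\ 3\ 12)^T$ and on $\Gamma_0(96)$ for $\bm a=(3\ 8\ 12)^T$. For the right-hand side, Lemma \ref{lem:Hslash} gives that $\mathcal H_{1,3}$ is modular of weight $\tfrac32$ on $\Gamma_0(12)$ with character $\chi_{12}$; applying Lemma \ref{lem:operators}(1) first to $U_4$ (twist $\chi_{16}$, which is trivial) and then to $U_6$ (twist $\chi_{24}$) keeps us modular of weight $\tfrac32$ on $\Gamma_0(24)$ with character $\chi_{12}\chi_{24}$ (which on the relevant groups agrees with $\chi_{4\mathcal D_{\bm a}}$), and finally Lemma \ref{lem:operators}(2) records the effect of $S_{M,m}$, enlarging the level to an $\lcm$ involving $M^2$ and $MN_\chi$ (or $4M^2$ when $M\equiv2\pmod4$) and producing a group of the form $\Gamma_{N',M}$. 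Intersecting the two groups, computing $[\SL_2(\Z):\Gamma]$ from Lemma \ref{lem:index}, and applying Lemma \ref{lem:valenceformula} yields an explicit bound $b$ (recorded in Appendix \ref{sec:appendixvalencelemmas}) such that the displayed identity holds as soon as it holds for the coefficients of $q^0,\dots,q^b$; I would then verify these finitely many coefficients by computer, after expanding $\mathcal H_{1,3}\big|(5-2U_4)\big|U_6$ and the products $\prod_j\vartheta_{h_j,N}$ to the required precision.

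No new idea is needed beyond this template; the delicate point is the bookkeeping in the second paragraph, in particular checking that the two summands of $\mathcal H_{1,3}\big|(5-2U_4)\big|U_6$ carry the \emph{same} character on $\Gamma_0(24)$ --- which works because $\chi_{16}$ is trivial --- so that their sum is again a modular form with a well-defined character, and that this character agrees with $\chi_{4\mathcal D_{\bm a}}$ on the common $\Gamma$. The one obstacle I anticipate is computational: the index $[\SL_2(\Z):\Gamma]$ for $\bm a=(3\ 8\ 12)^T$ can be large (since $\mathcal D_{\bm a}=288$), so $b$ may be big and the coefficient comparison must be organized carefully. If the direct check for that form turns out to be too expensive --- as happens for several cases in the proof of Theorem \ref{thm:congcondition} --- I would instead reduce it to the $(2\ 3\ 12)^T$ case via an elementary relation between the two representation numbers when $n$ is restricted to suitable residue classes.
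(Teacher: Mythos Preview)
Your proposal is correct and follows the paper's template exactly. Your operator expression $-\mathcal{H}_{1,3}\big|(5-2U_4)\big|U_6$ is literally equal to the paper's $(2\mathcal{H}_{8,3}-5\mathcal{H}_{2,3})\big|U_3$ as a $q$-series (use the commutation $V_3U_2=U_2V_3$ from Lemma~\ref{lem:operators}(4) to see $\mathcal{H}_{1,3}\big|U_{2}=\mathcal{H}_{2,3}$ and $\mathcal{H}_{1,3}\big|U_{8}=\mathcal{H}_{8,3}$), so the two write-ups differ only cosmetically.

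Your one worry is misplaced: the level is governed by $\ell_{\bm a}=\lcm(a_1,a_2,a_3)$ and by $M$, not by $\mathcal{D}_{\bm a}$. For both $\bm a$ one has $M=8\mid 24$, so the last clause of Lemma~\ref{lem:operators}(2) applies and the sieved class-number side lands on $\Gamma_{\lcm(24,64,8N_{\chi}),1}=\Gamma_0(192)$; the theta side is on $\Gamma_0(48)$ (resp.\ $\Gamma_0(96)$). In either case the common group is $\Gamma_0(192)$ and the valence bound is $b=48$, exactly as recorded in Appendix~\ref{sec:appendixvalencelemmas}, so no reduction of $(3\ 8\ 12)^T$ to $(2\ 3\ 12)^T$ is needed.
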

\begin{proof}
The equivalent formula in this case is
	\[
	\Theta_{{\bm{a}}}= \sum_{m\pmod{8}}
c_{\bm{a}}(m)
\left(2\mathcal{H}_{8,3}-5\mathcal{H}_{2,3}\right)\big|U_3 \big|S_{8,m}. \qedhere
	\]
\end{proof}

\begin{lemma}\label{lem:2348}
For $\bm{a}=(2\ 3\ 48)^T$, we have
\begin{equation*}
r_{{\bm a}}(n)=
c(n)
\left(H\left(72n\right)-H(18n)-3H\left(8n\right)+3H(2n)-12H\left(\tfrac{9n}{8}\right)+36H\left(\tfrac{n}{8}\right)\right).
\end{equation*}
\end{lemma}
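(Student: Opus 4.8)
The plan is to argue exactly as in the other lemmas of this section, and in particular to piggyback on Lemmas \ref{lem:1124} and \ref{lem:1616}. The first step is to recast the asserted identity as an equality of $q$-series. Recall that $\mathcal H\big|U_d$ has $n$-th Fourier coefficient $H(dn)$, that $\mathcal H\big|V_d$ has $n$-th Fourier coefficient $H(n/d)$, and that $V_dU_d$ is the identity; consequently $\mathcal H_{1,3}\big|U_3$ has $n$-th coefficient $H(9n)-3H(n)$, and then a short computation with the operator calculus shows that $\mathcal H_{1,3}\big|U_3\big|U_8$, $\mathcal H_{1,3}\big|U_3\big|U_2$, and $\mathcal H_{1,3}\big|U_3\big|V_8$ have $n$-th coefficients $H(72n)-3H(8n)$, $H(18n)-3H(2n)$, and $H(9n/8)-3H(n/8)$, respectively. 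Letting $M$ be the least common multiple of the moduli occurring in the entry for $\bm a=(2\ 3\ 48)^T$ in Appendix \ref{sec:appendixcs}, and using the sieving identity $\sum_{m\equiv j\,(D)}f\big|S_{M,m}=f\big|S_{D,j}$ to amalgamate the various congruence cases defining $c(n)$ (exactly as in the proof of Lemma \ref{lem:111}), the claim becomes equivalent to
\[
\Theta_{\bm a}=\sum_{m\pmod M}c(m)\,\mathcal H_{1,3}\big|U_3\big|\left(U_8-U_2-12V_8\right)\big|S_{M,m},
\]
which is the analogue of the identity in Lemma \ref{lem:1616} with an extra $U_3$ inserted (equivalently, $\bigl(\mathcal H_{8,3}-\mathcal H_{2,3}-12\mathcal H_{1,3}\big|V_8\bigr)\big|U_3$ in place of $\mathcal H_{1,3}\big|(U_8-U_2-12V_8)$).

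The second step is to check that both sides are holomorphic modular forms of weight $\tfrac32$ on one common congruence subgroup $\Gamma\subseteq\Gamma_0(4)$. For the left-hand side, Lemma \ref{lem:thetamodular} (with $N=1$, $\ell_{\bm a}=48$, $\mathcal D_{\bm a}=288$) shows $\Theta_{\bm a}$ is a modular form of weight $\tfrac32$ on $\Gamma_0(192)$ with character $\chi_{4\mathcal D_{\bm a}}$. For the right-hand side, Lemma \ref{lem:Hslash} guarantees $\mathcal H_{1,3}$ is already a genuine holomorphic modular form of weight $\tfrac32$ on $\Gamma_0(12)$ with character $\chi_{12}$ (so the non-holomorphic completion terms have cancelled), and applying Lemma \ref{lem:operators} (parts (1) and (4) for $U_3$ and $U_8$, part (3) for $V_8$, and part (2) for the sieving operator $S_{M,m}$) produces a holomorphic modular form of weight $\tfrac32$ on an explicitly computable congruence subgroup, whose character works out to $\chi_{4\mathcal D_{\bm a}}$ after simplifying the Kronecker symbols. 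Taking $\Gamma$ to be the intersection of this group with $\Gamma_0(192)$, the difference of the two sides is a holomorphic modular form of weight $\tfrac32$ on $\Gamma$, so Lemmas \ref{lem:valenceformula} and \ref{lem:index} reduce the identity to checking that the first $b:=\tfrac18[\SL_2(\Z):\Gamma]$ Fourier coefficients of the two sides agree; one then verifies these finitely many coefficients with a computer. The resulting $\Gamma$ and $b$ are recorded in the table of Appendix \ref{sec:appendixvalencelemmas}.

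The conceptual content is entirely contained in Lemmas \ref{lem:1124} and \ref{lem:1616}, so no new idea is needed; the only genuine obstacle is the size of the final computation. Since $48$ carries both a nontrivial $2$-part and a nontrivial $3$-part, and since the operators $V_8$ and $S_{M,m}$ each inflate the level, the index $[\SL_2(\Z):\Gamma]$, and hence the bound $b$, is on the large side. One therefore wants to carry out the congruence-class amalgamation above \emph{before} running the coefficient check, so that only the single form $\mathcal H_{1,3}\big|U_3\big|(U_8-U_2-12V_8)$ — rather than a sum of many sieved copies of it — must be expanded to the required precision, and one should also exploit the vanishing $H(D)=0$ for $D\not\equiv0,3\pmod 4$ (and $H(9n/8)$, $H(n/8)$ being supported on $8\mid n$) to keep the check tractable.
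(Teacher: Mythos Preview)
Your approach is correct and matches the paper's. The paper records the equivalent formula as
\[
\Theta_{\bm a}=\sum_{m\pmod{32}} c(m)\left(\mathcal{H}_{8,3}-\mathcal{H}_{4,3}\big|V_2-12\mathcal{H}_{1,3}\big|V_8\right)\big|U_3\big|S_{32,m},
\]
which is the same operator expression as yours up to the identities $\mathcal H_{1,3}\big|U_8=\mathcal H_{8,3}$, $\mathcal H_{1,3}\big|U_2=\mathcal H_{2,3}=\mathcal H_{4,3}\big|V_2$ (the last equality holding because $H(D)=0$ for $D\equiv 2\pmod 4$) and the commutativity of $U_3$ with $U_8,U_2,V_8$; the group and bound you defer to Appendix~\ref{sec:appendixvalencelemmas} are $\Gamma_{3072,32}$ and $12\,288$, exactly as listed.
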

\begin{proof} 
The equivalent formula in this case is
	\[
	\Theta_{{\bm{a}}}=\sum_{m\pmod{32}} 
c(m)
\left(\mathcal{H}_{8,3}-\mathcal{H}_{4,3}\big|V_2-12\mathcal{H}_{1,3}\big|V_8\right)\big|U_3\big|S_{32,m}. \qedhere
	\]
\end{proof}

\begin{lemma}\label{lem:256}
	We have, for $\bm a = (2\ 5\ 6)^T$,
	\begin{multline*}
	r_{{\bm a}}(n)=c(n) \left( H(60n)  -2 H(15n) - 5 H\left( \tfrac{12n}{5} \right) -3 H \left( \tfrac{20n}{3} \right)  +10 H \left( \tfrac{3n}{5} \right) +6 H \left( \tfrac{5n}{3} \right) \right.\\
	\left. -30 H \left( \tfrac{n}{15} \right)+15 H \left( \tfrac{4n}{15} \right) \right).
	\end{multline*}
\end{lemma}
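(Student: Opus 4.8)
The plan is to follow the same route as the other lemmas in this section, with Lemma~\ref{lem:1330} serving as the closest template. First I would record the equivalent generating-function identity
\[
\Theta_{\bm a}=\sum_{m\pmod M}c(m)\left(\mathcal{H}_{15,2}-5\mathcal{H}_{3,2}\big|V_5-3\mathcal{H}_{5,2}\big|V_3+15\mathcal{H}_{1,2}\big|V_{15}\right)\big|U_2\big|S_{M,m},
\]
where $M$ is the least common multiple of the moduli occurring in the relevant rows of Appendix~\ref{sec:appendixcs} (note that all the $\mathcal{H}_{\ell_1,2}$ are legitimate, since each $\ell_1\in\{15,3,5,1\}$ is coprime to the squarefree number $2$). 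That this is equivalent to the claimed class-number formula is a bookkeeping check: by Lemma~\ref{lem:Hslash} each $\mathcal{H}_{\ell_1,2}$ is a holomorphic modular form, so the non-holomorphic corrections of $\mathcal{H}$ drop out, and the $n$-th Fourier coefficient of $\mathcal{H}_{\ell_1,2}\big|U_2$ equals $H(4\ell_1n)-2H(\ell_1n)$ (the same combination that appears in Lemma~\ref{lem:111}); inserting $V_5$, $V_3$, $V_{15}$ then produces precisely the eight terms of the lemma, grouped as $H(4m)-2H(m)$ over $m\in\{15n,\,3n/5,\,5n/3,\,n/15\}$ with multiplicities $1,-5,-3,15$, and the finitely many values of $c(m)$ are chosen so that the two sides agree on each relevant residue class modulo $M$.

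Next I would verify that both sides are holomorphic modular forms of weight $\tfrac32$ on a single congruence subgroup $\Gamma\subseteq\Gamma_0(4)$. By Lemma~\ref{lem:thetamodular} (with $N=1$ and $\ell_{\bm a}=\lcm(2,5,6)=30$) the left-hand side lies on $\Gamma_0(120)$ with character $\chi_{4\mathcal{D}_{\bm a}}=\chi_{240}$. For the right-hand side, Lemma~\ref{lem:Hslash} puts each $\mathcal{H}_{\ell_1,2}$ on $\Gamma_0(8\operatorname{rad}(\ell_1))$ with character $\chi_{8\ell_1}$; I would then apply $V_5,V_3,V_{15}$ (Lemma~\ref{lem:operators}(3)), $U_2$ (Lemma~\ref{lem:operators}(1)), and $S_{M,m}$ (Lemma~\ref{lem:operators}(2)), tracking the level and the quadratic character at each step, which lands the right-hand side on some explicit $\Gamma_{N',M}$. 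Since all the operators involved send holomorphic modular forms to holomorphic modular forms, the difference of the two sides is a holomorphic modular form of weight $\tfrac32$ on $\Gamma=\Gamma_{\lcm(120,N'),M}$, whose index $[\SL_2(\Z):\Gamma]$ is evaluated by Lemma~\ref{lem:index}.

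Finally, Lemma~\ref{lem:valenceformula} reduces the identity to checking that the two sides agree in their first $b=\tfrac18[\SL_2(\Z):\Gamma]$ Fourier coefficients, which I would confirm by a direct computation and record (together with $\Gamma$ and $b$) in the table of Appendix~\ref{sec:appendixvalencelemmas}. The main obstacle is not conceptual but lies in two bookkeeping steps: first, pinning down the correct linear combination of $\mathcal{H}_{\ell_1,\ell_2}$-type terms and $U/V$ operators that reproduces the eight-term expression, guided here by the factorization $60=4\cdot 3\cdot 5$ and the known identity $r_{\bm 1}(n)=12(H(4n)-2H(n))$ of Lemma~\ref{lem:111}; and second, carrying the level and the character faithfully through the chain of operators so that the two sides genuinely lie on the same $\Gamma$. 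The index $[\SL_2(\Z):\Gamma]$, and hence the number $b$ of coefficients to be verified, grows with the ramification at the primes $2$, $3$, $5$ dividing $4\mathcal{D}_{\bm a}$, but it stays in the range where the computer check is routine.
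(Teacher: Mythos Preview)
Your approach is correct and complete: the eight class-number terms factor as you say, the $\mathcal H_{\ell_1,2}$ are all well-defined, and after $U_2$ commutes past $V_3$, $V_5$, $V_{15}$ (Lemma~\ref{lem:operators}(4)) you recover exactly the expression in the statement. The level bookkeeping and valence-formula reduction you outline go through without obstruction.

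The paper's own proof uses a different but equally short packaging. Rather than grouping the eight terms into four blocks of the shape $H(4m)-2H(m)$ (your choice $\ell_2=2$ followed by $U_2$), it groups them into four blocks of the shape $H(5m)-5H(m/5)$, i.e.\ it takes $\ell_2=5$ and writes
\[
\Theta_{\bm a}=\sum_{m\pmod 4}c(m)\bigl(\mathcal H_{12,5}-3\,\mathcal H_{4,5}\big|V_3-2\,\mathcal H_{3,5}+6\,\mathcal H_{1,5}\big|V_3\bigr)\big|S_{4,m},
\]
with no $U$ operator at all. Both decompositions land on $\Gamma_0(240)$ after $S_{4,m}$, so the valence bound and the computer check are the same ($72$ coefficients). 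The only practical difference is that the paper's version avoids threading $U_2$ through the $V$ operators, at the cost of having $\ell_1=12$ (non-squarefree) in one term; your version keeps all $\ell_1$ squarefree but introduces the extra $U_2$. Neither choice buys anything substantial here.
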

\begin{proof}
The equivalent formula in this case is
	\[
	\Theta_{{\bm{a}}}=\sum_{m\pmod{4}} c(m)\left(\mathcal{H}_{12,5}-3\mathcal{H}_{4,5}\big|V_3-2\mathcal{H}_{3,5} +6\mathcal{H}_{1,5}\big|V_{3}\right) \big|S_{4,m}. \qedhere
	\]
\end{proof}

\begin{lemma}\label{lem:2510}
For $\bm{a}=(2\ 5\ 10)^T$, we have 
\begin{equation*}
r_{{\bm a}}(n)=c(n)\left( H(100n) -4 H(25n) - 5H(4n) +20H(n)\right).
\end{equation*}
\end{lemma}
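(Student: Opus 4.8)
The plan is to follow the template of Lemma~\ref{lem:111}: rewrite the asserted class number identity as an equality of two holomorphic modular forms of weight $\tfrac32$, check that both sides satisfy the same modularity, and then conclude via the valence formula.

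First I would re-express the generating function of the right-hand side in terms of the forms $\mathcal{H}_{\ell_1,\ell_2}$ of Lemma~\ref{lem:Hslash}. Starting from $\mathcal{H}_{\ell_1,\ell_2}=\mathcal{H}\big|(U_{\ell_1\ell_2}-\ell_2U_{\ell_1}V_{\ell_2})$ and using the relations of Lemma~\ref{lem:operators}~(4) --- in particular $V_5U_5=\mathrm{id}$ and $U_4V_5=V_5U_4$ --- a short computation shows that the coefficient of $q^n$ in $\mathcal{H}_{1,5}\big|(U_4-4)\big|U_5$ is $H(100n)-4H(25n)-5H(4n)+20H(n)$. Hence the lemma is equivalent to
\[
\Theta_{\bm a}=\sum_{m\pmod M}c(m)\,\mathcal{H}_{1,5}\big|(U_4-4)\big|U_5\big|S_{M,m},
\]
where $M$ and $c(n)=c_{\bm a}(n)$ are read off the table in Appendix~\ref{sec:appendixcs}; the weight $c(n)$ is a nontrivial periodic function, so the sieve by $S_{M,m}$ is genuinely needed.

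Next I would confirm that both sides are holomorphic modular forms of weight $\tfrac32$ on a common congruence subgroup $\Gamma\subseteq\Gamma_0(4)$ with a common character. For the left-hand side this is Lemma~\ref{lem:thetamodular}: with $\ell_{\bm a}=\lcm(2,5,10)=10$ and $\mathcal{D}_{\bm a}=100$, $\Theta_{\bm a}$ is modular of weight $\tfrac32$ on $\Gamma_0(40)$ with character $\chi_{400}$. For the right-hand side, $\mathcal{H}_{1,5}$ is a holomorphic modular form of weight $\tfrac32$ by Lemma~\ref{lem:Hslash}; applying Lemma~\ref{lem:operators}~(1) for $U_4$ and $U_5$ and Lemma~\ref{lem:operators}~(2) for $S_{M,m}$ produces a holomorphic modular form of weight $\tfrac32$ on an explicit group $\Gamma_{N',L'}$ with the appropriate character. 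Let $\Gamma$ be the intersection of the groups appearing on the two sides (one checks the characters coincide on $\Gamma$). Lemma~\ref{lem:index} computes $[\SL_2(\Z):\Gamma]$, and Lemma~\ref{lem:valenceformula} then shows that the difference of the two sides vanishes identically as soon as its first $b$ Fourier coefficients vanish, where $b$ is the bound listed in Appendix~\ref{sec:appendixvalencelemmas}.

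Finally I would verify the first $b$ coefficients of both sides by a direct computation, which completes the proof. The genuine work is in the first step --- pinning down the correct combination of operators applied to the $\mathcal{H}_{\ell_1,\ell_2}$, together with the modulus $M$ and the weights $c(n)$ --- and, should $b$ be large, in the running time of the coefficient check; there is no conceptual novelty beyond the proof of Lemma~\ref{lem:111}.
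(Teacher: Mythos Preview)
Your proposal is correct and follows essentially the same approach as the paper: the paper records the equivalent identity as
\[
\Theta_{\bm a}=-\sum_{m\pmod{4}} c(m)\,\mathcal{H}_{1,5}\big|\left(4-U_4\right)\big|U_5\big|S_{4,m},
\]
which is exactly your formula up to the trivial sign rewriting $-(4-U_4)=U_4-4$, and then invokes Lemmas~\ref{lem:thetamodular}, \ref{lem:Hslash}, \ref{lem:operators}, \ref{lem:valenceformula}, and \ref{lem:index} together with the bound in Appendix~\ref{sec:appendixvalencelemmas} (here $\Gamma_0(80)$, $b=18$) and a finite coefficient check.
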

\begin{proof}
The equivalent formula in this case is
\[
\Theta_{{\bm{a}}}=-\sum_{m\pmod{4}} c(m)\mathcal{H}_{1,5}\big|\left(4-U_4\right)\big|U_5\big|S_{4,m}.\qedhere
\]
\end{proof}

\begin{lemma}\label{lem:2515}
 	For $\bm a=(2\ 5\ 15)^T$ we have 
 	\begin{multline*}
 	r_{{\bm a}}(n)=c(n) \left( H(600n) -2H(150n)- 5 H (24n) +10H(6n)+6 H\left(\tfrac{
50n
}{
3
}\right)-3 H \left( \tfrac{200n}{3} \right) 
 	\right.\\\left.
 	+15 H \left(\tfrac{8n}{3} \right)   
 	-30  H \left( \tfrac{2n}{3} \right) \right).
 	\end{multline*}
\end{lemma}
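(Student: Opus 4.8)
The plan is to run the same machine that handles all the other forms in $\mathcal C$. The first step is to translate the lemma into an identity of $q$-expansions. Writing $c=c_{\bm a}$ for the function recorded in the relevant table of Appendix \ref{sec:appendixcs} and $M$ for the lcm of the moduli occurring there, the claim is equivalent to the theta identity
\[
\Theta_{\bm a}=\sum_{m\pmod M}c(m)\left(\mathcal H_{24,5}-2\mathcal H_{6,5}-3\mathcal H_{8,5}\big|V_3+6\mathcal H_{2,5}\big|V_3\right)\big|U_5\big|S_{M,m}.
\]
To see that the right-hand side has the asserted Fourier coefficients, use that $U_5$ commutes with $V_3$, that $V_5U_5$ is the identity, and that $\mathcal H\big|U_d$ has $n$-th coefficient $H(dn)$; hence
\[
\mathcal H_{24,5}\big|U_5=\mathcal H\big|(U_{600}-5U_{24}),\qquad \mathcal H_{6,5}\big|U_5=\mathcal H\big|(U_{150}-5U_{6}),
\]
\[
\mathcal H_{2,5}\big|V_3\big|U_5=\mathcal H\big|(U_{50}-5U_2)\big|V_3,\qquad \mathcal H_{8,5}\big|V_3\big|U_5=\mathcal H\big|(U_{200}-5U_8)\big|V_3,
\]
and summing these with coefficients $1,-2,6,-3$ reproduces exactly the eight Hurwitz class numbers of the lemma, the four carrying a $/3$ in the argument coming precisely from the two $V_3$-twisted summands.

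The second step is to check that both sides of the displayed identity are holomorphic modular forms of weight $\tfrac32$ on a common congruence subgroup $\Gamma\subseteq\Gamma_0(4)$. For the left-hand side this is Lemma \ref{lem:thetamodular}: since $\ell_{\bm a}=\lcm(2,5,15)=30$, $\Theta_{\bm a}$ is a modular form of weight $\tfrac32$ on $\Gamma_0(120)$ with character $\chi_{600}$. For the right-hand side, Lemma \ref{lem:Hslash} gives each $\mathcal H_{\ell_1,5}$ as a modular form of weight $\tfrac32$ on $\Gamma_0(20\operatorname{rad}(\ell_1))$ with character $\chi_{20\ell_1}$; Lemma \ref{lem:operators}(3) handles the $V_3$-twists, Lemma \ref{lem:operators}(1) handles $U_5$, and Lemma \ref{lem:operators}(2) handles the sieving operator $S_{M,m}$. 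Intersecting the groups produced by the two sides — and verifying that all the characters collapse to $\chi_{600}$ — yields an explicit level for $\Gamma$, recorded in the table in Appendix \ref{sec:appendixvalencelemmas}. As in the proof of Lemma \ref{lem:Hslash}, one uses here that each $\mathcal H_{\ell_1,5}$ is genuinely holomorphic (its non-holomorphic completion cancels), so that the forms in play are honest holomorphic modular forms and the valence formula applies.

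By Lemmas \ref{lem:valenceformula} and \ref{lem:index}, the difference of the two sides is then a holomorphic modular form of weight $\tfrac32$ on $\Gamma$ that vanishes identically once its first $b:=\tfrac{1}{8}\,[\SL_2(\Z):\Gamma]$ Fourier coefficients vanish, so the last step is to check those coefficients with a computer, exactly as was carried out in detail for Lemma \ref{lem:111}. The creative part of the argument is the first step, namely producing the correct linear combination above and the correct value of $M$ so that all eight Hurwitz terms — including the four with denominator $3$ — come out with the right signs and multiplicities; this is dictated by the commutation relations of Lemma \ref{lem:operators}(4), in particular $f\big|V_d\big|U_{a\delta}=f\big|U_\delta\big|V_d$ for $\gcd(d,\delta)=1$ and the formula for $f\big|V_d\big|S_{M,m}$. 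The laborious part is the final computer check: because $\mathcal D_{\bm a}=150$ carries the square factor $5^2$, the level of $\Gamma$ contains a $5$ even before sieving, and once $S_{M,m}$ is applied the level picks up $M^2$ (with $M$ divisible by $5$), so $[\SL_2(\Z):\Gamma]$ and hence $b$ is larger than for forms with smaller $\mathcal D_{\bm a}$ — though the verification remains entirely routine.
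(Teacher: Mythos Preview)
Your approach is correct and essentially identical to the paper's. Your displayed identity
\[
\Theta_{\bm a}=\sum_{m\pmod M}c(m)\left(\mathcal H_{24,5}-2\mathcal H_{6,5}-3\mathcal H_{8,5}\big|V_3+6\mathcal H_{2,5}\big|V_3\right)\big|U_5\big|S_{M,m}
\]
is literally the paper's identity
\[
\Theta_{\bm a}=\sum_{m\pmod 2}c(m)\,\mathcal H_{1,5}\big|\left(U_{24}-3U_8V_3-2U_6+6U_2V_3\right)\big|U_5\big|S_{2,m}
\]
rewritten using $\mathcal H_{1,5}\big|U_{\ell}=\mathcal H_{\ell,5}$ (valid for $\gcd(\ell,5)=1$ by the commutation rules in Lemma~\ref{lem:operators}(4)), and your verification of the Fourier coefficients is accurate.

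One small correction to your closing commentary: from the table in Appendix~\ref{sec:appendixcs} one has $M=2$ here, not a multiple of $5$; accordingly the group recorded in Appendix~\ref{sec:appendixvalencelemmas} is $\Gamma_0(240)$ and the bound is $b=72$, so the computer check is in fact among the lighter ones, not a laborious case.
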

\begin{proof}
The equivalent formula in this case is
	\[
	\Theta_{{\bm{a}}}=\sum_{m\pmod{2}} c(m) \mathcal{H}_{1,5}\big|\left(U_{24}-3U_8V_3-2U_6+6U_2V_3\right)\big|U_5\big|S_{2,m}. \qedhere
\]

\end{proof}

\begin{lemma}\label{lem:269}
	We have, for $\bm a=(2\ 6\ 9)^T$,
	\[
	r_{{\bm a}}(n)= c(n)\left(H(12n)-2H(3n)-5H\left(\tfrac{4n}{3}\right)+10H\left(\tfrac{n}{3}\right) \right).
\]
\end{lemma}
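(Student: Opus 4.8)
The plan is to follow the same template as the other lemmas in this section. First I would rewrite the asserted formula as an equality of theta functions. Let $M$ be the least common multiple of the moduli appearing in the relevant entry of the table in Appendix \ref{sec:appendixcs}, and let $c(m)$ denote the corresponding tabulated constants. Then the claim is equivalent to
\[
\Theta_{\bm{a}}=\sum_{m\pmod{M}} c(m)\,\mathcal{H}_{1,2}\big|\left(U_3-5V_3\right)\big|U_2\big|S_{M,m}
\]
(equivalently, one may use $\mathcal{H}_{1,2}\big|(U_3-5V_3)=\mathcal{H}_{3,2}-5\mathcal{H}_{1,2}\big|V_3$, exactly as in Lemma \ref{lem:1618}). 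To see the equivalence, recall that the holomorphic part of $\mathcal{H}_{1,2}$ has $n$-th Fourier coefficient $H(2n)-2H(n/2)$; applying first $U_3-5V_3$ and then $U_2$ turns this into $H(12n)-2H(3n)-5H(4n/3)+10H(n/3)$, which is precisely the bracketed expression in the statement, while the operators $S_{M,m}$ only split $\Theta_{\bm{a}}$ according to the residue of $n$ modulo $M$, so that the constant $c(n)$ can be attached term by term. Note that this right-hand side is the same one occurring in Lemma \ref{lem:1618}.

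Next I would check that both sides are holomorphic modular forms on a common congruence subgroup of $\Gamma_0(4)$. By Lemma \ref{lem:thetamodular} (here $N=1$ and $\ell=\lcm(2,6,9)=18$), the left-hand side is a modular form of weight $\tfrac32$ on $\Gamma_0(72)$ with character $\chi_{432}$. For the right-hand side, Lemma \ref{lem:Hslash} gives that $\mathcal{H}_{1,2}$ is a modular form of weight $\tfrac32$ on $\Gamma_0(8)$ with character $\chi_8$; then Lemma \ref{lem:operators} (1) and (3) show that $\mathcal{H}_{1,2}\big|(U_3-5V_3)\big|U_2$ is a modular form of weight $\tfrac32$ on $\Gamma_0(24)$ with character $\chi_{12}$, and Lemma \ref{lem:operators} (2) then places each $\mathcal{H}_{1,2}\big|(U_3-5V_3)\big|U_2\big|S_{M,m}$ on an explicit group $\Gamma_{N',M'}$ (the same group for all $m$, since the character is real and fixed). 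Intersecting the group for the left-hand side with this group yields a congruence subgroup $\Gamma\subseteq\Gamma_0(4)$, and by Lemmas \ref{lem:valenceformula} and \ref{lem:index} the identity holds as soon as the first $b=\tfrac18\left[\SL_2(\Z):\Gamma\right]$ Fourier coefficients of the two sides agree; the precise $\Gamma$ and $b$ would be recorded in the table in Appendix \ref{sec:appendixvalencelemmas}.

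Finally I would verify those finitely many coefficients directly on a computer; since $\bm{a}=(2\ 6\ 9)^T$ is small, $b$ is modest and the check is immediate. The hard part is not any single computation but the careful bookkeeping of levels, conductors and characters when pushing $\mathcal{H}_{1,2}$ successively through $U_3$, $V_3$, $U_2$ and $S_{M,m}$, together with correctly identifying the intersection $\Gamma$; once this is in place, the comparison of coefficients is routine. As an alternative, one might try to deduce the claim from Lemma \ref{lem:1618} directly, since the two right-hand sides coincide, but I expect the direct valence-formula argument to be the cleanest.
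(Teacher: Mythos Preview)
Your proposal is correct and follows essentially the same approach as the paper: the paper gives exactly the equivalent formula
\[
\Theta_{\bm{a}}=\sum_{m\pmod{12}} c(m)\,\mathcal{H}_{1,2}\big|(U_3-5V_3)\big|U_2\big|S_{12,m},
\]
and then appeals to the valence formula (with group $\Gamma_0(576)$ and bound $b=144$ from Appendix \ref{sec:appendixvalencelemmas}) together with a computer check of the first $b$ coefficients. Your bookkeeping of levels and characters is accurate, and your observation that this right-hand side coincides with the one in Lemma \ref{lem:1618} is correct (though the paper does not exploit this).
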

\begin{proof}
The equivalent formula in this case is
	\[
	\Theta_{{\bm{a}}}=\sum_{m\pmod{12}} c(m) \mathcal{H}_{1,2}\big|\left(U_3-5V_3\right)\big|U_2\big|S_{12,m}. \qedhere
\]
\end{proof}

\begin{lemma}\label{lem:2615}
For $\bm{a}=(2\ 6\ 15)^T$, we have
\begin{multline*}
r_{{\bm a}, \bm h,N}(n)= c(n)\left( H(720n) -3 H(80n) -2 H(180n) + 6H (20n) +10 H \left( \tfrac{36n}{5} \right) 
-30
H \left( \tfrac{4n}{5} \right)  \right.\\
\left.  -5 H \left( \tfrac{144n}{5} \right) +15 H \left( \tfrac{16n}{5} \right)\right).
\end{multline*}
\end{lemma}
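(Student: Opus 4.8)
The plan is to follow the uniform strategy described at the start of Section~\ref{sec:lemmas} and carried out in detail for Lemma~\ref{lem:111}. First I would rewrite the right-hand side as the $q$-expansion of an explicit image of $\mathcal{H}$ under Hecke-type operators. By Lemma~\ref{lem:Hslash}, the $n$-th coefficient of $\mathcal{H}_{\ell_1,\ell_2}=\mathcal H\big|(U_{\ell_1\ell_2}-\ell_2 U_{\ell_1}V_{\ell_2})$ equals $H(\ell_1\ell_2 n)-\ell_2 H(\ell_1 n/\ell_2)$, and since $U_4$, $U_9$ and $V_5$ pairwise commute (Lemma~\ref{lem:operators}(4)), one has $\mathcal{H}_{4,5}\big|U_4=\mathcal{H}_{16,5}$, $\mathcal{H}_{4,5}\big|U_9=\mathcal{H}_{36,5}$ and $\mathcal{H}_{4,5}\big|U_{36}=\mathcal{H}_{144,5}$. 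A direct coefficient comparison then shows that
\[
H(720n) -3 H(80n) -2 H(180n) + 6H (20n) +10 H \left( \tfrac{36n}{5} \right)  -30 H \left( \tfrac{4n}{5} \right) -5 H \left( \tfrac{144n}{5} \right) +15 H \left( \tfrac{16n}{5}\right)
\]
is the $n$-th Fourier coefficient of $\mathcal{H}_{4,5}\big|\left(U_4-2\right)\big|\left(U_9-3\right)=\mathcal{H}_{4,5}\big|\left(U_{36}-3U_4-2U_9+6\right)$. Hence the claim is equivalent to
\[
\Theta_{{\bm{a}}}=\sum_{m\pmod{M}} c(m)\,\mathcal{H}_{4,5}\big|\left(U_4-2\right)\big|\left(U_9-3\right)\big|S_{M,m},
\]
where $M$ is the lcm of the moduli appearing in the relevant entry of the table in Appendix~\ref{sec:appendixcs}.

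Second, I would check that both sides are holomorphic modular forms of weight $\tfrac32$ on a common congruence subgroup $\Gamma\subseteq\Gamma_0(4)$ with a common character. By Lemma~\ref{lem:thetamodular}, with $\ell_{\bm a}=\lcm(2,6,15)=30$ and $\mathcal{D}_{\bm a}=180$, the function $\Theta_{\bm a}$ is a modular form of weight $\tfrac32$ on $\Gamma_0(120)$ with character $\chi_{720}$. By Lemma~\ref{lem:Hslash}, $\mathcal{H}_{4,5}$ is a modular form of weight $\tfrac32$ on $\Gamma_0(40)$ with character $\chi_{80}=\chi_{720}$; applying $U_4$ and then $U_9$ via Lemma~\ref{lem:operators}(1) leaves the character unchanged (the twisting characters $\chi_{16}$, $\chi_{36}$ being trivial) and raises the level to $\Gamma_0(120)$, and then $S_{M,m}$ via Lemma~\ref{lem:operators}(2) only changes the level. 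Intersecting the group obtained from the right-hand side with $\Gamma_0(120)$ gives the subgroup $\Gamma$ recorded in the table in Appendix~\ref{sec:appendixvalencelemmas}. The valence formula (Lemma~\ref{lem:valenceformula}) together with the index formula (Lemma~\ref{lem:index}) then produces an explicit bound $b=\tfrac{1}{8}[\SL_2(\Z):\Gamma]$ such that the difference of the two sides --- a modular form of weight $\tfrac32$ on $\Gamma$ --- vanishes identically once its first $b$ Fourier coefficients vanish. A finite computer check of these $b$ coefficients then completes the proof.

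The main obstacle is bookkeeping rather than anything conceptual. One must correctly guess and then verify the operator identity for the right-hand side, matching the eight Hurwitz class number terms against the two-factor product $\left(U_4-2\right)\left(U_9-3\right)$ applied to $\mathcal{H}_{4,5}$ and keeping track of which of the $V_5$-shifted pieces $\ell_2 H(\ell_1 n/\ell_2)$ actually contribute. One must also propagate the level and the character accurately through $U_4$, $U_9$ and $S_{M,m}$ using Lemma~\ref{lem:operators}, since the bound $b$ --- and hence whether the computer check is feasible at all --- is sensitive to the precise group $\Gamma$; in particular the characters on the two sides must both reduce to $\chi_{720}$, and any discrepancy there would signal an error in the operator identity.
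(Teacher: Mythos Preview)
Your proposal is correct and follows the same overall strategy as the paper: rewrite the right-hand side as an operator image of $\mathcal H$, verify modularity on a common subgroup, and finish with the valence formula and a finite coefficient check. The only difference is the operator identity: you package the eight class-number terms as $\mathcal{H}_{4,5}\big|(U_4-2)(U_9-3)$, while the paper writes the same function as $\big(\mathcal{H}_{80,3}-2\mathcal{H}_{20,3}+10\mathcal{H}_{4,3}\big|V_5-5\mathcal{H}_{16,3}\big|V_5\big)\big|U_3$. Both expressions have the same $q$-expansion and both lead to the group $\Gamma_0(240)$ recorded in Appendix~\ref{sec:appendixvalencelemmas}, so the resulting valence bound and computer check are identical; your factored form is simply a cleaner repackaging (in the spirit of the paper's own Lemmas~\ref{lem:337} and~\ref{lem:31030}) of the same data.
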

\begin{proof}
The equivalent formula in this case is
	\[
	\Theta_{{\bm{a}}}=\sum_{m\pmod{4}} 
	c(m)
	\left(\mathcal{H}_{80,3}-2\mathcal{H}_{20,3}+10\mathcal{H}_{4,3}\big|V_5-5\mathcal{H}_{16,3}\big|V_5\right) \big|U_3 \big|S_{4,m}. \qedhere
	\]
\end{proof}

\begin{lemma}\label{lem:334}
For $\bm{a}=(3\ 3\ 4)^T$, we have 
\begin{equation*}
r_{{\bm a}}(n)=
c(n)
 \left( H(36n) - 3 H(4n) + 8 H\left(\tfrac{9n}{4}\right) - 24H \left( \tfrac n4 \right) \right).
\end{equation*}

\end{lemma}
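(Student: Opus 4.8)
The plan is to follow the template used for Lemma~\ref{lem:1312} (the form $\bm{a}=(1\ 3\ 12)^T$), since the Hurwitz class numbers $H(36n)$, $H(4n)$, $H(9n/4)$, $H(n/4)$ occurring here are exactly those occurring there and only the rational coefficients change. With $M$ the lcm of the moduli appearing in the relevant table of Appendix~\ref{sec:appendixcs} (which I expect to be $M=8$), the first step is to observe that the claimed identity is equivalent to the equality of $q$-series
\[
\Theta_{\bm{a}}=\sum_{m\pmod{M}} c(m)\,\mathcal{H}_{1,3}\big|\left(U_{12}+8U_3V_4\right)\big|S_{M,m}.
\]
To justify this reformulation I would expand $\mathcal{H}_{1,3}=\mathcal{H}\big|(U_3-3V_3)$ and use the commutation identities of Lemma~\ref{lem:operators}(4) together with $V_dU_d=\mathrm{id}$ to rewrite $\mathcal{H}_{1,3}\big|(U_{12}+8U_3V_4)=\mathcal{H}\big|(U_{36}-3U_4+8U_9V_4-24V_4)$; reading off the $n$-th Fourier coefficient and using the convention that $H$ vanishes at non-integral arguments and at arguments $\equiv 1,2\pmod 4$, this coefficient equals $H(36n)-3H(4n)+8H(9n/4)-24H(n/4)$, so that multiplying by $c(m)$ and summing the sieves $S_{M,m}$ over residue classes reproduces the generating function of the right-hand side of the lemma.

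Next I would check that both sides are holomorphic modular forms of weight $\tfrac32$ on a common congruence subgroup $\Gamma\subseteq\Gamma_0(4)$. For the left-hand side this is Lemma~\ref{lem:thetamodular} (here $\ell=\lcm(3,3,4)=12$, $N=1$, $\mathcal{D}_{\bm a}=36$), giving a form on $\Gamma_0(48)$ with character $\chi_{144}$. For the right-hand side, Lemma~\ref{lem:Hslash} gives modularity of $\mathcal{H}_{1,3}$ on $\Gamma_0(12)$ with character $\chi_{12}$, and then Lemma~\ref{lem:operators}(1) and (3) track the level and character through $U_{12}$, $U_3$ and $V_4$, while Lemma~\ref{lem:operators}(2) (applicable since $M\mid 24$ and $M\not\equiv 2\pmod 4$) tracks them through $S_{M,m}$; intersecting with the level of $\Theta_{\bm a}$ produces the group $\Gamma$ recorded in Appendix~\ref{sec:appendixvalencelemmas}.

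The final step is the valence formula: Lemmas~\ref{lem:valenceformula} and~\ref{lem:index} show that the difference of the two sides, being a holomorphic modular form of weight $\tfrac32$ on $\Gamma$, vanishes identically provided its first $b:=\tfrac{1}{8}[\SL_2(\Z):\Gamma]$ Fourier coefficients vanish, and this finite check is carried out by computer exactly as in Lemma~\ref{lem:111}. I expect the only delicate point to be the bookkeeping in the second step: keeping careful track of how $U_{12}$, $U_3V_4$ and $S_{M,m}$ enlarge the level and alter the character, so as to identify $\Gamma$ and hence $b$ correctly and to confirm that $b$ is small enough for the computation to be feasible. Once $\Gamma$ is pinned down, the coefficient comparison and the machine verification are entirely routine.
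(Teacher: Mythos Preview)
Your proposal is correct and follows essentially the same approach as the paper. The only cosmetic difference is that the paper writes the equivalent formula as
\[
\Theta_{\bm{a}}=\sum_{m\pmod{8}} c(m)\left(\mathcal{H}_{4,3}+8\mathcal{H}_{1,3}\big|V_4\right)\big|U_3\big|S_{8,m},
\]
but using $V_3U_3=\mathrm{id}$ and $V_3U_{12}=U_4$ one checks that $(\mathcal{H}_{4,3}+8\mathcal{H}_{1,3}|V_4)|U_3=\mathcal{H}_{1,3}|(U_{12}+8U_3V_4)$, so your expression agrees with the paper's; the group recorded in Appendix~\ref{sec:appendixvalencelemmas} is $\Gamma_0(192)$ with $b=48$.
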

\begin{proof}
The equivalent formula in this case is
	\[
	\Theta_{{\bm{a}}}=\sum_{m\pmod{8}} 
	c(m)
	\left(\mathcal{H}_{4,3}+8\mathcal{H}_{1,3}\big|V_4\right) |U_3 \big|S_{8,m}. \qedhere
	\]
\end{proof}

\begin{lemma}\label{lem:337}
For $\bm{a}=(3\ 3\ 7)^T$, we have	
\begin{multline*}
r_{{\bm a}}(n)= H(252n) -2 H(63n) -3 H(28n) +6H(7n)-7 H \left( \tfrac{36n}{7} \right) +14 H \left( \tfrac{9n}{7} \right) \\
  +21 H \left( \tfrac{4n}{7} \right)-42  H \left(\tfrac{n}{7}\right).
\end{multline*}
\end{lemma}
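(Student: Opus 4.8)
The plan is to follow the scheme used for the other lemmas in this section (see the detailed proof of Lemma~\ref{lem:111}): recast the asserted class-number identity as an identity of weight-$\tfrac32$ modular forms, check that both sides are (holomorphic) modular forms on a common congruence subgroup, and then apply the valence formula (Lemma~\ref{lem:valenceformula}) together with Lemma~\ref{lem:index} to reduce the claim to a finite check of Fourier coefficients. To identify the operator expression, recall from the proof of Lemma~\ref{lem:Hslash} that $\mathcal H_{1,7}$ has $n$-th Fourier coefficient $H(7n)-7H(n/7)$. Applying $(3-U_9)$ and then $(2-U_4)$, which commute since $U_4U_9=U_{36}=U_9U_4$, produces a function whose $n$-th Fourier coefficient is
\[
H(252n)-2H(63n)-3H(28n)+6H(7n)-7H\left(\tfrac{36n}{7}\right)+14H\left(\tfrac{9n}{7}\right)+21H\left(\tfrac{4n}{7}\right)-42H\left(\tfrac{n}{7}\right),
\]
which is precisely the claimed value of $r_{\bm a}(n)$. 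Hence Lemma~\ref{lem:337} is equivalent to
\[
\Theta_{\bm a}=\mathcal H_{1,7}\big|(2-U_4)\big|(3-U_9),
\]
the analogue for $(3\ 3\ 7)^T$ of the formula in Lemma~\ref{lem:12121} for $(1\ 21\ 21)^T$ but with no trailing $|U_7$, consistent with $\mathcal D_{\bm a}=3\cdot3\cdot7=63$ and $4\mathcal D_{\bm a}=252$ being the argument of the leading Hurwitz class number.

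Next I would establish modularity of the two sides. By Lemma~\ref{lem:thetamodular} (with $N=1$ and $\ell_{\bm a}=\lcm(3,3,7)=21$), $\Theta_{\bm a}$ is a modular form of weight $\tfrac32$ on $\Gamma_0(84)$ with character $\chi_{252}$. By Lemma~\ref{lem:Hslash} with $\ell_1=1$ and $\ell_2=7$, $\mathcal H_{1,7}$ is a modular form of weight $\tfrac32$ on $\Gamma_0(28)$ with character $\chi_{28}$; applying Lemma~\ref{lem:operators}(1) to $U_9$ and then to $U_4$ and forming the indicated linear combinations shows that $\mathcal H_{1,7}|(2-U_4)|(3-U_9)$ is a modular form of weight $\tfrac32$ on $\Gamma_0(168)$. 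The character it picks up, $\chi_{28}\chi_{36}\chi_{16}$, agrees with $\chi_{252}$ on $(\Z/168\Z)^\times$, since $36$ and $16$ are perfect squares and $252=6^2\cdot7$. As $\Gamma_0(168)\subseteq\Gamma_0(84)$, the difference $\Theta_{\bm a}-\mathcal H_{1,7}|(2-U_4)|(3-U_9)$ is therefore a modular form of weight $\tfrac32$ on $\Gamma_0(168)$ with character $\chi_{252}$; holomorphy at all cusps is automatic, since $\mathcal H_{1,7}$ is holomorphic by Lemma~\ref{lem:Hslash}, the operators $U_d$ preserve holomorphy, and $\Theta_{\bm a}$ is a theta series.

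Finally, by Lemmas~\ref{lem:valenceformula} and~\ref{lem:index} one has $[\SL_2(\Z):\Gamma_0(168)]=168\prod_{p\mid168}\left(1+\tfrac1p\right)=384$, so the identity holds as soon as it holds for the first $\tfrac18\cdot384=48$ Fourier coefficients (or, if one prefers to trivialize the character by passing to $\Gamma_0(168)\cap\Gamma_1(28)$, the first $576$; the relevant bound $b$ is the one recorded in Appendix~\ref{sec:appendixvalencelemmas}), and a short computer check of these finitely many coefficients completes the proof. I expect the only genuine difficulty to be the bookkeeping rather than anything conceptual: one must track the Kronecker characters through the $U_d$-operators carefully to be certain that both sides really lie on the same congruence subgroup, so that the valence-formula bound is legitimate; once $\Gamma$ and $b$ are pinned down, the verification is routine and entirely parallel to the other cases in Section~\ref{sec:lemmas}.
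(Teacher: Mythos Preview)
Your proposal is correct and follows essentially the same approach as the paper: the paper's equivalent formula is $\Theta_{\bm a}=\mathcal H_{1,7}\big|(6-3U_4-2U_9+U_{36})$, which is exactly your factored expression $(2-U_4)(3-U_9)$ since $U_4U_9=U_{36}$. Your identification of the congruence subgroup $\Gamma_0(168)$ and the valence bound of $48$ coefficients also matches the entry in Appendix~\ref{sec:appendixvalencelemmas}.
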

\begin{proof}
The equivalent formula in this case is
	\[
	\Theta_{{\bm{a}}}=\mathcal{H}_{1,7}\big|\left(6-3U_4-2U_9+U_{36}\right). \qedhere 
	\]
\end{proof}

\begin{lemma}\label{lem:338}
For $\bm{a}=(3\ 3\ 8)^T$, we have 
\begin{equation*}
r_{{\bm a}}(n)=
c(n)
 \left( H(72n) -H(18n)- 
3
H(8n) +3H(2n)
-6\delta_{8\mid n}\left( H\left(\tfrac{9n}{2}\right)
-3 H 
\left( \tfrac n2 \right)\right)  \right).
\end{equation*}

\end{lemma}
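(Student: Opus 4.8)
The plan is to follow the scheme of Lemma \ref{lem:111}: express both sides of the claimed formula as holomorphic modular forms of weight $\tfrac32$ on one common congruence subgroup $\Gamma\subseteq\Gamma_0(4)$ and then conclude with the valence formula (Lemma \ref{lem:valenceformula}).

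The first step is to rewrite the right-hand side in terms of slashes of the Hurwitz class-number generating function $\mathcal H$. Using the definition of $\mathcal H_{\ell_1,\ell_2}$ in Lemma \ref{lem:Hslash} together with the coefficient descriptions of $U_d$, $V_d$, $S_{M,m}$ from Lemma \ref{lem:operators}, one checks that the $n$-th Fourier coefficients of $\mathcal H_{8,3}\big|U_3$, of $\mathcal H_{2,3}\big|U_3$, and of $\mathcal H_{4,3}\big|V_8\big|U_3$ are $H(72n)-3H(8n)$, $H(18n)-3H(2n)$, and $H(\tfrac{9n}{2})-3H(\tfrac n2)$, respectively; for the last one the nominal $2\mid n$ support collapses to $8\mid n$ because $H(m)=0$ unless $m\equiv 0,3\pmod{4}$. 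Letting $M$ be the least common multiple of the moduli governing $c(n)$ in Appendix \ref{sec:appendixcs} and combining residue classes via the grouping identity for the operators $S_{M,m}$ (as in the proof of Theorem \ref{thm:congcondition}), the claim becomes equivalent to
\[
\Theta_{\bm a}=\sum_{m\pmod{M}} c(m)\left(\mathcal H_{8,3}-\mathcal H_{2,3}-6\,\mathcal H_{4,3}\big|V_8\right)\big|U_3\big|S_{M,m}.
\]

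The second step is modularity. By Lemma \ref{lem:thetamodular}, $\Theta_{\bm a}$ is a holomorphic modular form of weight $\tfrac32$ on $\Gamma_0(4\ell)=\Gamma_0(96)$ (here $\ell=\lcm(3,3,8)=24$) with character $\chi_{4\mathcal D_{\bm a}}=\chi_{288}$. By Lemma \ref{lem:Hslash}, $\mathcal H_{8,3}$, $\mathcal H_{2,3}$, $\mathcal H_{4,3}$ are holomorphic modular forms of weight $\tfrac32$ on $\Gamma_0(24)$ with characters $\chi_{96}$, $\chi_{24}$, $\chi_{48}$; applying $V_8$, then $U_3$, then $S_{M,m}$, Lemma \ref{lem:operators} (1)--(3) shows each summand (hence the whole right-hand side) is a holomorphic modular form of weight $\tfrac32$ on an explicit group $\Gamma_{N',M'}$. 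A short bookkeeping of the Kronecker-symbol twists shows that after the $U_3$-slash all three summands carry the character $\chi_8$, which matches $\chi_{288}$ on the integers coprime to $6$ (hence on the relevant group). Intersecting with $\Gamma_0(96)$ yields $\Gamma\subseteq\Gamma_0(4)$, and Lemma \ref{lem:index} computes $[\SL_2(\Z):\Gamma]$; since $\mathcal H_{\ell_1,\ell_2}$ is already holomorphic, no separate non-holomorphic cancellation needs to be checked, it is built into Lemma \ref{lem:Hslash}.

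The third step is the valence formula: the difference of the two sides, being a holomorphic modular form of weight $\tfrac32$ on $\Gamma$, vanishes identically once its Fourier coefficients at $i\infty$ up to $\tfrac{1}{8}[\SL_2(\Z):\Gamma]$ do, giving the bound $b$ listed in Appendix \ref{sec:appendixvalencelemmas}; a computer calculation of the first $b$ coefficients then finishes the proof. The main obstacle is not conceptual but organizational: pinning down that the $\delta_{8\mid n}$ correction is exactly $-6\,\mathcal H_{4,3}\big|V_8\big|U_3$ (rather than some superficially similar slash that would fail to have the right support), and then controlling the index $[\SL_2(\Z):\Gamma]$, since the additional $U_3$, $V_8$, and $S_{M,m}$ operations inflate $\Gamma$ substantially, so that $b$ is sizeable, though, as throughout this section, it remains a finite, routine computation.
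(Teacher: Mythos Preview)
Your approach is correct and follows the paper's scheme: both sides are rewritten as weight-$\tfrac32$ modular forms, modularity on a common subgroup is established via Lemmas \ref{lem:thetamodular}, \ref{lem:Hslash}, and \ref{lem:operators}, and the valence formula finishes the proof. The one difference is in the choice of building blocks: the paper writes the equivalent generating-function identity as
\[
\Theta_{\bm a}=\sum_{m\pmod{8}} c(m)\left(\mathcal H_{8,3}-\mathcal H_{4,3}\big|V_2-6\,\mathcal H_{2,3}\big|V_4\right)\big|U_3\big|S_{8,m},
\]
whereas you use $\mathcal H_{8,3}-\mathcal H_{2,3}-6\,\mathcal H_{4,3}\big|V_8$. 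The two agree coefficient-by-coefficient because $H(18n)=H(2n)=0$ for $n$ odd and $H(9n/2)=H(n/2)=0$ for $n\equiv 4\pmod 8$, so the apparent discrepancy in supports is absorbed by the Hurwitz function's discriminant condition; both decompositions land on $\Gamma_0(192)$ after the sieving, yielding the same bound $b=48$. (Your aside that the ``nominal $2\mid n$ support collapses to $8\mid n$'' is unnecessary for $\mathcal H_{4,3}\big|V_8$, since the $V_8$ already forces $8\mid n$; it is rather the paper's $\mathcal H_{2,3}\big|V_4$ term where such a collapse argument is actually invoked.)
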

\begin{proof}
The equivalent formula in this case is
	\[
	\Theta_{{\bm{a}}}= \sum_{m\pmod{8}}c(m)\left(\mathcal{H}_{8,3}-\mathcal{H}_{4,3}\big|V_2 -6\mathcal{H}_{2,3}\big|V_4\right)\big|U_3\big| S_{8,m}. \qedhere
	\]
\end{proof}

\begin{lemma}\label{lem:3412}
For $\bm{a}=(3\ 4\ 12)^T$, we have
\[
r_{{\bm a}}(n)=c(n)\left( H(9n) -3 H(n) +2H\left(\tfrac{9n}{4}\right)-6H\left(\tfrac{n}{4}\right) \right).
\]
\end{lemma}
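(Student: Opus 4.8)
The plan is to run the same argument as in the proof of Lemma~\ref{lem:111}: recast the Hurwitz class number identity as an equality between two holomorphic modular forms of weight $\tfrac32$, use the valence formula to reduce to a finite check, and then verify the required Fourier coefficients by computer. First I translate the right-hand side into generating-function language. With the convention that $H(D)=0$ unless $D\in\N_0$ is a discriminant, one has $\sum_{n\ge0}H(9n)q^n=\mathcal H|U_9$, $\sum_{n\ge0}H(n)q^n=\mathcal H$, $\sum_{n\ge0}H(9n/4)q^n=\mathcal H|U_9|V_4$ and $\sum_{n\ge0}H(n/4)q^n=\mathcal H|V_4$, so, writing $\bm a=(3\ 4\ 12)^T$, the claimed identity is equivalent to
\[
\Theta_{\bm a}=\sum_{m\pmod M}c(m)\,\mathcal H\big|(U_9-3)\big|(1+2V_4)\big|S_{M,m},
\]
with $c$ as in the statement of the lemma and $M$ the least common multiple of the moduli occurring in the relevant tables in Appendix~\ref{sec:appendixcs} (here $S_{M,m}$ records the dependence of $c(n)$ on $n\pmod M$). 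Since $U_3U_3=U_9$, since $f\mapsto f|V_3|U_3$ is the identity, and since $\mathcal H_{1,3}=\mathcal H|(U_3-3V_3)$ by Lemma~\ref{lem:Hslash}, one has $\mathcal H|(U_9-3)=\mathcal H_{1,3}|U_3$, so the identity takes the form
\[
\Theta_{\bm a}=\sum_{m\pmod M}c(m)\,\mathcal H_{1,3}\big|(U_3+2U_3V_4)\big|S_{M,m},
\]
exactly in the shape of the formula used in the proof of Lemma~\ref{lem:1312}.

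The next step is to show that both sides are holomorphic modular forms of weight $\tfrac32$ on a common congruence subgroup $\Gamma\subseteq\Gamma_0(4)$. By Lemma~\ref{lem:thetamodular} (with $N=1$, $\ell=\lcm(3,4,12)=12$ and $\mathcal D_{\bm a}=144$) the left-hand side is a modular form of weight $\tfrac32$ on $\Gamma_0(48)$ with the trivial character $\chi_{576}$. For the right-hand side, Lemma~\ref{lem:Hslash} gives that $\mathcal H_{1,3}$ is a modular form of weight $\tfrac32$ on $\Gamma_0(12)$ with character $\chi_{12}$; applying $U_3$ (Lemma~\ref{lem:operators}(1)), then $V_4$ (Lemma~\ref{lem:operators}(3)), and then $S_{M,m}$ (Lemma~\ref{lem:operators}(2)) produces a modular form of weight $\tfrac32$ on an explicit group $\Gamma_{N',L'}$ with trivial character. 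Holomorphy at every cusp is preserved at each step, because $\mathcal H_{1,3}$ is a holomorphic modular form and each of $U_d$, $V_d$, $S_{M,m}$ sends holomorphic modular forms to holomorphic modular forms. Intersecting the two groups yields $\Gamma$, whose index $[\SL_2(\Z):\Gamma]$ is then computed from Lemma~\ref{lem:index}.

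By Lemma~\ref{lem:valenceformula}, the difference of the two sides---a holomorphic modular form of weight $\tfrac32$ on $\Gamma$---vanishes identically once its $q$-expansion vanishes through $q^{b}$ with $b=\tfrac1{8}[\SL_2(\Z):\Gamma]$; this $\Gamma$ and this bound $b$ are the entries recorded in the table in Appendix~\ref{sec:appendixvalencelemmas}. One then checks the first $b$ Fourier coefficients of both sides by computer, reading $r_{\bm a}(n)$ off from \eqref{3The} and the class-number side off from the definition of $H$, which completes the proof.

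The content here is bookkeeping and computation rather than new ideas. The one step that needs genuine care is propagating levels and characters correctly through $U_3$, $V_4$ and $S_{M,m}$ so as to pin down $\Gamma$; the resulting index $[\SL_2(\Z):\Gamma]$ then controls the size of the finite check, and the only way the argument could fail in practice is if that index were so large as to make verifying $b$ coefficients infeasible---in which case, as elsewhere in the paper, one would instead deduce the claim from an already-established identity. For $\bm a=(3\ 4\ 12)^T$, however, the direct verification should go through just as in Lemma~\ref{lem:111}.
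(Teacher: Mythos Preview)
Your proof is correct and follows the same approach as the paper's. Your generating-function identity $\mathcal H_{1,3}\big|(U_3+2U_3V_4)\big|S_{M,m}$ agrees with the paper's $\mathcal H_{1,3}\big|(1+2V_4)\big|U_3\big|S_{8,m}$ since $U_3$ and $V_4$ commute by Lemma~\ref{lem:operators}(4) (with $\gcd(3,4)=1$), and the remainder of the argument---level tracking via Lemmas~\ref{lem:thetamodular}, \ref{lem:Hslash}, \ref{lem:operators}, then the valence bound from Lemma~\ref{lem:valenceformula} and the finite computer check recorded in Appendix~\ref{sec:appendixvalencelemmas}---is exactly what the paper does.
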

\begin{proof}
The equivalent formula in this case is
	\[
	\Theta_{{\bm{a}}}= \sum_{m\pmod{8}}c(m)\mathcal{H}_{1,3}\big|\left(1+2V_4\right)\big|U_3 \big|S_{8,m}. \qedhere
	\]
\end{proof}

Setting $c_{(3\ 7\ 7)^T}(n):=1$, we have the following.
\begin{lemma}\label{lem:3763}
For $\bm{a}\in\{(3\ 7\ 7)^T,(3\ 7\ 63)^T\}$, we have
\begin{multline*}
r_{{\bm a}}(n)=c(n) \left( H(588n) -2H(147n)-7 H(12n) +14H(3n)-3 H \left(\tfrac{196n}{3} \right)+6H\left(\tfrac{49n}{3}\right) \right.\\
\left.
+21 H \left( \tfrac{4n}{3} \right)-42H\left(\tfrac{n}{3}\right)\right).
\end{multline*}
\end{lemma}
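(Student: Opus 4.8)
The plan is to follow the template used for the preceding lemmas (compare Lemma \ref{lem:337}): recast the asserted class-number identity as an identity between \emph{holomorphic} modular forms of weight $\tfrac32$, and then reduce it to a finite Fourier-coefficient check via the valence formula. The key observation is that the eight Hurwitz class numbers on the right-hand side pair up as $(H(4m),H(m))$ for $m\in\{147n,\ 3n,\ \tfrac{49n}{3},\ \tfrac n3\}$, since $588=4\cdot147$, $147=3\cdot49$, and $196=4\cdot49$. Because the $n$-th Fourier coefficient of $\mathcal H_{1,3}$ is $H(3n)-3H(\tfrac n3)$, while $U_d$ multiplies the argument of $H$ by $d$, the generating function of the right-hand side of Lemma \ref{lem:3763} is exactly
\[
\sum_{m\pmod M} c_{\bm a}(m)\,\mathcal H_{1,3}\big|\left(14-7U_4-2U_{49}+U_{196}\right)\big|S_{M,m},
\]
where $14-7U_4-2U_{49}+U_{196}=(2-U_4)(7-U_{49})$ and $M$ is the lcm of the moduli of the congruence classes defining $c_{\bm a}$ in the tables of Appendix \ref{sec:appendixcs} (for $\bm a=(3\ 7\ 7)^T$ one has $c\equiv1$, so the sieving operators are unnecessary; for $\bm a=(3\ 7\ 63)^T$ a $V$-operator may additionally be needed). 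Thus the lemma is equivalent to showing $\Theta_{\bm a}$ equals this expression.

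Next I would verify that both sides are holomorphic modular forms of weight $\tfrac32$ on a common congruence subgroup $\Gamma\subseteq\Gamma_0(4)$ with character $\chi_{4\mathcal D_{\bm a}}$. For the left-hand side this is Lemma \ref{lem:thetamodular} with $N=1$, which places $\Theta_{\bm a}$ on $\Gamma_0(4\ell_{\bm a})$. For the right-hand side, Lemma \ref{lem:Hslash} shows that $\mathcal H_{1,3}$ is a genuine holomorphic modular form of weight $\tfrac32$ on $\Gamma_0(12)$ with character $\chi_{12}$ (the non-holomorphic part cancels); applying $U_4$, $U_{49}$, $U_{196}$ via Lemma \ref{lem:operators} (1) keeps it holomorphic and enlarges the level, while leaving the character unchanged since $\chi_{16}$, $\chi_{196}$, $\chi_{784}$ are trivial on the relevant arguments; the operators $S_{M,m}$ (and any $V_d$) are then handled by Lemma \ref{lem:operators} (2) and (3). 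Intersecting the groups coming from the two sides gives the $\Gamma$ recorded in the row of the table in Appendix \ref{sec:appendixvalence}.

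With both sides identified as elements of the same finite-dimensional space, Lemmas \ref{lem:valenceformula} and \ref{lem:index} yield a bound $b=\tfrac18[\SL_2(\Z):\Gamma]$ such that the two modular forms agree if and only if their first $b$ Fourier coefficients agree; one then checks those coefficients on a computer. I expect the main obstacle to be purely computational and concentrated in the case $\bm a=(3\ 7\ 63)^T$: the level $4\ell_{(3\ 7\ 63)^T}=252$, combined with the sieving needed to match $c_{(3\ 7\ 63)^T}$ and with the conductor $12$ of $\chi_{12}$ entering the level bounds of Lemma \ref{lem:operators} (2), pushes $[\SL_2(\Z):\Gamma]$—and hence $b$—quite high, so verifying the Fourier coefficients is the expensive step. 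The conceptual content, namely that both sides are holomorphic modular forms and that finitely many coefficients suffice, is routine given Lemmas \ref{lem:thetamodular}, \ref{lem:Hslash}, and \ref{lem:operators}; the one bookkeeping point to watch is whether the sieving modulus $M$ satisfies $M\mid24$ (and whether $M\equiv2\pmod4$), as this determines which level bound in Lemma \ref{lem:operators} (2) applies.
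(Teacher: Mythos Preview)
Your approach is correct and matches the paper's strategy. The only difference is cosmetic: the paper realizes the same class-number combination as
\[
\Theta_{\bm a}=-\sum_{m\pmod{M}} c_{\bm a}(m)\,\mathcal H_{1,7}\big|(2-U_4)\big|(U_3-3V_3)\big|U_7\big|S_{M,m},
\]
i.e., it builds the generating function from $\mathcal H_{1,7}$ rather than $\mathcal H_{1,3}$. A direct Fourier-coefficient computation shows $-\mathcal H_{1,7}|(2-U_4)|(U_3-3V_3)|U_7$ and your $\mathcal H_{1,3}|(2-U_4)(7-U_{49})$ have identical $q$-expansions, so the two formulations coincide as modular forms and lead to the same congruence subgroup ($\Gamma_0(168)$ for $(3\ 7\ 7)^T$, $\Gamma_0(504)$ for $(3\ 7\ 63)^T$). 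Two small corrections to your commentary: no $V$-operator is needed for $(3\ 7\ 63)^T$, only the sieve $S_{3,m}$; and your worry about computational cost is unfounded, since the valence bound comes out to $b=48$ and $b=144$ respectively, among the smallest in Section~\ref{sec:lemmas}.
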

\begin{proof}
The equivalent formula in this case is 
\[
\Theta_{\bm{a}}=-\sum_{m\pmod{M}} 
c_{\bm{a}}(m)
\mathcal{H}_{1,7}\big|\left(2-U_4\right)\big|\left(U_3-3V_3\right)\big|U_7\big|S_{M,m}. \qedhere
\]
\end{proof}

\begin{lemma}\label{lem:3848}
	We have, for $\bm{a}=(3\ 8\ 48)^T$,
	\begin{equation*}
r_{{\bm a}}(n)=c(n)\left( 2H(72n) - 6 H(8n) -11 H \left( \tfrac{9n}{2} \right) +33 H \left( \tfrac n2 \right)\right).
\end{equation*}
\end{lemma}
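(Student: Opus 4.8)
The plan is to follow the template used for the other lemmas in Section~\ref{sec:lemmas}: recast the asserted identity as an equality of holomorphic weight-$\tfrac32$ modular forms, verify modularity of both sides on a common congruence subgroup $\Gamma\subseteq\Gamma_0(4)$ via Lemmas~\ref{lem:thetamodular},~\ref{lem:Hslash}, and~\ref{lem:operators}, and then apply the valence formula (Lemmas~\ref{lem:valenceformula} and~\ref{lem:index}) to reduce to a finite computer check. To locate the correct combination of operators, I would split the right-hand side as
\[
2H(72n)-6H(8n)-11H\left(\tfrac{9n}{2}\right)+33H\left(\tfrac n2\right)=2\bigl(H(72n)-3H(8n)\bigr)-11\bigl(H\left(\tfrac{9n}{2}\right)-3H\left(\tfrac n2\right)\bigr)
\]
and observe that, since $72=9\cdot 8$ and $V_3U_3$ is the identity, one has $\mathcal H_{1,3}\big|U_3=\mathcal H\big|(U_9-3)$, whose $n$-th coefficient is $H(9n)-3H(n)$; applying $U_8$ then yields $H(72n)-3H(8n)$ and applying $V_2$ yields $H(9n/2)-3H(n/2)$, the latter vanishing automatically for odd $n$ (so no $\delta$-symbol is needed here, in contrast to e.g.\ Lemma~\ref{lem:338}). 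Hence the claim is equivalent to
\[
\Theta_{\bm a}=\sum_{m\pmod M}c(m)\,\mathcal H_{1,3}\big|\left(2U_8-11V_2\right)\big|U_3\big|S_{M,m},
\]
with $M$ and $c$ the modulus and constants recorded in Appendix~\ref{sec:appendixcs}.

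For the modularity bookkeeping, note first that by Lemma~\ref{lem:thetamodular} the left-hand side is a modular form of weight $\tfrac32$ on $\Gamma_0(192)$ with character $\chi_{4\mathcal D_{\bm a}}$; here $\ell_{\bm a}=48$ and $\mathcal D_{\bm a}=1152$, so $\chi_{4\mathcal D_{\bm a}}=\chi_8$. On the other side, Lemma~\ref{lem:Hslash} (with $\ell_1=1$, $\ell_2=3$) gives that $\mathcal H_{1,3}$ is a modular form of weight $\tfrac32$ on $\Gamma_0(12)$ with character $\chi_{12}$. Distributing and using $V_2U_3=U_3V_2$, the operator expression equals $2\mathcal H_{1,3}\big|U_{24}-11\mathcal H_{1,3}\big|U_3V_2$; Lemma~\ref{lem:operators}~(1) shows $\mathcal H_{1,3}\big|U_{24}$ is modular of weight $\tfrac32$ on $\Gamma_0(24)$, and Lemma~\ref{lem:operators}~(1) and~(3) show $\mathcal H_{1,3}\big|U_3V_2$ is modular of weight $\tfrac32$ on $\Gamma_0(24)$, both with character $\chi_8$. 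Applying Lemma~\ref{lem:operators}~(2) then places $\mathcal H_{1,3}\big|(2U_8-11V_2)\big|U_3\big|S_{M,m}$ in a space of weight-$\tfrac32$ modular forms on an explicit group $\Gamma_{L,M}$ (with $L$ the appropriate lcm dictated by whether $M\not\equiv 2\pmod 4$ or $M\equiv 2\pmod 4$) with character $\chi_8$. Intersecting with the level of $\Theta_{\bm a}$ produces a common congruence subgroup $\Gamma\subseteq\Gamma_0(4)$, whose index $[\SL_2(\Z):\Gamma]$ is computed by Lemma~\ref{lem:index}.

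It then remains to invoke Lemma~\ref{lem:valenceformula}: the difference of the two sides is a weight-$\tfrac32$ modular form on $\Gamma$, so it vanishes identically provided it vanishes for the first $b=\tfrac18[\SL_2(\Z):\Gamma]$ Fourier coefficients (the value of $b$ to be listed in Appendix~\ref{sec:appendixvalencelemmas}), which one then verifies by directly computing $r_{\bm a}(n)$ and the relevant Hurwitz class numbers. I expect the genuine obstacle to be purely computational: because $\mathcal D_{\bm a}=1152$ is comparatively large and the sieve $S_{M,m}$ inflates the level, $[\SL_2(\Z):\Gamma]$ — and hence $b$ — may be quite large, putting this among the heavier checks in Section~\ref{sec:lemmas}; the rest is routine but error-prone tracking of levels and characters through the operators $U_d$, $V_d$, and $S_{M,m}$, which must be carried out carefully to pin down $\Gamma$ and $b$ exactly.
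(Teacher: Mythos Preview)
Your proposal is correct and follows essentially the same route as the paper: the paper's proof records exactly the equivalent formula
\[
\Theta_{\bm a}=\sum_{m\pmod{32}} c(m)\,\mathcal{H}_{1,3}\big|\left(2U_8-11V_2\right)\big|U_3\big|S_{32,m},
\]
with $M=32$ (the lcm of the moduli in Appendix~\ref{sec:appendixcs}), and Appendix~\ref{sec:appendixvalencelemmas} lists $\Gamma=\Gamma_{3072,32}$ and $b=12\,288$ for the valence-formula check. Your modularity bookkeeping and operator decomposition are accurate and in fact more detailed than what the paper spells out.
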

\begin{proof}
The equivalent formula in this case is
	\[
	\Theta_{{\bm{a}}}=\sum_{m\pmod{32}} c(m)\mathcal{H}_{1,3}\big|\left(2U_8-11V_2\right)\big|U_{3}\big|S_{32,m}. \qedhere
	\]
\end{proof}

\begin{lemma}\label{lem:31030}
For $\bm{a}=(3\ 10\ 30)^T$, we have	
\begin{multline*}
r_{{\bm a}}(n)= c(n) \left( H(900n)-2 H \left( 225n \right)  -3H(100n) -5 H (36n)+6 H(25n)  +10 H(9n) \right. \\
\left. +15H(4n) -30 H(n)\right).
\end{multline*}
\end{lemma}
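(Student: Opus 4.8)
The plan is to run exactly the argument sketched in Lemma \ref{lem:111}. Write $c:=c_{(3\ 10\ 30)^T}$ and let $M$ be the least common multiple of the moduli occurring in the definition of $c$ in Appendix \ref{sec:appendixcs}, so that $c(n)$ depends only on $n\pmod M$. The first step is to observe that, by Lemma \ref{lem:Hslash} together with the elementary fact $\mathcal{H}_{\ell_1,3}\big|U_3=\mathcal{H}\big|(U_{3\ell_1}-3U_{\ell_1})$ (valid since $V_3U_3$ is the identity), the $n$-th Fourier coefficient of
\[
\left(\mathcal{H}_{100,3}-2\mathcal{H}_{25,3}-5\mathcal{H}_{4,3}+10\mathcal{H}_{1,3}\right)\big|U_3
\]
equals $H(900n)-2H(225n)-3H(100n)-5H(36n)+6H(25n)+10H(9n)+15H(4n)-30H(n)$. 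Hence the asserted identity is equivalent to the theta identity
\[
\Theta_{\bm a}=\sum_{m\pmod{M}} c(m)\left(\mathcal{H}_{100,3}-2\mathcal{H}_{25,3}-5\mathcal{H}_{4,3}+10\mathcal{H}_{1,3}\right)\big|U_3\big|S_{M,m};
\]
indeed $\left(\mathcal{H}_{100,3}-2\mathcal{H}_{25,3}-5\mathcal{H}_{4,3}+10\mathcal{H}_{1,3}\right)\big|U_3=\mathcal{H}\big|(U_4-2)(U_9-3)(U_{25}-5)$, which makes visible how this combination encodes the local densities of $Q_{\bm a}$ at the primes $2$, $3$, and $5$.

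The second step is to check that both sides of this theta identity are holomorphic modular forms of weight $\tfrac32$ on a common congruence subgroup $\Gamma\subseteq\Gamma_0(4)$ carrying a common character. For the left-hand side this is Lemma \ref{lem:thetamodular} (here $\ell_{\bm a}=30$), which places $\Theta_{\bm a}$ on $\Gamma_0(120)$ with character $\chi_{3600}$; as $3600$ is a perfect square and every lower-right entry of a matrix in $\Gamma_0(120)$ is coprime to $2$, $3$ and $5$, this character is trivial on $\Gamma_0(120)$. For the right-hand side, Lemma \ref{lem:Hslash} shows that each $\mathcal{H}_{\ell_1,3}$ with $\ell_1\in\{1,4,25,100\}$ (all coprime to $3$) is a holomorphic modular form of weight $\tfrac32$ on $\Gamma_0(4\operatorname{rad}(\ell_1)\cdot 3)$ with character $\chi_{12\ell_1}$; applying $U_3$ via Lemma \ref{lem:operators}(1) and then $S_{M,m}$ via Lemma \ref{lem:operators}(2) keeps each summand a holomorphic modular form of weight $\tfrac32$ on an explicitly computable group $\Gamma_{N',L'}$, and one checks that all of the resulting characters restrict to the trivial character on the intersection $\Gamma$ of these groups with $\Gamma_0(120)$.

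The final step is then identical to Lemma \ref{lem:111}: since both sides are holomorphic modular forms of weight $\tfrac32$ on $\Gamma$ with the same (trivial) character, Lemmas \ref{lem:valenceformula} and \ref{lem:index} give the explicit bound $b=\tfrac18[\SL_2(\Z):\Gamma]$, recorded in Appendix \ref{sec:appendixvalencelemmas}, such that the two sides agree as soon as their first $b$ Fourier coefficients agree; one then verifies the first $b$ coefficients by computer and is done. The only conceptual step is the first one — guessing the right linear combination of the Zagier Eisenstein series $\mathcal H$ and confirming via Lemma \ref{lem:Hslash} that the non-holomorphic parts cancel; everything afterwards is level bookkeeping. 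I expect the main practical obstacle to be that three independent operators $U_4$, $U_9$, $U_{25}$ enter simultaneously, so the level of $\Gamma$, and hence $b$, is fairly large; keeping the computation feasible requires taking $M$ as small as possible and, where it helps, exploiting that $\mathcal H(m)=0$ whenever $m\equiv 2\pmod 4$ (as in the proof of Lemma \ref{lem:1330}) to merge congruence cases.
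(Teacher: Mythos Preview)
Your approach is correct and essentially the same as the paper's: both reduce the identity to a theta identity, exhibit the right-hand side as a holomorphic weight-$\tfrac32$ form via Lemma~\ref{lem:Hslash}, and finish with the valence formula on $\Gamma_0(240)$ (so $b=72$, far smaller than your closing remarks anticipate). The only difference is organizational: the paper writes the equivalent formula as
\[
\Theta_{\bm a}=\sum_{m\pmod{4}} c(m)\,\mathcal{H}_{1,5}\big|\bigl(6-3U_4-2U_9+U_{36}\bigr)\big|U_5\big|S_{4,m},
\]
i.e.\ it kills the non-holomorphic part using $\ell_2=5$ and then applies $U$-operators for $2$ and $3$, whereas you take $\ell_2=3$ and absorb the primes $2$ and $5$ into varying $\ell_1$. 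Both unwind to $\mathcal{H}\big|(U_4-2)(U_9-3)(U_{25}-5)$ and land on the same congruence subgroup, so neither buys anything over the other.

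One slip to fix: your ``elementary fact'' should read $\mathcal{H}_{\ell_1,3}\big|U_3=\mathcal{H}\big|(U_{9\ell_1}-3U_{\ell_1})$, not $U_{3\ell_1}$, since $\mathcal{H}_{\ell_1,3}=\mathcal{H}\big|(U_{3\ell_1}-3U_{\ell_1}V_3)$ and $U_{3\ell_1}\big|U_3=U_{9\ell_1}$. Your subsequent Fourier-coefficient claim and the factorization $\mathcal{H}\big|(U_4-2)(U_9-3)(U_{25}-5)$ are correct, so this is only a typo in the intermediate justification.
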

\begin{proof}
The equivalent formula in this case is
\[
	\Theta_{{\bm{a}}}=\sum_{m\pmod{4}} c(m)\mathcal{H}_{1,5}\big|\left(6-3U_4-2U_9+U_{36}\right)\big|U_5\big|S_{4,m}.\qedhere
	\]

\end{proof}

\begin{lemma}\label{lem:5615}
For $\bm{a}= (5\ 6\ 15)^T$, we have 
	\begin{multline*}
	r_{{\bm a}}(n)=c(n) \left( H(1800n)-2 H(450n)
-3H(200n)
 - 5H(72n) + 6 H(50n)  +10H(18n) 
 \right. \\ \left.
 +15 H(8n)-30H(2n)	\right).
	\end{multline*}
\end{lemma}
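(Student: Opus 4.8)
The plan is to mirror the strategy carried out explicitly in the proof of Lemma \ref{lem:111}: recast the claim as an identity between two holomorphic modular forms of weight $\tfrac32$, pin down a congruence subgroup $\Gamma\subseteq\Gamma_{0}(4)$ on which both sides transform, and then invoke Lemmas \ref{lem:valenceformula} and \ref{lem:index} to reduce the assertion to a finite check of Fourier coefficients done by computer.

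First I would rewrite the right-hand side using the sieved class-number functions of Lemma \ref{lem:Hslash}. Since the $U$-operators commute and satisfy $U_{a}U_{b}=U_{ab}$, the bracket of Hurwitz class numbers in Lemma \ref{lem:5615} is the $n$-th Fourier coefficient of $\mathcal H\big|(U_{8}-2U_{2})(U_{9}-3)(U_{25}-5)$ (one checks this by multiplying out; the three factors encode the $2$-, $3$- and $5$-adic parts of $1800=8\cdot 9\cdot 25$, with $H(1800n)$ the top term). Using $\mathcal H_{1,2}=\mathcal H|(U_{2}-2V_{2})$ together with $V_{2}U_{4}=U_{2}$ (a case of Lemma \ref{lem:operators} (4)) gives $\mathcal H|(U_{8}-2U_{2})=\mathcal H_{1,2}\big|U_{4}$, so the claim becomes equivalent to
\[
\Theta_{(5\ 6\ 15)^{T}}=\sum_{m\pmod{M}} c(m)\,\mathcal H_{1,2}\big|(3-U_{9})(5-U_{25})\big|U_{4}\big|S_{M,m},
\]
where $c=c_{(5\ 6\ 15)^{T}}$ is the function from Appendix \ref{sec:appendixcs}, $M$ is the $\lcm$ of the moduli appearing in its definition, and (as in the proof of Theorem \ref{thm:congcondition}) multiplying the $n$-th coefficient by $c(n)$ is realized by the sieve $\sum_{m}c(m)(\cdots)\big|S_{M,m}$; by Lemma \ref{lem:operators} (4) the order in which $U_{4}$ and $S_{M,m}$ act is immaterial.

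Next I would track modularity on the two sides. By Lemma \ref{lem:thetamodular} the left-hand side is a holomorphic modular form of weight $\tfrac32$ on $\Gamma_{0}(120)$ with character $\chi_{1800}=\chi_{8}$. On the right, Lemma \ref{lem:Hslash} with $\ell_{1}=1$, $\ell_{2}=2$ shows $\mathcal H_{1,2}$ is a holomorphic modular form of weight $\tfrac32$ on $\Gamma_{0}(8)$ with character $\chi_{8}$; applying Lemma \ref{lem:operators} (1) successively with $U_{9}$, $U_{25}$ and $U_{4}$ — and noting $\chi_{36}$, $\chi_{100}$, $\chi_{16}$ are trivial so the character does not change — lands us on $\Gamma_{0}(120)$ with character $\chi_{8}$, and Lemma \ref{lem:operators} (2) then accounts for $S_{M,m}$. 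Intersecting with $\Gamma_{0}(120)$ gives the explicit $\Gamma$ recorded in the table in Appendix \ref{sec:appendixvalencelemmas}; Lemmas \ref{lem:valenceformula} and \ref{lem:index} then turn the identity into the statement that its first $b=\tfrac18[\SL_{2}(\Z):\Gamma]$ coefficients vanish, which is verified by computer (exactly as indicated after Lemma \ref{lem:112}).

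The main obstacle is organizational rather than conceptual. One must verify carefully the elementary operator identity recasting the alternating sum of $H(\cdot)$'s as $\mathcal H_{1,2}\big|(3-U_{9})(5-U_{25})\big|U_{4}$ — getting the order of the $U$-operators and of $S_{M,m}$ right and matching it against the precise congruence data defining $c$ — and then push the level bookkeeping through Lemmas \ref{lem:operators} and \ref{lem:index} to obtain an honest value of $b$. Since $120$ already has three prime divisors and applying $S_{M,m}$ inflates the level, $b$ grows quickly, so it pays to take $M$ as small as possible and to exploit that the auxiliary quadratic characters $\chi_{36}$, $\chi_{100}$, $\chi_{16}$ collapse to the trivial character, keeping $\Gamma$ — and hence the final coefficient check — of manageable size.
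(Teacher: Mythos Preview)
Your approach is essentially the paper's: recast the identity in terms of $\mathcal{H}$ hit with $U$/$V$-operators, track modularity to a common subgroup, and finish with the valence formula plus a computer check. Your operator decomposition via $\mathcal{H}_{1,2}\big|(3-U_{9})(5-U_{25})\big|U_{4}$ is an equivalent algebraic repackaging of the paper's $\mathcal{H}_{2,3}\big|(5-U_{25})(2-U_{4})\big|U_{3}$; both land on $\Gamma_{0}(240)$ and the bound $b=72$ recorded in Appendix~\ref{sec:appendixvalencelemmas}.

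One small slip: your aside that ``by Lemma~\ref{lem:operators}~(4) the order in which $U_{4}$ and $S_{M,m}$ act is immaterial'' is not justified, since here $M=2$ and $\gcd(4,2)\neq 1$, so the commutation relation in Lemma~\ref{lem:operators}~(4) does not apply (and indeed $f\big|S_{2,1}\big|U_{4}=0$ while $f\big|U_{4}\big|S_{2,1}$ need not vanish). This is harmless for your argument, however, since you do place $U_{4}$ before $S_{M,m}$, which is the order that gives the correct Fourier coefficients and for which your level bookkeeping is valid.
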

\begin{proof}
The equivalent formula  in this case is
\[
\Theta_{{\bm{a}}}=\sum_{m\pmod{2}} c(m)\mathcal{H}_{2,3}\big|\left(5-U_{25}\right)\big|\left(2-U_{4}\right)\big|U_3 \big|S_{2,m}. \qedhere
\]
\end{proof}

\begin{lemma}\label{lem:5840}
For $\bm{a}=(5\ 8\ 40)^T$, we have 
	\begin{equation*}
	r_{{\bm a}}(n)= c(n) \left(H\left(100n\right) - 5H\left(4n\right) - 10H\left(\tfrac{25n}{4}\right)  + 50H\left(\tfrac{n}{4}\right)\right).
	\end{equation*}
\end{lemma}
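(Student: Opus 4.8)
The plan is to follow the strategy used for Lemma~\ref{lem:111}: rewrite the claimed formula as an equality of two holomorphic modular forms of weight $\tfrac32$, produce a congruence subgroup $\Gamma\subseteq\Gamma_0(4)$ on which both are modular forms with the same character, and then apply Lemmas~\ref{lem:valenceformula} and~\ref{lem:index} to reduce the identity to a finite Fourier coefficient comparison. First I would identify the generating function of the right-hand side. Recalling from Lemma~\ref{lem:Hslash} that $\mathcal{H}_{4,5}=\mathcal H|(U_{20}-5U_4V_5)$ and $\mathcal{H}_{1,5}=\mathcal H|(U_{5}-5V_5)$, and that $\mathcal H|U_d$ has $n$-th coefficient equal to the coefficient of $q^{dn}$ in $\mathcal H$, a short computation gives that the $n$-th Fourier coefficient of $\big(\mathcal{H}_{4,5}-10\,\mathcal{H}_{1,5}\big|V_4\big)\big|U_5$ equals $H(100n)-5H(4n)-10H\!\left(\tfrac{25n}{4}\right)+50H\!\left(\tfrac{n}{4}\right)$. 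Hence, with $M$ the lcm of the moduli appearing in the corresponding entry of Appendix~\ref{sec:appendixcs}, the lemma is equivalent to
\[
\Theta_{\bm a}=\sum_{m\pmod M} c(m)\,\big(\mathcal{H}_{4,5}-10\,\mathcal{H}_{1,5}\big|V_4\big)\big|U_5\big|S_{M,m}.
\]

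Next I would verify the modularity of both sides. By Lemma~\ref{lem:thetamodular} (with $N=1$), the left-hand side is a holomorphic modular form of weight $\tfrac32$ on $\Gamma_0(4\ell_{\bm a})=\Gamma_0(160)$ with character $\chi_{4\mathcal D_{\bm a}}=\chi_{6400}$. On the right, Lemma~\ref{lem:Hslash}, applied with $(\ell_1,\ell_2)\in\{(4,5),(1,5)\}$, already produces holomorphic modular forms $\mathcal{H}_{4,5}$ and $\mathcal{H}_{1,5}$ of weight $\tfrac32$ on $\Gamma_0(40)$ and $\Gamma_0(20)$ with characters $\chi_{80}$ and $\chi_{20}$; in particular there is no non-holomorphic completion to carry along. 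I would then apply Lemma~\ref{lem:operators}~(3) to handle $|V_4$, part~(1) to handle $|U_5$, and part~(2) to handle $|S_{M,m}$, obtaining that the right-hand side is a holomorphic modular form of weight $\tfrac32$ on a group of the form $\Gamma_{N',M}$ with character $\chi_{4\mathcal D_{\bm a}}$. Intersecting this with $\Gamma_0(160)$, computing the index via Lemma~\ref{lem:index}, and invoking Lemma~\ref{lem:valenceformula}, the identity reduces to checking that the first $b=\tfrac18[\SL_2(\Z):\Gamma]$ Fourier coefficients on the two sides agree, which I would confirm by computer.

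As in the other cases of this section, the obstacle is bookkeeping rather than conceptual. The delicate points are: reading off $M$ and the constants $c(m)$ from Appendix~\ref{sec:appendixcs}; correctly propagating the level and the conductor of the twisted character through Lemma~\ref{lem:operators}, in particular the factor $25$ introduced by $U_5$ and the factor $M^2$ introduced by $S_{M,m}$, so that $\Gamma$ and hence the bound $b$ are exact; and performing the final coefficient comparison, which --- because $\ell_{\bm a}=40$ and the operators $U_5$ and $S_{M,m}$ inflate the level --- is more computationally expensive here than in the small cases of Lemma~\ref{lem:111}. No separate argument ruling out a surviving non-holomorphic part (of the kind appearing in the proof of Lemma~\ref{lem:Hslash}) is needed, since the right-hand side has been written entirely in terms of the already-holomorphic functions $\mathcal{H}_{4,5}$ and $\mathcal{H}_{1,5}$.
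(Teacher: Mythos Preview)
Your proposal is correct and follows essentially the same approach as the paper. The paper writes the equivalent formula as $\Theta_{\bm a}=\sum_{m\pmod{16}} c(m)\,\mathcal{H}_{1,5}\big|(U_4-10V_4)\big|U_5\big|S_{16,m}$, which coincides with your expression because $\mathcal{H}_{1,5}\big|U_4=\mathcal{H}_{4,5}$ by the commutation relation $V_5U_4=U_4V_5$ in Lemma~\ref{lem:operators}~(4); the resulting group is $\Gamma_{1280,16}$ and the valence bound is $2304$, as recorded in Appendix~\ref{sec:appendixvalencelemmas}.
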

\begin{proof}
The equivalent formula in this case is
	\[
	\Theta_{{\bm{a}}}=\sum_{m\pmod{16}} c(m) \mathcal{H}_{1,5}\big|\left(U_4-10V_4\right)\big|U_5\big| S_{16,m}. \qedhere
	\]
\end{proof}

\newpage
\appendix

\section{Tables for the constants \texorpdfstring{$d_{\bm{a},\bm{h},N}(m)$}{d_{a,h,N}(m)}. }\label{sec:appendixds}
Here we list $d_{\bm{a},\bm{h},N}(m)$ from the statement of Theorem \ref{thm:congcondition}. For each $\bm{a}\in\mathcal{C}$, we give a table listing for $(\bm{h},N)\in \mathcal{S}_{\bm{a}}$ the corresponding $m$, $M$, and $d=d_{\bm{a},\bm{h},N}(n)$ valid for every $n\equiv m\pmod{M}$ such that $\Theta_{\bm{a},\bm{h},N} = \sum_{m\pmod{M}} d_{\bm{a},\bm{h},N}(m) \Theta_{\bm{a}}\big|S_{M,m}$. 

\begin{center}

\end{center}

\section{Bounds for the valence formula}\label{sec:appendixvalence}
Here we give tables for the bounds obtained from the valence formula. We then list how many coefficients have been computed with a computer in order to obtain the claim. 
The calculations were done using GP/Pari and run in parallel on 6 cores split between on one desktop (Dell Inspiron, i5 processor) and one laptop (Samsung NP900X3L, i5-6200U processor); the GP code may be found on the second author's website (see \url{https://hkumath.hku.hk/~bkane/papers/PeterssonQF/polygonal-Petersson.gp}). Most of the calculations took approximately 6 real-time hours (i.e., 36 core-hours) per $\bm{a}$, while those where the bound from the valence formula was larger than $10^8$ required a longer calculation of approximately  2-3 real-time days (i.e., approximately 15 core-days) each. 
\begin{center}
\begin{tabular}{|c|c|c|c||c|c|c|c|}
\hline
\small{$\bm{a}$}&\small{$N$}&\small{subgroup} &\small{coeff.}&\small{$\bm{a}$}&\small{$N$}&\small{subgroup} &\small{coeff.}\\
\hline
\hline
\multirow{2}{*}{\small{$\vphantom{T^{T^T}}(1\ 1\ 1)^T$}}&\small{ $N\nmid 6$}& \small $\Gamma_{256,16}$ & \small $384$&
\multirow{2}{*}{\small $(1\ 1\ 12)^T$} &\small $N\neq 6$ &\small  $\Gamma_{12288,64}$ &\small $98\,304$
\\[1pt]
\cline{2-4}\cline{6-8}
&\small $N\mid 6$&\small $ \Gamma_{576,24}$ & \small $1\,152$&
&\small $N=6$  &\small $\Gamma_{5184,72}$&\small $31\,104$
\\[1pt]
\cline{1-4}\cline{5-8}
\multirow{2}{*}{\small $(1\ 1\ 2)^T$}& \small $N\neq 12$ & \small $\Gamma_{1024,32}$& \small $3\,072$&
\multirow{2}{*}{\small $(1\ 1\ 21)^T$} &\small $N\nmid 14$&  \small $\Gamma_{36288,72}$ &\small $248\,832$
\\[1pt]
\cline{2-4}\cline{6-8}
& \small $N=12$ & \small $\Gamma_{1152,24}$ & \small $2\,304$ &
&\small $N\mid 14$  & \small $\Gamma_{460992,392}$&\small $22\,127\,616$
\\[1pt]
\cline{1-4}\cline{5-8}
\small{$\vphantom{T^{T^T}}(1\ 1\ 3)^T$}&\small  all&\small  $\Gamma_{20736,72}$ &\small  $124\,416$ &
\multirow{2}{*}{\small $(1\ 1\ 24)^T$} &\small $N\neq 3$ &\small $\Gamma_{49152,128}$ &\small $786\,432$
\\[1pt]
\cline{1-4}\cline{6-8}
\multirow{2}{*}{\small $(1\ 1\ 4)^T$} &\small $N \neq  12$&\small  $\Gamma_{4096,64}$ &\small $24\,576$&
&\small $N=3$ & \small $\Gamma_{2592,9}$&\small $3\,888$
\\[1pt]
\cline{2-4}\cline{5-8}
&\small $N=12$  &\small $\Gamma_{2304,48}$&\small $9\,216$&
\small\multirow{2}{*}{$(1\ 2\ 2)^T$} &\small $N\neq 6$ &\small  $\Gamma_{1024,32}$ &\small $3\,072$
\\[1pt]
\cline{1-4}\cline{6-8}
\small $\vphantom{T^{T^T}}(1\ 1\ 5)^T$ &\small all &\small  $\Gamma_{40000,200}$ &\small $720\,000$&
\small&\small $N=6$ &\small  $\Gamma_{576,24}$&\small $1\,152$
\\[1pt]
\cline{1-4}\cline{5-8}
\multirow{2}{*}{\small $(1\ 1\ 6)^T$} &\small $N\neq 8$ &\small  $\Gamma_{41472,144}$ &\small $497\,664$&
\small$\vphantom{T^{T^T}}(1\ 2\ 3)^T$ &\small all&\small $\Gamma_{20736,144}$ &\small $248\,832$
\\[1pt]
\cline{2-4}\cline{5-8}
&\small  $N=8$&\small $\Gamma_{3072,32}$&\small $12\,288$ &
\small$\vphantom{T^{T^T}}(1\ 2\ 4)^T$ &\small all&\small $\Gamma_{4096,64}$ &\small $24\,756$
\\[1pt]
\cline{1-4}\cline{5-8}
\small $\vphantom{T^{T^T}}(1\ 1\ 8)^T$ &\small all &\small  $\Gamma_{16384,128}$ &\small $196\,608$&
\small \multirow{2}{*}{$(1\ 2\ 5)^T$} &\small $N\neq 8$&\small  $\Gamma_{160000,400}$ &\small $5\,760\,000$
\\[1pt]
\cline{1-4}\cline{6-8}
\small \multirow{2}{*}{$(1\ 1\ 9)^T$} &\small $N\neq 4$&\small  $\Gamma_{104976,324}$ &\small $2\,834\,352$&
\small&\small $N=8$ &\small  $\Gamma_{5120,32}$&\small $18\,432$
\\[1pt]
\cline{2-4}\cline{5-8}
&\small $N=4$  &\small $\Gamma_{576,8}$&\small $576$&
\small$\vphantom{T^{T^T}}(1\ 2\ 6)^T$ &\small all &\small $\Gamma_{20736,144}$ &\small $248\,832$
\\[1pt]
\cline{1-4}\cline{5-8}

\end{tabular}

\begin{tabular}{|c|c|c|c||c|c|c|c|}
\hline
\small{$\bm{a}$}&\small{$N$}&\small{subgroup} &\small{coeff.}&\small{$\bm{a}$}&\small{$N$}&\small{subgroup} &\small{coeff.}\\
\hline
\hline

\cline{1-4}\cline{5-8}
\small$\vphantom{T^{T^T}}(1\ 2\ 8)^T$ &\small all&\small  $\Gamma_{16384,128}$ &\small $196\,608$&
\small	$\vphantom{T^{T^T}}(1\ 8\ 16)^T$ &\small all&\small $\Gamma_{65536,256}$ &\small $1\,572\,864$
\\
\cline{1-4}\cline{5-8}
\small$\vphantom{T^{T^T}}(1\ 2\ 10)^T$ &\small $N\neq 4$&\small $\Gamma_{40000,200}$ &\small $720\,000$&
\small \multirow{2}{*}{$(1\ 8\ 40)^T$} &\small $N=4$&\small $\Gamma_{5120,32}$&\small $18\,432$
\\
\cline{2-4}\cline{6-8}

\small&\small $N=4$&\small  $\Gamma_{5120,32}$&\small $18\,432$&
	&\small $N=5$&\small $\Gamma_{20000,25}$ &\small $90\,000$
\\
\cline{1-4}\cline{5-8}

\small$\vphantom{T^{T^T}}(1\ 2\ 16)^T$ &\small all&\small  $\Gamma_{65536,256}$ &\small $1\,572\,864$&
\small	$\vphantom{T^{T^T}}(1\ 9\ 9)^T$ &\small all&\small $\Gamma_{46656,216}$ &\small $839\,808$
\\
\cline{1-4}\cline{5-8}

\small \multirow{2}{*}{$(1\ 3\ 3)^T$} &\small $N\neq 8$&\small  $\Gamma_{5184,72}$ &\small $31\,104$&
\small$\vphantom{T^{T^T}}(1\ 9\ 12)^T$ &\small all&\small $\Gamma_{746496,864}$ &\small $53\,747\,712$
\\
\cline{2-4}\cline{5-8}
&\small $N=8$&\small $\Gamma_{768,16}$&\small $1\,536$&
\small\multirow{3}{*}{$(1\ 9\ 21)^T$}&\small  $N\mid 14$&\small $\Gamma_{345744,196}$ &\small $8\,297\,856$
\\
\cline{1-4}\cline{6-8}
\small $\vphantom{T^{T^T}}(1\ 3\ 4)^T$ &\small all&\small  $\Gamma_{82944,288}$ &\small $1\,990\,656$&
\small&\small  $N\mid 6$&\small $\Gamma_{326592,216}$ &\small $6\,728\,464$
\\
\cline{1-4}\cline{6-8}
\small $\vphantom{T^{T^T}}(1\ 3\ 6)^T$ &\small all&\small $\Gamma_{20736,144}$ &\small $248\,832$&
\small&\small  $N=4$&\small $\Gamma_{4032,8}$ &\small $4\,608$
\\
\cline{1-4}\cline{5-8}
\small $\vphantom{T^{T^T}}(1\ 3\ 9)^T$ &\small all&\small $\Gamma_{46656,216}$ &\small $839\,808$&
\small\multirow{2}{*}{$(1\ 9\ 24)^T$} &\small $N\mid 8$&\small $\Gamma_{147456,128}$ &\small $2\,359\,296$
\\
\cline{1-4}\cline{6-8}
\small \multirow{3}{*}{$(1\ 3\ 10)^T$} &\small $N\mid 12$&\small $\Gamma_{103680,144}$ &\small $1\,492\,992$&
\small	&\small $N=3$ &\small $\Gamma_{23328,27}$&\small $104\,976$
\\
\cline{2-4}\cline{5-8}
&\small $N\mid 10$ &\small $\Gamma_{120000,200}$ &\small $2\,880\,000$&
\small\multirow{2}{*}{$(1\ 12\ 12)^T$} &\small $N\nmid 8$&\small $\Gamma_{82944,288}$ &\small $1\,990\,656$
\\
\cline{2-4}\cline{6-8}
&\small $N=15$ &\small $\Gamma_{405000,225}$&\small $14\,580\,000$&
\small	&\small $N\mid8$ &\small $\Gamma_{49152,128}$&\small $786\,432$
\\
\cline{1-4}\cline{5-8}
\small $\vphantom{T^{T^T}}(1\ 3\ 12)^T$ &\small all&\small $\Gamma_{82944,288}$ &\small $1\,990\,656$&
\small\multirow{2}{*}{$(1\ 16\ 24)^T$} &\small $N\mid 6$&\small $\Gamma_{331776,576}$ &\small $15\,925\,248$
\\
\cline{1-4}\cline{6-8}
\small $\vphantom{T^{T^T}}(1\ 3\ 18)^T$ &\small all &\small $\Gamma_{2304,16}$ &\small $4\,608$&
\small &\small $N\mid 8$&\small $\Gamma_{196608,256}$ &\small $6\,291\,456$
\\
\cline{1-4}\cline{5-8}
\small \multirow{3}{*}{$(1\ 3\ 30)^T$} &\small $N\mid 10$&\small $\Gamma_{120000,200}$ &\small $2\,880\,000$&
\small \multirow{2}{*}{$(1\ 21\ 21)^T$} &\small $N\nmid 14$&\small $\Gamma_{36288,72}$&\small $248\,832$
\\
\cline{2-4}\cline{6-8}
&\small $N\mid6$&\small $\Gamma_{25920,72}$&\small $186\,624$&  
\small&\small $N\mid 14$&\small $\Gamma_{460992,392}$ &\small $22\,127\,616$
\\
\cline{2-4}\cline{5-8}
&\small $N=4$&\small $\Gamma_{15360,32}$ &\small $73\,728$&
\small \multirow{2}{*}{$(1\ 24\ 24)^T$} &\small $N\nmid 8$&\small $\Gamma_{20736,144}$ &\small $248\,832$
\\
\cline{1-4}\cline{6-8}
\small \multirow{2}{*}{$(1\ 4\ 4)^T$} &\small $N\neq 8$&\small $\Gamma_{9216,96}$ &\small $73\,728$&\small  
\small	&\small $N\mid8$ &\small $\Gamma_{49152,128}$&\small $786\,432$
\\
\cline{2-4}\cline{5-8}
&\small $N=8$ &\small $\Gamma_{4096,64}$&\small $24\,576$&
\small	$\vphantom{T^{T^T}}(2\ 2\ 3)^T$ &\small all&\small $\Gamma_{82944,288}$ &\small $1\,990\,656$
\\
	\cline{1-4}\cline{5-8}
\small \multirow{2}{*}{$(1\ 4\ 6)^T$} &\small $N\nmid 8$&\small $\Gamma_{82944,288}$ &\small $1\,990\,656$&
\small\multirow{2}{*}{$(2\ 3\ 3)^T$} &\small $N\neq 8$&\small $\Gamma_{20736,144}$ &\small $248\,836$
\\
\cline{2-4}\cline{6-8}
	&\small $N\mid 8$ &\small $\Gamma_{12288,64}$&\small $98\,304$&
\small	&\small $N=8$&\small $\Gamma_{3072,32}$&\small $12\,288$
\\
	\cline{1-4}\cline{5-8}
\small $\vphantom{T^{T^T}}(1\ 4\ 8)^T$ &\small all &\small $\Gamma_{16384,128}$ &\small $196\,608$&
\small$\vphantom{T^{T^T}}(2\ 3\ 6)^T$ &\small all&\small $\Gamma_{20736,144}$ &\small $248\,832$
\\
	\cline{1-4}\cline{5-8}
\small \multirow{2}{*}{$(1\ 4\ 12)^T$} &\small $N\neq 8$&\small $\Gamma_{82944,288}$ &\small $1\,990\,656$&
\small\multirow{2}{*}{$(2\ 3\ 8)^T$} &\small $N\mid 12$ &\small $\Gamma_{82944,288}$ &\small $1\,990\,656$
\\
\cline{2-4}\cline{6-8}
	&\small $N=8$&\small  $\Gamma_{12288,64}$&\small $98\,304$&
\small&\small $N=8$&\small $\Gamma_{49152,128}$&\small $786\, 432$
\\
\cline{1-4}\cline{5-8}
\small\multirow{2}{*}{$(1\ 4\ 24)^T$} &\small $N\nmid 6$&\small $\Gamma_{49152,128}$ &\small $786\,432$&
\small	$\vphantom{T^{T^T}}(2\ 3\ 9)^T$ &\small all &\small $\Gamma_{2304,16}$ &\small $4\ 608$
\\
\cline{2-4}\cline{5-8}
&\small $N\mid 6$ &\small $\Gamma_{331776,576}$&\small $15\,925\,248$&
\small	$\vphantom{T^{T^T}}(2\ 3\ 12)^T$ &\small all&\small $\Gamma_{331776,576}$ &\small $15\,925\,248$
\\
\cline{1-4}\cline{5-8}
\small	$\vphantom{T^{T^T}}(1\ 5\ 5)^T$ &\small all&\small $\Gamma_{40000,200}$ &\small $720\,000$&
\small \multirow{2}{*}{$ (2\ 3\ 18)^T$} &\small $N\neq 4$&\small $\Gamma_{46656,216}$ &\small $839\,808$
\\
\cline{1-4}\cline{6-8}
\small \multirow{2}{*}{$(1\ 5\ 8)^T$} &\small $N\mid 8$ &\small $\Gamma_{81920,128}$ &\small $1\,179\,648$&
\small	&\small $N=4$&\small $\Gamma_{9216,32}$&\small $36\,864$
\\
\cline{2-4}\cline{5-8}
 &\small $N=5$&\small $\Gamma_{20000,25}$ &\small $90\,000$&
\small\multirow{2}{*}{$(2\ 3\ 48)^T$} &\small $N\neq 3$&\small $\Gamma_{196608,256}$ &\small $6\,291\,456$
\\
\cline{1-4}\cline{6-8}
\small	$\vphantom{T^{T^T}}(1\ 5\ 10)^T$ &\small all&\small $\Gamma_{5120,32}$ &\small $18\,432$&
\small	&\small $N=3$ &\small $\Gamma_{5184,9}$&\small $7\,776$
\\
\cline{1-4}\cline{5-8}
\small	$\vphantom{T^{T^T}}(1\ 5\ 25)^T$ &\small all&\small $\Gamma_{1600,8}$ &\small $1\,440$&
\small\multirow{3}{*}{$(2\ 5\ 6)^T$} &\small $N\mid 20$&\small  $\Gamma_{480000,400}$ &\small $23\,040\,000$
\\
\cline{1-4}\cline{6-8}
\small	$\vphantom{T^{T^T}}(1\ 5\ 40)^T$ &\small all &\small $\Gamma_{81920,128}$ &\small $1\,179\,648$&
\small	&\small $N\mid 6$ &\small  $\Gamma_{103680,144}$ &\small $1\,492\,992$
\\
\cline{1-4}\cline{6-8}
\small \multirow{2}{*}{$(1\ 6\ 6)^T$} &\small $N\nmid 8$&\small $\Gamma_{5184,72}$ &\small $31\,104$&
\small	&\small $N=15$&\small $\Gamma_{40500,225}$&\small $14\,580\,000$
\\
\cline{2-4}\cline{5-8}
	&\small $N\mid8$&\small $\Gamma_{3072,32}$&\small $12\,288$&
\small	$\vphantom{T^{T^T}}(2\ 5\ 10)^T$ &\small all&\small $\Gamma_{5120,32}$ &\small $18\,432$
\\
\cline{1-4}\cline{5-8}
\small \multirow{2}{*}{$(1\ 6\ 9)^T$} &\small $N\neq 8$&\small $\Gamma_{186624,432}$ &\small $6\,718\,464$&
\small	$\vphantom{T^{T^T}}(2\ 5\ 15)^T$ &\small all&\small $\Gamma_{103680,144}$ &\small $1\,492\,992$
\\
\cline{2-4}\cline{6-8}
	&\small $N=8$ &\small $\Gamma_{9216,32}$&\small $36\,864$&
\small	$\vphantom{T^{T^T}}(2\ 6\ 9)^T$ &\small all&\small $\Gamma_{2304,16}$ &\small $4\,608$
\\
\cline{1-4}\cline{5-8}
\small \multirow{2}{*}{$(1\ 6\ 16)^T$} &\small $N\mid 12$&\small $\Gamma_{331776,576}$ &\small $15\,925\,248$ &
\small \multirow{2}{*}{$(2\ 6\ 15)^T$} &\small $N\nmid 20$ &\small $\Gamma_{103680,144}$&\small $1\,492\,992$
\\
\cline{2-4}\cline{6-8}
&\small $N=8$&\small $\Gamma_{196608,256}$ &\small $6\,291\,456$&
\small	&\small $N\mid 20$&\small $\Gamma_{480000,400}$ &\small $23\,040\,000$
\\
\cline{1-4}\cline{5-8}
\small	$\vphantom{T^{T^T}}(1\ 6\ 18)^T$ &\small all&\small $\Gamma_{2304,16}$ &\small $4\,608$&
\small{$(3\ 3\ 4)^T\vphantom{T^{T^T}}$} &\small all&\small $\Gamma_{331776,576}$ &\small $15\,925\,248$
\\
\cline{1-4}\cline{5-8}
\small \multirow{2}{*}{$(1\ 6\ 24)^T$} &\small $N\neq 8$&\small $\Gamma_{82944,288}$ &\small $1\,990\,656$&
\small\multirow{3}{*}{$(3\ 3\ 7)^T$} &\small $N=14$ &\small $\Gamma_{115248,196}$&\small $2\,765\,952$
\\
\cline{2-4}\cline{6-8}
	&\small $N=8$ &\small $\Gamma_{49152,128}$&\small $786\,432$&
\small	&\small $N\mid 21$&\small $\Gamma_{777924,441}$ &\small $56\,010\,528$
\\
\cline{1-4}\cline{6-8}
\small	$\vphantom{T^{T^T}}(1\ 8\ 8)^T$ &\small all&\small $\Gamma_{16384,128}$ &\small $196\,608$&
\small	&\small $N\in\{4,6\}$&\small $\Gamma_{36288,72}$ &\small $248\,832$
\\
\cline{1-4}\cline{5-8}
\end{tabular}
\end{center}
\begin{center}
	\begin{tabular}{|c|c|c|c||c|c|c|c|}
\hline
	$\bm{a}$&\small $N$ &\small  subgroup &\small  coeff.&\small 	$\bm{a}$&\small $N$ &\small  subgroup &\small  coeff.\\
\hline\hline

\small{$(3\ 3\ 8)^T\vphantom{T^{T^T}}$} &\small $N\mid 8$&\small $\Gamma_{49152,128}$ &\small $786\,432$&
\small{$(3\ 7\ 63)^T\vphantom{T^{T^T}}$} &\small $N=4$ &\small $\Gamma_{4032,8}$&\small $4\,608$
\\
\cline{2-4}\cline{5-8}
\small{$(3\ 3\ 8)^T\vphantom{T^{T^T}}$}&\small $N\in\{12,24\}$&\small $\Gamma_{1327104,1152}$ &\small $127\,401\,984$&
	$\vphantom{T^{T^T}}(3\ 8\ 8)^T$ &\small all &\small $\Gamma_{1327104,1152}$ &\small $127\,401\,984$
\\
\cline{1-4}\cline{5-8}
\small{	$(3\ 4\ 4)^T\vphantom{T^{T^T}}$} &\small all&\small  $\Gamma_{1327104,1152}$ &\small $127\,401\,984$&
\small{$(3\ 8\ 12)^T\vphantom{T^{T^T}}$}& \small all&\small $\Gamma_{1327104,1152}$ &\small $127\,401\,984$
\\
	\cline{1-4}\cline{5-8}
	\small{$(3\ 4\ 12)^T\vphantom{T^{T^T}}$} &\small all&\small $\Gamma_{1327104,1152}$ &\small $127\,401\,984$&
\small\multirow{2}{*}{$(3\ 8\ 48)^T$} &\small $N\nmid 8$ &\small $\Gamma_{331776,576}$&\small $15\,925\,248$
\\
\cline{1-4}\cline{6-8}
	$\vphantom{T^{T^T}}(3\ 4\ 36)^T$ &\small all&\small $\Gamma_{746496,864}$ &\small $53\,747\,712$&
&\small $N\mid 8$&\small $\Gamma_{196608,256}$ &\small $6\,291\,456$
\\
\cline{1-4}\cline{5-8}
\multirow{4}{*}{\small $(3\ 7\ 7)^T$} &\small $N\mid 21$&\small $\Gamma_{777924,441}$ &\small $56\,010\,528$&
\small\multirow{2}{*}{\small$(3\ 8\ 72)^T$} &\small $N=3$&\small $\Gamma_{23328,27}$ &\small $104\,976$
\\
	\cline{2-4}\cline{6-8}
	&\small $N=14$ &\small $\Gamma_{115248,169}$&\small $2\,765\,952$&
&\small $N=4$&\small $\Gamma_{9216,32}$&\small $36\,864$
\\
\cline{2-4}\cline{5-8} 
	&\small $N=4$ &\small $\Gamma_{1344,8}$ &\small $1\,536$&
\small{$(3\ 10\ 30)^T\vphantom{T^{T^T}}$} &\small all&\small $\Gamma_{103680,144}$ &\small $1\,492\,992$
\\
	\cline{2-4}\cline{5-8}
	&\small $N=6$ &\small $\Gamma_{9072,36}$ &\small $31\,104$&
\small{$(5\ 6\ 15)^T\vphantom{T^{T^T}}$} &\small all&\small $\Gamma_{103680,144}$ &\small $1\,492\,992$
\\
\cline{1-4}\cline{5-8}
\small{$(3\ 7\ 63)^T\vphantom{T^{T^T}}$} &\small $N\neq 4$&\small $\Gamma_{81648,108}$ &\small $839\,808$&
\small{$(5\ 8\ 40)^T\vphantom{T^{T^T}}$} &\small all&\small $\Gamma_{5120,32}$ &\small $18\,432$
\\
\cline{1-4}\cline{4-8}
\end{tabular}
\end{center}

\section{Bounds for the proofs of the lemmas}\label{sec:appendixvalencelemmas}
Here we give the bounds from the valence formula calculations in Section \ref{sec:lemmas}.
\begin{center}
		\begin{tabular}{|c||c|c|c|c|c|c|c|}
			\hline
			$\bm{a}$&\small  $\vphantom{T^{T^T}}(1\ 1\ 1)^T$ &\small  $(1\ 1\ 2)^T$ &\small  $(1\ 1\ 3)^T$ &\small  $(1\ 1\ 4)^T$ &\small  $(1\ 1\ 5)^T$ &\small  $(1\ 1\ 6)^T$ &\small  $(1\ 1\ 8)^T$ \\
			\hline

group &\small  $\Gamma_0(8)$ &\small  $\Gamma_0(16)$ &\small  $\Gamma_0(24)$ &\small  $\Gamma_0(16)$  &\small  $\Gamma_0(40)$  &\small  $\Gamma_0(24)$ &\small  $\Gamma_0(64)$\\
			\hline
			coeff. &\small  $2$ &\small  $3$  &\small  $6$ &\small  $3$ &\small  $9$ &\small  $6$ &\small  $12$\\
			\hline
			$\bm{a}$&\small  $\vphantom{T^{T^T}}(1\ 1\ 9)^T$ &\small  $(1\ 1\ 12)^T$ &\small  $(1\ 1\ 21)^T$ &\small  $(1\ 1\ 24)^T$ &\small  $(1\ 2\ 2)^T$ &\small  $(1\ 2\ 3)^T$ &\small  $(1\ 2\ 4)^T$ \\
			\hline

			group &\small  $\Gamma_0(72)$ &\small  $\Gamma_0(192)$ &\small  $\Gamma_0(1344)$ &\small  $\Gamma_0(192)$  &\small  $\Gamma_0(16)$  &\small  $\Gamma_0(48)$ &\small  $\Gamma_0(64)$\\
			\hline			
			coeff. &\small  $18$ &\small  $48$  &\small  $384$ &\small  $48$ &\small  $3$ &\small  $12$ &\small  $12$\\
			\hline
			$\bm{a}$&\small  $\vphantom{T^{T^T}}(1\ 2\ 5)^T$ &\small  $(1\ 2\ 6)^T$ &\small  $(1\ 2\ 8)^T$ &\small  $(1\ 2\ 10)^T$ &\small  $(1\ 2\ 16)^T$ &\small  $(1\ 3\ 3)^T$ &\small  $(1\ 3\ 4)^T$ \\
			\hline
			
			group &\small  $\Gamma_0(80)$ &\small  $\Gamma_0(48)$ &\small  $\Gamma_0(256)$ &\small  $\Gamma_0(80)$  &\small  $\Gamma_{1024,32}$  &\small  $\Gamma_0(24)$ &\small  $\Gamma_0(192)$\\
			\hline
			coeff. &\small  $18$ &\small  $12$  &\small  $384$ &\small  $18$ &\small  $3\,072$ &\small  $6$ &\small  $48$\\
			\hline
			$\bm{a}$&\small $(1\ 3\ 6)^T\vphantom{T^{T^T}}$ &\small  $(1\ 3\ 9)^T$ &\small  $(1\ 3\ 10)^T$ &\small  $(1\ 3\ 12)^T$ &\small  $(1\ 3\ 18)^T$ &\small  $(1\ 3\ 30)^T$ &\small $(1\ 4\ 4)^T$\\
			\hline
			
			group &\small  $\Gamma_0(48)$ &\small  $\Gamma_0(72)$ &\small  $\Gamma_0(120)$ &\small  $\Gamma_0(192)$  &\small  $\Gamma_0(144)$  &\small  $\Gamma_0(240)$ &\small  $\Gamma_0(16)$ \\
			\hline
			coeff. &\small  $12$ &\small  $18$  &\small  $36$ &\small  $48$ &\small  $36$ &\small  $72$&\small  $3$ \\
			\hline
	$\bm{a}$ &\small  $(1\ 4\ 6)^T$ &\small  $(1\ 4\ 8)^T$ &\small  $(1\ 4\ 12)^T$ &\small  $(1\ 4\ 24)^T$ &\small  $(1\ 5\ 5)^T$ &\small $(1\ 5\ 8)^T$ &\small  $(1\ 5\ 10)^T$ \\
			\hline
			
			group &\small  $\Gamma_0(192)$ &\small  $\Gamma_0(64)$ &\small  $\Gamma_0(192)$  &\small  $\Gamma_0(192)$  &\small  $\Gamma_0(40)$&\small  $\Gamma_0(320)$ &\small  $\Gamma_0(80)$\\
			\hline			
			coeff.  &\small  $48$  &\small  $12$ &\small  $48$ &\small  $48$ &\small  $9$&\small  $72$ &\small  $18$ \\
			\hline
			$\bm{a}$&\small  $(1\ 5\ 25)^T\vphantom{T^{T^T}}$ &\small  $(1\ 5\ 40)^T$ &\small  $(1\ 6\ 6)^T$ &\small  $(1\ 6\ 9)^T$ &\small $(1\ 6\ 16)^T$ &\small  $(1\ 6\ 18)^T$ &\small  $(1\ 6\ 24)^T$ \\
			\hline
			
			group  &\small  $\Gamma_{200,5}$ &\small  $\Gamma_0(320)$  &\small  $\Gamma_0(48)$  &\small  $\Gamma_0(144)$&\small  $\Gamma_{3072,32}$ &\small  $\Gamma_0(144)$ &\small  $\Gamma_{768,16}$ \\
			\hline
			coeff.  &\small  $180$ &\small  $72$ &\small  $12$ &\small  $36$&\small  $12\,288$ &\small  $36$  &\small  $1\,536$ \\
			\hline
			$\bm{a}$ &\small  $(1\ 8\ 8)^T\vphantom{T^{T^T}}$ &\small  $(1\ 8\ 16)^T$ &\small  $(1\ 8\ 40)^T$  &\small  $(1\ 9\ 9)^T$ &\small  $(1\ 9\ 12)^T$ &\small  $(1\ 9\ 21)^T$&\small  $(1\ 9\ 24)^T$ \\
			\hline

			group &\small  $\Gamma_{256,16}$&\small  $\Gamma_{1024,32}$  &\small  $\Gamma_{1280,16}$ &\small  $\Gamma_0(72)$ &\small  $\Gamma_0(576)$  &\small  $\Gamma_0(504)$  &\small  $\Gamma_0(576)$\\
			\hline
			coeff. &\small  $512$&\small  $3\,072$ &\small  $2\,304$  &\small  $18$ &\small  $144$ &\small  $144$  &\small  $144$ \\
			\hline
			$\bm{a}$ &\small  $(1\ 12\ 12)^T\vphantom{T^{T^T}}$ &\small  $(1\ 16\ 24)^T$ &\small  $(1\ 21\ 21)^T$ &\small  $(1\ 24\ 24)^T$ &\small  $(2\ 2\ 3)^T$ &\small $(2\ 3\ 3)^T$&\small  $(2\ 3\ 6)^T$\\
			\hline

			group  &\small  $\Gamma_0(48)$ &\small  $\Gamma_{3072,32}$ &\small  $\Gamma_0(168)$  &\small  $\Gamma_{768,16}$  &\small  $\Gamma_0(48)$ &\small  $\Gamma_0(48)$ &\small  $\Gamma_0(48)$\\
			\hline
			coeff. &\small $12$  &\small  $12\,288$ &\small  $48$ &\small  $1\,536$ &\small  $12$&\small  $12$ &\small  $12$\\
			\hline
			$\bm{a}$  &\small  $(2\ 3\ 8)^T\vphantom{T^{T^T}}$ &\small  $(2\ 3\ 9)^T$ &\small  $(2\ 3\ 12)^T$ &\small  $(2\ 3\ 18)^T$ &\small $(2\ 3\ 48)^T$ &\small  $(2\ 5\ 6)^T$ &\small  $(2\ 5\ 10)^T$ \\
			\hline
			
			group  &\small  $\Gamma_{768,16}$ &\small  $\Gamma_0(144)$  &\small  $\Gamma_0(192)$  &\small  $\Gamma_0(144)$&\small  $\Gamma_{3072,32}$ &\small  $\Gamma_0(240)$ &\small  $\Gamma_0(80)$ \\
			\hline
			coeff.   &\small  $1\,536$ &\small  $36$ &\small  $48$ &\small  $36$&\small  $12\,288$ &\small  $72$  &\small  $18$\\
			\hline
			
\end{tabular}
\end{center}		

\begin{center}			
\begin{tabular}{|c||c|c|c|c|c|c|c|}
			\hline
			$\bm{a}$&\small  $(2\ 5\ 15)^T\vphantom{T^{T^T}}$ &\small  $(2\ 6\ 9)^T$ &\small  $(2\ 6\ 15)^T$&\small $(3\ 3\ 4)^T$&\small $(3\ 3\ 7)^T$ &\small  $(3\ 3\ 8)^T$ &\small  $(3\ 4\ 4)^T$\\
			\hline
			group  &\small  $\Gamma_0(240)$  &\small  $\Gamma_0(576)$  &\small  $\Gamma_0(240)$ &\small $\Gamma_0(192)$&\small $\Gamma_0(168)$ &\small  $\Gamma_0(192)$ &\small  $\Gamma_0
(192)
$\\
			\hline			
			coeff.  &\small  $72$ &\small  $144$ &\small  $72$&\small   $48$ &\small $48$ &\small  $48$  &\small  $48$\\
			\hline
			$\bm{a}$ &\small  $(3\ 4\ 12)^T\vphantom{T^{T^T}}$ &\small  $(3\ 4\ 36)^T$ &\small $(3\ 7\ 7)^T$ &\small  $(3\ 7\ 63)^T$&\small $(3\ 8\ 8)^T$ &\small  $(3\ 8\ 12)^T$ &\small  $(3\ 8\ 48)^T$ \\
			\hline

			group  &\small  $\Gamma_0(192)$  &\small  $\Gamma_0(144)$ &\small  $\Gamma_0(168)$ &\small  $\Gamma_0(504)$ &\small $\Gamma_{768,16}$ &\small  $\Gamma_0(192)$ &\small  $\Gamma_{3072,32}$  \\
			\hline
			coeff.  &\small  $48$ &\small  $36$ &\small  $48$ &\small  $144$&\small  $1\,536$ &\small  $48$  &\small  $12\,288$  \\
			\hline
			$\bm{a}$&\small  $(3\ 8\ 72)^T\vphantom{T^{T^T}}$ &\small $(3\ 10\ 30)^T$ &\small  $(5\ 6\ 15)^T$ &\small  $(5\ 8\ 40)^T$&\multicolumn{3}{c}{}\\
			\cline{1-5}
			
			group &\small  $\Gamma_{2304,48}$ &\small  $\Gamma_0(240)$ &\small  $\Gamma_0(240)$ &\small  $\Gamma_{1280,16}$&\multicolumn{3}{c}{}\\
			\cline{1-5}
			coeff. &\small  $9\,216$&\small  $72$ &\small  $72$  &\small  $2\,304$&\multicolumn{3}{c}{}\\
\cline{1-5}
		\end{tabular}
\end{center}

\end{document}